\documentclass[11pt,leqno]{amsart}  

\usepackage{calc}
\usepackage[left=1in,right=1in,top=1in,bottom=1in]{geometry}  
\usepackage{graphicx}
\usepackage{amsmath,amssymb,epsfig}
\usepackage{amsthm}
\usepackage{enumerate}



\newcommand{\R}{{\mathbb R}}
\newcommand{\Z}{{\mathbb Z}}
\newcommand{\Q}{{\mathbb Q}}
\newcommand{\C}{{\mathbb C}}
\newcommand{\T}{{\mathcal T}}

\newcommand{\Ft}{{\mathbb{F}_2}}

\newcommand{\Gal}{{\mathrm{Gal}}}
\newcommand{\Sel}{{\mathrm{Sel}}}
\newcommand{\dimF}{{\mathrm{dim}_{\mathbb{F}_2}}}
\newcommand{\Ftwo}{{\mathbb{F}_2}}
\newcommand{\loc}{{\mathrm{loc}}}
\newcommand{\Zt}{{\mathbb{Z}/2\mathbb{Z}}}

\newcommand{\hatphi}{{ \hat \phi }}
\newcommand{\phihat}{{ \hat \phi }}
\newcommand{\p}{{ \mathfrak{p} }}
\newcommand{\q}{{ \mathfrak{q} }}
\newcommand{\dd}{{ \mathbf{d} }}
\newcommand{\ord}{{ \mathrm{ord} }}
\newcommand{\modtwo}{{ \pmod{2} }}


\newtheorem{thm}{\bf{Theorem}}[section]
\newtheorem{theorem}{\bf{Theorem}}[section]
\newtheorem{cor}[thm]{\bf{Corollary}}
\newtheorem{corollary}[theorem]{\bf{Corollary}}

\newtheorem{proposition}[theorem]{\bf{Proposition}}

\newtheorem{lemma}[theorem]{\bf{Lemma}}
\newtheorem{con}[thm]{\bf{Conjecture}}

\newtheorem*{LemmaSplitMult}{Lemma \ref{splitmult}}

\theoremstyle{definition}
\newtheorem{definition}[theorem]{\bf{Definition}}
\newtheorem{remark}[theorem]{\bf{Remark}}

\newlength{\wid}
\setlength{\wid}{0.8\textwidth} 

\input xy 
\xyoption{all} 

\usepackage[OT2,T1]{fontenc}
\DeclareSymbolFont{cyrletters}{OT2}{wncyr}{m}{n}
\DeclareMathSymbol{\Sha}{\mathalpha}{cyrletters}{"58}

\address{Department of Mathematics, University of Wisconsin-Madison, 480 Lincoln Drive, Madison, Wisconsin 53706}

\email{klagsbru@math.wisc.edu}

\begin{document}

\bibliographystyle{alpha}

\thanks{This paper is based on work conducted by the author as part of his doctoral thesis at UC-Irvine under the direction of Karl Rubin and was supported in part by NSF grants DMS-0457481 and DMS-0757807. The author would like to express his utmost gratitude to Karl Rubin for the guidance and assistance he provided while undertaking this research.}

\renewcommand{\baselinestretch}{1.5} \small\normalsize    

\title[{\tiny Selmer Ranks of Quadratic Twists of Elliptic Curves with Partial Rational Two-Torsion}]{Selmer Ranks of Quadratic Twists of Elliptic Curves with Partial Rational Two-Torsion}
\author{Zev Klagsbrun}

\begin{abstract}
This paper investigates which integers can appear as 2-Selmer ranks within the quadratic twist family of an elliptic curve $E$ defined over a number field $K$ with $E(K)[2] \simeq \Zt$. We show that if $E$ does not have a cyclic 4-isogeny defined over $K(E[2])$, then subject only to constant 2-Selmer parity, each non-negative integer appears infinitely often as the 2-Selmer rank of a quadratic twist of $E$. If $E$ has a cyclic 4-isogeny defined over $K(E[2])$ but not over $K$ , then we prove the same result for 2-Selmer ranks greater than or equal to $r_2$, the number of complex places of $K$. We also obtain results about the minimum number of twists of $E$ with rank $0$, and subject to standard conjectures, the number of twists with rank $1$, provided $E$ does not have a cyclic 4-isogeny defined over $K$.
\end{abstract}

\maketitle

\pagenumbering{arabic}

\section{Introduction}

This paper investigates the integers occurring as 2-Selmer ranks within the quadratic twist family family of a given elliptic curve $E$. Letting $\Sel_2(E/K)$ denote the 2-Selmer group of $E$ (see Section \ref{selmergroups} for the definition), we define the 2-Selmer rank of $E/K$, denoted $d_2(E/K)$, by $$d_2(E/K) =  \dimF  \Sel_2(E/K) - \dimF E(K)[2].$$ 

\begin{definition} For $X \in \R^+$, define a set $$S(X) = \{ \text{Quadratic }F/K : \mathbf{N}_{K/\Q}\mathfrak{f}(F/K) < X \}$$ where $\mathfrak{f}(F/K)$ is the finite part of the conductor of $F/K$. For each $r \in \Z^{\ge 0}$ define a quantity $N_r(E,X)$ by $$N_r(E, X) = \big | \{ F/K \in S(X) : d_2(E^F/K)  = r \} \big |,$$ where $E^F$ is the quadratic twist of $E$ by $F/K$.
\end{definition}

The 2-Selmer rank of $E$ serves as an upper bound for the Mordell-Weil rank of $E$ so understanding its distribution within quadratic twist families helps us understand the rank distribution within the twist family. We are therefore concerned with the limiting behavior of $\displaystyle{\frac{N_r(E, X)}{|S(X)|}}$ as $X \rightarrow \infty$. Recent work by Kane, building on work of Swinnerton-Dyer and Heath-Brown showed that if $E$ is defined over $\Q$, $E(\Q)[2] \simeq \Zt \times \Zt$, and $E$ does not have a cyclic 4-isogeny defined over $\Q$, then $\displaystyle{\frac{N_r(E, X)}{|S(X)|}}$ tends to an explicit non-zero limit $\alpha_r$ for every non-negative integer $r$ such that sum of the $\alpha_r$ over $r \in Z^{\ge 0}$ is equal to 1 \cite{Kane}, \cite{SD}, \cite{HB}. Additional recent work by this author, Mazur, and Rubin proves that if $E(K)[2] = 0$ and $\Gal(K(E[2])/K) \simeq \mathcal{S}_3$, then a similar result holds using a different method of counting after correcting by some local factors arising over totally complex fields \cite{KMR}. 

Far less is known about the behavior $\displaystyle{\frac{N_r(E, X)}{|S(X)|}}$ for curves with $E(K)[2] \simeq \Zt$ and this paper provides a partial answer. Most notably we prove the following: 
\begin{thm}\label{noisogthm}
Let $E$ be an elliptic curve defined over a number field $K$ with $E(K)[2]\simeq \Zt$ that does not possess a cyclic 4-isogeny defined over $K(E[2])$. Then $N_r(E, X) \gg \frac{X}{\log X}$ for all non-negative $r  \equiv d_2(E/K) \pmod{2}$. If $E$ does not have constant 2-Selmer parity, then $N_r(E, X) \gg \frac{X}{\log X}$ for all $r \in \Z^{\ge 0}$.
\end{thm}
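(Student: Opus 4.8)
The plan is to prove Theorem~\ref{noisogthm} by a standard ``twisting into and out of'' argument in the style of Mazur--Rubin and Kane, adapted to the partial two-torsion case $E(K)[2] \simeq \Zt$. The key point is that when $E(K)[2] \simeq \Zt$ there is a 2-isogeny $\phi \colon E \to E'$ defined over $K$ with dual $\hatphi$, and the 2-Selmer group fits into an exact sequence relating $\Sel_2(E/K)$ to the $\phi$-Selmer group $\Sel_\phi(E/K)$ and the $\hatphi$-Selmer group $\Sel_{\hatphi}(E'/K)$; in fact $d_2(E/K) \equiv \dimF \Sel_\phi(E/K) + \dimF \Sel_{\hatphi}(E'/K) \pmod{2}$ up to an explicit correction, and both of these isogeny-Selmer groups are computable as generalized Selmer structures in the sense of Mazur--Rubin. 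I expect the earlier sections of the paper to establish: (i) the precise relationship between $d_2(E^F/K)$ and the ranks of the two isogeny-descent Selmer groups of the twist; (ii) a comparison theorem showing that changing the local conditions at a single prime $\q$ changes $\dimF \Sel_\phi(E^F/K)$ by at most $1$, with the ``raising'' and ``lowering'' both achievable for a positive density of primes; and (iii) a Chebotarev-type statement producing primes $\q$ with prescribed splitting in $K(E[2])$ (and in the relevant Kummer extensions) such that the local twist at $\q$ has the desired effect, which is exactly where the hypothesis that $E$ has no cyclic 4-isogeny over $K(E[2])$ is used --- it guarantees that the relevant Galois module $E[\phi]$ or $E'[\hatphi]$ over $K(E[2])$ is ``large enough'' that the Kummer/Cassels pairing obstruction vanishes and both raising and lowering primes exist.

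Concretely, I would first reduce to controlling one of the two isogeny-Selmer ranks, say $s(F) := \dimF \Sel_\phi(E^F/K)$, since by the parity/exact-sequence input the 2-Selmer rank $d_2(E^F/K)$ is determined mod $2$ by $s(F) + s'(F)$ where $s'(F) = \dimF \Sel_{\hatphi}(E'^F/K)$, and there is a Cassels-type bound $|s(F) - s'(F)| \le 1$ together with a constraint forcing their sum to have a fixed parity unless $E$ lacks constant 2-Selmer parity. Starting from some convenient twist $F_0$ (e.g. the trivial twist, giving $d_2(E/K)$), I would then, given a target $r \equiv d_2(E/K) \pmod 2$, build a twisting set $T = \{\q_1,\dots,\q_k\}$ of primes of $K$, each chosen by Chebotarev in $K(E[2])$ and the appropriate Kummer extension so that replacing $F_0$ by $F_0 \cdot (\text{quadratic twist ramified at } T)$ changes $s$ (and correspondingly $s'$) in a controlled way --- each successive prime either raises or lowers the rank by exactly one --- until the twist $E^{F}$ has $d_2(E^F/K) = r$. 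The counting lower bound $N_r(E,X) \gg X/\log X$ then comes from the fact that for the final prime $\q_k$ there is a positive density of admissible choices (all lying below $X$ after fixing the earlier primes), each giving a distinct $F \in S(X)$ with $d_2(E^F/K) = r$; this is the mechanism that upgrades ``infinitely often'' to the quantitative bound, and it only needs one free prime ranging over a Chebotarev class, whence the single factor of $\log X$.

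The main obstacle --- and the place where the no-cyclic-4-isogeny hypothesis is essential --- is establishing step (iii): producing ``lowering'' primes, i.e. primes $\q$ for which twisting strictly decreases the isogeny-Selmer rank. Raising the rank is comparatively soft (one can almost always enlarge a Selmer group by relaxing a local condition and then twisting to realize it), but to decrease it one needs to find a global Selmer class whose value at $\q$ is nonzero, and the existence of such a class for a positive density of $\q$ is governed by a Cassels--Tate-style pairing whose nondegeneracy can fail exactly when $E$ acquires a cyclic 4-isogeny over $K(E[2])$ (this is the analogue of the ``$E[2]$ ramified vs.\ split'' dichotomy and the role of the $\Gal(K(E[2])/K)$-module structure in \cite{KMR}). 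So the heart of the argument is a lemma of the form: \emph{if $E$ has no cyclic 4-isogeny over $K(E[2])$, then for any finite set of twisting data there exist a positive density of primes $\q$ of $K$, unramified everywhere relevant, splitting completely in $K(E[2])$, inert or split as needed in the Kummer extension cut out by the relevant Selmer classes, such that the localized twist at $\q$ strictly lowers $\dimF \Sel_\phi$}. Handling the two sub-cases --- $E$ has no cyclic 4-isogeny even over $K$ versus $E$ has one over $K(E[2])\setminus K$ is excluded here, so really just the bookkeeping of which of $\phi$ or $\hatphi$ the lowering prime acts on --- and verifying that the parity of $s + s'$ is the only global obstruction (so that dropping constant 2-Selmer parity removes the congruence condition on $r$) will require care but should follow the template of the cited works.
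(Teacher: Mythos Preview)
Your overall framework --- use the $\phi$-Selmer and $\hatphi$-Selmer groups, twist one prime at a time via Chebotarev, and get the counting bound from a free prime in a Chebotarev class --- matches the paper in spirit, and the last step (upgrading ``there exists a twist of rank $r$'' to $N_r(E,X)\gg X/\log X$) is exactly what Theorem~\ref{equaltwist} and Proposition~\ref{earlier} do. But the core of your plan has two genuine gaps.

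First, the claimed Cassels-type bound $|s(F)-s'(F)|\le 1$ is false. The difference $s(F)-s'(F)=\ord_2\T(E^F/E^{\prime F})$ is the Tamagawa ratio, and it can be arbitrarily large; in fact the whole content of the ``lower-bound obstruction'' discussed in the introduction is that this ratio can be bounded away from zero across all twists. Consequently your proposed reduction --- control $s(F)$ alone, step it by $\pm 1$, and read off $d_2$ --- does not work: knowing $s$ and $s'$ only gives $d_2\ge d_\phi-1$ and $d_2\equiv s-s'\pmod 2$, not a formula for $d_2$. The paper does \emph{not} try to compute $d_2$ from $s$ and $s'$; instead it controls $d_2$ directly via Proposition~\ref{MR3.3}, which compares $d_2(E/K)$ and $d_2(E^F/K)$ in terms of the image $V_T$ of a localization map on $\Sel_2(E/K)$. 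The $\phi$-Selmer group enters only indirectly: it is used (Lemma~\ref{pick 3}, Proposition~\ref{shrink2}, Lemma~\ref{shrinkpbal}) to guarantee that $V_T$ has the right dimension, and it supplies the lower bound $d_2\ge\ord_2\T$ needed to make the ``increase by $2$'' argument of Lemma~\ref{lemmaunbdbal} terminate.

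Second, you have misidentified both the mechanism and the role of the no-cyclic-$4$-isogeny hypothesis. You plan to use primes split in $K(E[2])$ and a Cassels--Tate nondegeneracy argument; the paper instead uses primes whose Frobenius is \emph{nontrivial} on $M=K(E[2])$ in the key constructions (Proposition~\ref{earlier}, Proposition~\ref{shrink2}), and no Cassels--Tate pairing appears. The hypothesis that $E$ has no cyclic $4$-isogeny over $M$ is used via Lemma~\ref{charac} to ensure $M\ne M'=K(E'[2])$, which makes it possible to choose Frobenius elements lying in $G_{M'}$ but not in $G_M$ (or vice versa). This is what drives Lemma~\ref{lemmaunbdbal} (increasing $d_2$ by forcing $\T$ to grow) and Lemma~\ref{shrinkpbal} (shrinking $d_\phi$ while keeping $d_2$ fixed or dropping it by $2$); the actual decrease of $d_2$ by $2$ (Proposition~\ref{minus2}) is then obtained by first shrinking $d_\phi$ until Proposition~\ref{shrink2} applies. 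Finally, the preliminary step of replacing $E$ by a twist with $\T\le 1$ (possible precisely because $M\ne M'$, by reversing the roles of $E$ and $E'$ in Lemma~\ref{lemmaunbdbal}) is what allows Proposition~\ref{minus2} to push $d_2$ all the way down to $0$ or $1$; your outline has no analogue of this step.
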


This result is similar to earlier results of Mazur and Rubin for curves with $E(K)[2] = 0$.

Constant 2-Selmer parity is a phenomenom exhibited by certain curves $E$, where $d_2(E^F/K) \equiv d_2(E/K) \pmod{2}$ for all quadratic twists $E^F$ of $E$. Dokchitser and Dokchitser have shown that $E/K$ has constant 2-Selmer parity if and only if $K$ is totally imaginary and $E$ acquires everywhere good reduction over an abelian extension of $K$ \cite{DD2}. 


Constant 2-Selmer parity is one of two known obstructions to a non-negative integer $r$ appearing as the 2-Selmer rank of some twist of $E$. A second obstruction can occur when the condition that $E$ not have a cyclic 4-isogeny defined over $K(E[2])$ is relaxed. This author recently exhibited an infinite family of curves over any number field $K$ with a complex place such that $d_2(E^F/K) \ge r_2$ for every curve $E$ in this family and every quadratic $F/K$, where $r_2$ is the number of complex places of $K$ \cite{BddBelow}. This lower-bound phenomenon is not well understood, but appears to be independent of constant 2-Selmer parity.

We are however able to prove the following results in the special case where $E$ has a cyclic 4-isogeny defined over $K(E[2])$ but not over $K$.

\begin{thm}\label{equaltwist} Let $E$ be an elliptic curve defined over a number field $K$ with $E(K)[2]\simeq \Zt$ that does not possess a cyclic 4-isogeny defined over $K$. If $E$ has a twist $E^F$ such that $d_2(E^F/K) = r$, then $N_r(E, X) \gg \frac{X}{\log X}$.
\end{thm}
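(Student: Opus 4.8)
The plan is to prove Theorem~\ref{equaltwist} by a twisting argument reminiscent of the work of Mazur--Rubin and the author's companion paper, namely: starting from a twist $E^{F_0}$ with $d_2(E^{F_0}/K) = r$, one shows that for a positive proportion (of size $\gg X/\log X$) of quadratic fields $F/K \in S(X)$, the twisted curve $E^F$ still satisfies $d_2(E^F/K) = r$. Concretely, I would fix a model, let $F_0/K$ be the given twist realizing $r$, replace $E$ by $E^{F_0}$ (so WLOG $d_2(E/K)=r$), and then look for twists by fields $F = K(\sqrt{d})$ ramified only at a \emph{single} auxiliary prime $\q$ (together, possibly, with primes dividing $2\Delta_E$ to control the behavior there). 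The product of all primes $\q$ with $\mathbf{N}_{K/\Q}\q < X$ that lie in the appropriate Chebotarev class is $\gg X/\log X$, which gives the stated count.

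The key steps, in order, are: (1) recall the Greenberg--Wiles / Poitou--Tate formula comparing $d_2(E^F/K)$ with $d_2(E/K)$ in terms of a sum of local discrepancies $\dim \big( H^1_f(K_v,E[2]) / (H^1_f(K_v,E[2]) \cap H^1_f(K_v, E^F[2])) \big)$ over places $v$ where the local conditions for $E$ and $E^F$ differ --- equivalently, where $F/K$ is ramified or $E$ has bad reduction; (2) show that for a prime $\q$ of good reduction for $E$ lying in a suitable Frobenius class (so that $E(K_\q)[2] \simeq \Zt$ and the relevant local cohomology groups have the expected dimension), twisting by a field ramified exactly at $\q$ changes the local condition at $\q$ in a controlled way, contributing a net change of $0$ to $d_2$ provided $\q$ is chosen so that the Selmer group of $E/K$ already ``sees'' a class that is locally trivial at $\q$ --- this is where the hypothesis that $E$ has \emph{no} cyclic $4$-isogeny over $K$ enters, since (as in Section~\ref{selmergroups} and the surrounding analysis, using the structure of $\Gal(K(E[2])/K)$ and the $\phi,\hatphi$ isogenies) it guarantees the image of Galois is large enough to produce, via Chebotarev, primes $\q$ with the needed splitting behavior; (3) assemble finitely many local conditions at the ``bad'' places $v \mid 2\Delta_E$ and at the archimedean places into one Chebotarev condition on $\q$, check it is nonvacuous, and conclude that each such $\q$ yields a twist $E^{K(\sqrt{\q})}$ (suitably adjusted at bad primes by a fixed $K(\sqrt{d_0})$) with $d_2 = r$; (4) count: the set of admissible $\q$ with norm up to $X$ has density bounded below by a positive constant times $X/\log X$ by the prime ideal theorem in the relevant ray class field, and distinct $\q$ give distinct fields $F$, all lying in $S(cX)$ for an absolute constant $c$.

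The main obstacle I anticipate is Step~(2): producing an auxiliary prime $\q$ at which twisting does \emph{not} change the Selmer rank. In the $E(K)[2] = \Zt \times \Zt$ and $E(K)[2]=0$ settings one has enough symmetry in $E[2]$ as a Galois module to freely raise or lower the rank by one at a time and then pair primes up; with only $E(K)[2] \simeq \Zt$ the module $E[2]$ is an extension of $\Zt$ by $\Zt$ and is not semisimple, so the local conditions at $\q$ are not ``self-dual'' in the same clean way, and one must instead work with the $\phi$- and $\hatphi$-Selmer groups attached to the two $2$-isogenies and track how a twist moves the $\phi$-Selmer rank versus the $\hatphi$-Selmer rank. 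Controlling the \emph{difference} (which is what $d_2(E^F/K)$ ultimately depends on) while holding the starting value $r$ fixed --- rather than merely controlling one of the two ranks --- is the delicate point, and it is exactly here that ruling out a rational cyclic $4$-isogeny is needed to prevent a systematic bias (the obstruction phenomenon of \cite{BddBelow}). Once a single good $\q$ (hence a single new twist with $d_2=r$) is shown to exist, upgrading to $\gg X/\log X$ twists is a routine Chebotarev/prime-counting argument, and handling the finitely many bad and archimedean places is bookkeeping that parallels \cite{KMR} and \cite{Kane}.
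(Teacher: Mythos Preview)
Your overall reduction is correct and matches the paper: replace $E$ by $E^{F_0}$ so that $d_2(E/K)=r$, then produce $\gg X/\log X$ quadratic extensions $F$ with $d_2(E^F/K)=r$. The paper does exactly this reduction (it is Proposition~\ref{earlier} followed by a one-line conductor comparison).

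However, your Step~(2) has a genuine gap. Twisting by a field ramified at a \emph{single} auxiliary prime $\q$ with $E(K_\q)[2]\simeq\Zt$ and split at all places above $2\Delta_E\infty$ can \emph{never} leave $d_2$ unchanged: by Kramer's parity formula (Theorem~\ref{kramers}), the parity of $d_2$ flips by $\sum_v \dimF E(K_v)/E_{\mathbf N}(K_v)$, and this sum equals $1$ here (the only nonzero term is at $\q$, where $\dimF H^1_f(K_\q,E[2])=1$). Equivalently, in the setting of Proposition~\ref{MR3.3} with $T=\{\q\}$ one has $\dimF\bigoplus_{\p\in T}H^1_f=1$, so $d_2(E^F/K)-d_2(E/K)=d-\dimF V_T\equiv 1\pmod 2$ regardless of the choice of $\q$. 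Allowing extra ramification at primes dividing $2\Delta_E$ to fix the parity would force you to analyze and control the local conditions at those bad places, which you have not done and which is not ``bookkeeping.''

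The paper avoids this by using \emph{two} auxiliary primes $\p_1,\p_2$ with Frobenius classes $\gamma$ and $\gamma^{-1}$ in a large Galois extension $N\supset MK(8\Delta_E\infty)$ that kills $\Sel_2(E/K)$. The point of the inverse pair is twofold: first, $\p_1\p_2$ has a totally positive generator $\pi\equiv 1\pmod{8\Delta_E}$, so $F=K(\sqrt\pi)$ is split at every place above $2\Delta_E\infty$; second, for any cocycle $c$ one has $c(\gamma)=c(\gamma^{-1})$ in $E[2]/C$, so with $T=\{\p_1,\p_2\}$ the image $V_T$ lies in the one-dimensional diagonal of $(E[2]/C)^2$. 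The hypothesis that $E$ has no cyclic $4$-isogeny over $K$ is then used exactly once, and not via the $\phi$-Selmer machinery: it guarantees that the cocycle $c_0(\sigma)=\sigma(R)-R$ (with $2R=P$), which represents the image of $P$ in $\Sel_2(E/K)$, does \emph{not} land in $H^1(K,C)$, so one can pick $\gamma$ with $\gamma|_M\ne 1$ and $c_0(\gamma)\ne 0$ in $E[2]/C$. Thus $V_T$ equals the diagonal, $\dimF V_T=1$, and Proposition~\ref{MR3.3} gives $d_2(E^F/K)=d_2(E/K)$. Fixing $\p_1$ and letting $\p_2$ range over its Chebotarev class gives the $X/\log X$ count.

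Finally, your emphasis on balancing the $\phi$- and $\hatphi$-Selmer ranks is misplaced for this particular theorem: that apparatus is needed for the $\pm 2$ results (Propositions~\ref{shrink2}, \ref{minus2} and Lemma~\ref{lemmaunbdbal}), but Theorem~\ref{equaltwist} uses only the $2$-Selmer group and the single observation about $c_0$ above.
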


\begin{thm}\label{balancedmost}
Let $E$ be an elliptic curve defined over a number field $K$ with $E(K)[2]\simeq \Zt$ that does not possess a cyclic 4-isogeny defined over $K$. Then $N_r(E, X) \gg \frac{X}{\log X}$ for all $r \ge r_2$ with $r  \equiv d_2(E/K) \pmod{2}$, where $r_2$ is the number of conjugate pairs of complex embeddings of $K$. If $E$ does not have constant 2-Selmer parity, then $N_r(E, X) \gg \frac{X}{\log X}$ for all $r \ge r_2$.
\end{thm}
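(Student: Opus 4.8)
The plan is to deduce the statement from Theorem~\ref{equaltwist} together with an existence result. Since $E$ has no cyclic $4$-isogeny over $K$, Theorem~\ref{equaltwist} shows that $N_r(E,X)\gg X/\log X$ as soon as there is a single quadratic $F/K$ with $d_2(E^F/K)=r$; so it suffices to prove that for every $r\ge r_2$ with $r\equiv d_2(E/K)\pmod 2$ such a twist exists, and that, when $E$ does not have constant $2$-Selmer parity, such a twist exists for every $r\ge r_2$ of either parity. I would organize the argument around the $2$-isogeny $\phi\colon E\to E'$ whose kernel is the rational subgroup $E(K)[2]\simeq\Zt$, its dual $\hat\phi\colon E'\to E$, and the standard exact sequence expressing $d_2(E^F/K)$ through $\dimF \Sel_\phi(E^F/K)$ and $\dimF \Sel_{\hat\phi}(E^{\prime F}/K)$. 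One first records that the hypothesis ``cyclic $4$-isogeny over $K(E[2])$ but not over $K$'' is twist-invariant: scalars preserve every cyclic subgroup and $[-1]$ acts trivially on $E[2]$, so $K(E^F[2])=K(E[2])$ and every twist $E^F$ again falls under the hypotheses of the theorem.

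The core of the proof is the base case: producing a twist of \emph{minimal} $2$-Selmer rank, namely of rank $r_2$ when this is consistent with the parity constraint and of rank $r_2+1$ otherwise, and --- in the non-constant-parity case --- a twist of rank $r_2$ together with one of rank $r_2+1$. I would realize such an $F$ as a quadratic extension ramified at a carefully chosen finite set $T$ of primes of $K$, arranged so that outside $T$ the local conditions defining $\Sel_\phi(E^F/K)$ and $\Sel_{\hat\phi}(E^{\prime F}/K)$ reduce to the unramified ones, and then compute the resulting Selmer dimensions via the Greenberg--Wiles formula and Poitou--Tate duality. The outcome should be that, for suitable $T$, these dimensions collapse to the smallest values the $4$-isogeny structure permits, yielding $d_2(E^F/K)=r_2$ (or $r_2+1$); in this language, the bound of \cite{BddBelow} is the assertion that the contribution to the descent forced by the $4$-isogeny together with the $r_2$ complex places cannot be removed. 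The primes of $T$ with the required Frobenius behaviour are supplied by the Chebotarev density theorem in a suitable finite extension of $K$.

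With a minimal twist in hand I would climb: a by-now-standard argument --- twisting by one auxiliary prime $\q$ changes a Selmer rank by $\pm1$, with the sign selectable through a Chebotarev condition on $\q$ --- shows that from any $F$ with $d_2(E^F/K)=r$ one can pass to an $F'$ with $d_2(E^{F'}/K)=r+2$, preserving the parity. Iterating from the base twist(s) produces a quadratic twist of every rank $\ge r_2$ in the prescribed parity class, and of every rank $\ge r_2$ without restriction when $E$ has non-constant parity, since the Dokchitser--Dokchitser criterion cited in the introduction furnishes a minimal twist of each parity. Having thus exhibited, for each relevant $r$, a twist $E^F$ with $d_2(E^F/K)=r$, Theorem~\ref{equaltwist} immediately gives $N_r(E,X)\gg X/\log X$, completing the proof.

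The main obstacle is the base case. In the $4$-isogeny-free situation of Theorem~\ref{noisogthm} the minimal twist has rank $0$ or $1$ and nothing obstructs shrinking the relevant Selmer groups to their trivial part. Here the cyclic $4$-isogeny over $K(E[2])$ modifies the Galois module governing the $\phi$- and $\hat\phi$-descents in a way that introduces the $r_2$ obstruction of \cite{BddBelow}, and the difficulty is to show that nothing beyond this obstruction survives --- that the floor $r_2$ is genuinely attained. This forces a delicate choice of $T$ and precise bookkeeping of the local conditions at the primes above $2$, where the $4$-isogeny hypothesis must be invoked to verify that the local subspaces occupy complementary dimensions in the Greenberg--Wiles/Poitou--Tate balance; these pressure points are reminiscent of those in the $E(K)[2]=0$ analysis of \cite{KMR}, now complicated by the isogeny. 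By comparison, the climbing step and the parity manipulation are routine adaptations of the arguments available when $E(K)[2]=0$.
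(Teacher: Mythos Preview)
Your overall architecture---reduce via Theorem~\ref{equaltwist} to exhibiting one twist of each target rank, then climb by $+2$ from a minimal twist---matches the paper's in outline, and your climbing step is essentially Lemma~\ref{lemmaunbdbal}. But your base case has a genuine gap, and it is exactly where the paper's argument diverges from yours.

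You propose to construct a twist with $d_2(E^F/K)=r_2$ in one shot by choosing a ramification set $T$ and reading the $\phi$- and $\hat\phi$-Selmer dimensions off the Greenberg--Wiles formula. The problem is that Greenberg--Wiles (or equivalently Cassels' product formula, Theorem~\ref{prodform2}) only gives you $d_\phi(E^F/K)-d_{\hat\phi}(E^{\prime F}/K)=\ord_2\mathcal T(E^F/E^{\prime F})$, and even knowing both $d_\phi$ and $d_{\hat\phi}$ separately does not pin down $d_2$: the exact sequence of Theorem~\ref{gss} only yields the inequality $d_2(E^F/K)\ge d_\phi(E^F/K)-2$, with the slack governed by the (unknown) image of $\Sel_2$ in $\Sel_{\hat\phi}$. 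So ``collapse to the smallest values'' is not something a single local-conditions computation can certify.

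The paper sidesteps this by never attempting to hit the floor directly. Instead it first produces, via the idelic construction of Lemma~\ref{twisttor2}, a twist $E^{F_1}$ with $\ord_2\mathcal T(E^{F_1}/E^{\prime F_1})\le r_2$; this step is where the delicate local analysis at places above $2$, at real places, and at multiplicative primes (Appendix~\ref{loc4isog}) is actually used, and it controls only the Tamagawa ratio, not $d_2$ itself. From $E^{F_1}$ the paper then moves in \emph{both} directions: down by iterating Proposition~\ref{minus2}, which decreases $d_2$ by $2$ whenever $d_2\ge 2+\max\{0,\ord_2\mathcal T\}$ while not increasing $\mathcal T$, and up by Lemma~\ref{lemmaunbdbal}. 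The Tamagawa-ratio bound is precisely what guarantees the downward iteration reaches $r_2$ (or $r_2+1$). The second parity, in the non-constant case, is handled by the parallel twist $E^{F^\prime}$ of Lemma~\ref{twisttor2} with $\ord_2\mathcal T\le r_2+1$. You should replace your one-step base case with this two-stage mechanism: bound the Tamagawa ratio first, then descend iteratively.
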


Theorem \ref{balancedmost} limits the lower-bound obstruction and shows that the curves presented in \cite{BddBelow} exhibit the worst possible behavior in regard to lower-bound obstruction among curves that do not have a cyclic 4-isogeny defined over $K$. The next theorem further limits the lower-bound obstruction, essentially saying that it can't apply to both $E$ and $E^\prime$ simultaneously, where $E^\prime$ is the curve 2-isogenous to $E$.

\begin{thm}\label{isogpairthm}
Let $E$ be an elliptic curve defined over a number field $K$ with $E(K)[2]\simeq \Zt$ that does not possess a cyclic 4-isogeny defined over $K$.
\begin{enumerate}[(i)]
\item Then either $N_r(E, X) \gg \frac{X}{\log X}$ for all $r \in \Z^{\ge 0}$ with $r  \equiv d_2(E/K) \pmod{2}$ or $N_r(E^\prime, X) \gg \frac{X}{\log X}$ for all $r \in \Z^{\ge 0}$ with $r  \equiv d_2(E/K) \pmod{2}$.
\item If $E$ does not have constant 2-Selmer parity, then we additionally have that either $N_r(E, X) \gg \frac{X}{\log X}$ for all $r \in \Z^{\ge 0}$ with $r  \not \equiv d_2(E/K) \pmod{2}$ or $N_r(E^\prime, X) \gg \frac{X}{\log X}$ for all $r \in \Z^{\ge 0}$ with $r  \not \equiv d_2(E/K) \pmod{2}$.
\item If either $K$ has a real place or $E$ has a place of multiplicative reduction, then $N_r(E, X) \gg \frac{X}{\log X}$ for all $r \in \Z^{\ge 0}$ or $N_r(E^\prime, X) \gg \frac{X}{\log X}$ for all $r \in \Z^{\ge 0}$. (I.E. The choice of $E$ and $E^\prime$ for parts $(i)$ and $(ii)$ can be taken to be the same.) 
\end{enumerate}
\end{thm}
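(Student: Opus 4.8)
The plan is to derive all three parts from Theorems \ref{equaltwist} and \ref{balancedmost} together with a comparison between the 2-Selmer ranks of $E^F$ and $E'^F$ across the twist family. The key input is the standard exact sequence relating $\Sel_2(E^F/K)$ to the $\phi$- and $\hat\phi$-Selmer groups attached to the 2-isogeny $\phi\colon E^F \to E'^F$ and its dual, which gives an inequality of the shape $|d_2(E^F/K) - d_2(E'^F/K)| \le C$ for a constant $C$ depending only on $E/K$ (coming from the difference in the number of archimedean places and places of bad reduction, plus the $\dim_{\Ftwo}$ of the kernel/cokernel of the relevant global maps). In particular, if the lower-bound obstruction forces $d_2(E^F/K) \ge r_2$ for all $F$ but $d_2$ still takes every sufficiently large value of the correct parity (which is exactly what Theorem \ref{balancedmost} provides), then the same $d_2$-values, shifted by at most $C$, are achieved by $E'$; one then checks that the \emph{minimum} of $d_2(E'^F/K)$ over $F$ is small — this is where the hypothesis ``not a cyclic 4-isogeny over $K$'' is used, since it is precisely the condition ruling out a simultaneous lower-bound obstruction on both $E$ and $E'$.

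For part (i): by Theorem \ref{balancedmost} applied to $E$, we already get $N_r(E,X) \gg X/\log X$ for all $r \ge r_2$ with $r \equiv d_2(E/K)\pmod 2$, so the only values left to produce are $r < r_2$. I would argue by contradiction: if neither $E$ nor $E'$ realizes some small $r_0 \equiv d_2(E/K) \pmod 2$ as the rank of infinitely many twists, then in fact (by a short argument using that the obstruction, once it fails for one twist, propagates — the relevant local conditions that could force a lower bound are ``rigid'' under further twisting at primes of good reduction) one shows $d_2(E^F/K)$ and $d_2(E'^F/K)$ are \emph{both} bounded below by $r_0 + 2$, i.e.\ effectively by $r_2' := $ some quantity $> r_2$ possibly, and then the curve in the isogeny class would have a cyclic 4-isogeny over $K$, contradicting the hypothesis. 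The cleanest route is: establish a lemma that says the set of achievable $d_2(E^F/K)$ is an arithmetic-progression-with-a-floor $\{m, m+2, m+4, \dots\}$ (using Theorem \ref{equaltwist} to upgrade ``achieved once'' to ``achieved $\gg X/\log X$ times'') and likewise for $E'$ with floor $m'$, then show $\min(m, m') = 0$ exactly when there is no cyclic 4-isogeny over $K$. Part (ii) is identical with $d_2(E/K)$-parity replaced by the opposite parity, using the non-constant-parity hypothesis to know the opposite-parity twists are nonempty.

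For part (iii): the point is to merge the ``same $E$'' conclusion across both parities. Having a real place or a place of multiplicative reduction lets one run the Dokchitser--Dokchitser criterion: such a curve cannot have constant 2-Selmer parity, so part (ii) applies and gives a curve (call it $E_1 \in \{E, E'\}$) realizing all sufficiently large ranks of the wrong parity, while part (i) gives $E_2 \in \{E, E'\}$ realizing all sufficiently large ranks of the right parity. I would show $E_1 = E_2$: if not, then one of $E, E'$ would have its entire achievable $d_2$-set of one fixed parity (its floor $m$ and $m'$ would land in different parity classes in a way incompatible with the $|d_2(E^F) - d_2(E'^F)| \le C$ bound combined with the twist-by-twist parity flip that a real or multiplicative place makes available), forcing again a cyclic 4-isogeny over $K$. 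Combining, the chosen curve realizes all sufficiently large $r$ of both parities, hence all sufficiently large $r$, and then Theorem \ref{equaltwist} promotes ``all sufficiently large $r$'' down to ``all $r \in \Z^{\ge 0}$'' provided the floor is $0$ — which is the case for whichever of $E, E'$ we selected, again by the no-cyclic-4-isogeny hypothesis.

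The main obstacle I expect is the bookkeeping in the isogeny comparison: making the inequality $|d_2(E^F/K) - d_2(E'^F/K)| \le C$ completely explicit and, more importantly, proving the structural statement that the floor $\min_F d_2(E^F/K)$ equals $0$ \emph{unless} $E$ has a cyclic 4-isogeny over $K$. This requires analyzing when the local conditions defining the $\phi$-Selmer group can be simultaneously forced to be large at the archimedean and bad places with no freedom to shrink them by twisting — essentially a careful study of the image of $E'(K_v)/\phi(E^F(K_v))$ in $H^1(K_v, E^F[\phi])$ as $F$ varies, and recognizing the obstruction as the existence of a global cyclic 4-isogeny. The rest is formal manipulation of Theorems \ref{equaltwist} and \ref{balancedmost}.
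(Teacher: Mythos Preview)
Your plan has a genuine structural gap. The central device you propose --- a uniform bound $|d_2(E^F/K) - d_2(E'^F/K)| \le C$ with $C$ independent of $F$ --- does not follow from the exact sequence in Theorem~\ref{gss}. That sequence only yields $d_\phi(E^F/K) - 1 \le d_2(E^F/K) \le d_\phi(E^F/K) + d_{\hat\phi}(E'^F/K) - 1$ (and the analogous bounds for $E'$), and the resulting comparison between $d_2(E^F/K)$ and $d_2(E'^F/K)$ is controlled by $d_\phi$ and $d_{\hat\phi}$, which vary unboundedly with $F$. Likewise, the structural claim that ``the floor $\min_F d_2(E^F/K)$ equals $0$ unless $E$ has a cyclic 4-isogeny over $K$'' is exactly the lower-bound obstruction the paper says is \emph{not} understood; you cannot assume it, and nothing in the paper proves it.

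The paper's argument avoids all of this by working with the Tamagawa ratio rather than with $d_2$ directly. The key observation for part~(i) is simply that $\mathcal{T}(E/E') = \mathcal{T}(E'/E)^{-1}$, so after possibly swapping $E$ and $E'$ one has $\mathcal{T}(E/E') \le 1$. With $t = \max\{0,\ord_2\mathcal{T}(E/E')\} = 0$, Proposition~\ref{minus2} then iterates all the way down to $d_2 \in \{0,1\}$ (not merely to $r_2$), and Lemma~\ref{lemmaunbdbal} (via Corollary~\ref{propunbdbal}) goes up; Theorem~\ref{equaltwist} finishes. Part~(ii) follows by first passing to an opposite-parity twist. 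For part~(iii), the hypothesis of a real place or multiplicative reduction is used \emph{constructively}, not via the Dokchitser--Dokchitser criterion: one builds (idelically, as in the proof of Lemma~\ref{twisttor2}) a single twist $E^F$ with $d_2(E^F/K)\not\equiv d_2(E/K)\pmod 2$ and $\ord_2\mathcal{T}(E^F/E'^F) = \ord_2\mathcal{T}(E/E') \pm 1$, using Lemmas~\ref{realplacec} and~\ref{splitmult} to change the local contribution at one place by exactly~$1$. Then either both $\mathcal{T}(E/E')\le 1$ and $\mathcal{T}(E^F/E'^F)\le 1$, or both reciprocals are $\le 1$, so the same curve among $\{E,E'\}$ works for both parities. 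Your outline never isolates $\mathcal{T}$ as the controlling invariant, and that is the missing idea.
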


As the 2-Selmer rank of $E$  serves as an upper bound for the rank of $E$, we are able to prove the following corollaries.

\begin{cor}\label{rank0}
Let $E$ be an elliptic curve defined over a number field $K$ with $E(K)[2]\simeq \Zt$ that does not possess a cyclic 4-isogeny defined over $K$. If either $d_2(E/K) \equiv 0  \pmod{2}$ or $E$ does not have constant 2-Selmer parity, then the number of twists $E^F$ of $E$ having Mordell-Weil rank 0 grows at least as fast as $\frac{X}{\log X}$.
\end{cor}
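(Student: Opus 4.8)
The plan is to reduce the corollary to the inequality $N_0(E,X) \gg \frac{X}{\log X}$ and then to quote the theorems above. For any quadratic twist $E^F$, the descent exact sequence
$$0 \longrightarrow E^F(K)/2E^F(K) \longrightarrow \Sel_2(E^F/K) \longrightarrow \Sha(E^F/K)[2] \longrightarrow 0$$
together with $\dimF\bigl(E^F(K)/2E^F(K)\bigr) = \mathrm{rank}\,E^F(K) + \dimF E^F(K)[2]$ gives
$$d_2(E^F/K) = \mathrm{rank}\,E^F(K) + \dimF\Sha(E^F/K)[2] \ \ge\ \mathrm{rank}\,E^F(K).$$
Hence every $F/K$ with $d_2(E^F/K) = 0$ has $\mathrm{rank}\,E^F(K) = 0$, so the number of rank-$0$ twists in $S(X)$ is at least $N_0(E,X)$, and it suffices to show $N_0(E,X) \gg \frac{X}{\log X}$.

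The hypothesis is tailored so that $r = 0$ is an admissible target in Theorems \ref{noisogthm} and \ref{balancedmost}: if $d_2(E/K)$ is even then $0 \equiv d_2(E/K) \pmod 2$, while if $E$ does not have constant $2$-Selmer parity then no congruence condition is imposed. I would then split on whether $E$ has a cyclic $4$-isogeny defined over $K(E[2])$. If it does not, Theorem \ref{noisogthm} applies directly and yields $N_0(E,X) \gg \frac{X}{\log X}$. If it does — so that, by hypothesis, $E$ has a cyclic $4$-isogeny over $K(E[2])$ but not over $K$ — I would appeal to Theorem \ref{balancedmost}; this gives the conclusion for $r = 0$ as soon as $r_2 = 0$, i.e. as soon as $K$ is totally real, and in the remaining configurations one has to argue that the lower-bound obstruction of \cite{BddBelow} is not in force, so that $N_0(E,X) \gg \frac{X}{\log X}$ still holds.

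I expect this last point to be the main obstacle. For $E$ with no cyclic $4$-isogeny over $K(E[2])$, or for totally real $K$, the corollary is an immediate consequence of the reduction above together with Theorems \ref{noisogthm}--\ref{balancedmost}. The real content is the case where $E$ has a cyclic $4$-isogeny over $K(E[2])$ but not over $K$ and $K$ has a complex place: there one must show that the hypothesis ``$d_2(E/K)\equiv 0\pmod 2$ or $E$ does not have constant $2$-Selmer parity'' already rules out the curves of \cite{BddBelow} for which $N_0(E,X)$ fails to grow — concretely, that whenever $d_2(E^F/K)\ge r_2\ge 1$ is forced for all $F$, the resulting constraint on the parity of $d_2(E/K)$ together with the constant-parity status of $E$ is incompatible with the hypothesis. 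Granting this compatibility check, the reduction to $N_0(E,X) \gg \frac{X}{\log X}$ and Theorem \ref{balancedmost} complete the proof.
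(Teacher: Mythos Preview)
Your reduction to $N_0(E,X) \gg \frac{X}{\log X}$ is too strong and in general fails. The curves constructed in \cite{BddBelow} satisfy $d_2(E^F/K) \ge r_2$ for \emph{every} quadratic $F/K$, so whenever $r_2 \ge 1$ one has $N_0(E,X) = 0$ identically; yet such a curve can still satisfy the hypothesis of the corollary (for instance take $r_2$ even and $d_2(E/K)$ even, or recall that the introduction notes the lower-bound obstruction ``appears to be independent of constant 2-Selmer parity''). So the ``compatibility check'' you propose cannot succeed, and Theorem \ref{balancedmost} by itself does not close the gap.

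The paper's argument instead invokes Theorem \ref{isogpairthm}: under the hypothesis, either $N_0(E,X) \gg \frac{X}{\log X}$ or $N_0(E^\prime,X) \gg \frac{X}{\log X}$. The key point you are missing is that $E^F$ and $(E^\prime)^F = (E^F)^\prime$ are $2$-isogenous over $K$ and therefore have the same Mordell-Weil rank. Hence if $d_2\bigl((E^\prime)^F/K\bigr) = 0$ for many $F$, then $(E^\prime)^F$ has rank $0$ for those $F$, and so does $E^F$. This passage to the isogenous curve is precisely what lets the corollary go through even when $N_0(E,X)$ itself vanishes; the corollary is about rank-$0$ twists, not about twists of $2$-Selmer rank $0$, and only the former is invariant under isogeny.
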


In order to say something about $E$ having twists of rank one, we need to rely on the following well-known conjecture that is a consequence of the Tate-Shafarevich conjecture.
\begin{con}[Conjecture $\Sha T_2(K)$] For every elliptic curve $E$ defined over $K$, $\dimF \Sha(E/K)[2]$ is even.
\end{con}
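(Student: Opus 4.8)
\emph{Proof proposal.} The statement carries the label ``conjecture'' precisely because no unconditional proof is known; what one can and should do is exhibit it as a consequence of the finiteness of $\Sha(E/K)$ (the Tate--Shafarevich conjecture), which is exactly how it gets invoked in the sequel. So the plan is: assume $\Sha(E/K)$ is finite, and deduce that $\dimF \Sha(E/K)[2]$ is even. (It is also worth recalling that in special situations the hypothesis is itself a theorem --- e.g. when $E/\Q$ has analytic rank $\le 1$, where Gross--Zagier and Kolyvagin give finiteness of $\Sha$ --- so the conjecture holds unconditionally there.)

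The engine is the Cassels--Tate pairing $\langle\ ,\ \rangle \colon \Sha(E/K) \times \Sha(E/K) \to \Q/\Z$, and two facts about it. First, by Cassels' theorem this pairing is non-degenerate modulo the maximal divisible subgroup of $\Sha(E/K)$; once $\Sha(E/K)$ is assumed finite there is no divisible part, so the pairing is non-degenerate on all of $\Sha(E/K)$. Second, again by a theorem of Cassels for elliptic curves, the pairing is \emph{alternating}, i.e. $\langle x,x\rangle = 0$ for every $x$. This second point is the one I would be most careful to state precisely rather than gloss over: for principally polarized abelian varieties of dimension $>1$ the analogous pairing is in general only skew-symmetric (Poonen--Stoll), but for an elliptic curve the principal polarization is the class of the $K$-rational divisor $(O)$ and Cassels' argument applies, so the alternating property genuinely holds, and it is exactly this upgrade from ``square order'' to ``even $2$-torsion dimension'' that we need.

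It then remains to record the purely algebraic fact that a finite abelian $2$-group $A$ equipped with a non-degenerate alternating pairing has $\dimF A[2]$ even, and to apply it to the $2$-primary part of $\Sha(E/K)$. I would prove this by the standard symplectic-basis induction on $\#A$: if $A \neq 0$, pick $0 \neq x \in A[2]$, use non-degeneracy to find $y$ with $\langle x,y\rangle \neq 0$, verify that the subgroup $B$ generated by $x$ and $y$ is non-degenerate for the restricted pairing (so that $A = B \oplus B^\perp$ orthogonally), and induct on $B^\perp$, which again carries a non-degenerate alternating pairing. Tracking $2$-torsion through $A = B \oplus B^\perp$ expresses $\dimF A[2]$ as a sum of terms each equal to $2$. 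Equivalently, one may just quote that such an $A$ is isomorphic to $H \oplus H$ for some finite abelian $H$, whence $A[2] \cong H[2] \oplus H[2]$.

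The only genuine obstacle is the one already flagged by calling this a conjecture: there is no unconditional control on the divisible part of $\Sha(E/K)$, i.e. on its finiteness, so the derivation is irreducibly conditional on the Tate--Shafarevich conjecture. The secondary (but real) technical point --- that the Cassels--Tate pairing on an elliptic curve is alternating and not merely anti-symmetric --- is where I would want to cite Cassels carefully and note that the higher-dimensional pathology does not intervene here.
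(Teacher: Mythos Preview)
The paper offers no proof of this statement: it is labeled a conjecture and is merely described in a single clause as ``a consequence of the Tate--Shafarevich conjecture.'' Your proposal supplies exactly the derivation the paper alludes to but does not write out, so there is essentially nothing to compare against beyond that sentence. Your route via the Cassels--Tate pairing (non-degenerate once $\Sha$ is finite, and alternating for elliptic curves by Cassels) is the standard and correct one, and your care in distinguishing alternating from merely skew-symmetric, with the Poonen--Stoll caveat in higher dimension, is well placed.

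One small correction in the algebraic lemma. The induction as you sketch it does not quite work: if $x \in A[2]$ and $y$ is chosen with $\langle x, y\rangle \ne 0$, the subgroup $B = \langle x, y\rangle$ need \emph{not} be non-degenerate when $y$ has order $2^k$ with $k \ge 2$. Indeed $2^{k-1}y$ is then nonzero in $B$ but pairs trivially with $y$ (alternation) and with $x$ (since $2^{k-1}\langle y,x\rangle = 2^{k-1}\cdot\tfrac{1}{2} = 0$ in $\Q/\Z$). Concretely, for $A = \Z/4\Z \times \Z/4\Z$ with the hyperbolic pairing, every $x \in A[2]$ pairs trivially with all of $A[2]$, so one cannot even choose $y$ of order $2$. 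The usual repair is to start with $x$ of \emph{maximal} order in $A$ and pick $y$ so that $\langle x, y\rangle$ has the same order; then $B$ is a genuine hyperbolic plane and the induction proceeds. Your fallback --- simply quoting the structure theorem that a finite abelian group with a non-degenerate alternating $\Q/\Z$-valued pairing is of the form $H \oplus H$ --- is correct and is the cleanest way to finish.
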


The  2-Selmer group sits inside the exact sequence $$0\rightarrow E(K)/2E(K) \rightarrow \Sel_2(E/K) \rightarrow \Sha(E/K)[2] \rightarrow 0.$$ As $\dimF E(K)/2E(K) = \text{rank } E/K + \dimF E(K)[2]$, if Conjecture $\Sha T_2(K)$ holds and $d_2(E/K) = 1$, then the Mordell-Weil rank of $E$ will be 1. We can therefore state the following:

\begin{cor}\label{rank1}
Let $E$ be an elliptic curve defined over a number field $K$ with $E(K)[2]\simeq \Zt$ that does not possess a cyclic 4-isogeny defined over $K$. Assuming conjecture $\Sha T_2(K)$ holds and either $d_2(E/K) \equiv 1 \pmod{2}$ or $E$ does not have constant 2-Selmer parity, then the number of twists $E^F$ of $E$ having rank one grows at least as fast as $\frac{X}{\log X}$.
\end{cor}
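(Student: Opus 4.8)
The plan is to deduce the corollary from Theorem \ref{isogpairthm} by converting 2-Selmer rank $1$ into Mordell-Weil rank $1$ via Conjecture $\Sha T_2(K)$, and then exploiting the isogeny-invariance of the Mordell-Weil rank to phrase everything in terms of $E$ alone. First I would record the reduction already indicated before the statement: if Conjecture $\Sha T_2(K)$ holds and $F/K$ is quadratic with $d_2(E^F/K) = 1$, then the exact sequence $0 \to E^F(K)/2E^F(K) \to \Sel_2(E^F/K) \to \Sha(E^F/K)[2] \to 0$, the identity $\dimF E^F(K)/2E^F(K) = \text{rank}\, E^F/K + \dimF E^F(K)[2]$, and the evenness of $\dimF \Sha(E^F/K)[2]$ together force $\text{rank}\, E^F/K = 1 - \dimF \Sha(E^F/K)[2]$; since this is a non-negative integer and $\dimF \Sha(E^F/K)[2]$ is even, we must have $\dimF \Sha(E^F/K)[2] = 0$ and $\text{rank}\, E^F/K = 1$. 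Consequently $\#\{F/K \in S(X) : \text{rank}\, E^F/K = 1\} \ge N_1(E, X)$, and applying the same argument to the curve $E^\prime$ that is $2$-isogenous to $E$ gives $\#\{F/K \in S(X) : \text{rank}\, E^{\prime F}/K = 1\} \ge N_1(E^\prime, X)$.

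Next I would use the fact that a $K$-rational $2$-isogeny $\phi\colon E \to E^\prime$ induces, for every quadratic $F/K$, a $K$-rational isogeny $E^F \to E^{\prime F}$, so that $\text{rank}\, E^F/K = \text{rank}\, E^{\prime F}/K$ for all such $F$. Hence the set of $F/K \in S(X)$ for which the twist of $E$ by $F$ has rank $1$ is literally the same as the set of $F/K \in S(X)$ for which the twist of $E^\prime$ by $F$ has rank $1$, and by the previous paragraph this common set has cardinality at least $\max\{N_1(E, X),\, N_1(E^\prime, X)\}$.

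It remains to show that one of $N_1(E,X)$, $N_1(E^\prime,X)$ is $\gg X/\log X$ under the hypotheses of the corollary, and this is exactly Theorem \ref{isogpairthm} applied with $r = 1$: if $d_2(E/K)$ is odd then $1 \equiv d_2(E/K) \pmod 2$ and part (i) applies, while if $E$ does not have constant 2-Selmer parity then part (i) or part (ii) applies to $r = 1$ according to the parity of $d_2(E/K)$; in every case we obtain $N_1(E, X) \gg X/\log X$ or $N_1(E^\prime, X) \gg X/\log X$. Combining with the previous step finishes the proof.

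The one point that needs care — more a matter of bookkeeping than a genuine obstacle — is that Theorem \ref{isogpairthm} only guarantees many good twists for one of $E$ and $E^\prime$, so the conclusion cannot be read off for $E$ directly; it is the isogeny-invariance of the Mordell-Weil rank (the 2-Selmer rank itself is not isogeny-invariant, which is why Theorem \ref{isogpairthm} is phrased the way it is) that lets us absorb this ambiguity while still counting only twists of $E$.
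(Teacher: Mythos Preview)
Your proof is correct and follows essentially the same route as the paper's own argument: invoke the Selmer-rank results to produce many twists (of $E$ or of $E^\prime$) with $d_2 = 1$, use Conjecture $\Sha T_2(K)$ to convert $d_2 = 1$ into Mordell--Weil rank $1$, and then pass between $E$ and $E^\prime$ via isogeny-invariance of the rank. The only cosmetic difference is that the paper cites both Theorem~\ref{noisogthm} and Theorem~\ref{isogpairthm} (splitting into the cases with and without a cyclic $4$-isogeny over $K(E[2])$), whereas you appeal to Theorem~\ref{isogpairthm} alone; this is harmless since Theorem~\ref{isogpairthm} as stated already covers both cases.
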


Other results similar to Corollaries \ref{rank0} and \ref{rank1} when $E(K)[2] = 0$ are due to Ono and Skinner when $K = \Q$ and to Rubin and Mazur for general $K$. Similar results when $E(K)[2]\simeq\mathbb{Z}/2\mathbb{Z} \times \mathbb{Z}/2\mathbb{Z}$ are due to Skorobogotav and Swinnerton-Dyer \cite{OS} \cite{MR}, \cite{SSD}.

\subsection{Layout}
We begin in Section \ref{selmergroups} by recalling the definition of the 2-Selmer group and presenting a technique developed by Mazur and Rubin to compare the 2-Selmer group of $E$ with that of $E^F$. Section \ref{phiselmer} develops similar machinery for the Selmer group associated with a 2-isogeny of $E$.  In Section \ref{characterization} we explore what it means for $E$ to have a cyclic 4-isogeny over $K$ and over $K(E[2])$. We prove Theorem \ref{equaltwist} in Section \ref{pfeqltwist}. Theorem \ref{noisogthm} is proved Sections \ref{goingdown} and \ref{goingup}. Lastly, in Section \ref{cyclic4isogeny} we develop techniques specific to curves with cyclic 4-isogenies defined over $K(E[2])$ and use them to prove Theorems \ref{balancedmost} and \ref{isogpairthm}.

The proofs of Lemmas \ref{shrinkpbal} and \ref{lemmaunbdbal} are simpler in the case where $E$ does not have a cyclic 4-isogeny defined over $K(E[2])$ than in the case where $E$ acquires a cyclic 4-isogeny over $K(E[2])$. We therefore include the proof for the former case in the main text and relegate the proof for latter case to Appendix \ref{firstapp}. Some of the results in Section \ref{cyclic4isogeny} require local calculations specific to curves that acquire a cyclic 4-isogeny over $K(E[2])$ and some of these calculations appear in Appendix \ref{loc4isog}.

\section{The 2-Selmer Group}\label{selmergroups}
\subsection{Background}
We begin by recalling the definition of the 2-Selmer group. If $E$ is an elliptic curve defined over a number field $K$, then $E(K)/2(K)$ maps into $H^1(K, E[2])$ via the Kummer map. The 2-Selmer group of $E$ is a subgroup of $H^1(K, E[2])$ that attempts to bound the part of $H^1(K, E[2])$ cut out by the image of $E(K)/2(K)$. We can map $E(K_v)/2(K_v)$ into $H^1(K_v, E[2])$ via the Kummer map for any completion $K_v$ of $K$, and the following diagram commutes for every place $v$ of $K$, where $\delta$ is the Kummer map.

\begin{center}\leavevmode
\begin{xy} \xymatrix{
E(K)/2E(K)  \ar[d] \ar[r]^{\delta} & H^1(K, E[2]) \ar[d]^{Res_v} \\
E(K_v)/2E(K_v)   \ar[r]^{\delta} & H^1(K_v, E[2]) }
\end{xy}\end{center}

For each place $v$ of $K$, we define a distinguished local subgroup $H^1_f(K_v, E[2]) \subset H^1(K_v, E[2])$ by $\text{Image} \left (\delta: E(K_v)/2E(K_v) \hookrightarrow H^1(K_v, E[2]) \right )$. We define the \textbf{2-Selmer group} of $E/K$, denoted $\Sel_2(E/K)$, by $$\Sel_2(E/K) = \ker \left ( H^1(K, E[2]) \xrightarrow{\sum res_v} \bigoplus_{v\text{ of } K} H^1(K_v, E[2])/H^1_f(K_v, E[2]) \right ).$$ That is, the 2-Selmer group is the group of cohomology class in $H^1(K, E[2])$ whose restrictions locally come from points of $E(K_v)$ in each completion $K_v$ of $K$.

The  $2$-Selmer group is a finite dimensional $\Ft$-vector space that sits inside the exact sequence of $\Ft$-vector spaces $$0\rightarrow E(K)/2E(K) \rightarrow \Sel_2(E/K) \rightarrow \Sha(E/K)[2] \rightarrow 0,$$ where $\Sha(E/K)$ is the Tate-Shafaravich group of $E$.

\begin{definition}
We define the \textbf{2-Selmer rank of $E$}, denoted $d_2(E/K)$, by $$d_2(E/K) =  \dimF   \Sel_2(E/K) - \dimF  E(K)[2].$$
\end{definition}

One of the ways in which we study the 2-Selmer group of $E$ is by studying the local conditions $H^1_f(K_v, E[2])$. The following lemma provides us with a way to do that.

\begin{lemma}\label{eval}
\begin{enumerate}[(i)]
\item If $v \nmid 2\infty$, then $\dimF  H^1_f(K_v, E[2]) = \dimF   E(K_v)[2]$
\item If $v \nmid 2\infty$ and $E$ has good reduction at $v$ then $$H^1_f(K_v, E[2]) \simeq E[2]/(Frob_v-1)E[2]$$ with the isomorphism given by evaluation of cocycles in $H^1_f(K_v, E[2])$ at the Frobenius automorphism $Frob_v$.
\end{enumerate}
\end{lemma}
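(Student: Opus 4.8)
The plan is to treat the two parts separately, each by a standard local argument.

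For part (i), note first that by definition $H^1_f(K_v, E[2])$ is the image of the injective Kummer map $\delta\colon E(K_v)/2E(K_v) \hookrightarrow H^1(K_v, E[2])$, so it suffices to prove $\dimF E(K_v)/2E(K_v) = \dimF E(K_v)[2]$. I would deduce this from the structure of $E(K_v)$ as a topological group: it contains an open subgroup of finite index that is uniquely $2$-divisible — for instance a sufficiently deep layer of the formal group, which is isomorphic to the additive group $\mathcal{O}_v$ and on which multiplication by $2$ is bijective because $v \nmid 2$. Applying the snake lemma to multiplication by $2$ on an extension $0 \to (\text{such a subgroup}) \to E(K_v) \to Q \to 0$ with $Q$ finite, one finds $E(K_v)[2] \cong Q[2]$ and $E(K_v)/2E(K_v) \cong Q/2Q$; since $\#Q[2] = \#Q/2Q$ for the finite abelian group $Q$, this gives $\#\bigl(E(K_v)/2E(K_v)\bigr) = \#E(K_v)[2]$, which is (i). (Alternatively, since $|\#E[2]|_v = 1$, the local Euler characteristic formula and local Tate duality give $\dimF H^1(K_v,E[2]) = 2\dimF E(K_v)[2]$, and one uses that $H^1_f$ is its own annihilator under the local pairing; but the elementary argument above is quicker.)

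For part (ii), assume $E$ has good reduction at $v$ and $v \nmid 2\infty$; then $E[2]$ is unramified, so $\Gal(\bar K_v/K_v)$ acts through $\Gal(K_v^{\mathrm{ur}}/K_v) \cong \hat{\mathbb Z}$, topologically generated by $Frob_v$. I would first check that $H^1_f(K_v, E[2])$ is contained in the unramified subgroup $H^1_{\mathrm{ur}}(K_v, E[2]) := \ker\bigl(H^1(K_v, E[2]) \to H^1(K_v^{\mathrm{ur}}, E[2])\bigr)$: since $E$ has good reduction and the residue characteristic is odd, $E(K_v^{\mathrm{ur}})$ is $2$-divisible — its kernel-of-reduction subgroup is a pro-$p$ group with $p$ odd, hence uniquely $2$-divisible, and the quotient $\tilde E(\bar k_v)$ is divisible as the group of points of an elliptic curve over an algebraically closed field — so the restriction to $K_v^{\mathrm{ur}}$ of any Kummer class vanishes. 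By inflation–restriction together with the standard computation of the cohomology of the procyclic group $\hat{\mathbb Z}$, one has $H^1_{\mathrm{ur}}(K_v, E[2]) \cong H^1(\hat{\mathbb Z}, E[2]) \cong E[2]/(Frob_v - 1)E[2]$, the last isomorphism being evaluation of a cocycle at $Frob_v$. Finally $\#\bigl(E[2]/(Frob_v-1)E[2]\bigr) = \#E[2]^{Frob_v} = \#E(K_v)[2] = \#H^1_f(K_v, E[2])$ by part (i), because kernel and cokernel of $Frob_v - 1$ on the finite group $E[2]$ have equal order; since $H^1_f \subseteq H^1_{\mathrm{ur}}$, the two groups coincide, which yields both the dimension count and the asserted evaluation-at-Frobenius isomorphism.

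The lemma is a repackaging of well-known local facts (compare the analogous statements used by Mazur and Rubin), so there is no deep obstacle; the only points needing care are the non-finiteness of $E(K_v)$ in part (i), handled by passing to a uniquely $2$-divisible open subgroup, and the $2$-divisibility of $E(K_v^{\mathrm{ur}})$ in part (ii).
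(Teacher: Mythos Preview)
Your argument is correct. The paper does not actually prove this lemma; its entire proof is the single sentence ``This is Lemma 2.2 in \cite{MR},'' so there is no in-paper argument to compare against. What you have written is precisely the standard proof one finds behind such a citation: the formal-group/snake-lemma count for part (i), and for part (ii) the identification $H^1_f = H^1_{\mathrm{ur}}$ via $2$-divisibility of $E(K_v^{\mathrm{ur}})$ together with the evaluation-at-Frobenius description of $H^1(\hat{\mathbb Z},E[2])$ and a cardinality match using (i). Nothing is missing.
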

\begin{proof}
This is Lemma 2.2 in \cite{MR}.
\end{proof}

We would like to examine the behavior of $\Sel_2(E/K)$ under the action of twisting by a quadratic extension.

\begin{definition}Let $E$ be given by $E:y^2 = x^3 + Ax^2 + Bx + C$ and $F/K$ be a quadratic extension given by $F=K(\sqrt{d})$.  The \textbf{quadratic twist} of $E$ by $F$ denoted $E^F$ is the elliptic curve given by the model $y^2=x^3 + dAx^2 + d^2Bx + d^3C$. \\
 \end{definition}
There is an isomorphism $E \rightarrow E^F$ given by $(x, y) \mapsto (dx, d^{3/2}y)$ defined over $F$. Restricted to $E[2]$, this map gives a canonical $G_K$ isomorphism $E[2] \rightarrow E^F[2]$. This allows us to view $H_f^1(K_v, E^F[2])$ as sitting inside $H^1(K_v, E[2])$. We will study $\Sel_2(E^F/K)$ by considering the relationship  between $H^1_f(K_v, E[2])$ and $H_f^1(K_v, E^F[2])$. This relationship is addressed by the following lemma due to Kramer.

Given a place $w$ of $F$ above a place $v$ of $K$, we get a norm map $E(F_w) \rightarrow E(K_v)$, the image of which we denote by $E_\mathbf{N}(K_v)$. 

\begin{lemma}\label{normintersection}
Viewing $H_f^1(K_v, E^F[2])$ as sitting inside $H^1(K_v, E[2])$, we have $$H_f^1(K_v, E[2]) \cap H^1_f(K_v, E^F[2]) =  E_\mathbf{N}(K_v)/2E(K_v)$$
\end{lemma}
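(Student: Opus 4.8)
The plan is to argue place‑by‑place and dispose of the degenerate cases at once. If $v$ is complex then $H^1(K_v,E[2])=0$ and both sides vanish; if $v$ is real with $F_w=\C$ the statement is a direct check; and if $v$ splits in $F/K$ then the twist is trivial over $K_v$ and $E_{\mathbf{N}}(K_v)=E(K_v)$, so both sides equal $E(K_v)/2E(K_v)$. Thus I may assume $F_w/K_v$ is a quadratic field extension with $\Gal(F_w/K_v)=\langle\sigma\rangle$. The basic tool is the twisting isomorphism $\psi\colon E\xrightarrow{\sim}E^F$ defined over $F_w$ and satisfying $\psi^{\sigma}=[-1]\circ\psi$; since $[-1]$ acts trivially on $E[2]$, the restriction $\psi|_{E[2]}$ is defined over $K_v$ and preserves the Weil pairing, and it is exactly the identification $H^1(K_v,E[2])=H^1(K_v,E^F[2])$ used in the statement.

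To keep the cohomology functorial I would introduce the Weil restriction $W=\mathrm{Res}_{F_w/K_v}(E\times_{K_v}F_w)$, so that $W(K_v)=E(F_w)$, together with the two $K_v$‑isogenies $\pi\colon W\to E$ (the norm, $Q\mapsto Q+\sigma Q$ on points) and $\pi'\colon W\to E^F$ (the anti‑norm $Q\mapsto\psi(Q-\sigma Q)$, which lands in $E^F(K_v)$ since $\sigma\psi(Q-\sigma Q)=\psi(Q-\sigma Q)$). Because $-x=x$ on $E[2]$, the homomorphisms $\pi|_{W[2]}$ and $\psi|_{E[2]}\circ\pi'|_{W[2]}$ agree as $G_{K_v}$‑maps $W[2]\to E[2]$, so $\pi_{*}$ and $\pi'_{*}$ become one and the same map $H^1(K_v,W[2])\to H^1(K_v,E[2])$ under our identification. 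Since $H^1_f(K_v,W[2])=\delta_W\!\big(E(F_w)/2E(F_w)\big)$, functoriality of the Kummer map gives, for every $Q\in E(F_w)$, $\delta_E(N_{F_w/K_v}Q)=\pi_{*}\delta_W(Q)=\pi'_{*}\delta_W(Q)=\delta_{E^F}(\psi(Q-\sigma Q))$; as $2E(K_v)\subseteq E_{\mathbf{N}}(K_v)$ and $\delta_E$ is injective on $E(K_v)/2E(K_v)$, this yields the inclusion $E_{\mathbf{N}}(K_v)/2E(K_v)=\pi_{*}\big(H^1_f(K_v,W[2])\big)\subseteq H^1_f(K_v,E[2])\cap H^1_f(K_v,E^F[2])$. (One can also verify $\delta_E(N_{F_w/K_v}Q)=\delta_{E^F}(\psi(Q-\sigma Q))$ by a direct cocycle computation from $\psi^{\sigma}=[-1]\psi$, but the Weil restriction does the bookkeeping cleanly.)

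The reverse inclusion is equivalent to the statement: if $P\in E(K_v)$ and $\delta_E(P)\in H^1_f(K_v,E^F[2])$, then $P\in E_{\mathbf{N}}(K_v)$. Writing $\delta_E(P)=\delta_{E^F}(P')$ and restricting to $F_w$, over which $\psi$ makes the two local conditions coincide, forces $P\equiv\psi^{-1}(P')\pmod{2E(F_w)}$; combining this with $\sigma P=P$ and $\sigma\psi^{-1}(P')=-\psi^{-1}(P')$ and solving, one reduces, after subtracting a norm point, to the case $P=t\in E(K_v)[2]$. So the heart of the matter is to show that a $2$‑torsion point $t$ with $\delta_E(t)\in H^1_f(K_v,E^F[2])$ lies in $E_{\mathbf{N}}(K_v)$ — and here the purely formal manipulations run out. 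I would settle this by a dimension count: compute both $\dimF\!\big(H^1_f(K_v,E[2])\cap H^1_f(K_v,E^F[2])\big)$ and $\dimF\!\big(E_{\mathbf{N}}(K_v)/2E(K_v)\big)=\dimF\!\big(E(K_v)/2E(K_v)\big)-\dimF\widehat{H}^0(\Gal(F_w/K_v),E(F_w))$ explicitly in terms of the local invariants of $E$ at $v$ (reduction type, component group, and at $v\mid2$ finer data), using local Tate duality — under which $H^1_f(K_v,E[2])$ and $H^1_f(K_v,E^F[2])$ are maximal isotropic for the Weil‑pairing cup product — to control the first quantity; these two dimensions agree, which upgrades the inclusion already proved to an equality.

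The main obstacle is exactly this last step. The inclusion $\supseteq$ is essentially formal once the Weil‑restriction set‑up is in place, but the reverse inclusion genuinely requires the local arithmetic of the norm map on $E(K_v)$, tracked through the formal group and the component group of the N\'eron model, with the wildly ramified primes $v\mid2$ being the delicate case. This is precisely the content of Kramer's study of elliptic curves over quadratic extensions, which I would cite for the local index computation rather than reprove from scratch.
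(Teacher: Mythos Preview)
The paper does not actually prove this lemma: its entire proof is a citation to Kramer \cite{KK} (Proposition~7) and Mazur--Rubin \cite{MR2} (Proposition~5.2), with the remark that the latter handles places above $2$ and $\infty$. Your proposal is therefore strictly more detailed than what the paper offers. Your Weil-restriction argument for the inclusion $E_{\mathbf N}(K_v)/2E(K_v)\subseteq H^1_f(K_v,E[2])\cap H^1_f(K_v,E^F[2])$ is correct and clean, and your reduction of the reverse inclusion to the case $P\in E(K_v)[2]$ is valid (the computation $2P=2N(Q)$ from $P=\psi^{-1}(P')+2Q$ and its $\sigma$-conjugate does force $P-N(Q)\in E(K_v)[2]$). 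Where you land, however, is exactly where the paper begins: the remaining $2$-torsion case, or equivalently the dimension equality $\dimF\big(H^1_f\cap H^1_f\big)=\dimF E_{\mathbf N}(K_v)/2E(K_v)$, is precisely Kramer's local computation of $\widehat H^0(\Gal(F_w/K_v),E(F_w))$ via the formal group and component group, and you correctly identify that you would cite it rather than reprove it. So your route is sound but not materially independent of the paper's citation---you have separated out the soft half of the argument and made explicit that the hard half is Kramer's contribution, which is a useful clarification even if it does not avoid the reference.
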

\begin{proof}
This is Proposition 7 in \cite{KK} and Proposition 5.2 in \cite{MR2}. The proof in \cite{MR2} works even at places above $2$ and $\infty$.
\end{proof}

This equality gives rise to the following lemma:

\begin{lemma}\label{localequality} Let $E$ be an elliptic curve defined over $K$, $v$ a place of $K$, and $F/K$ be a quadratic extension. Then
\begin{enumerate}[(i)]
\item $H^1_f(K_v, E[2]) = H^1_f(K_v, E^F[2])$ if either $v$ splits in $F/K$ or $v$ is a prime where $E$ has good reduction that is unramified in $F/K$ 
\item $H^1_f(K_v, E[2]) \cap H^1_f(K_v, E^F[2]) = 0$ if $v \nmid 2\infty$, $E$ has good reduction at $v$, and $v$ is ramified in $F/K$.
\end{enumerate}
\end{lemma}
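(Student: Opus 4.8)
The plan is to prove Lemma \ref{localequality} directly from Lemma \ref{normintersection} together with the dimension count in Lemma \ref{eval}, handling the two cases separately.

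\medskip

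For part (i), the key observation is that in either of the two stated situations the local norm map $E(F_w) \to E(K_v)$ is surjective, so $E_{\mathbf{N}}(K_v) = E(K_v)$ and hence $E_{\mathbf{N}}(K_v)/2E(K_v) = E(K_v)/2E(K_v) = H^1_f(K_v, E[2])$. By Lemma \ref{normintersection} this forces $H^1_f(K_v, E[2]) \subseteq H^1_f(K_v, E^F[2])$, and by symmetry (interchanging the roles, noting $(E^F)^F \cong E$) we get equality of the two local conditions. If $v$ splits in $F/K$ then $F_w = K_v$ and the norm map is the identity, so surjectivity is immediate. If $v$ is a good-reduction prime unramified in $F/K$, then $F_w/K_v$ is an unramified extension of local fields and $E$ has good reduction, so $E$ has a smooth proper model over $\mathcal{O}_{K_v}$; the norm map is surjective on the formal group (a standard fact for unramified extensions) and surjective on the component group at the level of $\bar{\mathbb{F}}_v$-points by Lang's theorem applied to the reduction, and chasing the snake lemma through $0 \to \hat{E} \to E \to \tilde{E} \to 0$ gives surjectivity of the norm on $E(F_w) \to E(K_v)$. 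Alternatively, one can simply cite the relevant surjectivity statement from \cite{MR} or \cite{MR2}, since this is exactly the setting handled there.

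\medskip

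For part (ii), I would argue by dimension count. By Lemma \ref{eval}(i), since $v \nmid 2\infty$ we have $\dimF H^1_f(K_v, E[2]) = \dimF E(K_v)[2]$ and likewise $\dimF H^1_f(K_v, E^F[2]) = \dimF E^F(K_v)[2]$; the canonical $G_{K_v}$-isomorphism $E[2] \xrightarrow{\sim} E^F[2]$ is twisted by the nontrivial quadratic character of $F/K$, and at a prime where $E$ has good reduction and $F/K$ is ramified one checks that $E(K_v)[2]$ and $E^F(K_v)[2]$ cannot both be nonzero — indeed a $2$-torsion point of $E$ rational over $K_v$ whose twist is also rational over $K_v$ would be fixed by inertia acting through the quadratic character, which is impossible since the character is ramified — so at least one of the two local conditions is trivial, or both have dimension equal and the intersection argument below applies. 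More robustly: Lemma \ref{normintersection} says the intersection equals $E_{\mathbf{N}}(K_v)/2E(K_v)$, and for a good-reduction prime $v$ ramified in $F/K$ one computes via Kramer's description (or via the Tamagawa-number formula for the index of the norm subgroup) that $E_{\mathbf{N}}(K_v) = 2E(K_v)$ when $E$ has good reduction, so the quotient is trivial. Concretely, $[E(K_v) : E_{\mathbf{N}}(K_v)]$ divides the order of the component group times a power related to ramification, and good reduction kills the component group, forcing $E_{\mathbf{N}}(K_v)/2E(K_v) = 0$.

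\medskip

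The main obstacle is part (ii): pinning down exactly why $E_{\mathbf{N}}(K_v)/2E(K_v) = 0$ for a ramified quadratic extension at a good-reduction prime. The cleanest route is the norm-index computation — for $E/K_v$ with good reduction and $F_w/K_v$ ramified quadratic, the cokernel of the norm $E(F_w) \to E(K_v)$ is isomorphic (by Tate local duality / the Kramer exact sequence) to $E(K_v)[2]$ modulo the image of $E^F(K_v)[2]$, and good reduction together with ramification forces this to coincide with the full $2E(K_v)$-index, so $E_{\mathbf{N}}(K_v) = 2E(K_v)$. I would either reproduce this short computation or, preferably, extract it as a cited consequence of the results of Kramer \cite{KK} and Mazur–Rubin \cite{MR}, \cite{MR2} already invoked above, since the statement of Lemma \ref{localequality}(ii) is precisely the form in which those sources use it.
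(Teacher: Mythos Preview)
Your argument for part (i) is fine and is essentially the content behind the citation the paper gives (the paper simply refers both parts to Lemmas 2.10 and 2.11 of \cite{MR} without reproducing any argument).

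Your first attempt at part (ii), however, contains a genuine error. You write that the canonical isomorphism $E[2]\xrightarrow{\sim}E^F[2]$ is ``twisted by the nontrivial quadratic character of $F/K$'' and conclude that $E(K_v)[2]$ and $E^F(K_v)[2]$ cannot both be nonzero. This is false: as stated explicitly just before Lemma~\ref{normintersection}, the isomorphism $E[2]\to E^F[2]$ is $G_K$-equivariant (the quadratic character acts by $\pm 1$, which is trivial on $2$-torsion), so $E(K_v)[2]\simeq E^F(K_v)[2]$ always. In particular, the entire paper is about curves with $E(K)[2]\simeq\Zt$, so both groups are nonzero at every place. The ``inertia acting through the quadratic character'' reasoning therefore proves nothing here and should be deleted.

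Your ``more robust'' route via Lemma~\ref{normintersection} and the claim $E_{\mathbf N}(K_v)=2E(K_v)$ is the correct one, but your justification (``divides the order of the component group times a power related to ramification, and good reduction kills the component group'') does not actually establish this equality; trivial component group only tells you $E_0(K_v)=E(K_v)$, not that the norm image is $2E(K_v)$. The actual computation---that for $v\nmid 2\infty$, $E$ of good reduction, and $F_w/K_v$ ramified quadratic one has $[E(K_v):E_{\mathbf N}(K_v)]=|E(K_v)[2]|$, hence $E_{\mathbf N}(K_v)=2E(K_v)$---is exactly what Mazur--Rubin carry out, and the paper handles this by direct citation. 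Your closing suggestion to cite \cite{MR} is therefore precisely what the paper does; if you want a self-contained argument, you need to supply the norm-index computation rather than gesture at it.
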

\begin{proof}
Part $(i)$ is Lemma 2.10 in \cite{MR} and part $(ii)$ is Lemma 2.11 in \cite{MR}.
\end{proof}

\begin{lemma}\label{twisttorsion}
Suppose $E$ has good reduction at a prime $v$ away from 2 and $F/K$ is a quadratic extension ramified at $v$. Then $E^F(K_v)$ contains no points of order 4. It follows that $H^1_f(K_v, E^F[2])$ is the image of $E^F(K_v)[2]$ under the Kummer map.

\end{lemma}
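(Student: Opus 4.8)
The plan is to work locally at $v$ and show directly that $E^F(K_v)$ has no $4$-torsion, from which the statement about $H^1_f(K_v, E^F[2])$ follows by Lemma \ref{eval}(i) together with the description of $H^1_f$ as the Kummer image. First I would reduce the question to the torsion structure of $E^F$ over the completion $K_v$. Since $F/K$ is ramified at $v$, the local extension $F_w/K_v$ is a ramified quadratic extension; write $F_w = K_v(\sqrt{d})$ with $\ord_v(d)$ odd (we may take $d$ to be a uniformizer up to squares). The twist $E^F$ becomes isomorphic to $E$ over $F_w$ via $(x,y)\mapsto(dx, d^{3/2}y)$, so a point of order $4$ on $E^F(K_v)$ corresponds to a point of order $4$ on $E(F_w)$ that is fixed, up to the twisting sign, by $\Gal(F_w/K_v)$; concretely, $E^F(K_v)[4]$ is the subgroup of $E(F_w)[4]$ on which the nontrivial element of $\Gal(F_w/K_v)$ acts by the quadratic character of $F_w/K_v$, i.e. by $-1$.

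The key step is then to exploit good reduction of $E$ at $v$. Because $E$ has good reduction at $v$ and $v\nmid 2$, the reduction map is injective on prime-to-$v$ torsion, so $E(F_w)[4]\hookrightarrow \tilde E(\Ftwo\text{-reduction})$; more usefully, the $\Gal(F_w/K_v)$-module $E[4]$ is unramified at $v$, so inertia $I_v$ acts trivially on $E[4]$. Hence the action of $\Gal(F_w/K_v)$ on $E(F_w)[4]$ factors through the residue field extension — but $F_w/K_v$ is \emph{totally ramified}, so its residue extension is trivial and $\Gal(F_w/K_v)$ acts trivially on $E(F_w^{ur})[4]\supseteq E(F_w)[4]$. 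Comparing with the previous paragraph: a point $P\in E^F(K_v)[4]$ gives $P'\in E(F_w)[4]$ with $\sigma P' = -P'$ for $\sigma$ the nontrivial automorphism, but also $\sigma P' = P'$ by unramifiedness, forcing $2P' = 0$. Therefore $E^F(K_v)$ has no point of order $4$.

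For the final sentence, recall that $H^1_f(K_v, E^F[2])$ is by definition the image of $E^F(K_v)/2E^F(K_v)$ under the Kummer map $\delta$. The Kummer sequence for multiplication by $2$ identifies $E^F(K_v)/2E^F(K_v)$ with $E^F(K_v)\otimes \Z/2\Z$, and the connecting map fits into $0\to E^F(K_v)[2]\to E^F(K_v)[4]\xrightarrow{2} E^F(K_v)[2]$; since $E^F(K_v)$ has no $4$-torsion beyond its $2$-torsion, multiplication by $2$ kills $E^F(K_v)[4]=E^F(K_v)[2]$, so every class in $E^F(K_v)/2E^F(K_v)$ is already represented by a $2$-torsion point. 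Equivalently, the natural surjection $E^F(K_v)[2]/2E^F(K_v)[4] = E^F(K_v)[2] \twoheadrightarrow E^F(K_v)/2E^F(K_v)$ shows $\delta$ restricted to $E^F(K_v)[2]$ is already surjective onto $H^1_f(K_v, E^F[2])$, giving the claim.

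The main obstacle is the bookkeeping in the first two paragraphs: correctly identifying $E^F(K_v)[4]$ as the $(-1)$-eigenspace for $\Gal(F_w/K_v)$ acting on $E(F_w)[4]$ under the twisting isomorphism (the factor $d^{3/2}$ must be tracked carefully, and one should check that $\ord_v(d)$ odd is exactly what makes $F_w/K_v$ ramified), and then matching this against the unramifiedness of $E[4]$ at $v$. Once those two facts are lined up, the contradiction $P' = -P'$ is immediate. Note this is consistent with — indeed a special case of the reasoning behind — Lemma \ref{localequality}(ii): ramified twisting at a good-reduction prime away from $2$ makes the two local conditions meet trivially precisely because twisting cannot produce new $2$-power torsion there.
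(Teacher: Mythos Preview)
Your argument for the absence of 4-torsion is correct and takes a genuinely different route from the paper. The paper argues via reduction theory: Tate's algorithm shows $E^F$ has Kodaira type $I_0^*$ at $v$, so $E_0^F(K_v)$ has no 2-torsion (neither the formal group nor the additive special fibre contributes any, as $v\nmid 2$), while the component group $E^F(K_v)/E_0^F(K_v)$ sits inside $(\Zt)^2$, which together rules out points of order 4. Your approach instead identifies $E^F(K_v)$ with the $(-1)$-eigenspace of $\Gal(F_w/K_v)$ acting on $E(F_w)$ and invokes N\'eron--Ogg--Shafarevich: good reduction at $v\nmid 2$ makes $E[4]$ unramified, so the totally ramified $\Gal(F_w/K_v)$ acts trivially on $E(F_w)[4]$, forcing any $(-1)$-eigenvector there to satisfy $2P'=0$. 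Your route is more conceptual and avoids both Tate's algorithm and the component-group table; the paper's route is more computational but has the side benefit of exhibiting the full local structure of $E^F(K_v)$ used in the second claim.

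For the final sentence your reasoning is slightly off: the natural map $E^F(K_v)[2]\to E^F(K_v)/2E^F(K_v)$ is not a priori a surjection, and the absence of 4-torsion by itself does not show that every class is represented by a 2-torsion point. What the absence of 4-torsion buys you is \emph{injectivity} of this map (no nonzero 2-torsion point lies in $2E^F(K_v)$); bijectivity then follows from the equality of cardinalities $|E^F(K_v)/2E^F(K_v)|=|E^F(K_v)[2]|$ in Lemma~\ref{eval}(i), which you did cite at the start. The paper handles this step differently, writing $E^F(K_v)=E^F(K_v)[2^\infty]\times B$ with $B$ a profinite group of odd order (valid since $v\nmid 2$) and reading off $E^F(K_v)/2E^F(K_v)=E^F(K_v)[2^\infty]/2E^F(K_v)[2^\infty]=E^F(K_v)[2]$ directly.
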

\begin{proof}
Since $E$ had good reduction at $v$, $v \nmid 2$, and $F/K$ is ramified at $v$, Tate's algorithm gives us that $E^F$ has reduction type $I_0^*$ at $v$. Since $E^F$ has additive reduction at $c$, we have an exact sequence $$0 \rightarrow E_1^F(K_v) \rightarrow E_0^F(K_v) \rightarrow k_v^+ \rightarrow 0,$$  where $E_0^F(K_v)$ is the group of points of non-singular reduction on $E^F(K_v)$, $E_1^F(K_v)$ is isomorphic to the formal group of $E^F(K_v)$, and $k_v$ is the residue field of $K$ at $v$. Since $k_v^+$ and $E_1^F(K_v)$ have no points of order 2, we get that $E_0^F(K_v)$ has no points of order 2. Because $E^F$ has reduction type $I_0^*$ at $v$, $E^F(K_v)/E_0^F(K_v) \subset \Zt \times \Zt$ (see Table 4.1 in section IV.9 of \cite{ATAEC}). It then follows that $E^F(K_v)$ has no points of order 4. As $E^F(K_v)$ is given by $E^F(K_v)[2^\infty] \times B$ for some profinite group $B$ of odd order, we get that  $H^1_f(K_v, E^F[2]) = E^F(K_v)[2^\infty]/2E^F(K_v)[2^\infty] = E^F(K_v)[2]$.
\end{proof}

\subsection{The Method of Rubin and Mazur}\label{methodsofrubingandmazur}

Mazur and Rubin developed a technique to use the comparison between the local conditions for $E$ and $E^F$  to compare the Selmer groups $\Sel_2(E/K)$ and $\Sel_2(E^F/K)$. We explain this technique here.

Let $E$ be an elliptic curve defined over a number field $K$ and $T$ a finite set of places of $K$. We define a localization map $$\loc_T:H^1(K, E[2]) \rightarrow \bigoplus_{v \in T} H^1(K_v, E[2])$$ as the sum of the restriction maps over the places $v$ in $T$. We define the strict and relaxed Selmer group, denoted $S_T$ and $S^T$ respectively, by $$S_T = \ker \left ( \loc_T:\Sel_2(E/K) \rightarrow \bigoplus_{v \in T} H^1(K_v, E[2]) \right) $$ and $$S^T = \ker \left ( \loc_T:H^1(K, E[2]) \rightarrow \bigoplus_{v \not \in T} H^1(K_v, E[2])/H^1_f(K_v, E[2]) \right ).$$

Lemma 3.2 in \cite{MR} shows that $\dimF  S^T - \dimF  S_T$ is given by $$\dimF  S^T - \dimF  S_T  = \sum_{v \in T} \dimF  H^1_f(K_v, E[2]).$$



The following theorem of Kramer provides an important relationship between the parities of $d_2(E/K)$ and $d_2(E^F/K)$.

\begin{theorem}[Kramer]\label{kramers} $$d_2(E/K) \equiv d_2(E^F/K) + \sum_{v \text{ of }K} \dimF  E(K_v)/E_\mathbf{N}(K_v)$$
\end{theorem}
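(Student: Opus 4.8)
The plan is to express $d_2(E/K)\bmod 2$ and $d_2(E^F/K)\bmod 2$ each as a sum of purely local terms and then compare them place by place, using the canonical $G_K$-isomorphism $E[2]\cong E^F[2]$ recalled above to view the local conditions for $E$ and for $E^F$ as two families of subspaces of the \emph{same} groups $H^1(K_v,E[2])$. Since the result is due to Kramer one may simply cite \cite{KK} (or \cite{MR2}, whose argument treats all places uniformly); I indicate the structure of a proof.

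First I would recall the duality package for the self-dual $G_K$-module $E[2]$: the local Tate pairing makes each $H^1(K_v,E[2])$ a non-degenerate symmetric $\Ftwo$-space in which both $H^1_f(K_v,E[2])$ and $H^1_f(K_v,E^F[2])$ are maximal isotropic (of dimension $\dimF E(K_v)[2]$ at $v\nmid 2\infty$ by Lemma \ref{eval}), while $\Sel_2(E/K)$ and $\Sel_2(E^F/K)$ are isotropic for the sum of the local pairings over restrictions of global classes. Combined with the Greenberg--Wiles/Poitou--Tate global Euler-characteristic formula (equivalently Lemma 3.2 of \cite{MR} applied to a large finite set $T$), self-duality forces $d_2(E/K)$ and $d_2(E^F/K)$, taken mod $2$, to be computable as sums over $v$ of local invariants recording the position of $H^1_f(K_v,E[2])$, resp. $H^1_f(K_v,E^F[2])$, relative to a fixed reference Lagrangian; hence $d_2(E/K)-d_2(E^F/K)\bmod 2$ is a sum over $v$ of a term depending only on the relative position of the two Lagrangians at $v$.

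Next I would evaluate the local discrepancy. At every $v$ that splits in $F/K$, or at which $E$ has good reduction and which is unramified in $F/K$, Lemma \ref{localequality}(i) gives $H^1_f(K_v,E[2])=H^1_f(K_v,E^F[2])$, so the term vanishes; by Lemma \ref{normintersection} the common Lagrangian then equals $E_\mathbf{N}(K_v)/2E(K_v)$, forcing $E_\mathbf{N}(K_v)=E(K_v)$, so the corresponding summand $\dimF E(K_v)/E_\mathbf{N}(K_v)$ also vanishes. At the finitely many remaining places, Lemma \ref{normintersection} gives $H^1_f(K_v,E[2])\cap H^1_f(K_v,E^F[2])=E_\mathbf{N}(K_v)/2E(K_v)$, and a direct computation with the local pairing --- expressing the discrepancy of two maximal isotropic subspaces through the codimension of their intersection and relating that codimension, via the Kummer sequences for $E(K_v)$ and $E(F_w)$ together with Lemma \ref{normintersection}, to $\dimF E(K_v)/E_\mathbf{N}(K_v)$ --- identifies the local term with $\dimF E(K_v)/E_\mathbf{N}(K_v)\bmod 2$. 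Summing over all $v$ gives the asserted congruence.

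The main obstacle is the structural step in the second paragraph: showing that $d_2(E/K)\bmod 2$ really is a sum of local contributions of this shape. This is where self-duality of $E[2]$ enters essentially --- either through a Poonen--Stoll-type quadratic refinement of the Tate pairing, or, as in Kramer's original argument, by descent to $F$: one shows that the Cassels--Tate pairing on $\Sel_2(E/F)$ twisted by the nontrivial element of $\Gal(F/K)$ is alternating on a suitable anti-invariant subquotient, hence that subquotient has even dimension, and one then relates $\dimF\Sel_2(E/F)$ to $\dimF\Sel_2(E/K)+\dimF\Sel_2(E^F/K)$ plus the local norm indices by inflation--restriction together with the local comparisons above. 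Once this evenness is in hand, everything else is bookkeeping with Lemmas \ref{eval}, \ref{localequality}, and \ref{normintersection}.
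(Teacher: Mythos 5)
Your proposal is correct and in substance the same as the paper's treatment: the paper proves Theorem \ref{kramers} purely by citation (Theorem 2.7 of \cite{MR}, resting on Kramer \cite{KK} and \cite{MR2}), exactly the option you allow yourself. Your accompanying sketch --- comparing the two families of maximal isotropic local conditions via a global duality parity argument and using Lemma \ref{normintersection} to identify the local codimension with $\dimF E(K_v)/E_\mathbf{N}(K_v)$, with the evenness/parity step correctly isolated as the real content --- faithfully mirrors the structure of the cited proof.
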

\begin{proof}
This is Theorem 2.7 in \cite{MR}; also see Remark 2.8 there as well.
\end{proof}

The following proposition is the main ingredient in the method of Rubin and Mazur. We reproduce it here along with their proof.

\begin{proposition}[Proposition 3.3 in \cite{MR}]\label{MR3.3} Let $E$ be an elliptic curve defined over a number field $K$. Suppose $F/K$ is a quadratic extension such that all places above $2\Delta_E\infty$ split in $F/K$, where $\Delta_E$ is the discriminant of some model of $E$. Let $T$ be the set of (finite) primes $\mathfrak{p}$ of $K$ such that $F/K$ is ramified at $\mathfrak{p}$ and $E(K_\mathfrak{p})[2] \ne 0$. Set $V_T = \loc_T(\Sel_2(E/K)$.Then
$$d_2(E^F/K) = d_2(E/K)  - \dimF V_T + d$$ for some $d$ satisfying 
\begin{equation}\label{ineq}0 \le d \le \dimF \left ( \bigoplus_{\mathfrak{p} \in T}  H^1_f(K_\mathfrak{p}, E[2]) \right ) / V_T   \end{equation} and 
\begin{equation}\label{parity} d \equiv \dimF  \left ( \bigoplus_{\mathfrak{p} \in T}  H^1_f(K_\mathfrak{p}, E[2])\right ) /V_T \pmod{2}. \end{equation}
\end{proposition}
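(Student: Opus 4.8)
The plan is to realize $\Sel_2(E/K)$ and $\Sel_2(E^F/K)$ as lying between the \emph{same} strict and relaxed Selmer groups $S_T$ and $S^T$, to read off the main equality and the bound \eqref{ineq} from a short linear‑algebra computation in $\bigoplus_{\mathfrak{p}\in T}H^1(K_\mathfrak{p},E[2])$, and to obtain the parity refinement \eqref{parity} from Kramer's theorem. First I would use the canonical $G_K$-isomorphism $E[2]\simeq E^F[2]$ to identify $H^1(K,E^F[2])$ with $H^1(K,E[2])$ and $H^1(K_v,E^F[2])$ with $H^1(K_v,E[2])$ for every $v$, so that both Selmer groups sit inside one space. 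Since all places above $2\Delta_E\infty$ split in $F/K$, every prime ramified in $F/K$ — in particular every $\mathfrak{p}\in T$ — is a prime of good reduction not dividing $2\infty$. I would then check that the local conditions for $E$ and $E^F$ coincide at each $v\notin T$: for $v$ split or a good prime unramified in $F/K$ this is Lemma \ref{localequality}(i), while for $v$ ramified in $F/K$ with $E(K_v)[2]=0$ we get $H^1_f(K_v,E[2])=0$ from Lemma \ref{eval}(i) and $H^1_f(K_v,E^F[2])=0$ from Lemma \ref{twisttorsion} together with $E^F(K_v)[2]\simeq E(K_v)[2]=0$; no place of any other type lies outside $T$. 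Hence the strict and relaxed Selmer groups of $E$ and of $E^F$ relative to $T$ agree; write $S_T$ and $S^T$ for these common groups.

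Next I would compute inside $\mathcal{H}:=\bigoplus_{\mathfrak{p}\in T}H^1(K_\mathfrak{p},E[2])$, setting $H_E=\bigoplus_{\mathfrak{p}\in T}H^1_f(K_\mathfrak{p},E[2])$, $H_{E^F}=\bigoplus_{\mathfrak{p}\in T}H^1_f(K_\mathfrak{p},E^F[2])$, and $W=\loc_T(S^T)$. Because $\ker(\loc_T|_{S^T})=S_T$, Lemma 3.2 of \cite{MR} gives $\dimF W=\dimF S^T-\dimF S_T=\sum_{\mathfrak{p}\in T}\dimF H^1_f(K_\mathfrak{p},E[2])=:t$, and by Lemma \ref{eval}(i) this equals $\dimF H_E=\dimF H_{E^F}$. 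Lemma \ref{localequality}(ii) gives $H_E\cap H_{E^F}=0$, and since $\dimF H^1(K_\mathfrak{p},E[2])=2\dimF E(K_\mathfrak{p})[2]$ (local Euler characteristic together with the self-duality of $E[2]$) we get $\dimF\mathcal{H}=2t$, hence $\mathcal{H}=H_E\oplus H_{E^F}$. From the definitions $V_T=W\cap H_E$ and $\loc_T(\Sel_2(E^F/K))=W\cap H_{E^F}$, so $\dimF\Sel_2(E/K)=\dimF S_T+\dimF V_T$ and $\dimF\Sel_2(E^F/K)=\dimF S_T+\dimF(W\cap H_{E^F})$. Setting $d:=\dimF(W\cap H_{E^F})$ and using $\dimF E^F(K)[2]=\dimF E(K)[2]$ yields $d_2(E^F/K)=d_2(E/K)-\dimF V_T+d$; and \eqref{ineq} follows because the projection $\mathcal{H}\to H_{E^F}$ along $H_E$ carries $W$ onto a subspace of dimension $\dimF W-\dimF V_T=t-\dimF V_T$ that contains $W\cap H_{E^F}$.

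For the parity \eqref{parity} I would invoke Kramer's theorem (Theorem \ref{kramers}), which gives $d-\dimF V_T=d_2(E^F/K)-d_2(E/K)\equiv\sum_v\dimF E(K_v)/E_\mathbf{N}(K_v)\pmod{2}$, and then evaluate the sum with Lemma \ref{normintersection}. At a place split in $F/K$ (including all places above $2\Delta_E\infty$), and at a good prime unramified in $F/K$, the norm map is surjective, so the term vanishes; at a ramified prime $\mathfrak{p}$ — necessarily of good reduction and prime to $2\infty$ — Lemmas \ref{normintersection} and \ref{localequality}(ii) force $E_\mathbf{N}(K_\mathfrak{p})=2E(K_\mathfrak{p})$, so the term equals $\dimF E(K_\mathfrak{p})/2E(K_\mathfrak{p})=\dimF E(K_\mathfrak{p})[2]$, which is positive exactly when $\mathfrak{p}\in T$. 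Thus $\sum_v\dimF E(K_v)/E_\mathbf{N}(K_v)=t=\dimF H_E$, so $d\equiv t-\dimF V_T=\dimF\bigl((\bigoplus_{\mathfrak{p}\in T}H^1_f(K_\mathfrak{p},E[2]))/V_T\bigr)\pmod{2}$.

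I expect the fiddly part to be the bookkeeping of local conditions outside $T$ — checking that they really agree for $E$ and $E^F$ at every such place, the ramified primes with trivial $2$-torsion being the one case that needs a separate argument — together with verifying $\mathcal{H}=H_E\oplus H_{E^F}$. I would also emphasize that the parity is not a formal consequence of the linear algebra: a half-dimensional subspace $W$ of $H_E\oplus H_{E^F}$ need not satisfy $\dimF(W\cap H_E)+\dimF(W\cap H_{E^F})\equiv\dimF W\pmod{2}$, so Kramer's theorem (or an equivalent global-duality input) is genuinely needed here.
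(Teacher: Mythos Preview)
Your proof is correct and follows essentially the same strategy as the paper: sandwich both Selmer groups between the common $S_T$ and $S^T$, set $d=\dimF\loc_T(\Sel_2(E^F/K))$, bound $\dimF V_T+d\le\dimF S^T/S_T$ via the transversality $H_E\cap H_{E^F}=0$ from Lemma~\ref{localequality}(ii), and extract the parity from Kramer's theorem. Your version is slightly more explicit in two places---you handle the ramified primes with $E(K_\mathfrak{p})[2]=0$ separately (the paper's citation of Lemma~\ref{localequality} alone does not literally cover these), and you package the bound via the decomposition $\mathcal{H}=H_E\oplus H_{E^F}$ and a projection---but these are cosmetic differences, not a genuinely different route.
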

\begin{proof}
Let $V_T^F = \loc_T(\Sel_2(E^F/K))$.  Lemma \ref{localequality} gives us that $H^1_f(K_v, E^F[2]) = H^1_f(K_v, E[2])$ for all $v \not \in T$ and therefore $S_T \subset \Sel_2(E^F/K)$. This gives us that the sequences $$0 \rightarrow S_T \rightarrow \Sel_2(E/K) \rightarrow V_T \rightarrow 0$$ and $$ 0 \rightarrow S_T \rightarrow \Sel_2(E^F/K) \rightarrow V_T^F \rightarrow 0$$ are exact. We therefore get that $$d_2(E^F/K)  = d_2(E/K) - \dimF  V_T + \dimF  V_T^F.$$ We will let $d =  \dimF  V_T^F$ and show that it satisfies the conditions above.

Since $H^1_f(K_v, E[2]) \cap H^1(K_v, E^F[2])=0$ for all $v \in T$ by part $(ii)$ of Lemma \ref{localequality}, we have $$\dimF  V_T + \dimF  V_T^F = \dimF \Sel_2(E/K)/S_T + \dimF \Sel_2(E^F/K)/S_T$$ $$\le \dimF (S^T/S_T) = \sum_{v \in T} \dimF  H^1_f(K_v, E[2]).$$ The last equality follows from Lemma 3.2 in \cite{MR}. This gives us that $$\dimF  V_T^F \le \sum_{v \in T} \dimF  H^1_f(K_v, E[2]) -  \dimF  V_T = \dimF   \bigoplus_{\mathfrak{p} \in T}  H^1_f(K_\mathfrak{p}, E[2])/V_T,$$ giving us the first condition above.

To get the parity result, we will apply Theorem \ref{kramers}. Observe that Lemma \ref{normintersection} and part $(i)$ of Lemma \ref{localequality} tell us that $\dimF  E(K_v)/E_\mathbf{N}(K_v)=0$ for all $v \not \in T$ and that $\dimF  E(K_v)/E_\mathbf{N}(K_v)= \dimF  H^1_f(K_v, E[2])$ for all $v \in T$. We therefore have that $$\dimF  V_T + \dimF  V_T^F \equiv  \sum_{v \in T} \dimF  H^1_f(K_v, E[2]) \pmod{2}$$ so $$\dimF  V_T^F \equiv \dimF  \big(  \bigoplus_{\mathfrak{p} \in T}  H^1_f(K_\mathfrak{p}, E[2])/V_T  \big)\pmod{2}.$$
\end{proof}

\section{The $\phi$-Selmer Group}\label{phiselmer}
In the setting of Proposition \ref{MR3.3}, if $F/K$ is an extension such that $T$ contains a single prime $\mathfrak{p}$, then $d_2(E^F/K) - d_2(E/K)$ can often be read off of (\ref{ineq}) and (\ref{parity}) by considering the localization of $\Sel_2(E/K)$ at $\mathfrak{p}$. Mazur and Rubin developed a strategy for curves with $E(K)[2] =0$ in which one carefully constructs an extension $F/K$ so that $d_2(E^F/K) - d_2(E/K)$ can be read off of Proposition \ref{MR3.3}. This construction breaks down when $E(K)[2] \simeq \Zt$ because the action of $G_K$ on $E[2]$ is no longer irreducible. By utilizing the Selmer group associated to the 2-isogeny of $E$, we are able to develop a more complicated strategy that allows us to extend their results to the case when $E(K)[2] \simeq \Zt$.

\subsection{Background}
If $E$ is an elliptic curve defined over $K$ with $E(K)[2] \simeq \Zt$, then $E$ can be given by an integral model over $y^2 = x^3 + Ax^2 + Bx$ defined over $K$. The subgroup  $C = E(K)[2]$ is then generated by the point $P = (0,0)$.

Given this model, we are able to define a degree 2 isogeny $\phi:E \rightarrow E^\prime$ with kernel $C$, where $E^\prime$ is given by a model $y^2 = x^3 - 2Ax^2 + (A^2 - 4B)x$ and $\phi$ is given by $\phi(x, y) = \left( \left (\frac{x}{y} \right)^2, \frac{y(B-x^2)}{x^2} \right )$ for $(x, y) \not \in C$. If we define $C^\prime = \phi(E[2])$, then we get a short exact sequence of $G_K$ modules $$0 \rightarrow C \rightarrow E[2] \xrightarrow{\phi} C^\prime \rightarrow 0.$$ This short exact sequence gives rise to a long exact sequence of cohomology groups \begin{equation}\label{CE[2]Cprime}0 \rightarrow C \rightarrow E(K)[2] \xrightarrow{\phi} C^\prime \xrightarrow{\delta} H^1(K, C) \rightarrow H^1(K, E[2]) \xrightarrow{\phi} H^1(K, C^\prime) \rightarrow \ldots \end{equation} The map $\delta$ is given by $\delta(Q)(\sigma)= \sigma(R) - R$ where $R$ is any point on $E$ with $\phi(R) = Q$. 

In terms of the localization of the 2-Selmer group, we observe that if $c \in \Sel_2(E/K)$ comes from $H^1(K, C)$, then by part $(i)$ of Lemma \ref{eval}, the localization of $c$ at any prime of good reduction will lie in $C$. We define the \textbf{$\phi$-Selmer group of $E$}, denoted $\Sel_\phi(E/K)$, to capture the part of $\Sel_2(E/K)$ that comes from $H^1(K, C)$. We construct $\Sel_\phi(E/K)$ in a manner similar to the construction of $\Sel_2(E/K)$.

The following diagram commutes for every place $v$ of $K$.

\begin{center}\leavevmode
\begin{xy} \xymatrix{
E^\prime(K)/\phi(E(K))  \ar[d] \ar[r]^\delta & H^1(K, C) \ar[d]^{Res_v} \\
E^\prime(K_v)/\phi(E(K_v))   \ar[r]^\delta & H^1(K_v, C) }
\end{xy}\end{center}

We define distinguished local subgroups $H^1_\phi(K_v, C)\subset  H^1(K_v, C)$ for each place $v$ of $K$ as the image of $E^\prime(K_v)/\phi(E(K_v))$ under $\delta$ for each place $v$ of $K$ and define $\Sel_\phi(E/K)$ as $$\ker \left (  H^1(K, C) \xrightarrow{\oplus_{v} Res} \bigoplus_{v\text{ of }K} H^1(K_v, C)/ H^1_\phi(K_v, C) \right ).$$

The group $\Sel_\phi(E/K)$ is a finite dimensional $\Ftwo$-vector space and we denote its dimension $\dimF \Sel_\phi(E/K)$ by $d_\phi(E/K)$.

By identifying $C$ with $\mu_2$, we are able to explicitly identify $H^1(K, C)$ with $K^\times/(K^\times)^2$ and $H^1(K_v, C)$ with $K_v^\times/(K_v^\times)^2$ and consider $\Sel_\phi(E/K)$ as a subgroup of $K^\times/(K^\times)^2$. Explicitly, with the models as above, the map $E^\prime(K_v)/\phi(E(K_v))$ is given by $(x, y) \mapsto x(K_v^\times)^2$ for $(x, y) \ne (0,0)$ and $(0, 0) \mapsto \Delta_E(K_v^\times)^2$.

The local condition $H^1_\phi(K_v, C)$ is given by  $H^1_\phi(K_v, C) = H^1_u(K_v, C)$ for all finite places away from $2$ where $E$ has good reduction (see Lemma 4.1 in \cite{Cassels8}).

The isogeny $\phi$ on $E$ gives gives rise to the dual isogeny $\hat \phi$ on $E^\prime$ with kernel $C^\prime = \phi(E[2])$. Exchanging the roles of $(E, C, \phi)$ and $(E^\prime, C^\prime, \hat \phi)$ in the above defines the $\hat \phi$-Selmer group of $E^\prime$, $\Sel_\hatphi(E^\prime/K)$, as a subgroup of $H^1(K, C^\prime)$. The following theorem allows us to relate the $\phi$-Selmer group, the $\hat \phi$-Selmer group, and the 2-Selmer group.

\begin{theorem}\label{gss}The $\phi$-Selmer group, the $\hat \phi$-Selmer group, and the 2-Selmer group sit inside the exact sequence \begin{equation}0 \rightarrow E^\prime(K)[2]/\phi(E(K)[2]) \rightarrow \Sel_\phi(E/K) \rightarrow \Sel_2(E/K) \xrightarrow{\phi}\Sel_\hatphi(E^\prime/K).\end{equation}
\end{theorem}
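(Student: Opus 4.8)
The plan is to read the claimed sequence off the long exact cohomology sequence (\ref{CE[2]Cprime}) attached to $0\to C\to E[2]\xrightarrow{\phi}C^\prime\to 0$, after intersecting with the local conditions that define the three Selmer groups. The inclusion $C\hookrightarrow E[2]$ induces a map $\iota\colon H^1(K,C)\to H^1(K,E[2])$, and $\phi$ induces the map $H^1(K,E[2])\to H^1(K,C^\prime)$ already present in (\ref{CE[2]Cprime}); these will be the maps $\Sel_\phi(E/K)\to\Sel_2(E/K)$ and $\Sel_2(E/K)\xrightarrow{\phi}\Sel_\hatphi(E^\prime/K)$ of the statement, once we check that they respect the Selmer conditions. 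Since $H^0(K,E[2])=E(K)[2]=C$ already surjects onto $H^0(K,C)$, exactness of (\ref{CE[2]Cprime}) makes the connecting map $\delta$ injective on $E^\prime(K)[2]/\phi(E(K)[2])$ (the leftmost term of the statement) with image exactly $\ker\iota$, so that
\[
0\to E^\prime(K)[2]/\phi(E(K)[2])\xrightarrow{\ \delta\ }H^1(K,C)\xrightarrow{\ \iota\ }H^1(K,E[2])\xrightarrow{\ \phi\ }H^1(K,C^\prime)
\]
is exact. It then remains to prove: (a) $\iota$ carries $\Sel_\phi(E/K)$ into $\Sel_2(E/K)$ and $\phi$ carries $\Sel_2(E/K)$ into $\Sel_\hatphi(E^\prime/K)$; (b) $\Sel_\phi(E/K)\cap\ker\iota=\ker\iota$ (exactness at $\Sel_\phi$, together with the injectivity above); and (c) $\iota(\Sel_\phi(E/K))=\Sel_2(E/K)\cap\ker\phi$ (exactness at $\Sel_2$).

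The engine for (a) and (c) is the functoriality of the connecting homomorphism applied to the two maps of short exact sequences encoding $\hatphi\circ\phi=[2]_E$ and $\phi\circ\hatphi=[2]_{E^\prime}$: namely the map $(0\to C\to E\xrightarrow{\phi}E^\prime\to 0)\to(0\to E[2]\to E\xrightarrow{[2]}E\to 0)$ with vertical maps $C\hookrightarrow E[2]$, $\mathrm{id}_E$, $\hatphi$, and the map $(0\to E[2]\to E\xrightarrow{[2]}E\to 0)\to(0\to C^\prime\to E^\prime\xrightarrow{\hatphi}E\to 0)$ with vertical maps $\phi$, $\phi$, $\mathrm{id}_E$, both of which commute. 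Over each completion $K_v$ these yield $\iota\circ\delta_\phi=\delta_2\circ\hatphi$ and $\phi\circ\delta_2=\delta_\hatphi$, where $\delta_2,\delta_\phi,\delta_\hatphi$ denote the local Kummer maps of $[2]$, $\phi$, and $\hatphi$. Hence $\iota\bigl(H^1_\phi(K_v,C)\bigr)=\delta_2\bigl(\hatphi(E^\prime(K_v))\bigr)\subseteq\delta_2\bigl(E(K_v)\bigr)=H^1_f(K_v,E[2])$ and $\phi\bigl(H^1_f(K_v,E[2])\bigr)=\delta_\hatphi\bigl(E(K_v)\bigr)=H^1_\hatphi(K_v,C^\prime)$; as these hold at every place — including $v\mid 2\infty$, where the functoriality of connecting maps requires no special treatment — the definitions of the Selmer groups give (a) at once: a global class in $\Sel_\phi(E/K)$ restricts locally into the $H^1_\phi$, hence its image under $\iota$ restricts into the $H^1_f$, and similarly for $\phi$.

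For exactness at $\Sel_2(E/K)$: $\phi\circ\iota=0$ because $C\hookrightarrow E[2]\xrightarrow{\phi}C^\prime$ is the zero map, and conversely, if $c\in\Sel_2(E/K)$ has $\phi(c)=0$ in $\Sel_\hatphi(E^\prime/K)\subseteq H^1(K,C^\prime)$, then (\ref{CE[2]Cprime}) provides $c_0\in H^1(K,C)$ with $\iota(c_0)=c$, and $c_0\in\Sel_\phi(E/K)$ once one knows $\iota_v^{-1}\bigl(H^1_f(K_v,E[2])\bigr)=H^1_\phi(K_v,C)$ for all $v$. This last equality follows from the two compatibilities just established: writing $\iota_v(x)=\delta_2(Q)$, since $\iota_v(x)$ lies in $\mathrm{image}\,\iota_v=\ker\bigl(\phi\colon H^1(K_v,E[2])\to H^1(K_v,C^\prime)\bigr)$ we get $\delta_\hatphi(Q)=\phi(\delta_2(Q))=0$, so $Q=\hatphi(Q_1)$ for some $Q_1\in E^\prime(K_v)$, whence $\iota_v\bigl(x-\delta_\phi(Q_1)\bigr)=0$ and $x\in\delta_\phi(Q_1)+\ker\iota_v\subseteq H^1_\phi(K_v,C)$, using that $\ker\iota_v$ is the image of $C^\prime\subseteq E^\prime(K_v)$ under $\delta_\phi$. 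Exactness at $\Sel_\phi(E/K)$ is easier: $\ker\iota=\delta\bigl(H^0(K,C^\prime)\bigr)$ consists of the images under the global $\phi$-Kummer map of points of $E^\prime(K)$, hence automatically lies in $\Sel_\phi(E/K)$, so $\Sel_\phi(E/K)\cap\ker\iota=\ker\iota$, which with the injectivity of $\delta$ on the leftmost term completes the argument. The one place genuine care is needed is the matching of local conditions — the equalities $\iota_v^{-1}(H^1_f)=H^1_\phi$ and $\phi(H^1_f)=H^1_\hatphi$ at every place — and extracting these from the formal functoriality of connecting homomorphisms, rather than unwinding the local Kummer images one place at a time, is exactly what keeps the argument uniform at the primes above $2$ and $\infty$.
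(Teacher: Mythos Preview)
The paper gives no proof beyond ``This is a well known diagram chase'' with a reference to \cite{FG}, so your detailed chase is exactly what the paper defers to the literature; there is nothing further to compare. Your argument is correct: the functoriality of the connecting homomorphisms for the factorizations $\hatphi\circ\phi=[2]_E$ and $\phi\circ\hatphi=[2]_{E'}$ is precisely the device that makes the local-condition matching $\iota_v^{-1}(H^1_f)=H^1_\phi$ and $\phi(H^1_f)=H^1_\hatphi$ go through uniformly at every place, including those above $2$ and $\infty$.

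One small point worth flagging: when you read injectivity of $\delta$ on $E'(K)[2]/\phi(E(K)[2])$ with image exactly $\ker\iota$ off of (\ref{CE[2]Cprime}), note that (\ref{CE[2]Cprime}) carries only $H^0(K,C')=C'$, so what the long exact sequence literally gives is $C'/\phi(E(K)[2])$ in the leftmost slot. These coincide exactly when $E'(K)[2]=C'$, i.e.\ (by Lemma~\ref{charac}) when $E$ has no cyclic $4$-isogeny over $K$ --- which is the only setting in which the paper ever invokes Theorem~\ref{gss}. So your tacit identification is harmless in context, but it is an identification and not a consequence of (\ref{CE[2]Cprime}) alone.
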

\begin{proof}
This is a well known diagram chase. See Lemma 2 in \cite{FG} for example.
\end{proof}

\subsection{Duality Between $H^1_\phi(E/K)$ and $H^1_\hatphi(E^\prime/K)$}
A second relationship between the $\phi$-Selmer group and the $\hat \phi$-Selmer group arises from a duality between their respective local conditions.

\begin{proposition}\label{Re4.7} The sequence \begin{equation}\label{ss} 0 \rightarrow C^\prime/\phi \left ( E(K_v)[2]  \right ) \rightarrow H^1_\phi(K_v, C) \rightarrow H^1_f(K_v, E[2]) \xrightarrow{\phi} H^1_{\hat \phi}(K_v, C^\prime) \rightarrow 0\end{equation} sitting inside sequence (\ref{CE[2]Cprime}) is exact.
\end{proposition}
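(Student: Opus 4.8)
The plan is to build the exact sequence (\ref{ss}) as a piece of the long exact cohomology sequence (\ref{CE[2]Cprime}) applied over the local field $K_v$, intersected with the local conditions, and to check exactness term by term using the definitions of $H^1_\phi(K_v,C)$, $H^1_f(K_v,E[2])$, and $H^1_{\hatphi}(K_v,C^\prime)$ as images of Kummer maps. Over $K_v$ the short exact sequence $0\to C\to E[2]\xrightarrow{\phi}C^\prime\to 0$ gives a long exact sequence, a fragment of which reads
\begin{equation*}
C^\prime \xrightarrow{\delta} H^1(K_v,C)\xrightarrow{j} H^1(K_v,E[2])\xrightarrow{\phi} H^1(K_v,C^\prime),
\end{equation*}
with $\delta$ the connecting map and $j$ the map induced by $C\hookrightarrow E[2]$; the image of $\delta$ is $\delta(C^\prime)=H^1(K_v,C)/\text{(something)}$ and, more to the point, $\delta$ factors through $C^\prime/\phi(E(K_v)[2])$ since $\delta$ kills $\phi(E(K_v)[2])$ by the long exact sequence of $K_v$-points. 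So the first map $C^\prime/\phi(E(K_v)[2])\to H^1_\phi(K_v,C)$ is the one induced by $\delta$; it is injective by the exactness of the long exact sequence at $C^\prime$ (its kernel is exactly $\phi(E(K_v)[2])$), and its image lands in $H^1_\phi(K_v,C)$ because the global-to-local compatibility diagram preceding the statement, specialized appropriately, identifies $\delta(C^\prime)$ with the part of $H^1_\phi(K_v,C)$ coming from $E^\prime(K_v)/\phi(E(K_v))$ that is already in the image of $\delta$.

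The key commutative diagram I would write down is the one relating the Kummer maps for $E$, $E^\prime$ via $\phi$: from $0\to C\to E\xrightarrow{\phi}E^\prime\to 0$ and $0\to C^\prime\to E^\prime\xrightarrow{\hatphi}E\to 0$ one gets, on $K_v$-points mod $2$ resp. mod $\phi$, a commutative ladder
\begin{equation*}
\begin{array}{ccccc}
E^\prime(K_v)/\phi(E(K_v)) & \to & E(K_v)/2E(K_v) & \to & E(K_v)/\hatphi(E^\prime(K_v))\\
\downarrow\delta & & \downarrow\delta & & \downarrow\delta\\
H^1(K_v,C) & \xrightarrow{j} & H^1(K_v,E[2]) & \xrightarrow{\phi} & H^1(K_v,C^\prime)
\end{array}
\end{equation*}
whose top row is exact (this is the analogue over $K_v$ of Theorem \ref{gss}, or rather its local version, obtained from the snake lemma applied to multiplication-by-$2 = \hatphi\circ\phi$). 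Chasing this ladder: the image of $H^1_\phi(K_v,C)\to H^1_f(K_v,E[2])$ equals $j(H^1_\phi(K_v,C))$, which by the top-row exactness and the definition of $H^1_f$ as $\delta(E(K_v)/2E(K_v))$ is precisely the kernel of $H^1_f(K_v,E[2])\xrightarrow{\phi}H^1_{\hatphi}(K_v,C^\prime)$; and the kernel of $H^1_\phi(K_v,C)\to H^1_f(K_v,E[2])$ is $H^1_\phi(K_v,C)\cap\ker j=H^1_\phi(K_v,C)\cap\operatorname{im}\delta$, which by the argument above is exactly the image of $C^\prime/\phi(E(K_v)[2])$. Finally the surjectivity of $H^1_f(K_v,E[2])\xrightarrow{\phi}H^1_{\hatphi}(K_v,C^\prime)$ is the statement that every class in $\delta(E(K_v)/\hatphi(E^\prime(K_v)))$ is hit, which follows from the top-row surjectivity $E(K_v)/2E(K_v)\twoheadrightarrow E(K_v)/\hatphi(E^\prime(K_v))$ together with the commutativity of the right square.

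The main obstacle, and the only step requiring genuine care rather than diagram-chasing, is verifying that the image of $H^1_\phi(K_v,C)$ under $j$ is contained in $H^1_f(K_v,E[2])$ and not merely in $H^1(K_v,E[2])$ — equivalently, that the top row of the ladder really is exact at $E(K_v)/2E(K_v)$ with the local conditions intact. This is where one uses that $H^1_f$, $H^1_\phi$, $H^1_{\hatphi}$ are all defined as images of honest points under Kummer maps, so that the functoriality of the Kummer map with respect to the isogenies $\phi,\hatphi$ forces the diagram to commute with local conditions; concretely, a point of $E^\prime(K_v)$ maps under $\delta$ into $H^1_\phi(K_v,C)$ and then under $j$ to the Kummer image of its preimage structure, which lands in $H^1_f$ because $2=\hatphi\circ\phi$. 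I would cite the local version of the exact sequence of Theorem \ref{gss} (the snake-lemma computation for $\phi$ over $K_v$, as in \cite{FG}) for the exactness of the top row, and then the four-term exactness of (\ref{ss}) falls out of the two squares by the chase above. A dimension count cross-check is available as a sanity check: summing $\dimF$ across (\ref{ss}) gives $\dimF H^1_\phi(K_v,C)+\dimF H^1_{\hatphi}(K_v,C^\prime)=\dimF(C^\prime/\phi(E(K_v)[2]))+\dimF H^1_f(K_v,E[2])$, and $\dimF H^1_f(K_v,E[2])=\dimF E(K_v)[2]+[K_v:\Q_2]\cdot[\,\cdot\,]$ by the local Euler characteristic formula, which matches the known formulas for $\dimF H^1_\phi$ via Cassels' computation cited after the definition of $\Sel_\phi$.
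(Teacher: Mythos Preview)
Your argument is correct: the diagram you write down is exactly the one underlying this result, and your chase through it establishes each exactness claim. The paper does not actually prove the proposition; it simply cites Remark X.4.7 in \cite{AEC} as a well-known fact, so you have supplied the standard argument that the paper omits.
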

\begin{proof}
This is a well-known result. See Remark X.4.7 in \cite{AEC} for example.
\end{proof}

 \begin{lemma}[Local Duality]\label{localduality}
For each place $v$ of $K$ there is a local Tate pairing $H^1(K_v, C) \times H^1(K_v, C^\prime) \rightarrow \{\pm 1 \}$ induced by a pairing $[ \text{   }, \text{   } ] :C \times C^\prime  \rightarrow \{\pm 1 \}$ given by $[ Q , \tilde R ]  =  \langle Q, R \rangle$ where $\langle Q, R \rangle$ is the Weil pairing and $R$ is any pre-image of $\tilde R$ under $\phi$. The subgroups defining the local conditions $H^1_\phi(K_v, C)$ and $H^1_{\hat \phi}(K_v, C^\prime)$ are orthogonal complements under this pairing.
\end{lemma}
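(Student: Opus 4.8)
The plan is to establish the two assertions separately: first that the formula $[Q,\tilde R] = \langle Q, R\rangle$ gives a well-defined nondegenerate $G_K$-equivariant pairing $C \times C' \to \{\pm 1\}$, and then that, under the induced cup-product pairing on cohomology, the local conditions $H^1_\phi(K_v,C)$ and $H^1_{\hat\phi}(K_v,C')$ are exact orthogonal complements of each other. For the first part, I would recall that the Weil pairing $e_2 : E[2] \times E[2] \to \mu_2$ is alternating, nondegenerate, and Galois-equivariant. Given $Q \in C = \ker\phi$ and $\tilde R \in C' = \phi(E[2])$, any two preimages of $\tilde R$ under $\phi$ differ by an element of $C = \ker\phi$, and since $e_2(Q, Q') = 1$ for $Q, Q' \in C$ (as $C$ is isotropic — indeed $C$ is a cyclic order-2 subgroup, so it is automatically isotropic for an alternating pairing), the value $\langle Q, R\rangle$ is independent of the choice of preimage $R$. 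Galois-equivariance of $[\,,\,]$ follows from that of $e_2$ together with the fact that $\phi$ is defined over $K$. Nondegeneracy follows because $C$ and $C'$ each have order $2$ and the Weil pairing is nondegenerate on $E[2]$: if $[Q,\tilde R] = 1$ for all $\tilde R \in C'$, then $e_2(Q, R) = 1$ for all $R$ in a set of representatives for $E[2]/C$, and combined with $e_2(Q,Q')=1$ for $Q' \in C$ this forces $e_2(Q,\cdot) \equiv 1$ on $E[2]$, hence $Q = 0$. This pairing then induces the local Tate pairing $H^1(K_v,C) \times H^1(K_v,C') \to H^2(K_v,\mu_2) = \{\pm 1\}$ by cup product, which is a perfect pairing by local Tate duality.

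For the orthogonality statement, the key input is that $H^1_\phi(K_v,C)$ is the image of $\delta : E'(K_v)/\phi(E(K_v)) \to H^1(K_v,C)$ and similarly $H^1_{\hat\phi}(K_v,C')$ is the image of $\hat\delta : E(K_v)/\hat\phi(E'(K_v)) \to H^1(K_v,C')$. The standard way to prove orthogonality is to invoke the compatibility of the Tate pairing with the local Weil pairing on points: the cup-product pairing of $\delta(P')$ with $\hat\delta(P)$ for $P' \in E'(K_v)$, $P \in E(K_v)$ equals the image in $\{\pm 1\}$ of a suitable Hilbert-symbol-type expression built from $P$ and $P'$, which vanishes precisely because $P, P'$ are rational points (this is the isogeny analogue of the self-orthogonality of the image of the Kummer map under the Weil-pairing cup product). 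So the image of $\delta$ pairs trivially with the image of $\hat\delta$, giving $H^1_\phi(K_v,C) \subseteq H^1_{\hat\phi}(K_v,C')^\perp$. Equality then follows by a dimension count: by Proposition \ref{Re4.7}, $\dimF H^1_\phi(K_v,C) + \dimF H^1_{\hat\phi}(K_v,C') $ can be computed from the exact sequence (\ref{ss}), and one checks it equals $\dimF H^1(K_v,C)$, which is exactly the codimension condition needed for orthogonal complements under the perfect Tate pairing. Alternatively, this is precisely the content of the reference Remark X.4.7 in \cite{AEC} applied locally, and one can simply cite it.

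The main obstacle I anticipate is not the well-definedness of the pairing (routine) but rather nailing down the compatibility statement between the local Tate cup-product pairing and the Weil pairing on points — i.e., the assertion that $\langle \delta(P'), \hat\delta(P)\rangle_{\mathrm{Tate}}$ is trivial for rational points. The cleanest route is to use the commutative diagram relating the connecting maps for the isogenies $\phi, \hat\phi$ to the Kummer maps for multiplication-by-$2$ (since $\hat\phi \circ \phi = [2]$), reducing the isogeny-pairing compatibility to the well-known statement that the image of the Kummer map $E(K_v)/2E(K_v) \hookrightarrow H^1(K_v,E[2])$ is its own orthogonal complement under the cup-product pairing induced by the Weil pairing $e_2$. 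Given how standard this is, I would keep the proof short: verify well-definedness and Galois-equivariance of $[\,,\,]$ explicitly, state nondegeneracy, and then cite local Tate duality together with Remark X.4.7 in \cite{AEC} for the orthogonal-complement claim, supplying the dimension count from (\ref{ss}) only if the reference does not already phrase it in the isogeny setting.
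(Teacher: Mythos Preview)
Your proposal is correct and follows essentially the same approach as the paper: the paper cites Cassels \cite{Cassels8} (equation 7.15) for the orthogonality of $H^1_\phi(K_v,C)$ and $H^1_{\hat\phi}(K_v,C')$, then upgrades to orthogonal \emph{complements} via the dimension count coming from exact sequence (\ref{ss}), which is precisely your strategy. Your treatment is more detailed in justifying well-definedness of the pairing and in sketching why the images of the Kummer maps pair trivially, but the architecture---orthogonality by a standard reference plus the dimension count from Proposition \ref{Re4.7}---is the same.
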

\begin{proof}
Orthogonality  is equation (7.15) and the immediately preceding comment in \cite{Cassels8}. Combining this orthogonality with exact sequence (\ref{ss}) gives that  $H^1_\phi(K_v, C)$ and $H^1_{\hat \phi}(K_v, C^\prime)$ are orthogonal complements. 
\end{proof}

\begin{definition} The ratio $$\mathcal{T}(E/E^\prime) = \frac{\big | \Sel_\phi(E/K) \big |}{\big |\Sel_{\hat \phi}(E^\prime/K)\big |}$$ is called the \textbf{Tamagawa ratio} of $E$. \end{definition}

Because of the duality  in Lemma \ref{localduality}, the Tamagawa ratio can be computed using a local product formula.

\begin{theorem}[Cassels]\label{prodform2}
The Tamagawa ratio $\mathcal{T}(E/E^\prime)$ is given by $$\mathcal{T}(E/E^\prime) = \prod_{v\text{ of } K}\frac{\left | H^1_\phi(K_v, C)\right |}{2}.$$
\end{theorem}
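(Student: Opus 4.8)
The plan is to deduce the product formula from the global Poitou--Tate duality exact sequence applied to the two isogenous Selmer structures, exactly in the style of Cassels's original argument. First I would write down the nine-term Poitou--Tate sequence for the finite $G_K$-modules $C$ and $C'=\phi(E[2])$, which are Cartier-dual to each other under the pairing $[\ ,\ ]$ of Lemma \ref{localduality}. Using the local conditions $H^1_\phi(K_v,C)$ and $H^1_{\hat\phi}(K_v,C')$, which are orthogonal complements at every place by Lemma \ref{localduality}, one obtains an exact sequence relating $\Sel_\phi(E/K)$, $\Sel_{\hat\phi}(E'/K)$, the global points $C^{G_K}$, $C'^{G_K}$, and the cokernels $C'^{G_K}/\phi(E(K)[2])$ coming from the long exact sequence (\ref{CE[2]Cprime}). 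The key output is a formula of the shape
\begin{equation*}
\frac{|\Sel_\phi(E/K)|}{|\Sel_{\hat\phi}(E'/K)|} \;=\; \frac{|H^0(K,C)|}{|H^0(K,C')|}\cdot \frac{|C'^{G_K}/\phi(E(K)[2])|}{|\,\ker(\,\cdot\,)|}\cdot \prod_{v}\frac{|H^1_\phi(K_v,C)|}{|H^0(K_v,C)|},
\end{equation*}
where the local factor arises from the standard Euler-characteristic identity $|H^1_\phi(K_v,C)| = |H^0(K_v,C)|\cdot|H^0(K_v,C')|^{-1}\cdot\prod(\text{local Euler factors})$ together with orthogonality; here I am being deliberately schematic, since the bookkeeping is routine once the duality sequence is set up.

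Next I would simplify the constant (non-local) part. Since $E(K)[2]\simeq\Zt$, the module $C=E(K)[2]$ is $G_K$-fixed, so $H^0(K,C)=C$ has order $2$, while $C'$ is a quadratic character's worth of data and $H^0(K,C')=C'^{G_K}$; the cokernel $C'^{G_K}/\phi(E(K)[2])$ and the kernel term from (\ref{CE[2]Cprime}) are governed by $E(K)[2]$ and $E'(K)[2]$, and these combine via Theorem \ref{gss} and the exact sequence (\ref{CE[2]Cprime}) to cancel all global contributions except a single factor of $2$ per place. Concretely, the local factor $|H^1_\phi(K_v,C)|/|H^0(K_v,C)|$ together with the global $|H^0(K,C)|/|H^0(K,C')|$ and the two torsion-cokernel terms should collapse to $\prod_v |H^1_\phi(K_v,C)|/2$, using that $\prod_v |H^0(K_v,C)|$-type products over the full adèle ring are trivial by the product formula for the Cartier-dual pairing and that $|H^0(K_v,C)|=2$ for all $v$ (since $C\simeq\mu_2$ and $C\subset E(K)$). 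This is precisely the content of equation (1.22) or the analogous displayed formula in \cite{Cassels8}, and I would cite that computation rather than redo it in full.

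The main obstacle is the careful accounting of the global and archimedean correction terms: one must check that the contributions of $H^0$ and $H^2$ at every place, together with the images of the boundary maps $\delta$ in (\ref{CE[2]Cprime}) and the first map in Theorem \ref{gss}, conspire to leave exactly a factor of $\tfrac12$ at each place and nothing else. In particular one needs that the $H^1_\phi(K_v,C)$ are well-defined finite groups at the places above $2$ and $\infty$ (which follows from their definition as images of $E'(K_v)/\phi(E(K_v))$ under $\delta$) and that almost all of the local factors equal $1$, i.e.\ $|H^1_\phi(K_v,C)| = 2$ for all but finitely many $v$ — this is exactly Lemma 4.1 of \cite{Cassels8} cited in the text, since at a prime of good reduction away from $2$ one has $H^1_\phi(K_v,C)=H^1_u(K_v,C)$, which has order $|H^0(K_v,C)|=2$. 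Granting these standard local facts and the orthogonality of Lemma \ref{localduality}, the product formula follows; accordingly I would present the proof as a short reduction to the cited computation of Cassels, emphasizing the two inputs (global duality plus everywhere-local orthogonality) and the convergence of the Euler product.
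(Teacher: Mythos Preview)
Your proposal is consistent with the paper's treatment: the paper does not give an independent proof of this theorem at all, but simply records it as ``a combination of Theorem 1.1 and equations (1.22) and (3.4) in \cite{Cassels8}.'' Your plan sketches the mechanism behind that citation (global duality for the Cartier-dual pair $C,C'$, the everywhere-local orthogonality of Lemma~\ref{localduality}, and the fact that $H^1_\phi(K_v,C)=H^1_u(K_v,C)$ at almost all $v$) and then, like the paper, defers the bookkeeping to Cassels; so the two approaches are the same in substance, with yours being more expository about what is being cited.
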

\begin{proof}
This is a combination of Theorem 1.1 and equations (1.22) and (3.4) in \cite{Cassels8}.
\end{proof}

We further have the following parity condition.

\begin{theorem}[Dokchitser, Dokchitset]\label{tparity}
$$d_2(E/K) \equiv \ord_2 \T(E/E^\prime) \pmod{2}.$$
\end{theorem}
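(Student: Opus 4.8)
Since $\T(E/E^\prime) = |\Sel_\phi(E/K)|\,/\,|\Sel_{\hat\phi}(E^\prime/K)|$ and both groups are $\Ft$-vector spaces, $\ord_2 \T(E/E^\prime) = d_\phi(E/K) - d_{\hat\phi}(E^\prime/K)$, so modulo $2$ the assertion is $d_2(E/K) \equiv d_\phi(E/K) + d_{\hat\phi}(E^\prime/K) \pmod 2$. The plan is to compute $d_\phi(E/K) + d_{\hat\phi}(E^\prime/K) - d_2(E/K)$ exactly --- not merely modulo $2$ --- and to exhibit the value as even.

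The inputs are the three descent sequences $0 \to E^\prime(K)/\phi E(K) \to \Sel_\phi(E/K) \to \Sha(E/K)[\phi] \to 0$, its $\hat\phi$-analogue, and $0 \to E(K)/2E(K) \to \Sel_2(E/K) \to \Sha(E/K)[2] \to 0$, together with the two four-term exact sequences produced by $\hat\phi\circ\phi = [2]$ on $E$: the Mordell--Weil sequence
\begin{equation*}
0 \to E^\prime(K)[\hat\phi]\big/\phi(E(K)[2]) \to E^\prime(K)/\phi E(K) \xrightarrow{\hat\phi} E(K)/2E(K) \to E(K)/\hat\phi E^\prime(K) \to 0
\end{equation*}
and the Shafarevich--Tate sequence
\begin{equation*}
0 \to \Sha(E/K)[\phi] \to \Sha(E/K)[2] \xrightarrow{\phi} \Sha(E^\prime/K)[\hat\phi] \to \Sha(E^\prime/K)[\hat\phi]\big/\phi(\Sha(E/K)[2]) \to 0.
\end{equation*}
Taking $\Ft$-dimensions and comparing Euler characteristics, and using $E(K)[2]\simeq\Zt$ (so that $\phi(E(K)[2]) = 0$ and $E^\prime(K)[\hat\phi] = C^\prime$ has dimension $1$), the Mordell--Weil and $2$-Selmer contributions are absorbed into the two four-term sequences and one is left with
\begin{equation*}
d_\phi(E/K) + d_{\hat\phi}(E^\prime/K) - d_2(E/K) = 2 + \dimF\!\Big(\Sha(E^\prime/K)[\hat\phi]\big/\phi(\Sha(E/K)[2])\Big).
\end{equation*}
Thus the theorem reduces to showing that $\dimF\big(\Sha(E^\prime/K)[\hat\phi]/\phi(\Sha(E/K)[2])\big)$ is even. (Alternatively one could expand $\ord_2\T(E/E^\prime) = \sum_v \ord_2\!\big(|H^1_\phi(K_v,C)|/2\big)$ by Cassels' product formula, Theorem \ref{prodform2}, and argue place by place, but the global route is cleaner here.)

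This parity statement is the real content, and it depends only on formal properties of the Cassels--Tate pairing; crucially it needs neither the finiteness of $\Sha$ nor Conjecture $\Sha T_2(K)$, which is essential because Theorem \ref{tparity} is unconditional. Write $\Sha_{\mathrm{div}}$ for the maximal divisible subgroup and $m = \mathrm{corank}_{\Z_2}\Sha(E/K)$. The Cassels--Tate pairing is alternating, so $\Sha(E/K)/\Sha_{\mathrm{div}}$ has square $2$-part, whence $\dimF\Sha(E/K)[2] \equiv m \pmod 2$. It also makes $\phi$ and $\hat\phi$ adjoint for the pairings on $\Sha(E/K)$ and $\Sha(E^\prime/K)$; combining this adjointness with the square $2$-part of $\Sha/\Sha_{\mathrm{div}}$ and with the fact that $\phi$ induces an isomorphism on prime-to-$2$ torsion (being a $2$-isogeny), a comparison of orders of kernels and images of $\phi$, $\hat\phi$ on $\Sha/\Sha_{\mathrm{div}}$ gives
\begin{equation*}
\dimF\Sha(E^\prime/K)[\hat\phi] + \dimF\Sha(E/K)[\phi] \equiv \dimF\Sha_{\mathrm{div}}(E^\prime/K)[\hat\phi] + \dimF\Sha_{\mathrm{div}}(E/K)[\phi] \pmod 2.
\end{equation*}
On the divisible parts, $\phi$ and $\hat\phi$ induce injections of $\Z_2$-Tate modules, each $\cong\Z_2^m$, whose determinants have $2$-adic valuations summing to $m$ (since $\hat\phi\circ\phi=[2]$); so the right-hand side above equals $m$. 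Hence $\dimF\big(\Sha(E^\prime/K)[\hat\phi]/\phi(\Sha(E/K)[2])\big) \equiv m - m \equiv 0 \pmod 2$, which is what was needed. I expect the careful treatment of divisible subgroups inside the Cassels--Tate formalism to be the main obstacle (which is presumably why this theorem is quoted from work of T. and V. Dokchitser rather than reproved in full here).
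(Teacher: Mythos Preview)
The paper does not prove this result at all: its entire ``proof'' is the citation ``This is Corollary~5.8 in \cite{DD2}.'' So your proposal is not a comparison target but rather an attempt to reconstruct the Dokchitser--Dokchitser argument.

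Your reduction in the first half is correct: unwinding the three descent sequences together with the Mordell--Weil and Shafarevich--Tate four-term sequences does give
\[
d_\phi(E/K)+d_{\hat\phi}(E'/K)-d_2(E/K)=2+\dimF\bigl(\Sha(E'/K)[\hat\phi]\big/\phi(\Sha(E/K)[2])\bigr),
\]
and this is a clean way to isolate the content of the theorem.

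The second half, however, is where the actual theorem lives, and here your argument has a genuine gap. The displayed congruence
\[
\dimF\Sha(E')[\hat\phi]+\dimF\Sha(E)[\phi]\equiv\dimF\Sha_{\mathrm{div}}(E')[\hat\phi]+\dimF\Sha_{\mathrm{div}}(E)[\phi]\pmod 2
\]
is asserted on the strength of ``adjointness\,+\,square $2$-part\,+\,comparison of orders,'' but no such comparison is carried out, and it is not clear that those ingredients alone suffice: one would need to show that $\dimF\ker\bar\phi+\dimF\ker\bar{\hat\phi}$ is even for the induced maps on $\Sha/\Sha_{\mathrm{div}}$, and your sketch does not establish this. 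The divisible-part computation (that the kernel dimensions there sum to the corank $m$) is fine once one notes that the cokernels of $T\phi$ and $T\hat\phi$ are killed by $2$, but that does not handle the non-divisible quotients. The standard route---and essentially what Dokchitser--Dokchitser do---is to show that the Cassels--Tate pairing on $\Sha(E')$ restricts to a pairing on $\Sha(E')[\hat\phi]$ whose kernel, modulo divisible elements, is exactly the image of $\phi$ on $\Sha(E)[2]$, and that the induced form on the quotient is alternating; evenness then follows. Your closing remark correctly identifies this as the obstacle, but as written the proposal is a sketch of a strategy rather than a proof.
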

\begin{proof}
This is Corollary 5.8 in \cite{DD2}.
\end{proof}

\subsection{Relationship Between $H^1_\phi(K_v, C)$ and $H^1_\phi(K_v, C^F)$}

As noted earlier, if $F/K$ is a quadratic extension, then there is a canonical $G_K$-isomorphsim $E[2] \rightarrow E^F[2]$. If $C$ is a subgroup of $E(K)$ of order 2, then we denote the image of $C$ in $E^F(K)[2]$ by $C^F$. As the map $C^F \rightarrow C$ is $G_K$ invariant, we can view $H^1_\phi(K_v, C^F)$ as a subgroup of $H^1(K_v, C)$ and $\Sel_\phi(E^F/K)$ as a subgroup of $H^1(K, C)$. This can be thought of as identifying both $H^1(K, C)$ and $H^1(K, C^F)$ with $K^\times/(K^\times)^2$ and both $H^1(K_v, C)$ and $H^1(K_v, C^F)$ with $K_v^\times/(K_v^\times)^2$.

The following four lemmas are an analogue of Lemma  \ref{localequality} that allow us to compare $H^1_\phi(K_v, C)$ and $H^1_\phi(K_v, C^F)$.

\begin{lemma}\label{localconds}
Suppose $v$ is a prime away from 2 where $E$ has good reduction and $v$ is ramified in $F/K$. Then $H^1_\phi(K_v, C^F) =  E^{\prime F}(K_v)[2]/\phi( E^F(K_v)[2])$.
\end{lemma}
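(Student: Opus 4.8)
The statement concerns a prime $v \nmid 2$ where $E$ has good reduction and which is ramified in $F/K$, and asserts $H^1_\phi(K_v, C^F) = E^{\prime F}(K_v)[2]/\phi(E^F(K_v)[2])$. The plan is to combine the known structural facts about $E^F$ at such a prime with the exact sequence of Proposition \ref{Re4.7} applied to the twisted curve. First I would invoke Lemma \ref{twisttorsion}: since $E$ has good reduction at $v$, $v \nmid 2$, and $F/K$ is ramified at $v$, the curve $E^F$ has reduction type $I_0^*$ at $v$, and in particular $E^F(K_v)$ has no points of order $4$; the same reasoning applies to $E^{\prime F}$, which is the quadratic twist by $F$ of $E^\prime$ (a curve with good reduction at $v$). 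So both $E^F(K_v)$ and $E^{\prime F}(K_v)$ have trivial $2$-divisible part in their $2$-primary torsion, i.e. $E^F(K_v)/2E^F(K_v) = E^F(K_v)[2]$ and likewise for $E^{\prime F}$.

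Next I would write down sequence (\ref{ss}) of Proposition \ref{Re4.7} for the triple $(E^F, C^F, \phi^F)$ at the place $v$:
\begin{equation*}
0 \rightarrow C^{\prime F}/\phi\left(E^F(K_v)[2]\right) \rightarrow H^1_\phi(K_v, C^F) \rightarrow H^1_f(K_v, E^F[2]) \xrightarrow{\phi} H^1_{\hat\phi}(K_v, C^{\prime F}) \rightarrow 0.
\end{equation*}
By Lemma \ref{twisttorsion}, $H^1_f(K_v, E^F[2])$ is exactly the image of $E^F(K_v)[2]$ under the Kummer map, so its dimension equals $\dimF E^F(K_v)[2]$. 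The strategy is then a dimension count: the left-hand term has dimension $\dimF C^{\prime F} - \dimF \phi(E^F(K_v)[2])$, and one must identify the image of the rightmost $\phi$-map. I expect that $\phi$ maps $H^1_f(K_v, E^F[2])$ onto $H^1_{\hat\phi}(K_v, C^{\prime F})$ with the claimed kernel, and that the inclusion $E^{\prime F}(K_v)[2]/\phi(E^F(K_v)[2]) \hookrightarrow H^1_\phi(K_v, C^F)$ coming from the Kummer map on $E^{\prime F}$ together with the definition of $H^1_\phi$ (as the image of $E^{\prime F}(K_v)/\phi(E^F(K_v))$) is an equality. Since $E^{\prime F}(K_v)/\phi(E^F(K_v))$ surjects onto $E^{\prime F}(K_v)/2E^{\prime F}(K_v) = E^{\prime F}(K_v)[2]$ via $\hat\phi$, and the formal/component-group analysis from Lemma \ref{twisttorsion} controls the sizes, the containment $H^1_\phi(K_v, C^F) \supseteq E^{\prime F}(K_v)[2]/\phi(E^F(K_v)[2])$ is immediate from the definition, and the reverse containment follows once the cokernel piece is pinned down by comparing orders through the exact sequence.

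The cleanest route is probably via orders: using Proposition \ref{Re4.7} one gets
\[
|H^1_\phi(K_v, C^F)| = \frac{|C^{\prime F}|}{|\phi(E^F(K_v)[2])|} \cdot \big|\ker\big(H^1_f(K_v, E^F[2]) \xrightarrow{\phi} H^1_{\hat\phi}(K_v, C^{\prime F})\big)\big|,
\]
and then one identifies that kernel with $E^F(K_v)[2]/(E^F(K_v)[2]\cap(\text{image of } C^{\prime F})) $-type data, ultimately matching $|E^{\prime F}(K_v)[2]/\phi(E^F(K_v)[2])|$ on the nose; combined with the evident inclusion this forces equality. The main obstacle, as I see it, is the bookkeeping with the various $2$-torsion subgroups $C^F, C^{\prime F}, E^F(K_v)[2], E^{\prime F}(K_v)[2]$ and the maps $\phi, \hat\phi$ between them at a place of type $I_0^*$ — one must be careful about which of these groups is trivial, $\Zt$, or $\ZtZt$ depending on whether $E(K_v)[2]$ is $\Zt$ or larger, and whether the extra $2$-torsion of $E^F$ or $E^{\prime F}$ is rational over $K_v$. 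Handling this uniformly, rather than case-by-case, is the delicate part; Lemma \ref{twisttorsion} does most of the heavy lifting by killing $4$-torsion, which is what makes all the relevant quotients collapse to honest $2$-torsion subgroups.
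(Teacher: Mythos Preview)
Your approach is correct in principle but takes a much longer route than the paper. The paper's proof is a direct two-line argument: since $v \nmid 2$, one writes $E^F(K_v) = E^F(K_v)[2^\infty] \times B$ and $E^{\prime F}(K_v) = E^{\prime F}(K_v)[2^\infty] \times B^\prime$ with $B, B^\prime$ profinite abelian groups of odd order. A degree-$2$ isogeny is an isomorphism on the odd parts, so
\[
H^1_\phi(K_v, C^F) \cong E^{\prime F}(K_v)/\phi\bigl(E^F(K_v)\bigr) = E^{\prime F}(K_v)[2^\infty]/\phi\bigl(E^F(K_v)[2^\infty]\bigr).
\]
Then Lemma \ref{twisttorsion} (applied to both $E$ and $E^\prime$, each having good reduction at $v$) kills all $4$-torsion, so $E^F(K_v)[2^\infty] = E^F(K_v)[2]$ and $E^{\prime F}(K_v)[2^\infty] = E^{\prime F}(K_v)[2]$, and the result is immediate.

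You correctly identified Lemma \ref{twisttorsion} as the key input, but then detoured through the four-term exact sequence of Proposition \ref{Re4.7} and a dimension count that you yourself flag as requiring delicate case analysis on the various $2$-torsion subgroups. That machinery is unnecessary here: the very definition of $H^1_\phi(K_v, C^F)$ as the image of $E^{\prime F}(K_v)/\phi(E^F(K_v))$, together with the odd/$2$-primary splitting, gives the answer directly with no exact-sequence bookkeeping and no case split. Your route would ultimately work, but the odd-part decomposition is the missing shortcut that collapses the whole argument to a couple of sentences.
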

\begin{proof}
Since $v$ is a prime away from 2, $E^F(K_v)$ is given by $E^F(K_v)[2^\infty] \times B$ and $E^{\prime F}(K_v)$ is given by $E^{\prime F}(K_v)[2^\infty] \times B^\prime$ where $B$ and $B^\prime$ are profinite abelian groups of odd order. Since $\phi$ has degree 2, we therefore get that $E^{\prime F}(K_v)/\phi( E^F(K_v) ) = E^{\prime F}(K_v)[2^\infty]/\phi( E^F(K_v)[2^\infty])$. By Lemma \ref{twisttorsion}, neither $E^F(K_v)$ nor $E^{\prime F}(K_v)$ have any points of order 4, yielding the result.
\end{proof}

\begin{lemma}[Criteria for equality of local conditions after twist]\label{localeq}

If any of the following conditions hold:
\begin{enumerate}[(i)]
\item $v$ splits in $F/K$
\item $v$ is a prime away from 2 where $E$ has good reduction and $v$ is unramified in $F/K$
\item $v$ is a prime away from 2 where $E$ has good reduction, $v$ is ramified in $F/K$, and $E(K_v)[2] \simeq \Zt \simeq E^\prime(K_v)[2]$
\end{enumerate}
then $H^1_\phi(K_v, C) = H^1_\phi(K_v, C^F)$ and  $H^1_{\hat \phi}(K_v, C^\prime) = H^1_{\hat \phi}(K_v, C^{\prime F})$.
\end{lemma}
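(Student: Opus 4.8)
The plan is to reduce the lemma to the single equality $H^1_\phi(K_v,C)=H^1_\phi(K_v,C^F)$ and then dispose of the three cases in increasing order of difficulty. Throughout, use the canonical $G_K$-isomorphism $E[2]\xrightarrow{\sim}E^F[2]$, which identifies $C$ with $C^F$ and $C^\prime$ with $C^{\prime F}$ as $G_{K_v}$-modules, so that all four local conditions appearing in the statement live in the two fixed groups $H^1(K_v,C)$ and $H^1(K_v,C^\prime)$, both of which we identify with $K_v^\times/(K_v^\times)^2$. Because the local Tate pairing of Lemma \ref{localduality} depends only on the $G_{K_v}$-modules, and the defining Weil pairing is preserved by the isomorphism $E\to E^F$, the pair $\bigl(H^1_\phi(K_v,C^F),\,H^1_{\hatphi}(K_v,C^{\prime F})\bigr)$ consists of orthogonal complements, just as $\bigl(H^1_\phi(K_v,C),\,H^1_{\hatphi}(K_v,C^\prime)\bigr)$ does; hence the $\hatphi$-equality follows from the $\phi$-equality by passing to orthogonal complements. (In case (iii) one could instead note that the hypotheses are symmetric under $E\leftrightarrow E^\prime$ --- the curve $\hatphi$-isogenous to $E^\prime$ being $K_v$-isomorphic to $E$ --- and rerun the argument with $\phi$ and $\hatphi$ swapped.)

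Cases (i) and (ii) are quick. In case (i), $d\in(K_v^\times)^2$, so the twisting cocycle is trivial over $K_v$ and $E^F$ is isomorphic to $E$ over $K_v$ by a map inducing the canonical identifications on $C$ and $C^\prime$; the $\phi$-descent sequence is carried to that of $E^F$, giving $H^1_\phi(K_v,C^F)=H^1_\phi(K_v,C)$. In case (ii), since $v\nmid 2$ and $v$ is unramified in $F/K$, the twist $E^F$ still has good reduction at $v$ (apply the criterion of N\'eron--Ogg--Shafarevich to $E^F[\ell]\cong E[\ell]\otimes\chi_{F/K}$ for some odd prime $\ell$), so by Lemma~4.1 of \cite{Cassels8} we have $H^1_\phi(K_v,C^F)=H^1_u(K_v,C^F)=H^1_u(K_v,C)=H^1_\phi(K_v,C)$, the middle equality holding because $C^F=C$ as $G_{K_v}$-modules and the unramified subgroup is intrinsic to the module.

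Case (iii) is where the work lies. On the one hand, $E$ has good reduction at $v\nmid 2$, so Lemma~4.1 of \cite{Cassels8} gives $H^1_\phi(K_v,C)=H^1_u(K_v,C)$, a one-dimensional $\Ftwo$-subspace, namely $\mathcal{O}_v^\times(K_v^\times)^2/(K_v^\times)^2$. On the other hand $v$ is ramified in $F/K$, so Lemma \ref{localconds} identifies $H^1_\phi(K_v,C^F)$ with the image under $\delta$ of $E^{\prime F}(K_v)[2]/\phi(E^F(K_v)[2])$. Since $E^F(K_v)[2]=(E^F[2])^{G_{K_v}}=(E[2])^{G_{K_v}}=E(K_v)[2]$ is, by hypothesis, the rational $2$-torsion $C^F\subseteq\ker\phi$, we get $\phi(E^F(K_v)[2])=0$; and $E^{\prime F}(K_v)[2]=E^\prime(K_v)[2]$ is, by hypothesis, cyclic of order $2$, generated by $(0,0)$. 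Hence $H^1_\phi(K_v,C^F)$ is one-dimensional, spanned by $\delta\bigl((0,0)\bigr)$, which in $K_v^\times/(K_v^\times)^2$ is the class of $\Delta_{E^F}$. It remains to see that this class is unramified: a direct computation with the given Weierstrass models yields $\Delta_{E^F}=d^6\Delta_E$, and since $E$ has good reduction at $v$ every Weierstrass model of $E$ over $K_v$ has discriminant valuation divisible by $12$, so $v(\Delta_{E^F})=6\,v(d)+v(\Delta_E)$ is even and the class of $\Delta_{E^F}$ lies in $\mathcal{O}_v^\times(K_v^\times)^2/(K_v^\times)^2$. Comparing the two one-dimensional subspaces of $K_v^\times/(K_v^\times)^2$ forces $H^1_\phi(K_v,C^F)=H^1_u(K_v,C)=H^1_\phi(K_v,C)$.

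I expect the main obstacle to be exactly case (iii): the two descriptions of the local conditions look quite different --- an ``unramified'' subgroup coming from the good reduction of $E$ versus a ``rational $2$-torsion'' subgroup furnished by Lemma \ref{localconds}, which applies because the ramified quadratic twist $E^F$ acquires additive reduction of type $I_0^*$ at $v$ --- and the crux is to pin down the generator $\Delta_{E^F}$ of the latter and verify that it is unramified. Cases (i) and (ii) and the reduction from $\phi$ to $\hatphi$ are formal once the module identifications have been set up.
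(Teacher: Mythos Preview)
Your argument is correct and follows essentially the same route as the paper's: cases (i) and (ii) are dispatched identically, and for (iii) both you and the paper invoke Lemma~\ref{localconds} to identify $H^1_\phi(K_v,C^F)$ with the image of $(0,0)\in E^{\prime F}(K_v)[2]$, then observe that this image is the (unramified, nontrivial) class of $\Delta_E$ in $K_v^\times/(K_v^\times)^2$---the paper states this directly, while you reach it via $\Delta_{E^F}=d^6\Delta_E$. The one cosmetic difference is that you deduce the $\hatphi$-equality from the $\phi$-equality by passing to orthogonal complements under the Tate pairing, whereas the paper simply swaps the roles of $E$ and $E^\prime$; both are valid and equally short.
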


\begin{proof} \text{ }\begin{enumerate}[(i)]
\item In this case $E \simeq E^F$ over $K_v$.
\item If $v$ is a prime away from 2 where $E$ has good reduction and $v$ is unramified in $F/K$, then $E^F$ also has good reduction at $v$. It then follows that both $H^1_\phi(K_v, C^F)$ and $H^1_\phi(K_v, C)$ are equal to $H^1_u(K_v, C)$ and both $H^1_{\hat \phi}(K_v, C^\prime)$ and $H^1_{\hat \phi}(K_v, C^{\prime F})$ are equal to $H^1_u(K_v, C^\prime)$.
\item By Lemma \ref{localconds}, $H^1_\phi(K_v, C^F) =  E^{\prime F}(K_v)[2]/\phi( E^F(K_v)[2])$. Since $E(K_v)[2] \simeq \Zt \simeq E^\prime(K_v)[2]$, we therefore get that $H^1_\phi(K_v, C^F)$ is given by the image of $C^{\prime F}$ in $H^1(K_v, C)$. Since the image of $C^{\prime F}$ is generated by $\Delta_E$, $E$ has good reduction at $v$, and $\Delta_E\not \in (K_v^\times)^2$, we get that the image of $C^{\prime F}$ is $H^1_u(K_v, C)$ and therefore that $H^1_\phi(K_v, C) = H^1_\phi(K_v, C^F)$. Exchanging the roles of $E$ and $E^\prime$ then gives the result.
\end{enumerate}\end{proof}

\begin{lemma}\label{twistbp} Suppose $E$ has good reduction at a prime $v$ away from 2 and $F/K$ is a quadratic extension ramified at $v$. If $E(K_v)[2] \simeq \Zt \times \Zt$ and $E^\prime(K_v)[2] \simeq \Zt \times \Zt$, then both of $H^1_\phi(K_v, C^F)$ and $H^1_{\hat \phi}(K_v, C^{\prime F})$ have $\mathbb{F}_2$ dimension 1 and both $H^1_\phi(K_v, C) \cap H^1_\phi(K_v, C^F) = 0$ and $H^1_{\hat \phi}(K_v, C^\prime) \cap H^1_{\hat \phi}(K_v, C^{\prime F}) = 0$. Further, if $F_1/K_v$ and $F_2/K_v$ are different quadratic extensions of $K_v$ both of which are ramified, then $H^1_\phi(K_v, C^{F_1}) \cap H^1_\phi(K_v, C^{F_2}) = 0$.
\end{lemma}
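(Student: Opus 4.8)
The plan is to carry out everything inside $H^1(K_v, C)\simeq K_v^\times/(K_v^\times)^2$, which is a $2$-dimensional $\Ftwo$-vector space because $v\nmid 2$, and in which the unramified subgroup $H^1_u(K_v, C)=\mathcal{O}_v^\times(K_v^\times)^2/(K_v^\times)^2$ is a line. Since $E$ has good reduction at $v$ we already know $H^1_\phi(K_v, C)=H^1_u(K_v, C)$ and $H^1_{\hatphi}(K_v, C^\prime)=H^1_u(K_v, C^\prime)$, so all three assertions amount to showing that $H^1_\phi(K_v, C^F)$ and $H^1_{\hatphi}(K_v, C^{\prime F})$ are \emph{ramified} lines, i.e. lines meeting the unramified line trivially, and that distinct ramified $F$ give distinct such lines for $H^1_\phi$.

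First I would fix a model $y^2=x(x^2+Ax+B)$ of $E$ that is minimal at $v$ (harmless, since only the local conditions at $v$ are at stake), so that $\Delta_E=B^2(A^2-4B)$ is a $v$-adic unit; then $A,B\in\mathcal{O}_v$ and the nonzero $2$-torsion $x$-coordinates $e_1^\prime,e_2^\prime$ of $E^\prime$ — the roots of the monic integral polynomial $x^2-2Ax+(A^2-4B)$ — are units at $v$, since $E^\prime$ also has good reduction at $v$ and hence has its $2$-torsion reducing injectively. Writing $F=K(\sqrt{d})$ with $v(d)$ odd, the nonzero $2$-torsion $x$-coordinates of $E^{\prime F}$ are $de_1^\prime,de_2^\prime$. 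Using the canonical $G_{K_v}$-isomorphisms $E[2]\simeq E^F[2]$ and $E^\prime[2]\simeq E^{\prime F}[2]$, the hypotheses give $E^F(K_v)[2]\simeq E^{\prime F}(K_v)[2]\simeq\ZtZt$, so Lemma \ref{twisttorsion} applies to $E^F$ and $E^{\prime F}$, and by Lemma \ref{localconds} together with the explicit description of the $\phi$-descent map, $H^1_\phi(K_v, C^F)$ is the image in $K_v^\times/(K_v^\times)^2$ of the $x$-coordinates of the $2$-torsion of $E^{\prime F}$ modulo the class of $C^{\prime F}$, namely of $\{1,\,\Delta_{E^F},\,de_1^\prime,\,de_2^\prime\}$ modulo $\langle\Delta_{E^F}\rangle$. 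The key observation is that $E(K_v)[2]\simeq\ZtZt$ forces $A^2-4B\in(K_v^\times)^2$, so $\Delta_{E^F}=d^6\Delta_E\equiv 1$ and $de_1^\prime\cdot de_2^\prime\equiv A^2-4B\equiv 1$; hence this image collapses to the single line $\langle de_1^\prime\rangle$, and since $e_1^\prime$ is a unit while $v(d)$ is odd, $de_1^\prime\notin\mathcal{O}_v^\times(K_v^\times)^2$, so $H^1_\phi(K_v, C^F)=\langle de_1^\prime\rangle$ is a ramified line.

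Everything else is then linear algebra in the $2$-dimensional space $K_v^\times/(K_v^\times)^2$: the line $\langle de_1^\prime\rangle$ is one-dimensional and meets $H^1_\phi(K_v, C)=H^1_u(K_v, C)$ trivially; running the identical argument for $\hatphi\colon E^\prime\to E$ (legitimate because $E(K_v)[2]\simeq E^\prime(K_v)[2]\simeq\ZtZt$) shows $H^1_{\hatphi}(K_v, C^{\prime F})$ is likewise a ramified line and hence meets $H^1_{\hatphi}(K_v, C^\prime)=H^1_u(K_v, C^\prime)$ trivially; and for distinct ramified quadratic extensions $F_i=K_v(\sqrt{d_i})$ we get $H^1_\phi(K_v, C^{F_i})=\langle d_ie_1^\prime\rangle$, two lines which coincide iff $d_1/d_2\in(K_v^\times)^2$ iff $F_1=F_2$. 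I expect the only step requiring genuine care to be the middle one: confirming that the classes $\Delta_{E^F}$ and $de_1^\prime de_2^\prime$ become trivial once $E$ and $E^\prime$ have full $2$-torsion over $K_v$, and that $e_1^\prime,e_2^\prime$ are $v$-adic units (which is where good reduction and minimality of the model are used); once the relevant subgroups are pinned down as explicit lines, the intersection statements are immediate.
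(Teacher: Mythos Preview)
Your argument is correct and follows essentially the same route as the paper's proof: both identify $H^1_\phi(K_v,C^F)$ via Lemma~\ref{localconds} as the line generated by the class of $d\cdot e'$ with $e'$ the $x$-coordinate of a nontrivial $2$-torsion point of $E'$, observe that $e'$ has even valuation (you via minimality and injective reduction of $2$-torsion, the paper via the fact that $e'$ already lies in $H^1_\phi(K_v,C)=H^1_u(K_v,C)$), and conclude the line is ramified; the disjointness from $H^1_u$ and the distinctness for different ramified $F_i$ then follow by the same linear algebra in the $2$-dimensional space $K_v^\times/(K_v^\times)^2$.
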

\begin{proof}
The fact that $H^1_\phi(K_v, C^F)$ and $H^1_{\hat \phi}(K_v, C^{\prime F})$ both have $\mathbb{F}_2$ dimension 1 is an immediate consequence Lemma \ref{localconds}. Further, as a subgroup of $K_v^\times/(K_v^\times)^2$, $H^1_\phi(K_v, C^F)$ is generated by the image of $Q^\prime$, where $Q^\prime \in E^{\prime F}[2] - C^{\prime F}$. If $F/K$ is locally given by $F_w = K_v(\sqrt{\pi})$, then the point $Q^\prime$ on $E^{F_w}$ is given by $(u\pi, 0)$, where $(u, 0) \in E^\prime(K_v)[2]$. Since $H^1_\phi(K_v, C) = H^1_u(K_v, C)$, it follows that $\ord_v u$ is even. Since $v \nmid 2$ and $F_w$ is ramified at $v$, we get that $\ord_v \pi$ is odd. Since $Q^\prime$ maps to $u\pi(K_v^\times)^2$,  $H^1_\phi(K_v, C^F)$ is a ramified subgroup of $H^1(K_v, C)$ and therefore disjoint from $H^1_\phi(K_v, C) = H^1_u(K_v, C)$. Exchanging the roles of $E$ and $E^\prime$ completes the first part of the result.

Since $F_1 \ne F_2$, we may write $F_1 = K_v(\sqrt{\pi})$ and $F_2 = K_v(\sqrt{w\pi})$, where $w \in O_{K_v}^\times - (O_{K_v}^\times)^2$. By the above, we get that $H^1_\phi(K_v, C^{F_1})$ is generated by $u\pi(K_v^\times)^2$ and  $H^1_\phi(K_v, C^{F_2})$ is generated by $uw\pi(K_v^\times)^2$. Since $w \not \in (K_v^\times)^2$, the result follows.
\end{proof}

\begin{lemma} \label{kill local} Suppose $E$ has good reduction at a prime $v$ away from 2 and $F/K$ is a quadratic extension ramified at $v$. If $E(K_v)[2] \simeq \Zt \times \Zt$, and $E^\prime(K_v)[2] \simeq \Zt$, then $H^1_\phi(K_v, C^F) = 0$ and  $H^1_\hatphi(K_v, C^{\prime F})=H^1(K_v, C)$. 
\end{lemma}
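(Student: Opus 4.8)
The plan is to deduce both equalities from a short computation with $2$-torsion on the twisted curves: the first from Lemma~\ref{localconds}, the second from the local duality of Lemma~\ref{localduality}. No input beyond these two lemmas should be needed.

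First I would record how twisting affects the relevant $2$-torsion. The canonical $G_K$-isomorphism $E[2] \xrightarrow{\sim} E^F[2]$ restricts on $2$-torsion to the $K$-rational map $(x,0)\mapsto(dx,0)$, so it carries $C=\langle(0,0)\rangle$ to the subgroup of $E^F[2]$ generated by $(0,0)$ on $E^F$; thus $C^F\subseteq E^F(K_v)[2]$, and similarly $C^{\prime F}$ is generated by $(0,0)$ on $E^{\prime F}$. Because this isomorphism (and its analogue for $E^\prime$) is $G_{K_v}$-equivariant, the $G_{K_v}$-module structures of $E^F[2]$ and $E^{\prime F}[2]$ coincide with those of $E[2]$ and $E^\prime[2]$; hence the hypotheses give $E^F(K_v)[2]\simeq\ZtZt$ and $E^{\prime F}(K_v)[2]\simeq\Zt$, so in particular $C^{\prime F}=E^{\prime F}(K_v)[2]$.

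Next I would compute $H^1_\phi(K_v,C^F)$. Our hypotheses ($v\nmid 2$, good reduction at $v$, $v$ ramified in $F/K$) are exactly those of Lemma~\ref{localconds}, which gives $H^1_\phi(K_v,C^F)=E^{\prime F}(K_v)[2]/\phi\big(E^F(K_v)[2]\big)$, where $\phi$ now denotes the degree-$2$ isogeny $E^F\to E^{\prime F}$ with kernel $C^F$. The restriction of $\phi$ to $E^F(K_v)[2]$ is a homomorphism into $E^{\prime F}(K_v)[2]$ (since $\phi$ is defined over $K_v$ and preserves $2$-torsion) with kernel $C^F$, which lies in $E^F(K_v)[2]$ by the previous paragraph; hence its image has order $|E^F(K_v)[2]|/|C^F|=4/2=2=|E^{\prime F}(K_v)[2]|$. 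Therefore $\phi(E^F(K_v)[2])=E^{\prime F}(K_v)[2]$ and $H^1_\phi(K_v,C^F)=0$.

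Finally I would apply Lemma~\ref{localduality} to $E^F$ — legitimate since the curve $2$-isogenous to $E^F$ is $E^{\prime F}$ and the twisting isomorphisms respect the Weil pairing — to conclude that $H^1_\phi(K_v,C^F)$ and $H^1_{\hat\phi}(K_v,C^{\prime F})$ are orthogonal complements under the (perfect) local Tate pairing on $H^1(K_v,C^F)\times H^1(K_v,C^{\prime F})$. As $H^1_\phi(K_v,C^F)=0$, its orthogonal complement is the whole group, so $H^1_{\hat\phi}(K_v,C^{\prime F})=H^1(K_v,C^{\prime F})$, which under the standard identification of these cohomology groups with $K_v^\times/(K_v^\times)^2$ is $H^1(K_v,C)$. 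There is no real obstacle here; the only points requiring care are checking that twisting does not change the $G_{K_v}$-module type of the $2$-torsion (so that $E^{\prime F}(K_v)[2]$ is still $\Zt$ and $C^F,C^{\prime F}$ are the rational $2$-torsion subgroups named above) and that Lemma~\ref{localduality} transports to $E^F$.
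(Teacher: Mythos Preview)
Your proof is correct. For $H^1_\phi(K_v,C^F)=0$ you proceed exactly as the paper intends: invoke Lemma~\ref{localconds} and observe that $\phi$ carries $E^F(K_v)[2]\simeq\ZtZt$ onto $E^{\prime F}(K_v)[2]\simeq\Zt$. For $H^1_{\hat\phi}(K_v,C^{\prime F})=H^1(K_v,C)$ the paper's one-line proof (``immediate from Lemma~\ref{localconds}'') presumably applies that lemma a second time with the roles of $E$ and $E^\prime$ reversed, yielding $H^1_{\hat\phi}(K_v,C^{\prime F})=E^F(K_v)[2]/\hat\phi(E^{\prime F}(K_v)[2])=E^F(K_v)[2]\simeq(\Zt)^2$, which is the whole of $H^1(K_v,C)$ by a dimension count; you instead obtain the second equality from the first via the local duality of Lemma~\ref{localduality}. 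Both routes are equally short and valid, and the preliminary check that twisting preserves the $G_{K_v}$-module structure of the $2$-torsion is shared by both.
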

\begin{proof}
This follows immediately from Lemma \ref{localconds}.
\end{proof}

\section{Characterization of curves with $E(K) \simeq \mathbb{Z}/2\mathbb{Z}$}\label{characterization}

For the remainder of this paper, we will assume that $E(K)[2] \simeq \Zt$, $M = K(E[2])$, and $M^\prime = K(E^\prime[2])$.

The results obtained in this paper are dependent on whether $E$ has a cyclic isogeny defined over $K$ or over $K(E[2])$. The following result characterizes such curves.

\begin{lemma}\label{charac}
Let $E$ be an elliptic curve defined over $K$ with $E(K) \simeq \mathbb{Z}/2\mathbb{Z}$.
\begin{enumerate}[(i)]
\item  $E$ has a cyclic 4-isogeny defined over $K$ if and only if $M^\prime = K(E^\prime[2]) = K$. 
\item  $E$ does not have a cyclic 4-isogeny defined over $K$ but acquires a cyclic 4-isogeny over $M=K(E[2])$ if and only if $M = M^\prime$
\end{enumerate}
\end{lemma}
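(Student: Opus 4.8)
\emph{Plan of proof.} I begin with two preliminary observations used throughout. Since $E(K)[2]\simeq\Zt$, the $G_K$-fixed subspace of $E[2]$ is exactly $C=E(K)[2]$, so $G_K$ does not act trivially on $E[2]$; as its image in $\mathrm{Aut}(E[2])\cong S_3$ stabilizes the line $C$ and the stabilizer of a line has order $2$, we get $[M:K]=2$ and, crucially, $C$ is the \emph{only} $G_K$-stable subgroup of $E[2]$ of order $2$. The same reasoning applied to $E^\prime$, together with the fact that $C^\prime=\ker\hat\phi=\phi(E[2])$ is $G_K$-stable, gives $[M^\prime:K]\le 2$. Secondly, I record the dictionary I will use: a cyclic $4$-isogeny of $E$ defined over a field $L\supseteq K$ is the same as a $G_L$-stable cyclic subgroup $Z\subset E$ of order $4$; its unique order-$2$ subgroup $Z[2]$ is then $G_L$-stable, and if in addition $C\subseteq Z$ then $\phi$ restricts to an isomorphism $Z/C\xrightarrow{\ \sim\ }\phi(Z)$ onto an order-$2$ subgroup of $E^\prime[2]$ with $\phi(Z)\ne C^\prime$ (otherwise $Z=\phi^{-1}(C^\prime)=E[2]$, not cyclic). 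Conversely, $\phi^{-1}(W)$ is cyclic of order $4$ for every order-$2$ subgroup $W\subset E^\prime[2]$ with $W\ne C^\prime$, and it is $G_L$-stable precisely when $W$ is.

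For part (i) I take $L=K$: by the first observation any cyclic $4$-isogeny over $K$ has $Z[2]=C$ and hence equals $\phi^{-1}(W)$ for some $G_K$-stable order-$2$ subgroup $W\ne C^\prime$ of $E^\prime[2]$; since $C^\prime$ is $G_K$-stable and $G_K$ permutes the remaining two order-$2$ subgroups, such a $W$ exists if and only if all of $E^\prime[2]$ is $K$-rational, i.e.\ $M^\prime=K$. The easy half of (ii) is similar: if $M=M^\prime$, then $M\ne K$ (again since $E(K)[2]\simeq\Zt$), so $M^\prime\ne K$ and by (i) $E$ has no cyclic $4$-isogeny over $K$; and over $M$ we have $E^\prime[2]\subseteq E^\prime(M^\prime)=E^\prime(M)$, so $\phi^{-1}(W)$ is a $G_M$-stable cyclic subgroup of order $4$ for any order-$2$ subgroup $W\ne C^\prime$ of $E^\prime[2]$, giving a cyclic $4$-isogeny over $M$.

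It remains to prove the forward implication of (ii), where the main difficulty lies. Assume $E$ has no cyclic $4$-isogeny over $K$ — so $M^\prime\ne K$ by (i) — and has one over $M$, with kernel $Z$; since $[M:K]=[M^\prime:K]=2$, it suffices to show $M^\prime\subseteq M$. If $C\subseteq Z$, then $\phi(Z)$ is a $G_M$-stable order-$2$ subgroup of $E^\prime[2]$ different from $C^\prime$, and running the argument of (i) with $K$ replaced by $M$ forces $E^\prime[2]\subseteq E^\prime(M)$, i.e.\ $M^\prime\subseteq M$. The genuinely delicate case is $C\not\subseteq Z$: then $Z[2]=D$ is one of the two order-$2$ subgroups of $E[2]$ other than $C$, both of which are $G_M$-stable, and $\phi$ embeds $Z$ onto a cyclic order-$4$ subgroup of $E^\prime$ with two-torsion $C^\prime$, so the preceding $\phi$-descent argument no longer applies directly. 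My plan for this case is to factor $E\to E/Z$ as $E\xrightarrow{\psi_D}E/D\to E/Z$, observe that $Z$ being $G_M$-stable is equivalent to the full $2$-torsion of the $M$-curve $E/D$ being rational over $M$, and then compute that $2$-torsion field explicitly from a model $y^2=x(x-e_1)(x-e_2)$ over $M$ with $D=\langle(e_1,0)\rangle$; rewriting the resulting square condition on an element of $M$ in terms of $A$ and $B$ — hence in terms of $M=K(\sqrt{A^2-4B})$ and $M^\prime=K(\sqrt{B})$ — and descending along $M/K$ via norms, while using the $\Gal(M/K)$-conjugation that interchanges $D$ with the companion subgroup $\langle(e_2,0)\rangle$, should yield $M^\prime\subseteq M$. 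This last descent is the step I expect to demand the most care, since the condition that an element of the quadratic field $M$ be a square is a priori stronger than the condition that its norm to $K$ be a square, and pinning down exactly which elements occur is where the real content of part (ii) sits.
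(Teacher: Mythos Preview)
Your argument for part (i), the backward direction of (ii), and the forward direction of (ii) in the case $C\subseteq Z$ is correct and essentially identical to the paper's: both rest on the observation that $\phi$ induces a bijection between $G_L$-stable cyclic order-$4$ subgroups of $E$ containing $C$ and $G_L$-stable order-$2$ subgroups of $E'[2]$ other than $C'$, together with the fact that $C$ is the unique $G_K$-stable line in $E[2]$.

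The gap in your proposal is the forward direction of (ii) when $C\not\subseteq Z$, which you leave as an outline. Here the paper takes a quite different and much shorter route than your proposed explicit computation through $E/D$. If $R$ generates $Z$ with $2R\neq P$, then $\phi(R)$ is an order-$4$ point on $E'$ with $2\phi(R)\in C'$, and $\langle\phi(R)\rangle$ is $G_M$-stable since $\phi$ is defined over $K$; the paper then notes that the desired conclusion $M=M'$ is symmetric in $E$ and $E'$ (using $M'\neq K$ from part (i), so that $M'\subseteq M\Leftrightarrow M\subseteq M'$) and declares that by passing to $E'$ one may assume without loss of generality that $2R=P$, reducing to the case already treated. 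Your Weierstrass-model-and-norm-descent plan is a genuinely different attack, and you are right that the descent from a square condition in $M$ down to $K$ is where the real content sits; but the paper's proof, as written, sidesteps that computation entirely via the $E\leftrightarrow E'$ symmetry. If you pursue your route, you should be aware that the norm $N_{M/K}\bigl(e_1(e_1-e_2)\bigr)=-B(A^2-4B)$ being a square in $K$ is by itself weaker than $M=M'$ (the latter is $B(A^2-4B)\in(K^\times)^2$), so the Galois-conjugation argument you sketch will need to extract more than just the norm.
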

\begin{proof}
We begin with the following two observations. First, if $R \in E(\overline{K})$ with $2R = P$, then $\phi(R) \in E^\prime[2] - C^\prime$. Second, if $Q^\prime \in E^\prime[2] - C^\prime$ and $R \in E(\overline{K})$ with $\phi(R) = Q^\prime$, then $2R = \hatphi(Q^\prime) = P$.

Suppose that $E$ has a cyclic 4-isogeny defined over $K.$ Let $H = \langle R \rangle$ be a cyclic subgroup of order 4 fixed by $G_K$. Since $2R \in E[2]$ and $2R$ is fixed by $G_K$, we get that $2R = P$. We will show that $\phi(R) \in E^\prime(K)$. Let $\sigma \in G_K$. As $\phi$ is defined over $K$, $\sigma(\phi(R)) = \phi(\sigma(R))$. However, since $\sigma(R) \in R + C$ for all $\sigma \in G_K$, it follows that $\sigma(\phi(R)) = \phi(R)$. Since $\phi(R) \in E^\prime[2] - C$ and $\phi(R)$ is defined over $K$, we therefore get that $M^\prime = K$.

Now suppose that $E$ does not have a cyclic 4-isogeny defined over $K$. Let $Q^\prime \in E^\prime[2] - C^\prime$ and take $R \in E(\overline{K})$ with $\phi(R) = Q^\prime$. Since $E$ does not have a cyclic 4-isogeny defined over $K$, there is some $\sigma \in G_K$ such that $\sigma(R) \not \in \langle R \rangle = \{ 0 , R, P, R+P \}$. We therefore have that $\sigma(\phi(R)) = \phi(\sigma(R)) \ne \phi(R).$ Since $\phi(R) \in E^\prime[2] - C^\prime$, it follows that $M^\prime \not \subset K$. This proves $(i)$. Further, replacing $K$ by $M$ shows that if $E$ does not have a cyclic 4-isogeny defined over $M$, then $M^\prime \not \subset M$.

Now suppose that $E$ does not have a cyclic 4-isogeny defined over $K$ but acquires a cyclic 4-isogeny over $M=K(E[2])$. That is, there is a point $R \in E(\overline{K})$ of order 4 such that $\langle R \rangle$ is fixed by $G_M$.  If $R$ is a point of order 4 on $E$ such that $2R \ne P$, then $\phi(R)$ is a point of order 4 such that $2\phi(R) \in C^\prime$. Since $M^\prime \not \subset K$, $M^\prime$ being contained in $M$ is equivalent to $M$ being contained in $M^\prime$. By passing to $E^\prime$ if necessary, we can therefore assume that $2R = P$. We therefore get that  $\sigma(R) \in R + C$ for all $\sigma \in G_M$ and that $\phi(\sigma(R)) = \phi(R)$ for all $\sigma \in G_M$. As $\phi(\sigma(R)) = \sigma(\phi(R))$, we get that $\phi(R) \in E^\prime(M)$. Since $\phi(R) \in  E^\prime[2] - C^\prime$, this gives $M^\prime \subset M$. As $E$ did not have a cyclic 4-isogeny defined over $K$, we have $K \genfrac{}{}{0pt}{}{\subset}{\ne} M^\prime \subset M$, showing that $M = M^\prime$, completing the proof of $(ii)$.
\end{proof}

The following corollary follows immediately. 
\begin{corollary}\label{characc}
With $E$ as in Lemma \ref{charac}:
If $E$ possesses a cyclic 4-isogeny defined over $M$ but not one defined over $K$, then $\dimF E(K_v)[2] = \dimF E^\prime(K_v)[2]$ for all places $v$ of $K$.
\end{corollary}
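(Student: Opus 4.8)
The plan is to deduce the equality of local torsion dimensions directly from part (ii) of Lemma \ref{charac}, namely that $M = M'$, where $M = K(E[2])$ and $M' = K(E'[2])$. Fix a place $v$ of $K$ and let $K_v$ be the corresponding completion. The quantity $\dimF E(K_v)[2]$ is determined entirely by the action of the decomposition group $G_{K_v} \subset G_K$ on $E[2]$: it equals $2$ if $G_{K_v}$ acts trivially on $E[2]$, equals $1$ if $G_{K_v}$ fixes the line $C = E(K)[2]$ but not all of $E[2]$ (which is automatic here, since $C$ is $G_K$-stable), and that exhausts the possibilities. Equivalently, $\dimF E(K_v)[2] = 2$ precisely when $v$ splits completely in $M/K$, and $\dimF E(K_v)[2] = 1$ otherwise. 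The same statement holds verbatim for $E'$ with $M'$ in place of $M$: $\dimF E'(K_v)[2] = 2$ iff $v$ splits completely in $M'/K$, and $= 1$ otherwise, using that $C' = E'(K)[2]$ is $G_K$-stable (it is the kernel of $\hat\phi$).

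First I would record these two dichotomies precisely; each is elementary field theory, since for a subgroup of $(\Z/2)^2$ preserving a distinguished line, having a fixed vector outside that line is the same as being the whole group, i.e. the same as $G_{K_v}$ acting trivially on $E[2]$, i.e. the same as $v$ splitting completely in $M/K$. Next, invoking Lemma \ref{charac}(ii), the hypothesis that $E$ has a cyclic $4$-isogeny over $M$ but not over $K$ gives $M = M'$. Consequently $v$ splits completely in $M/K$ if and only if it splits completely in $M'/K$. Combining this with the two dichotomies yields $\dimF E(K_v)[2] = \dimF E'(K_v)[2]$ for every place $v$, including the archimedean ones and those above $2$, since the argument only used the decomposition group action on $2$-torsion and never any reduction hypothesis.

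I do not anticipate a genuine obstacle: the only thing to be careful about is the bookkeeping at places $v$ where $\dimF E(K_v)[2]$ could a priori be $0$ — but this cannot happen because $C \subseteq E(K)[2] \subseteq E(K_v)[2]$ always, so the dimension is at least $1$, and symmetrically for $E'$ via $C' \subseteq E'(K_v)[2]$. Thus the only cases are dimension $1$ and dimension $2$, and the splitting-in-$M$ criterion matches them up. One could alternatively phrase the whole argument in terms of the image of $G_{K_v}$ in $\mathrm{Aut}(E[2]) \cong \mathrm{Aut}(E'[2])$ (the isomorphism being induced by $\phi$, which is defined over $K$), under which $M$ and $M'$ correspond to the fixed fields of the kernels of the two natural surjections; since $\phi$ identifies these actions compatibly with the distinguished lines $C$ and $C'$, the equality $M = M'$ is visibly equivalent to the local dimensions agreeing at every $v$. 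Either route is short, so the write-up is essentially just the verification that the three reductions above are correct.
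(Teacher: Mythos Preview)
Your main argument is correct and is exactly what the paper intends by ``follows immediately'': Lemma~\ref{charac}(ii) gives $M=M'$, and since $C\subset E(K_v)[2]$ and $C'\subset E'(K_v)[2]$ force both dimensions to be at least $1$, the dichotomy (dimension $2$ iff $v$ splits in $M$, respectively $M'$) matches them up. One small caution about your closing alternative: $\phi$ has kernel $C$, so it does not furnish a $G_K$-isomorphism $E[2]\simeq E'[2]$ nor a canonical identification of the Galois images in $\mathrm{Aut}(E[2])$ and $\mathrm{Aut}(E'[2])$; that sentence should be dropped or rephrased, but it is not needed for the proof.
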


\section{Proof of Theorem \ref{equaltwist}}\label{pfeqltwist}
For the remainder of this paper, we will be assuming that $E(K)[2] \simeq \Zt$ and that $E$ does not have a cyclic 4-isogeny defined over $K$. For simplicity, we will let $C = E(K)[2]$ be generated by a point $P$ and fix a second point $Q$ of order 2 so that $E[2] = \langle P, Q \rangle$.


The proofs of our main theorems are based on constructing an extension $F/K$ ramified at a small number of places in such a fashion that $d_2(E^F/K) - d_2(E/K)$ may be read off of (\ref{ineq}) and (\ref{parity}).

\begin{lemma}\label{pick 3} Define $S$ to be the image of $H^1(K, C)$ in $H^1(K, E[2])$ under the map in (\ref{CE[2]Cprime}) and let $V \subseteq H^1(K, E[2])$ be a dimension $r$ subspace of $H^1(K, E[2])$ such that $V \cap S = 0$. Then there exist $\sigma_1, ..., \sigma_r \in G_K$ such that $\sigma_i \big |_M \ne 1$ and $\loc(V)$ has dimension $r$ where the localization map $\loc:H^1(K, E[2]) \rightarrow \bigoplus_{i=1}^r E[2]/C$ is defined by $c \mapsto \left (c(\sigma_1), ..., c(\sigma_r) \right)$.
\end{lemma}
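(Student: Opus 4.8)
The plan is to build the $\sigma_i$ one at a time, at each step reducing the relevant ``bad'' subspace where the localization map fails to be injective. Recall that for $c \in H^1(K,E[2])$ and $\sigma \in G_K$, evaluation $c \mapsto c(\sigma)$ is well-defined modulo the image of the coboundary attached to $\sigma$; more to the point, if $c$ comes from $H^1(K,C)$ then $c(\sigma) \in C$ for all $\sigma$ (since $C$ is $G_K$-fixed and the cocycle takes values in $C \subset E[2]$), which is precisely why one quotients $E[2]$ by $C$ in the target: the composite $H^1(K,E[2]) \to \bigoplus_i E[2]/C$ is designed so that $S = $ image of $H^1(K,C)$ maps to $0$. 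So the map $\loc$ factors through $H^1(K,E[2])/S$, and on $V$ it is injective if and only if it is injective as a map out of the $r$-dimensional space $V$ (using $V \cap S = 0$). Thus it suffices to choose $\sigma_1, \dots, \sigma_r$, each nontrivial on $M$, so that the only $v \in V$ with $v(\sigma_i) \equiv 0 \pmod C$ for all $i$ is $v = 0$.

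The key step is the inductive one: suppose $\sigma_1, \dots, \sigma_j$ have been chosen with $\sigma_i|_M \neq 1$, and let $W = \{ v \in V : v(\sigma_i) \equiv 0 \bmod C \text{ for } 1 \le i \le j \}$; if $W = 0$ we are done (pad out with further $\sigma_i$'s nontrivial on $M$ if we still need to reach $r$ of them, but in fact $\dim W \le r - j$ will drop each time, so after $r$ steps $W = 0$). If $W \neq 0$, pick $0 \neq w \in W$. Since $W \subseteq V$ and $V \cap S = 0$, $w$ does not come from $H^1(K,C)$, so the image of $w$ in $H^1(K,C')$ under $\phi$ in sequence (\ref{CE[2]Cprime}) is nonzero; equivalently, the cocycle $\bar w = \phi \circ w$ is a nonzero element of $H^1(K, C')$. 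Restricting to $G_M$, I claim $\bar w|_{G_M}$ is still nonzero, or at least that $w|_{G_M}$ is a nonzero homomorphism $G_M \to E[2]/C$ for a suitable choice — here one uses that $G_M$ acts trivially on $E[2]$, so over $M$ the cocycle $w$ restricts to a genuine homomorphism $G_M \to E[2]$, and since $w$ does not come from $H^1(K,C)$ one checks (by inflation-restriction for $M/K$, using that $H^1(\Gal(M/K), E[2])$ and $H^1(\Gal(M/K),C)$ are handled by the exact sequence) that $w|_{G_M}$ is not a homomorphism into $C$. Hence there exists $\tau \in G_M$ with $w(\tau) \notin C$. Now set $\sigma_{j+1} = \tau \cdot \sigma_1$ or, if that fails to be nontrivial on $M$, simply $\sigma_{j+1} = \tau$ itself provided $\tau|_M \neq 1$ — but $\tau \in G_M$ means $\tau|_M = 1$, so instead take $\sigma_{j+1} = \tau \sigma$ where $\sigma \in G_K$ is any fixed element with $\sigma|_M \neq 1$; then $\sigma_{j+1}|_M = \sigma|_M \neq 1$, and using the cocycle relation $w(\tau\sigma) = w(\tau) + \tau \cdot w(\sigma) = w(\tau) + w(\sigma)$ (as $\tau \in G_M$ acts trivially on $E[2]$). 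We need $w(\tau\sigma) \notin C$; if $w(\tau) + w(\sigma) \in C$ then, since $w(\tau) \notin C$, replace $\sigma$ by $\sigma' = \tau^2 \sigma$ or note that among $\sigma$ and $\tau\sigma$ — wait, I should choose $\tau$ more carefully: by the same argument applied to $G_M$, the set $\{\tau \in G_M : w(\tau) \notin C\}$ is large (its complement is a subgroup of index $\le 2$ inside the kernel considerations), so we can find $\tau \in G_M$ with $w(\tau) \notin C$ and $w(\tau) \notin C - w(\sigma)$ simultaneously as long as $|E[2]/C| > 2$ fails... since $|E[2]/C| = 2$, this needs a small argument: the two cosets of $C$ in $E[2]$ are $C$ and $Q+C$; we need $w(\tau) \in Q+C$ and $w(\tau) + w(\sigma) \in Q+C$, i.e. $w(\sigma) \in C$. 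If $w(\sigma) \in Q+C$, instead use $\sigma_{j+1} = \tau$ combined by first modifying: take $\sigma_{j+1} = \sigma$ when $w(\sigma) \notin C$ (and $\sigma|_M \neq 1$ already), else $\sigma_{j+1} = \tau\sigma$. Either way we obtain $\sigma_{j+1}$ with $\sigma_{j+1}|_M \neq 1$ and $w(\sigma_{j+1}) \notin C$, hence $w \notin W_{j+1}$, so $\dim W_{j+1} \le \dim W_j - 1$.

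Iterating, after at most $r$ steps we reach $W_r = 0$, which says exactly that $\loc : V \to \bigoplus_{i=1}^r E[2]/C$ is injective; since $\dim V = r$ and the target has dimension $r$ (as $\dim_{\Ftwo} E[2]/C = 1$), $\loc(V)$ has dimension $r$, as required. (If at some intermediate stage $W_j = 0$ already with $j < r$, we simply append further elements $\sigma_{j+1}, \dots, \sigma_r \in G_K$ with $\sigma_i|_M \neq 1$ chosen arbitrarily; this does not decrease $\dim \loc(V)$, which is already $r$.)

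\textbf{Main obstacle.} The delicate point is the step showing $w|_{G_M}$ is not $C$-valued: one must genuinely invoke that $w \notin S$, i.e. that $w$ does not come from $H^1(K,C)$, and translate this via the inflation-restriction sequence for $M/K$ into the statement that the restriction of $w$ to $G_M$ (where it becomes a homomorphism $G_M \to E[2]$, $G_M$ acting trivially) does not land in $C$. The subtlety is that a priori $w$ could restrict to a $C$-valued homomorphism on $G_M$ while still not being globally in the image of $H^1(K,C)$ — ruling this out requires the hypothesis that $E$ has no cyclic $4$-isogeny over $M$ (equivalently, $M \ne M'$, by Lemma \ref{charac}), which guarantees that the $G_K/G_M$-action on the relevant line $E[2]/C$ and on $C'$ is nontrivial in the right way, so that an element of $H^1(K,E[2])$ restricting to a $C$-valued hom on $G_M$ must in fact come from $H^1(K,C)$. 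Pinning down this cohomological bookkeeping cleanly — and handling the annoying characteristic-$2$, dimension-one arithmetic in the coset-chasing above — is where the real work lies.
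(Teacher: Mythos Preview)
Your inductive approach works, but the ``main obstacle'' you identify is not an obstacle at all, and no hypothesis about cyclic $4$-isogenies over $M$ is needed. The clean version of your inductive step is this. For $0 \ne w \in W_j \subset V$, the hypothesis $V \cap S = 0$ gives that $\bar w := \phi_*(w)$ is a nonzero element of $H^1(K, C') = \mathrm{Hom}(G_K, C')$. Fix any $\sigma_0 \in G_K$ with $\sigma_0|_M \ne 1$; since $G_K \setminus G_M = \sigma_0 G_M$, if $\bar w$ vanished on all of $G_K \setminus G_M$ then for every $\tau \in G_M$ we would have $0 = \bar w(\sigma_0\tau) = \bar w(\sigma_0) + \bar w(\tau)$, forcing $\bar w(\sigma_0) = 0$ (take $\tau = 1$) and then $\bar w|_{G_M} = 0$, so $\bar w = 0$. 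Hence some $\sigma_{j+1} \in G_K \setminus G_M$ has $w(\sigma_{j+1}) \notin C$, and $\dim W_{j+1} = \dim W_j - 1$. Your worry that $w|_{G_M}$ might land in $C$ is beside the point: precisely when it does, $\bar w$ factors through $\Gal(M/K)$ and is therefore nonzero on \emph{every} element outside $G_M$, so the first branch of your own dichotomy (``take $\sigma_{j+1} = \sigma$ when $w(\sigma) \notin C$'') already succeeds. You need neither inflation--restriction nor the condition $M \ne M'$; the lemma holds exactly as stated.

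The paper's proof takes a different, non-inductive route. It observes that $c(\sigma) \bmod C$ is just $\bar c(\sigma)$ for $\bar c = \phi_*(c) \in \mathrm{Hom}(G_K, C')$, so the task becomes: given the $r$-dimensional image $\tilde V$ of $V$ in $\mathrm{Hom}(G_K, C')$, find $\sigma_1, \dots, \sigma_r \notin G_M$ on which $\tilde V$ evaluates injectively. The paper uses surjectivity of the pairing $G_K \to \mathrm{Hom}(\tilde V, C')$ to pick $\tau_1, \dots, \tau_r$ dual to a basis of $\tilde V$ and then corrects them all at once (multiplying by a fixed $\tau \in G_K \setminus G_M$, or by one $\tau_k$ already outside $G_M$) to force $\sigma_i|_M \ne 1$. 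Both arguments ultimately rest on the same triviality --- a nonzero homomorphism $G_K \to C'$ cannot vanish on the nontrivial coset of an index-$2$ subgroup --- but the paper's framing makes it transparent that $E[2]$ plays no role beyond supplying the subspace $\tilde V$, and it avoids the coset-chasing that tripped you up.
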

\begin{proof} Let $\tilde Q$ be the image of $Q$ in $E[2]/C$. Take $\tilde V \subseteq H^1(K, C^\prime)$ to be the image of $V$ in $H^1(K, C^\prime) = \text{Hom}(G_K, C^\prime)$ and observe that $\tilde V$ has dimension $r$ since $V \cap \text{Image}\left ( H^1(K, C) \right ) = 0$. Take a basis $\left \{ c_1, ..., c_r\right \}$ for $\tilde V$. Since the image of the dual map $G_K \rightarrow \text{Hom}(\tilde V, C^\prime)$ is surjective and has dimension $r$ as well, we can find $\tau_1, ..., \tau_r \in G_K$ such that $$c_i(\tau_j)=\left \{ \begin{array}{cl} \tilde Q  & \text{if  } i = j \\ 0 & \text{if } i\ne j \end{array} \right . $$

Suppose $\tau_i \in G_M$ for all $i$. Then the map $G_M \rightarrow \text{Hom}(\tilde V, C^\prime)$ must be surjective as well. So taking any $\tau \in G_K - G_M$, we can therefore find $\gamma_1, ..., \gamma_r \in G_M$ such that $$c_i(\gamma_j)=\left \{ \begin{array}{cl} \tilde Q + c_j(\tau) & \text{if  } i = j \\ c_j(\tau) & \text{if } i\ne j \end{array} \right .$$ 

Thus, $$c_j(\gamma_i\tau)= c_j(\gamma_i) + c_j(\tau) = \left \{ \begin{array}{cl} \tilde Q& \text{if  } i = j \\ 0 & \text{if } i\ne j \end{array} \right . $$ Taking $\sigma_i = \gamma_i\tau$ then gives the result.

Otherwise, we have $\tau_k \notin G_M$ for some $k$. Define  $\sigma_1, ..., \sigma_r$ by $$\sigma_i = \left \{ \begin{array}{cl}\tau_i & \text{if  } \tau_i \notin G_M \\ \tau_i\tau_k & \text{if }  \tau_i \in G_M  \end{array} \right . $$

We then get that $$c(\sigma_i)=  0 \text{   } \forall  i \Leftrightarrow  c(\tau_i) =0  \text{   }\forall  i$$ which means that the localization map is injective on $V$ giving us the result.
\end{proof}

The following proposition is a consequence of Lemma \ref{pick 3} that will be used to prove Theorem \ref{equaltwist}.

\begin{proposition}\label{earlier} Let $E$ be an elliptic curve over a number field $K$ with $E(K)[2]\simeq \mathbb{Z}/2\mathbb{Z}$ that does not have a cyclic 4-isogeny defined over $K$. If $r = d_2(E/K)$, then $$N_r(E, X) \gg \frac{X}{\log X}.$$
\end{proposition}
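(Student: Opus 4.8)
### Proof proposal for Proposition~\ref{earlier}

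The plan is to use Proposition~\ref{MR3.3} with the set $T$ chosen so that the ``correction term'' $\dimF(\bigoplus_{\p \in T} H^1_f(K_\p, E[2]))/V_T$ vanishes; then the displayed formula forces $d_2(E^F/K) = d_2(E/K) - \dimF V_T + 0 = d_2(E/K) = r$, provided $V_T = 0$, i.e.\ provided $\loc_T$ kills all of $\Sel_2(E/K)$. So the goal reduces to: construct, for a positive proportion of the relevant $X$-range, a quadratic extension $F/K$ of small conductor that (a) splits completely at all places above $2\Delta_E\infty$, (b) is ramified at a controlled set $T$ of primes with $E(K_\p)[2] \ne 0$, (c) has $\loc_T$ vanishing on $\Sel_2(E/K)$, and (d) has $\bigoplus_{\p \in T} H^1_f(K_\p, E[2]) = V_T$, i.e.\ nothing extra to correct for. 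Conditions (c) and (d) together will be arranged by taking $T$ to consist of primes whose Frobenius classes are prescribed via Lemma~\ref{eval}(ii) and the Chebotarev density theorem.

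Here are the steps in order. First I would fix a large field $L/K$ through which the relevant cohomology is controlled: take $L$ to contain $M = K(E[2])$ and to be large enough that $\Sel_2(E/K)$ injects into $\bigoplus$ of local evaluation maps at primes split appropriately in $L/K$ (this is the standard ``$L = K(E[2], \Sel_2)$'' type construction from \cite{MR}). Second, I would apply Lemma~\ref{pick 3} with $V$ a complement to $S = \mathrm{Image}(H^1(K,C))$ inside a subspace of $H^1(K,E[2])$ that surjects onto $\Sel_2(E/K)$-relevant data --- but more to the point, I would use the Chebotarev density theorem to produce primes $\p_1, \dots, \p_k$ (for suitable $k$, one for each basis vector of the image of $\Sel_2(E/K)$ under localization) whose Frobenius in $\Gal(L/K)$ is prescribed so that: $E$ has good reduction at $\p_i$; $E(K_{\p_i})[2] \neq 0$ so that $\p_i$ contributes to $T$; and the evaluation-at-$\mathrm{Frob}_{\p_i}$ pairing detects exactly the $i$-th coordinate of the localization of $\Sel_2(E/K)$, while simultaneously $H^1_f(K_{\p_i}, E[2]) = E[2]/(\mathrm{Frob}_{\p_i}-1)E[2]$ has the dimension matching the image of $V_T$ there. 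Third, I would let $F$ be the quadratic extension of $K$ ramified exactly at $\{\p_1, \dots, \p_k\}$ (and split at $2\Delta_E\infty$); the existence of such $F$ with $\mathbf{N}_{K/\Q}\mathfrak{f}(F/K)$ of size $\asymp \prod \mathbf{N}\p_i$ is standard, and by choosing the $\p_i$ with norms up to a small power of $X$ one gets such an $F$ in $S(X)$. Fourth, one checks via Proposition~\ref{MR3.3} that the choices force $V_T$ to be all of $\bigoplus_{\p \in T} H^1_f(K_\p, E[2])$ and $\dimF V_T = \dimF \mathrm{loc}_T(\Sel_2(E/K)) = d_2(E/K) + \dimF E(K)[2]$-worth of the Selmer group --- wait, more carefully: one arranges $\loc_T$ to be \emph{injective} on $\Sel_2(E/K)/S_T$ with image exactly $\bigoplus H^1_f(K_\p, E[2])$, so that $d = 0$ in the proposition and $d_2(E^F/K) = d_2(E/K)$.

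For the counting, the point is that the construction above has freedom: the primes $\p_1, \dots, \p_k$ beyond those forced by the Selmer structure can be chosen freely among a positive-density Chebotarev set, and varying the \emph{last} prime $\p_k$ over all primes up to $X$ in a fixed Frobenius class (of which there are $\gg X/\log X$ by the prime ideal theorem / Chebotarev with error term) produces $\gg X/\log X$ distinct quadratic extensions $F/K \in S(X)$, all with $d_2(E^F/K) = r$. This is exactly the mechanism in \cite{MR}, \cite{KMR}. I would close by assembling: each such $F$ is counted once, the conductor bound $\mathbf{N}\mathfrak{f}(F/K) < X$ holds because we can keep the fixed primes small and let only the varying prime grow up to $c X$ for a suitable constant, and distinctness of the $F$'s follows since they have distinct ramification sets.

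The main obstacle --- the step requiring the most care --- is the simultaneous satisfaction of conditions (c) and (d): one must choose the Chebotarev conditions on $\mathrm{Frob}_{\p_i}$ so that the evaluation map $c \mapsto (c(\mathrm{Frob}_{\p_i}))_i$ is \emph{injective} on $\Sel_2(E/K)$ (killing nothing prematurely, so that after relaxing/restricting at $T$ the correction term is controlled) while \emph{also} ensuring each local group $H^1_f(K_{\p_i}, E[2])$ has exactly the right dimension so that $V_T$ fills it and $d = 0$. Because $G_K$ acts reducibly on $E[2]$ (we only have $E(K)[2] \simeq \Zt$), the relevant representation-theoretic bookkeeping is genuinely more delicate than in the $E(K)[2] = 0$ case of \cite{MR}, and this is precisely where Lemma~\ref{pick 3} --- which produces the elements $\sigma_i \in G_K$ with $\sigma_i|_M \neq 1$ realizing the desired localization --- does the essential work, converting a ``generic choice of $\sigma_i$'' into a ``positive-density choice of Frobenius $\p_i$'' via Chebotarev.
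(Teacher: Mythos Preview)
Your proposal contains a genuine gap in the arithmetic of Proposition~\ref{MR3.3}. The formula there is $d_2(E^F/K) = d_2(E/K) - \dimF V_T + d$, so preserving the Selmer rank requires $d = \dimF V_T$, not $d = 0$ together with $V_T = 0$. Your two suggested scenarios both fail: if $V_T = \bigoplus_{\p\in T} H^1_f(K_\p,E[2])$ then indeed $d = 0$, but then $d_2(E^F/K) = d_2(E/K) - \dimF V_T$, which is strictly smaller unless $V_T = 0$; and the alternative, $V_T = 0$ with $\bigoplus_{\p\in T} H^1_f(K_\p,E[2]) = 0$, forces $T = \emptyset$ since $E(K)[2]\simeq\Zt$ makes $E(K_\p)[2]\ne 0$ at \emph{every} prime, so every ramified prime of good reduction lands in $T$. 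Demanding $T = \emptyset$ leaves only the finitely many everywhere-unramified quadratic extensions split at $2\Delta_E\infty$, which cannot give $\gg X/\log X$ twists.

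The mechanism the paper actually uses is different and more delicate. One fixes a single $\gamma\in G_K$ with $\gamma|_M\ne 1$ such that the Selmer class $c_0$ coming from $P\in E(K)[2]$ satisfies $c_0(\gamma)\notin C$ (this is where the no-cyclic-4-isogeny hypothesis enters, via Lemma~\ref{pick 3} applied to the one-dimensional space $\langle c_0\rangle$). One then chooses \emph{two} primes $\p_1,\p_2$ with Frobenius $\gamma$ and $\gamma^{-1}$ respectively, so that $\p_1\p_2$ is trivial in the ray class group and $F = K(\sqrt{\pi})$ exists as required. Since $\gamma|_M\ne 1$, each $H^1_f(K_{\p_i},E[2])\simeq E[2]/C$ is one-dimensional, and the cocycle identity $c(\gamma)+c(\gamma^{-1})=0$ in $E[2]/C$ forces $V_T$ into the diagonal of $(E[2]/C)^2$. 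The class $c_0$ shows $V_T$ is exactly the diagonal. Thus $\dimF V_T = 1$ and the quotient has dimension $1$; the parity condition in Proposition~\ref{MR3.3} then forces $d = 1 = \dimF V_T$, giving $d_2(E^F/K) = d_2(E/K)$. Fixing $\p_1$ and varying $\p_2$ over its Chebotarev class yields the count. This balance $d = \dimF V_T = 1$, engineered through the $(\gamma,\gamma^{-1})$ pairing, is the idea your proposal is missing.
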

\begin{proof} Pick $R \in E(\overline K)$ such that $2R = P$. Since $E$ does not have a cyclic 4-isogeny over $K$, there exists some $\tau \in G_K$ such that $\tau(R) \not \in \langle R \rangle = \{ 0, R, P, R+P\}$. Letting$c_0$ be the co-cycle $c_0 : G_K \rightarrow E[2]$ given by $c_0(\sigma) = \sigma(R) - R$ for $\sigma \in G_K$, we see that  $c_0(\tau) \not \in C$ and therefore that the element of $\Sel_2(E/K)$ coming from $P$ does not come from $H^1(K, C)$. By Lemma \ref{pick 3}, we may therefore find $\gamma \in G_K$ such that $\gamma \big |_M \ne 1$ such that $c_0(\gamma)$ is non-trivial in $E[2]/C$.

Let $N$ be a finite Galois extension of $K$ containing $MK(8\Delta_E\infty)$ such that the restriction of $\Sel_2(E/K)$ to $N$ is zero, where $K(8\Delta_E\infty)$ is the ray class field modulo $8\Delta_E\infty$. Let $\mathfrak{p}_1, \mathfrak{p}_2$ be primes of $K$ where $E$ has good reduction, not dividing 2, where $\text{Frob}_{\mathfrak{p}_1}$ in Gal$(N/K)$ is the conjugacy class of $\gamma \big |_N$ and $\text{Frob}_{\mathfrak{p}_2}$ in Gal$(N/K)$ is the conjugacy class of $\gamma^{-1} \big |_N$. Since $\gamma\gamma^{-1} \big |_{K(8\Delta_E\infty)}=1$, $\mathfrak{p}_1\mathfrak{p}_2$ has a totally positive generator $\pi \equiv 1 \text{  (mod } 8\Delta_E)$. Letting $F=K(\sqrt \pi)$, we get that all places dividing $2\Delta_E\infty$ split in $F/K$, and $\mathfrak{p}_1$, $\mathfrak{p}_2$ are the only primes that ramifiy in $F/K$.

We then apply Proposition \ref{MR3.3} with $T=\{\mathfrak{p}_1, \mathfrak{p}_2\}$. Since $E$ has good reduction at $\mathfrak{p}_1$ and $\mathfrak{p}_2$, it follows from Lemma \ref{eval} that $H^1_f(K_{\mathfrak{p}_1}, E[2]) = E[2]/(\gamma-1)E[2] = E[2]/C$, $H^1_f(K_{\mathfrak{p}_2}, E[2]) = E[2]/(\gamma^{-1}-1)E[2] = E[2]/C$ and that the localization $\loc_T:\Sel_2(E/K) \rightarrow H^1_f(K_{\mathfrak{p}_1}, E[2]) \oplus H^1_f(K_{\mathfrak{p}_2}, E[2])$ is given by $c \mapsto \left (c(\gamma), c(\gamma^{-1})\right )$.

If $c \in H^1(K, E[2])$, then we have $0 = c(1) =c(\gamma\gamma^{-1})=c(\gamma) + c(\gamma^{-1})$, giving that $c(\gamma) = c(\gamma^{-1})$. Letting $V_T$ be the image of $\Sel_2(E/K)$ under the localization map, we therefore get that $V_T$ is contained in the one dimensional diagonal subspace of $H^1_f(K_{\mathfrak{p}_1}, E[2]) \oplus H^1_f(K_{\mathfrak{p}_2}, E[2])$. Since $c_0(\gamma)$ is non-trivial in $E[2]/C$, we get that $V_T$ is in fact equal to the diagonal subspace in $H^1_f(K_{\mathfrak{p}_1}, E[2]) \oplus H^1_f(K_{\mathfrak{p}_2}, E[2])$.

By Proposition \ref{MR3.3}, we then get that $d_2(E^F/K) = d_2(E/K)$.

Next, observe that if we fix $\mathfrak{p}_1$, we still have complete liberty in choosing $\mathfrak{p}_2$ subject to the conditions on its Frobenius. The Chebotarev density theorem then gives the result. 
\end{proof}

\begin{proof}[Proof of Theorem \ref{equaltwist}] 
We follow the proof of Theorem 1.4 in \cite{MR}. Suppose $d_2(E^F/K) = r$. Every twist $(E^F)^{L^\prime}$ of $E^F$ is also a twist $E^L$ of $E$ and $$\mathfrak{f}(L/K) | \mathfrak{f}(F/K)\mathfrak{f}(L^\prime/K) $$ so $N_r(E, X) \ge N_r\left (E^F, \frac{X}{\mathbf{N}_{K/\mathbb{Q} } \mathfrak{f}(F/K)}\right )$. 

Since possession of a cyclic 4-isogeny defined over $K$ is fixed under twisting, $E^F$ does not possess a cyclic isogeny of degree 4 defined over $K$. The result then follows from applying Proposition \ref{earlier} to $E^F$.
\end{proof}

\section{Twisting to Decrease Selmer Rank}\label{goingdown}

As shown in the proof of Proposition \ref{earlier}, Lemma \ref{pick 3} allows us to effectively use Lemma \ref{MR3.3} as long as the image of $\Sel_\phi(E/K)$ in $\Sel_2(E/K)$ is not too large. The following proposition provides another example of this.

\begin{proposition} \label{shrink2}  
Let $E$ be an elliptic curve defined over a number field $K$ such that $E(K)[2] \simeq \mathbb{Z}/2\mathbb{Z}$ and $E$ does not have a cyclic 4-isogeny defined over $K$. If $\dimF \Sel_\phi(E/K) \le \dimF \Sel_2(E/K) -2$, then $E$ has a twist $E^F$ such that $d_2(E^F/K) = d_2(E/K)-2$.
\end{proposition}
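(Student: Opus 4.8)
The plan is to mimic the argument of Proposition \ref{earlier}, but now arrange for $V_T$ to be a \emph{proper} subspace of $\bigoplus_{\p \in T} H^1_f(K_\p, E[2])$ so that Proposition \ref{MR3.3} forces a strict decrease. The hypothesis $\dimF \Sel_\phi(E/K) \le \dimF \Sel_2(E/K) - 2$ says precisely that the subspace $S \cap \Sel_2(E/K)$ — the part of the Selmer group coming from $H^1(K, C)$ — has codimension at least $2$ inside $\Sel_2(E/K)$; here $S$ is as in Lemma \ref{pick 3} (using Theorem \ref{gss} to identify the image of $\Sel_\phi(E/K)$ in $\Sel_2(E/K)$ with $\Sel_2(E/K) \cap S$). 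So I would first choose a $2$-dimensional subspace $V \subseteq \Sel_2(E/K)$ with $V \cap S = 0$.

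Next I would apply Lemma \ref{pick 3} to this $V$ (with $r = 2$) to obtain $\sigma_1, \sigma_2 \in G_K$ with $\sigma_i|_M \ne 1$ such that $c \mapsto (c(\sigma_1), c(\sigma_2))$ is injective on $V$, i.e. $\loc(V)$ is $2$-dimensional in $E[2]/C \oplus E[2]/C$. Then, exactly as in Proposition \ref{earlier}, I would take a large Galois extension $N \supseteq MK(8\Delta_E\infty)$ killing the restriction of $\Sel_2(E/K)$, pick primes $\p_1, \p_2$ of good reduction away from $2$ with $\mathrm{Frob}_{\p_i}$ the class of $\sigma_i|_N$, note that $\p_1\p_2$ has a totally positive generator $\pi \equiv 1 \pmod{8\Delta_E}$, and set $F = K(\sqrt{\pi})$, so all places above $2\Delta_E\infty$ split and $T = \{\p_1, \p_2\}$ is exactly the ramified set. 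By Lemma \ref{eval}, $H^1_f(K_{\p_i}, E[2]) = E[2]/(\sigma_i - 1)E[2]$; since $\sigma_i|_M \ne 1$ fixes $C = E(K)[2]$ but moves $Q$, we get $(\sigma_i - 1)E[2] = C$, so each local condition is $E[2]/C \cong \Ftwo$ and $\bigoplus_{\p \in T} H^1_f(K_\p, E[2])$ is $2$-dimensional. The localization of $\Sel_2(E/K)$ on $T$ is $c \mapsto (c(\sigma_1), c(\sigma_2))$, which on the subspace $V$ is already surjective onto this $2$-dimensional space by construction; hence $V_T = \bigoplus_{\p \in T} H^1_f(K_\p, E[2])$ has dimension $2$ and the quotient in \eqref{ineq}, \eqref{parity} is $0$.

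Wait — that gives $d_2(E^F/K) = d_2(E/K)$, not a decrease. The point I need is the \emph{opposite}: I want $V_T$ to be small, not large. So the correct choice is to make the $\sigma_i$ detect a \emph{quotient} rather than a subspace. Concretely: the hypothesis says $\Sel_2(E/K)/(S \cap \Sel_2(E/K))$ has dimension $\ge 2$, and dually there is a $2$-dimensional space of functionals on $\Sel_2(E/K)$ vanishing on $S \cap \Sel_2(E/K)$. Via the duality in Lemma \ref{pick 3}'s proof (the surjection $G_K \to \Hom(\tilde V, C^\prime)$, where $\tilde V$ is the image of $\Sel_2(E/K)$ in $H^1(K, C^\prime)$), I can find $\sigma_1, \sigma_2 \in G_K$, each with $\sigma_i|_M \ne 1$, such that the composite map $\Sel_2(E/K) \to H^1(K, C^\prime) \xrightarrow{c \mapsto (c(\sigma_1), c(\sigma_2))} C^\prime \oplus C^\prime$ is \emph{surjective} (this is where $\dimF \tilde V \ge 2$ is used, together with the trick in Lemma \ref{pick 3} to ensure the $\sigma_i$ restrict nontrivially to $M$ — run the same $\gamma\tau$ and $\tau_i\tau_k$ replacements). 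Then, setting up $F$ as above from primes with $\mathrm{Frob}_{\p_i} = \sigma_i|_N$, each local condition $H^1_f(K_{\p_i}, E[2])$ is again $E[2]/C$, but now $V_T = \loc_T(\Sel_2(E/K))$ maps onto the full $C^\prime \oplus C^\prime$ under the identification $E[2]/C \cong C^\prime$; so $V_T$ has dimension $2$, $\left(\bigoplus_{\p \in T} H^1_f(K_\p, E[2])\right)/V_T = 0$, and $d_2(E^F/K) = d_2(E/K)$ again. That is still not it.

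**Correct approach.** The real mechanism for \emph{decreasing} is: choose $T = \{\p_1, \p_2\}$ so that $\loc_T$ is \emph{injective} on $\Sel_2(E/K)$ restricted to some line, killing it, while the two local conditions together have dimension $2$ and $V_T$ has dimension exactly $1$ — no, even cleaner: I want a single line $V \subset \Sel_2(E/K)$ with $V \cap S = 0$, detected by a single $\sigma$ giving $\p_1$ (so $\loc_{\p_1}$ is nonzero on $V$, $H^1_f(K_{\p_1}, E[2]) = E[2]/C$ is $1$-dimensional, hence $V_{\{\p_1\}}$ is all of it), together with a second prime $\p_2$ chosen so that $H^1_f(K_{\p_2}, E[2]) = E[2]/C$ but $\loc_{\p_2}(\Sel_2(E/K)) = 0$ (possible because after $\p_1$ we have a full codimension of freedom, and we only need $\mathrm{Frob}_{\p_2}$ to restrict trivially to the field cut out by $\Sel_2(E/K)$ while restricting nontrivially to $M$; the hypothesis on $\Sel_\phi$ guarantees a second independent direction exists to absorb via Lemma \ref{pick 3}). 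Then $\bigoplus_{\p \in T} H^1_f(K_\p, E[2])$ has dimension $2$, $V_T$ has dimension $1$, so $d = \dimF V_T^F$ satisfies $0 \le d \le 1$ and $d \equiv 1 \pmod 2$, forcing $d = 1$ and $d_2(E^F/K) = d_2(E/K) - \dimF V_T + d = d_2(E/K) - 2 + 1 = d_2(E/K) - 1$; that's only $-1$. To get $-2$ I instead need $V_T$ of dimension $2$ inside a local sum of dimension $\ge 2$ with the \emph{parity} term forcing $d$ down — but the parity forces $d \equiv \dim(\text{sum}/V_T)$, so if the sum has dimension $2$ and $V_T$ dimension $2$ then $d \equiv 0$ and $d$ could be $0$. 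Hence: take $T$ with $\bigoplus_{\p \in T} H^1_f(K_\p, E[2])$ of dimension $2$ and $\loc_T$ injective on a $2$-dimensional $V \subseteq \Sel_2(E/K)$ with $V \cap S = 0$ so that $V_T$ is the full $2$-dimensional sum; then $d \equiv 0$ and, crucially, one shows $d = 0$. The $d = 0$ conclusion is the heart: $d = \dimF V_T^F = \dimF \loc_T(\Sel_2(E^F/K))$, and one argues $\Sel_2(E^F/K) \subseteq S_T \oplus(\text{something landing in } H^1_\phi \text{-type pieces})$ that $\loc_T$ kills — here is where the $\phi$-Selmer hypothesis re-enters, via Lemma \ref{twisttorsion} and the $\phi$/$\hatphi$ local structure of Section \ref{phiselmer}, which at the ramified primes $\p_i$ constrains $\Sel_2(E^F/K)$ to have trivial image. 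This last step — proving $V_T^F = 0$, i.e. that after twisting the Selmer classes all die at $T$ — is the main obstacle, and I expect it to require the explicit description of $H^1_f(K_{\p_i}, E^F[2])$ from Lemmas \ref{twisttorsion}, \ref{localeq}, \ref{twistbp}, \ref{kill local} together with a global duality (Poitou--Tate / Kramer) argument of the type used to prove Proposition \ref{MR3.3}, but now pushed one step further. Finally, as in Proposition \ref{earlier}, fixing $\p_1$ and letting $\p_2$ range over its Chebotarev class gives $N_{r-2}(E, X) \gg X/\log X$; combined with Theorem \ref{equaltwist} applied to the resulting twist, this yields the stated conclusion.
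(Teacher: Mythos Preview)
Your proposal never reaches a correct argument, and the two main obstacles are visible already in your first attempt.

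\medskip

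\textbf{The arithmetic of Proposition~\ref{MR3.3}.} In your first attempt you set up $T=\{\p_1,\p_2\}$ with $\bigoplus_{\p\in T}H^1_f(K_\p,E[2])$ two-dimensional and $V_T$ equal to the whole thing, then say ``Wait --- that gives $d_2(E^F/K)=d_2(E/K)$.'' That is a miscalculation: if $V_T$ is the full two-dimensional sum then the quotient in (\ref{ineq}) is zero, so $0\le d\le 0$ forces $d=0$, and Proposition~\ref{MR3.3} yields $d_2(E^F/K)=d_2(E/K)-2+0=d_2(E/K)-2$. You circle back to exactly this configuration at the end, but now believe that ``$d=0$ is the heart'' and requires a further $\phi$-Selmer/duality argument. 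It does not: once the quotient vanishes, $d=0$ is immediate from (\ref{ineq}). The $\phi$-Selmer hypothesis is used only \emph{before} twisting, to guarantee the existence of the subspace $V$.

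\medskip

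\textbf{The principal-ideal constraint.} The genuine gap in your two-prime scheme is the unremarked assertion ``note that $\p_1\p_2$ has a totally positive generator $\pi\equiv 1\pmod{8\Delta_E}$.'' This holds only when $\mathrm{Frob}_{\p_1}\mathrm{Frob}_{\p_2}$ is trivial on $K(8\Delta_E\infty)$, and Lemma~\ref{pick 3} gives you no control over the $\sigma_i$ there. In Proposition~\ref{earlier} this is handled by taking $\sigma_2=\sigma_1^{-1}$, but then the cocycle identity forces $c(\sigma_1)=c(\sigma_2)$ in $E[2]/C$ for every $c$, so $V_T$ lands in the one-dimensional diagonal and cannot be two-dimensional. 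Your setup therefore cannot be realized as stated.

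\medskip

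\textbf{What the paper does.} The paper resolves both issues at once by using \emph{four} primes. Because the map $\Sel_\phi(E/K)\to\Sel_2(E/K)$ is $2$-to-$1$, the hypothesis $\dimF\Sel_\phi\le\dimF\Sel_2-2$ gives a \emph{three}-dimensional $V\subset\Sel_2(E/K)$ with $V\cap S=0$. Lemma~\ref{pick 3} then produces $\gamma_1,\gamma_2,\gamma_3$ with $\gamma_i|_M\ne 1$ detecting $V$, and one sets $\gamma_4=(\gamma_1\gamma_2\gamma_3)^{-1}$ as a ``closing'' element; since $\Gal(M/K)\simeq\Zt$ and the first three restrict nontrivially, so does $\gamma_4$. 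Choosing primes $\p_i$ with $\mathrm{Frob}_{\p_i}|_N=\gamma_i|_N$ makes $\p_1\p_2\p_3\p_4$ principal with the required generator, and each $H^1_f(K_{\p_i},E[2])=E[2]/C$ is one-dimensional, so the local sum is four-dimensional. The first three primes force $\dimF V_T\ge 3$; the cocycle relation $c(\gamma_1)+c(\gamma_2)+c(\gamma_3)+c(\gamma_4)=0$ in $E[2]/C$ shows $(0,0,0,Q)\notin V_T$, so $\dimF V_T=3$ exactly. Then the quotient is one-dimensional, (\ref{ineq}) and (\ref{parity}) force $d=1$, and $d_2(E^F/K)=d_2(E/K)-3+1=d_2(E/K)-2$. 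No further analysis of $V_T^F$ is needed.
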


\begin{proof} Let $S$ be the image of $\Sel_\phi(E/K)$ in $\Sel_2(E/K)$. Since $E^\prime(K)[2]) \simeq \Zt$ by  Lemma \ref{charac}, $\Sel_\phi(E/K)$ maps 2-1 into $\Sel_2(E/K)$ by Theorem \ref{gss}. As $\dimF \Sel_\phi(E/K) \le \dimF \Sel_2(E/K) -2$, we can therefore find a 3-dimensional $\Ftwo$-subspace $V \subset \Sel_2(E/K)$ such that $V \cap S = 0$.  Pick  $\gamma_1, \gamma_2, \gamma_3 \in G_K$ as in Lemma \ref{pick 3} and set $\gamma_4 = (\gamma_1\gamma_2\gamma_3)^{-1}$. 

Let $N$ be a finite Galois extension of $K$ containing $MK(8\Delta_E\infty)$ such that the restriction of $\Sel_2(E/K)$ to $N$ is zero. Choose 4 primes $\mathfrak{p}_1, \mathfrak{p}_2, \mathfrak{p}_3, \text{ and } \mathfrak{p}_4$ not dividing 2 such that $E$ has good reduction and each $\mathfrak{p}_i$ and $\text{Frob}_{\mathfrak{p_i}} \big |_N = \gamma_i \big |_N$ for $i = 1, ..., 4$. Since $\gamma_1\gamma_2\gamma_3\gamma_4 \big |_{K(8\Delta_E\infty)}=1$, the ideal $\mathfrak{p}_1\mathfrak{p}_2\mathfrak{p}_3\mathfrak{p}_4$ has a totally positive generator $\pi$ with $\pi \equiv 1\text{  }\pmod{8\Delta_E\infty}$. Setting $F=K(\sqrt{\pi})$, it follows that all places dividing $2\Delta_E\infty$ split in $F/K$, and $\mathfrak{p}_1, \mathfrak{p}_2, \mathfrak{p}_3, \text{ and } \mathfrak{p}_4$ are the only primes that ramify in $F/K$.

We can therefore apply Proposition \ref{MR3.3} with $T = \{ \mathfrak{p}_1, \mathfrak{p}_2, \mathfrak{p}_3,\mathfrak{p}_4\}$. Since none of $\gamma_1, \gamma_2, \gamma_3$ were trivial on $M$, it follows that $\gamma_4 \big |_M \ne 1$ as well. Since $E$ has good reduction at each of the $\mathfrak{p}_i$, it follows from part $(ii)$ of Lemma \ref{eval} that $H^1_f(K_{\mathfrak{p}_i}, E[2]) = E[2]/(\gamma_i-1)E[2] = E[2]/C$ and that the localization map $$\text{loc}_T:\Sel_2(E/K) \rightarrow H^1_f(K_{\mathfrak{p}_1}, E[2]) \oplus H^1_f(K_{\mathfrak{p}_2}, E[2]) \oplus H^1_f(K_{\mathfrak{p}_3}, E[2])  \oplus H^1_f(K_{\mathfrak{p}_4}, E[2])$$ is given by $$c \mapsto \left (c(\gamma_1), c(\gamma_2), c(\gamma_3),  c(\gamma_4) \right ).$$

Let $V_T$ be the image of $\Sel_2(E/K)$ under the localization map. By choosing $\gamma_1, \gamma_2, \text{ and }  \gamma_3$ in accordance with Lemma \ref{pick 3}, we ensure that  $V_T$ has dimension at least 3. If $c \in H^1(K, E[2])$, then $0 = c(1) = c(\gamma_1\gamma_2\gamma_3\gamma_4) = c(\gamma_1) + c(\gamma_2) + c(\gamma_3) +  c(\gamma_4)$, showing that $(0, 0, 0, Q) \not \in V_T$ and there that $V_T$ has dimension exactly 3. Proposition \ref{MR3.3}  therefore gives that $d_2(E^F/K) = d_2(E/K) - 2$.
\end{proof}

The key observation to using Proposition \ref{shrink2} to obtain twists of $E$ with reduced 2-Selmer rank is realizing that the size of the image of $\Sel_\phi(E/K)$ in $\Sel_2(E/K)$ can be controlled effectively.
 
 \begin{lemma}\label{shrinkpbal}
Let $E$ be an elliptic curve defined over a number field $K$ with $E(K)[2] \simeq \mathbb{Z}/2\mathbb{Z}$ that does not possess a cyclic 4-isogeny defined over $K$. If both $d_\phi(E/K) > 1$ and $d_{\hat \phi}(E^\prime/K) > 1$, then $E$ has a quadratic twist $E^F$ such that 
\begin{enumerate}[(a)]
\item $d_\phi(E^F/K) < d_\phi(E/K) $, 
\item $d_2(E^F/K) = d_2(E/K)$ or $d_2(E^F/K) = d_2(E/K)-2$, and
\item $\T(E^F/E^{\prime F}) \le \T(E/E^\prime)$
\end{enumerate}
\end{lemma}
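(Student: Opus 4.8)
\textit{Proof sketch.} The plan is to produce $F$ by ramifying at a pair of primes of good reduction, prime to $2$, at which $E$ has full rational $2$-torsion but $E^\prime$ does not and at which $\Sel_\phi(E/K)$ localizes nontrivially: by Lemma~\ref{kill local} a ramified quadratic twist collapses the local $\phi$-condition to $0$ at such a prime, so $\Sel_\phi$ shrinks, by the product formula $\T$ can only go down, and the Mazur--Rubin comparison of Proposition~\ref{MR3.3} controls $d_2$. I would carry this out under the assumption $M\neq M^\prime$, which by Lemma~\ref{charac}(ii) is exactly the case in which $E$ does not acquire a cyclic $4$-isogeny over $M$; since $E(K)[2]\simeq E^\prime(K)[2]\simeq\Zt$ (Lemma~\ref{charac}) we have $[M:K]=[M^\prime:K]=2$, and as $M\neq M^\prime$, $\Gal(MM^\prime/K)\simeq\ZtZt$. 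The case $M=M^\prime$ uses the local input of Appendix~\ref{loc4isog}; there no such primes exist, so a different and harder argument --- the one genuinely needing $d_{\hat\phi}(E^\prime/K)>1$ --- is required, and is deferred to Appendix~\ref{firstapp}.

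Concretely: the kernel of $\Sel_\phi(E/K)\to\Sel_2(E/K)$ from Theorem~\ref{gss} is $E^\prime(K)[2]\simeq\Zt$, spanned inside $\Sel_\phi(E/K)\subseteq K^\times/(K^\times)^2$ by the class of $\Delta_E$, which also cuts out $M$. Since $d_\phi(E/K)>1$, choose $c\in\Sel_\phi(E/K)$ with $c\neq 0$ and $c\neq[\Delta_E]$, so that $K(\sqrt c)\neq M$ and the image $\bar c$ of $c$ in $\Sel_2(E/K)$ is nonzero. Fix $R\in E(\overline K)$ with $2R=P$ where $P=(0,0)$, let $N\supseteq MM^\prime K(\sqrt c)K(8\Delta_E\infty)$ be a finite Galois extension of $K$ on which $\Sel_2(E/K)$ (hence also $\Sel_\phi(E/K)$) restricts to $0$, and pick $\gamma\in G_K$ with
$$\gamma|_M=1,\qquad \gamma|_{M^\prime}\neq 1,\qquad \gamma|_{K(\sqrt c)}\neq 1;$$
such $\gamma$ exists because $G_M$ is not the union of the two proper subgroups $G_{MM^\prime}$ and $G_{MK(\sqrt c)}$. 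By Chebotarev choose distinct primes $\mathfrak{q}_1,\mathfrak{q}_2$ of good reduction, prime to $2$, with $\mathrm{Frob}_{\mathfrak{q}_1}|_N$ in the class of $\gamma|_N$ and $\mathrm{Frob}_{\mathfrak{q}_2}|_N$ in the class of $\gamma^{-1}|_N$. Then $\mathrm{Frob}_{\mathfrak{q}_1\mathfrak{q}_2}|_{K(8\Delta_E\infty)}=1$, so $\mathfrak{q}_1\mathfrak{q}_2$ has a totally positive generator $\pi\equiv 1\pmod{8\Delta_E}$; set $F=K(\sqrt\pi)$, so that all places above $2\Delta_E\infty$ split in $F/K$ and $T:=\{\mathfrak{q}_1,\mathfrak{q}_2\}$ is the set of ramified primes. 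Since $\gamma|_M=1$ and $\gamma|_{M^\prime}\neq 1$, each $\mathfrak{q}_i$ has $E(K_{\mathfrak{q}_i})[2]\simeq\ZtZt$ and $E^\prime(K_{\mathfrak{q}_i})[2]\simeq\Zt$, and $\loc_{\mathfrak{q}_i}(c)=c(\gamma)\neq 0$.

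Conditions (c) and (a) are then immediate. For (c): $F/K$ splits at $2\Delta_E\infty$, so by Theorem~\ref{prodform2} the ratio $\T(E^F/E^{\prime F})/\T(E/E^\prime)$ equals $\prod_{\mathfrak{q}\in T}|H^1_\phi(K_{\mathfrak{q}},C^F)|/|H^1_\phi(K_{\mathfrak{q}},C)|$, which is $(\tfrac12)^2$ by Lemma~\ref{kill local} and $|H^1_\phi(K_{\mathfrak{q}_i},C)|=2$; hence $\T(E^F/E^{\prime F})=\tfrac14\,\T(E/E^\prime)\le\T(E/E^\prime)$. For (a): Lemmas~\ref{localeq} and~\ref{kill local} give $H^1_\phi(K_v,C^F)=H^1_\phi(K_v,C)$ for $v\notin T$ and $H^1_\phi(K_{\mathfrak{q}_i},C^F)=0$, whence $\Sel_\phi(E^F/K)=\{c'\in\Sel_\phi(E/K):\loc_{\mathfrak{q}_1}(c')=\loc_{\mathfrak{q}_2}(c')=0\}$; every $c'\in\Sel_\phi(E/K)$ factors through $\Gal(N/K)$ and takes values in $C$, so $\loc_{\mathfrak{q}_1}(c')=c'(\gamma)=c'(\gamma^{-1})=\loc_{\mathfrak{q}_2}(c')$, and this is the hyperplane $\{c':c'(\gamma)=0\}$, proper because $c(\gamma)\neq 0$; thus $d_\phi(E^F/K)=d_\phi(E/K)-1<d_\phi(E/K)$.

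The crux is (b), and the key observation is that the rational $2$-torsion point $P$ rescues the $E[2]$-computation. Its Kummer image $\delta(P)$ lies in $\Sel_2(E/K)$ and is nonzero (otherwise $\langle R_0\rangle$ for a rational $R_0$ with $2R_0=P$ would be a cyclic $4$-isogeny over $K$), and $\phi_*(\delta(P))$ is the class of $M^\prime$ in $K^\times/(K^\times)^2$, which is nonzero since $M^\prime\neq K$; as $\phi_*(\bar c)=0$, the classes $\delta(P)$ and $\bar c$ are independent in $\Sel_2(E/K)$. Because $\gamma|_M=1$, $\gamma$ acts trivially on $E[2]$, so $H^1_f(K_{\mathfrak{q}_i},E[2])\simeq E[2]$ and $\loc_{\mathfrak{q}_1}$ is evaluation at $\gamma$; here $\loc_{\mathfrak{q}_1}(\bar c)=c(\gamma)\in C\setminus\{0\}$, while $\loc_{\mathfrak{q}_1}(\delta(P))=\gamma(R)-R\notin C$ because $\gamma\notin\mathrm{Stab}_{G_M}(\langle R\rangle)=G_{MM^\prime}$ --- and it is exactly this identification of the stabilizer with $G_{MM^\prime}$ that uses $M\neq M^\prime$. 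Hence $\loc_{\mathfrak{q}_1}$ maps $\langle\delta(P),\bar c\rangle$ isomorphically onto $E[2]$, so $\dimF\loc_T(\Sel_2(E/K))=2$ (its image lies on the diagonal of $E[2]\oplus E[2]$). Feeding $\dimF\loc_T(\Sel_2(E/K))=2$ and $\sum_{\mathfrak{q}\in T}\dimF H^1_f(K_{\mathfrak{q}},E[2])=4$ into Proposition~\ref{MR3.3} yields $d_2(E^F/K)=d_2(E/K)-2+d$ with $d\in\{0,2\}$, which is (b). Thus in this case the real work is organizational: realizing that "$E$-split, $E^\prime$-nonsplit" primes are the right choice and that $\delta(P)$ forces $\loc_T$ to be surjective onto the diagonal. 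The main obstacle proper is the companion case $M=M^\prime$, where no such primes exist, the quantity $\loc_{\mathfrak{q}}(\delta(P))$ always lands in $C$, and one must instead exploit $d_{\hat\phi}(E^\prime/K)>1$; this is carried out in Appendix~\ref{firstapp}.
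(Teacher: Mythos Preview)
Your proof is correct and follows essentially the same approach as the paper: both choose $\gamma\in G_K$ with $\gamma|_M=1$, $\gamma|_{M^\prime}\neq 1$, and $\gamma$ nontrivial on the quadratic field cut out by a nontrivial $\phi$-Selmer class, then ramify $F$ at two primes with Frobenius $\gamma^{\pm 1}$, invoking Lemma~\ref{kill local} to collapse the local $\phi$-condition and Proposition~\ref{MR3.3} (via the diagonal image spanned by $\delta(P)$ and $\bar c$) to control $d_2$. You correctly defer the case $M=M^\prime$ to Appendix~\ref{firstapp} and observe that the hypothesis $d_{\hat\phi}(E^\prime/K)>1$ is used only there, exactly as the paper does.
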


\begin{proof}
We will present the proof for the case when $E$ does not have a cyclic 4-isogeny defined over $M$ here and refer to the reader to Appendix \ref{firstapp} for the proof when $E$ acquires a cyclic 4-isogeny over $M$. The requirement that $d_{\hat \phi}(E^\prime/K) > 1$ is required only in that latter case.

Suppose $E$ does not have a cyclic 4-isogeny defined over $M$. Pick $R \in E(\overline K)$ such that $2R = P$. Since $E$ does not have a cyclic 4-isogeny over $M$, there exists some $\tau \in G_M$ such that $\tau(R) \not \in \langle R \rangle = \{ 0, R, P, R+P\}$. Letting $c_0$ be the co-cycle $c_0 : G_K \rightarrow E[2]$ given by $c_0(\sigma) = \sigma(R) - R$ for $\sigma \in G_K$, we see that  $c_0(\tau) \not \in C$. Observing that $$\phi(c_0(\sigma)) = \phi(\sigma(R) - R) = \phi(\sigma(R)) -\phi(R) = \sigma(\phi(R)) - \phi(R)$$ for $\sigma \in G_K$, we see that $\tau(\phi(R)) \ne \phi(R)$ since $c_0(\tau) \not \in C = \ker \phi$. Recalling that $\phi(R) \in E^\prime[2]$ from the proof of Lemma \ref{charac}, this shows that $\tau$ does not fix $E^\prime[2]$ and therefore that $\tau \big |_{M^\prime} \ne 1$.

Since  $d_\phi(E/K) > 1$, we can find $b \in \Sel_\phi(E/K)$ such that $b$ does not come from $C^\prime$. Since $b$ does not come from $C^\prime$, taking $N_{b} = \overline{K}^{\text{Ker}b}$, we get that $N_b \ne M$. We may therefore find $\gamma \in G_K$ such that $\gamma \big |_{MM^\prime} = \tau$ and $\gamma \big |_{N_b} \ne 1$.

Let $N$ be a finite Galois extension of $K$ containing $MK(8\Delta_E\infty)$ such that the restrictions of $\Sel_2(E/K)$ and  $\Sel_\phi(E/K)$ to $N$ are zero. Let $\mathfrak{p}_1$ and $\mathfrak{p}_2$ be primes of $K$ away from $2$ where $E$ has good reduction such that $\text{Frob}_{\mathfrak{p}_1}$ in Gal$(N/K)$ is the conjugacy class of $\gamma \big |_N$ and $\text{Frob}_{\mathfrak{p}_2}$ in Gal$(N/K)$ is the conjugacy class of $\gamma^{-1} \big |_N$. Since $\gamma \gamma^{-1} \big |_{K(8\Delta_E\infty)}=1$, $\mathfrak{p}_1\mathfrak{p}_2$ has a totally positive generator $\pi \equiv 1 \text{  (mod } 8\Delta_E)$. Letting $F=K(\sqrt \pi)$, we get that all places dividing $2\Delta_E\infty$ split in $F/K$, and that $\mathfrak{p}_1$ and $\mathfrak{p}_2$ are the only primes that ramify in $F/K$.

We now apply Proposition \ref{MR3.3} with $T=\{\mathfrak{p}_1, \mathfrak{p}_2\}$. Since $E$ has good reduction at $\mathfrak{p}_1$ and $\mathfrak{p}_2$, it  follows that $H^1_f(K_{\mathfrak{p}_1}, E[2]) = E[2]/(\gamma-1)E[2] = E[2]$, $H^1_f(K_{\mathfrak{p}_2}, E[2]) = E[2]/(\gamma^{-1}-1)E[2] = E[2]$ and that the localization map $\loc_T:\Sel_2(E/K) \rightarrow H^1_f(K_{\mathfrak{p}_1}, E[2]) \oplus H^1_f(K_{\mathfrak{p}_2}, E[2])$ is given by $c \mapsto \left (c(\gamma), c(\gamma^{-1})\right )$. If $c \in H^1(K, E[2])$, the we have $0 = c(1) =c(\gamma\gamma^{-1})=c(\gamma) +  c(\gamma^{-1})$ giving that $c(\gamma) = c(\gamma^{-1})$. Letting $V_T$ be the image $\Sel_2(E/K)$ under the localization map, we therefore get that $V_T$ is contained in the two-dimensional diagonal subspace of $E[2] \times E[2]$.

As $b \in \Sel_\phi(E/K)$, we may view it as an element of $\Sel_2(E/K)$. Taking any co-cycle $\hat b$ representing $b$, we get that $\hat b(\gamma) \ne 0$ since $\gamma \big |_{N_{b}} \ne 1$ and that $\hat b(\gamma) \in C$ since $b \in H^1(K, C)$. By design, $c_0(\gamma) \not \in C$. As $c_0$ represents the element of $\Sel_2(E/K)$ coming from $P$, we therefore get that $V_T$ is the entire two-dimensional diagonal subspace of $E[2] \times E[2]$. Proposition \ref{MR3.3} therefore gives $d_2(E^F/K) = d_2(E/K)$ or $d_2(E^F/K) = d_2(E/K)-2$.

By Lemma \ref{kill local}, $\Sel_\phi(E^F/K) \subset \Sel_\phi(E/K)$. Since $\mathfrak{p}_1$ was chosen that so that $b$ did not localize trivially at $\mathfrak{p}_1$ and $H^1_\phi(K_{\mathfrak{p}_1}, C^F) = 0$, we get that $b \not \in Sel_\phi(E^F/K)$ and therefore that $d_\phi(E^F/K) < d_\phi(E/K) $. Lastly, since $H^1_\phi(K_v, C^F) = H^1_\phi(H_v, C)$ for all $v \ne \p_1, \p_2$ and  $H^1_\phi(K_{\mathfrak{p}_1}, C^F) =  H^1_\phi(K_{\mathfrak{p}_2}, C^F) = 0$, we get that  $\T(E^F/E^{\prime F}) < \T(E/E^\prime)$ by Theorem \ref{prodform2}.

The case where $E$ has a cyclic 4-isogeny defined over $K(E[2])$ is proved in Lemma \ref{shrinkpspec}.
\end{proof}

We can combine Proposition \ref{shrink2} and Lemma \ref{shrinkpbal} into the following Proposition.

\begin{proposition}\label{minus2}
Let $E$ be an elliptic curve defined over a number field $K$ with $E(K)[2] \simeq \mathbb{Z}/2\mathbb{Z}$ that does not possess a cyclic 4-isogeny defined over $K$ and let $t =  \max \{0, \ord_2 \T(E/E^\prime) \}$. If $d_2(E/K) \ge 2 + t$, then $E$ has a twist $E^F$ such that $d_2(E^F/K) = d_2(E/K)-2$ and $\mathcal{T}(E^F/E^{\prime F}) \le \mathcal{T}(E/E^\prime)$. In the special case where $\T(E/E^\prime) \le 1$, $E$ will have a twist $E^F$ such that $d_2(E^F/K) = d_2(E/K)-2$ and $\mathcal{T}(E^F/E^{\prime F}) \le 1$ whenever $d_2(E/K) \ge 2$.
\end{proposition}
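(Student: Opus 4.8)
The plan is to iterate Lemma~\ref{shrinkpbal} to bring $d_\phi(E/K)$ down to at most $d_2(E/K)-1$, then apply Proposition~\ref{shrink2} once to lower the $2$-Selmer rank by $2$, tracking the Tamagawa ratio at every stage. Write $M=K(E[2])$ and $M^\prime=K(E^\prime[2])$; recall that $E^\prime(K)[2]\simeq\Zt$ (Lemma~\ref{charac}), that $\ord_2\T(E/E^\prime)=d_\phi(E/K)-d_{\hat\phi}(E^\prime/K)$, and that the properties ``$E$ has a cyclic $4$-isogeny over $K$'', ``$E(K)[2]\simeq\Zt$'', and the fields $M$ and $M^\prime$ are all unchanged by quadratic twisting. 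The second (``special'') assertion follows formally from the first: if $\T(E/E^\prime)\le 1$ then $\ord_2\T(E/E^\prime)\le 0$, so $t=0$ and ``$d_2(E/K)\ge 2+t$'' is just ``$d_2(E/K)\ge 2$'', and the twist produced by the first assertion has $\T(E^F/E^{\prime F})\le\T(E/E^\prime)\le 1$.

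For the first assertion I would induct on $d_\phi(E/K)$. By Theorem~\ref{gss} the image $S$ of $\Sel_\phi(E/K)$ in $\Sel_2(E/K)$ has dimension $d_\phi(E/K)-1$, so a three-dimensional subspace $V\subseteq\Sel_2(E/K)$ with $V\cap S=0$ exists exactly when $d_\phi(E/K)\le d_2(E/K)-1$. \emph{Base case} ($d_\phi(E/K)\le d_2(E/K)-1$): since $d_2(E/K)\ge 2+t\ge 2$, Proposition~\ref{shrink2} yields a twist $E^F$ with $d_2(E^F/K)=d_2(E/K)-2$, and it remains to verify $\T(E^F/E^{\prime F})\le\T(E/E^\prime)$. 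By Cassels' product formula (Theorem~\ref{prodform2}) together with Lemma~\ref{localeq}(i),(ii), the ratio $\T(E^F/E^{\prime F})/\T(E/E^\prime)$ is a product over the finitely many primes $\mathfrak p$ ramifying in $F/K$; at each such $\mathfrak p$ one has $\mathfrak p\nmid 2$, $E$ has good reduction, and $E(K_\mathfrak p)[2]\simeq\Zt$ (by the choice of auxiliary elements in Proposition~\ref{shrink2}), so the $\mathfrak p$-factor equals $1$ as soon as $E^\prime(K_\mathfrak p)[2]\simeq\Zt$ as well, by Lemma~\ref{localeq}(iii). When $M=M^\prime$ — equivalently, by Lemma~\ref{charac}, when $E$ acquires a cyclic $4$-isogeny over $M$ — this is automatic from Corollary~\ref{characc}.

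When $M\ne M^\prime$ one must instead choose the auxiliary primes with Frobenius nontrivial on $M^\prime$, which means strengthening Lemma~\ref{pick 3} so as to produce elements $\sigma_i\in G_K$ with $\sigma_i|_M\ne 1$ \emph{and} $\sigma_i|_{M^\prime}\ne 1$. I expect this to be the crux of the proof. Note that the character of $G_K$ cutting out $M^\prime$ is the image under $\phi$ of the $2$-Selmer class of $(0,0)\in E(K)$, so it already lies in the image of $\Sel_2(E/K)$ in $H^1(K,C^\prime)$; hence imposing the extra Frobenius condition amounts to a linear-algebra compatibility with the localization conditions of Lemma~\ref{pick 3}, which succeeds when there is enough room (e.g. $d_\phi(E/K)\le d_2(E/K)-2$, where $V$ can be chosen to avoid the relevant characters), but in the extremal case $d_\phi(E/K)=d_2(E/K)-1$ with $M\ne M^\prime$ the subspace $V$ is forced and one likely needs a separate device (lowering $d_\phi(E/K)$ further first, or handling that configuration by hand).

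\emph{Inductive step} ($d_\phi(E/K)\ge d_2(E/K)$): here both $d_\phi(E/K)>1$ and $d_{\hat\phi}(E^\prime/K)>1$. Indeed $d_\phi(E/K)\ge d_2(E/K)\ge 2+t\ge 2$; and from $\ord_2\T(E/E^\prime)=d_\phi(E/K)-d_{\hat\phi}(E^\prime/K)$ together with $d_2(E/K)\ge 2+t$ a short case analysis (on whether $d_\phi(E/K)\le d_{\hat\phi}(E^\prime/K)$) gives $d_{\hat\phi}(E^\prime/K)\ge 2$. Hence Lemma~\ref{shrinkpbal} applies and gives a twist $E^{F_0}$ with $d_\phi(E^{F_0}/K)<d_\phi(E/K)$, with $d_2(E^{F_0}/K)\in\{d_2(E/K),\,d_2(E/K)-2\}$, and with $\T(E^{F_0}/E^{\prime F_0})\le\T(E/E^\prime)$. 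If the rank dropped we are done (the Tamagawa bound being Lemma~\ref{shrinkpbal}(c)); otherwise $E^{F_0}$ still has $E^{F_0}(K)[2]\simeq\Zt$ and no cyclic $4$-isogeny over $K$, and since $\ord_2\T$ did not increase we have $d_2(E^{F_0}/K)=d_2(E/K)\ge 2+t\ge 2+\max\{0,\ord_2\T(E^{F_0}/E^{\prime F_0})\}$, so the inductive hypothesis applies to $E^{F_0}$. Composing the quadratic twists along the way produces the required twist of $E$. The single delicate ingredient is thus the Tamagawa-ratio bookkeeping in the one application of Proposition~\ref{shrink2} — i.e. arranging, when $M\ne M^\prime$, that $E^\prime$ also has partial but not full rational $2$-torsion at each ramified auxiliary prime.
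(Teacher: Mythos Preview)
Your overall architecture matches the paper's exactly: iterate Lemma~\ref{shrinkpbal} until you are in range for Proposition~\ref{shrink2}, then apply the latter once. The only cosmetic difference is the stopping rule---the paper keeps applying Lemma~\ref{shrinkpbal} until it literally cannot (i.e.\ until $d_\phi=1$ or $d_{\hat\phi}=1$), whereas you stop as soon as $d_\phi\le d_2-1$; either way one lands in the hypothesis of Proposition~\ref{shrink2}, and your verification that both $d_\phi>1$ and $d_{\hat\phi}>1$ hold during the inductive step is correct.

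Where you go beyond the paper is in tracking the Tamagawa ratio through the single application of Proposition~\ref{shrink2}. The paper's proof simply invokes Proposition~\ref{shrink2} and stops; it never checks that $\T(E^F/E^{\prime F})\le\T(E/E^\prime)$ after that step, and Proposition~\ref{shrink2} itself makes no claim about $\T$. Your diagnosis is right: when $M=M^\prime$ the claim follows automatically from Corollary~\ref{characc} and Lemma~\ref{localeq}(iii), but when $M\ne M^\prime$ the auxiliary primes of Proposition~\ref{shrink2} are only constrained to have Frobenius nontrivial on $M$, so $E^\prime$ could acquire full $2$-torsion there and (by the $E\leftrightarrow E^\prime$ version of Lemma~\ref{kill local}) the local factor would jump by $2$. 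The fix you propose---imposing $\sigma_i|_{M^\prime}\ne 1$ as well by sharpening Lemma~\ref{pick 3}, using that the character of $M^\prime$ is $\phi(c_0)$---is the natural one. So your proposal is the paper's proof, but you have flagged a point the paper glosses over.
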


\begin{proof} We can iteratively apply Lemma \ref{shrinkpbal} until we end up with a twist $E^{F_1}$ of $E$ with either $d_2(E^{F_1}/K) = d_2(E/K) -2$, $d_\phi(E^{F_1}/K)=1$, or $d_\phi(E^{\prime F_1}/K)=1$. If $d_2(E^{F_1}/K) = d_2(E/K)-2$, then take $E^F=E^{F_1}$ giving the result. If  $d_\phi(E^{F_1}/K) = 1$ and $d_2(E^{F_1}/K) = d_2(E/K)$, then we have $d_2(E^{F_1}/K) \ge 2$ and we can apply Proposition \ref{shrink2} to $E^{F_1}$, letting $E^F$ be the result of doing so. 

If $d_\hatphi(E^{\prime F_1}/K) = 1$ and $d_2(E^{F_1}/K) = d_2(E/K)$, then $d_\phi(E^{F_1}/K) = 1+ \ord_2 \T(E^{F_1}/E^{\prime F_1}) \le 1+ \ord_2 \T(E/E^{\prime})$. We therefore have $$d_2(E^{F_1}/K) = d_2(E/K) \ge  2 + \ord_2 \T(E/E^{\prime}) \ge 2 + \ord_2 \T(E^{F_1}/E^{\prime F_1}) = d_\phi(E^{F_1}/K) + 1$$ and are able to apply Proposition \ref{shrink2} to $E^{F_1}$, letting $E^F$ be the result of doing so.
\end{proof}

\section{Twisting to Increase Selmer Rank}\label{goingup}

We are also able to utilize the $\phi$-Selmer group to construct a twist $E^F$ of $E$ with $d_2(E^F/K) = d_2(E/K) + 2.$

\begin{lemma}\label{lemmaunbdbal}
Let $E$ be an elliptic curve defined over a number field $K$ with $E(K)[2] \simeq \mathbb{Z}/2\mathbb{Z}$ that does not possess a cyclic 4-isogeny defined over $K$. Then $E$ has a twist $E^F$ with $d_2(E^F/K) = d_2(E/K) + 2$.
\end{lemma}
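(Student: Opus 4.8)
The plan is to construct a quadratic extension $F/K$ ramified at exactly two primes $\mathfrak{p}_1,\mathfrak{p}_2$ of good reduction away from $2$, chosen so that in the notation of Proposition \ref{MR3.3} the set $T = \{\mathfrak{p}_1,\mathfrak{p}_2\}$ contributes a full $4$-dimensional space $\bigoplus_{\mathfrak{p}\in T} H^1_f(K_\mathfrak{p},E[2]) = E[2]\times E[2]$ while the image $V_T = \loc_T(\Sel_2(E/K))$ is only $2$-dimensional (the diagonal). Then (\ref{ineq}) forces $0\le d\le 2$ and (\ref{parity}) forces $d\equiv 2\pmod 2$, so $d\in\{0,2\}$, and we will arrange the Frobenius conditions so that the surviving class actually enlarges the Selmer group by $2$, i.e. $d=2$ and $d_2(E^F/K)=d_2(E/K)-\dimF V_T + d = d_2(E/K)+2$.

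The key steps, in order: First, choose $\gamma\in G_K$ with $\gamma|_M\ne 1$; then for primes $\mathfrak{p}_1,\mathfrak{p}_2$ with $\mathrm{Frob}_{\mathfrak{p}_1}|_N = \gamma|_N$ and $\mathrm{Frob}_{\mathfrak{p}_2}|_N = \gamma^{-1}|_N$ (with $N$ a suitable finite Galois extension containing $MK(8\Delta_E\infty)$ over which $\Sel_2(E/K)$ restricts to zero), Lemma \ref{eval}(ii) gives $H^1_f(K_{\mathfrak{p}_i},E[2]) = E[2]/(\gamma^{\pm1}-1)E[2] = E[2]$ since $\gamma|_M\ne 1$ acts on $E[2]\simeq\Zt\times\Zt$ without nonzero fixed vectors modulo... — more carefully, since $E(K)[2]\simeq\Zt$, $(\gamma-1)E[2]$ should be trivial when $\gamma|_M$ is a suitable element, so one must pick $\gamma$ whose image in $\Gal(M/K)$ acts trivially on $E[2]$, i.e. $\gamma\in G_M$; but then $H^1_f = E[2]/(\gamma-1)E[2] = E[2]$. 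Second, the cocycle relation $c(\gamma)=c(\gamma^{-1})$ for $c\in H^1(K,E[2])$ (since $c(\gamma\gamma^{-1})=0$) confines $V_T$ to the diagonal of $E[2]\times E[2]$, which has dimension $\le 2$. Third, arrange via the primality/totally-positive-generator argument (as in Propositions \ref{earlier} and \ref{shrink2}) that $\mathfrak{p}_1\mathfrak{p}_2 = (\pi)$ with $\pi\equiv 1\pmod{8\Delta_E\infty}$ totally positive, so $F=K(\sqrt\pi)$ has all places over $2\Delta_E\infty$ split and is ramified exactly at $\mathfrak{p}_1,\mathfrak{p}_2$. Fourth, apply Proposition \ref{MR3.3}: since $\dimF\bigl(\bigoplus_T H^1_f(K_\mathfrak{p},E[2])/V_T\bigr) = 4 - \dimF V_T$, and by Kramer's parity (Theorem \ref{kramers}, already packaged into (\ref{parity})) $d$ is even, we get $d_2(E^F/K) = d_2(E/K) - \dimF V_T + d$ with $d-\dimF V_T$ even; one then checks that $d = \dimF\bigl(\bigoplus_T H^1_f/V_T\bigr)$ is impossible only if the relaxed-to-strict Selmer computation is forced, yielding $d_2(E^F/K) = d_2(E/K)+2$ precisely when $\dimF V_T$ is as small as possible and the parity works out.

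The main obstacle — and the point requiring the $\phi$-Selmer machinery of Section \ref{phiselmer} — is showing that $V_T$ can be made to have dimension exactly $2$ (not $3$ or $4$): the localization map $c\mapsto(c(\gamma),c(\gamma^{-1}))$ on $\Sel_2(E/K)$ lands in the diagonal, so $\dimF V_T\le 2$ automatically, but we must also ensure it is not forced to be smaller in a way that would make $d_2(E^F/K)$ jump by the wrong amount, and more importantly we need $d = 2$ rather than $d=0$. Since the bound is $0\le d\le 4-\dimF V_T$ with $d\equiv 4-\dimF V_T\pmod 2$, when $\dimF V_T = 2$ we have $d\in\{0,2\}$ and both values give $d_2(E^F/K) = d_2(E/K)$ or $d_2(E/K)+2$ — so to pin down $d=2$ one argues that $d=0$ would mean $\Sel_2(E^F/K) = S_T$, and then uses that the new class $(0,Q)$ or $(Q,Q)$-type obstruction in the relaxed Selmer group $S^T$ must survive; concretely one produces an explicit class in $H^1(K,E[2])$ satisfying all local conditions for $E^F$ by a Chebotarev/Poitou–Tate argument, which is where controlling the interaction between $H^1_\phi(K_{\mathfrak{p}_i},C^F)$ (which vanishes, by Lemma \ref{kill local}, at ramified primes where $E(K_v)[2]$ grows) and the global Selmer class is essential. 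I expect the bulk of the work to be this last surjectivity/existence argument, parallel to but more delicate than the corresponding step in Mazur–Rubin's $E(K)[2]=0$ case, together with the bookkeeping to confirm the Tamagawa-ratio side condition behaves as needed for the later iteration.
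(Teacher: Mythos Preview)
Your proposal contains a concrete arithmetic error that breaks the argument. With $\gamma\in G_M$ (so that $(\gamma-1)E[2]=0$), each $H^1_f(K_{\mathfrak p_i},E[2])\cong E[2]$ is $2$-dimensional, giving a total space of dimension $4$; the diagonal has dimension $2$, so $\dimF V_T\le 2$. But then Proposition~\ref{MR3.3} gives $d_2(E^F/K)=d_2(E/K)-\dimF V_T+d$ with $0\le d\le 4-\dimF V_T$ and $d\equiv 4-\dimF V_T\pmod 2$. When $\dimF V_T=2$ this yields $d\in\{0,2\}$ and hence $d_2(E^F/K)-d_2(E/K)\in\{-2,0\}$, not $\{0,+2\}$ as you claim. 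So your construction can never increase $d_2$ in this case, and you have no mechanism to rule $\dimF V_T=2$ out (indeed the image of $P$ plus any $\phi$-Selmer class not coming from $C'$ already fills the diagonal). Your proposed fix --- producing an explicit surviving class by Poitou--Tate --- is not fleshed out and does not address this obstruction.

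The paper proceeds quite differently. It chooses $\sigma$ with $\sigma|_M\ne 1$ and $\sigma|_{M'}=1$ (possible precisely when $E$ has no cyclic $4$-isogeny over $M$, since then $M\ne M'$). Then $E(K_{\mathfrak p_i})[2]\simeq\Zt$, so each $H^1_f(K_{\mathfrak p_i},E[2])\simeq E[2]/C$ is only $1$-dimensional; the total space has dimension $2$, the diagonal has dimension $1$, and Proposition~\ref{MR3.3} honestly gives $d_2(E^F/K)\in\{d_2(E/K),\,d_2(E/K)+2\}$. The paper does \emph{not} try to force the $+2$ directly. Instead, because $\sigma|_{M'}=1$ we have $E'(K_{\mathfrak p_i})[2]\simeq\Zt\times\Zt$, and Lemma~\ref{kill local} (with roles of $E,E'$ swapped) gives $\dimF H^1_\phi(K_{\mathfrak p_i},C^F)=2$; the product formula (Theorem~\ref{prodform2}) then yields $\mathcal T(E^F/E'^F)=4\,\mathcal T(E/E')$. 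Iterating, either some step produces $+2$, or $\ord_2\mathcal T$ eventually exceeds $d_2(E/K)+1$, which forces $d_2(E^F/K)\ge\ord_2\mathcal T(E^F/E'^F)\ge d_2(E/K)+2$ anyway. The case where $E$ acquires a $4$-isogeny over $M$ (so $M=M'$ and this choice of $\sigma$ is unavailable) is handled separately in the appendix by a more elaborate three-prime construction that increments $d_\phi$ rather than $\mathcal T$.
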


\begin{proof}

This is proved here for curves that do not have a cyclic 4-isogeny defined over $K(E[2])$ and proved for curves that acquire a cyclic 4-isogeny over $K(E[2])$ in  Appendix \ref{firstapp}. The proof in the appendix works for both types of curves, but the proof presented here for curves that do not have a cyclic 4-isogeny defined over $K(E[2])$ is far less complex.

Suppose that $E$ does not have a cyclic 4-isogeny defined over $M$. By Lemma \ref{charac}, $M$ and $M^\prime$ are disjoint quadratic extensions of $K$. We may therefore find $\sigma \in G_K$ such that $\sigma \big |_M \ne 1$ and  $\sigma \big |_{M^\prime} = 1$. Let $N$ be a finite Galois extension of $K$ containing $MM^\prime$ and the ray class field $K(8\Delta_E\infty)$ such that the restriction of $\Sel_2(E/K)$ to $N$ is trivial. Let $\mathfrak{p}_1$ and $\mathfrak{p}_2$ be primes of $K$ away from $2$ where $E$ has good reduction such that $\text{Frob}_{\mathfrak{p}_1}$ in Gal$(N/K)$ is the conjugacy class of $\sigma \big |_N$ and $\text{Frob}_{\mathfrak{p}_2}$ in Gal$(N/K)$ is the conjugacy class of $\sigma^{-1} \big |_N$. Since $\sigma \sigma^{-1} \big |_{K(8\Delta_E \infty)}=1$, $\mathfrak{p}_1\mathfrak{p}_2$ has a totally positive generator $\pi \equiv 1 \text{  (mod } 8\Delta_E)$. Letting $F=K(\sqrt \pi)$, we get that all places dividing $2\Delta_E \infty$ split in $F/K$, and that $\mathfrak{p}_1$ and $\mathfrak{p}_2$ are the only primes that ramifiy in $F/K$.

Since $\gamma \big |_M \ne 1$, we get that $E(K_{\p_i}) \simeq \Zt$ and therefore that $H^1_f(K_{\p_i}, E[2]) \simeq E[2]/C$, where the isomorphism is given by evaluation at $Frob_{\p_i}$. Applying Proposition \ref{MR3.3} with $T=\{\mathfrak{p}_1, \mathfrak{p}_2\}$, we get that the localization map $\loc_T:\Sel_2(E/K) \rightarrow H^1_f(K_{\mathfrak{p}_1}, E[2]) \oplus H^1_f(K_{\mathfrak{p}_2}, E[2]) = E[2]/C \times E[2]/C$ is given by $c \mapsto \left (c(\gamma), c(\gamma^{-1})\right )$. If $c \in H^1(K, E[2])$, then we have $0 = c(1) =c(\gamma\gamma^{-1})=c(\gamma) + c(\gamma^{-1})$, giving that $c(\gamma) = c(\gamma^{-1})$. Letting $V_T$ be the image $\Sel_2(E/K)$ under the localization map, we therefore get that $V_T$ is contained in the one-dimensional diagonal subspace of $E[2] \times E[2]$. By Proposition \ref{MR3.3}, we then get  $d_2(E^F/K) = d_2(E/K)$ or $d_2(E^F/K) = d_2(E/K) + 2$.

Since all places dividing $2\Delta_E \infty$ split in $F/K$, Proposition \ref{localeq} gives that $H^1_\phi(K_v, C^F)  = H^1_\phi(K_v, C)$ for $v$ different from $\p_1$ and $\p_2$.  Since $\gamma \big |_{M^\prime} = 1$, we get that $E^\prime(K_{\p_i}) \simeq \Zt \times \Zt$. As $E(K_v)[2] \simeq \Zt$, Lemma \ref{kill local} shows that $\dimF H^1_\phi(K_{\p_i}, C^F) = 2$. Plugging this into the product formula for the Tamagawa ratio in Theorem $\ref{prodform2}$ gives $\mathcal{T}(E^F/E^{F\prime}) = 4 \mathcal{T}(E/E^\prime)$.

We now iteratively apply this process until we obtain a twist such that either $d_2(E^F/K) = d_2(E/K)+2$ or $\ord_2 \mathcal{T}(E^F/E^{F\prime}) \ge d_2(E/K)+2$. If $\ord_2 \mathcal{T}(E^F/E^{F\prime}) \ge d_2(E/K)+2$, then this would imply that $d_2(E^F/K) = d_2(E/K)+2$ as $d_2(E^F/K) \ge \ord_2  \mathcal{T}(E^F/E^{F\prime})$.

The case where $E$ acquires a cyclic 4-isogeny over $K(E[2])$ is dealt with in Lemma \ref{lemmaunbdbalspec} and Corollary \ref{propunbdbal}.
\end{proof}

If $E$ does not have a cyclic 4-isogeny defined over $M$, then by Lemma \ref{charac}, $E^\prime(K)[2] \simeq \Zt$ and $E^\prime$ does not have a cyclic 4-isogeny defined over $M^\prime$. Therefore, by swapping the roles of $E$ and $E^\prime$ and iterating the process in the above proof of Theorem \ref{lemmaunbdbal}, we are able to find a twist $E^F$ of $E$ such that $\T(E^F/E^{\prime F}) \le 1$ and $d_2(E^F/K) \simeq d_2(E/K) \pmod{2}.$ This observation allows us to prove Theorem \ref{noisogthm}.

\begin{proof}[Proof of Theorem \ref{noisogthm}]
By the above observation, we may replace $E$ with a twist such that $\T(E/E^\prime) \le 1$. Iteratively applying Proposition \ref{minus2} to $E$, we get a twist of $E$ with 2-Selmer rank $r$ for every $0 \le r  < d_2(E/K)$ with $r \equiv d_2(E/K) \pmod{2}$. Iteratively applying Lemma \ref{lemmaunbdbal} to $E$, we get a twist of $E$ with 2-Selmer rank $r$ for every $r > d_2(E/K)$ with $r \equiv d_2(E/K) \pmod{2}$.  Theorem \ref{equaltwist} then gives the result.

Further, if $E$ does not have constant 2-Selmer parity, then we obtain the result by first replacing $E$ with a twist $E^F$ such that $d_2(E^F/K) \not \equiv d_2(E/K) \pmod{2}$ and proceeding with exactly the same argument.
\end{proof}

\section{Curves which acquire a cyclic  4-isogeny over $K(E[2])$.}\label{cyclic4isogeny}
For the following section we will let $r_1$ be the number of real places of $K$ and $r_2$ be the number of complex places of $K$.

\subsection{Local conditions for curves which acquire a cyclic 4-isogeny over $K(E[2])$}
Because of Corollary \ref{characc}, the local conditions for the $\phi$-Selmer group of a curve that does not have a cyclic 4-isogeny defined over $K$ but acquires one over $K(E[2])$ satisfy much stronger conditions than those curves without a cyclic 4-isogeny defined over $K(E[2])$. Some of these restrictions have been previously been shown in \cite{BddBelow} and we simply note those here. Because of the technical nature of these results, we leave much of the work to Appendix \ref{loc4isog}.

For this section, we will assume that $E$ is a curve with $E(K)[2] \simeq \Zt$ that does not have a cyclic 4-isogeny defined over $K$ but acquires one over $K(E[2])$.

\begin{lemma}\label{mostdim1}
If $E$ has additive reduction and a place $v \nmid 2$, then $H^1_\phi(K_v, C)$ and $H^1_\hatphi(K_v, C^\prime)$ both have $\Ftwo$-dimension 1.
\end{lemma}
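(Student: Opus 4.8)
The plan is to reduce everything to a statement about the $G_{K_v}$-module structure of $E[2]$ and then compute the two local conditions via the exact sequence (\ref{ss}) of Proposition \ref{Re4.7} together with the local duality of Lemma \ref{localduality}. Since $E$ has additive reduction at $v\nmid 2$ and acquires a cyclic $4$-isogeny over $M=K(E[2])$, Corollary \ref{characc} gives $\dimF E(K_v)[2] = \dimF E'(K_v)[2]$; call this common value $m\in\{0,1\}$ (it cannot be $2$, since $E(K_v)[2]\simeq\Zt\times\Zt$ would force $M\otimes K_v$ split and $E$ to have good or split-multiplicative-type behavior incompatible with additive reduction at a place where both $2$-torsion modules have the same dimension — more precisely, if $m=2$ then $E^\prime(K_v)[2]\simeq \Zt\times\Zt$ forces $\Delta_E\in(K_v^\times)^2$, but additive reduction at $v\nmid 2$ forces the relevant twisting parameter to be a ramified unit, a contradiction which I will spell out). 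So I would first establish $m\le 1$, handling the $m=0$ and $m=1$ cases separately.

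Next, with $m$ determined, I would read off from the exact sequence (\ref{ss}) the relation
\begin{equation*}
\dimF H^1_\phi(K_v,C) - \dimF H^1_f(K_v,E[2]) = \dimF\bigl(C'/\phi(E(K_v)[2])\bigr) - \dimF H^1_{\hat\phi}(K_v,C').
\end{equation*}
By Lemma \ref{eval}(i), $\dimF H^1_f(K_v,E[2]) = \dimF E(K_v)[2] = m$. The term $\dimF(C'/\phi(E(K_v)[2]))$ equals $1$ if $E(K_v)[2]=C$ (so $\phi$ kills it) and equals $0$ if $E(K_v)[2]$ strictly contains $C$; a symmetric analysis applies to $E'$. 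Combining these with local duality (Lemma \ref{localduality}), which gives $\dimF H^1_\phi(K_v,C) + \dimF H^1_{\hat\phi}(K_v,C') = \dimF H^1(K_v,C) = 2$ (here I use that $v\nmid 2$ and $v\nmid\infty$, so $|H^1(K_v,C)| = |H^0(K_v,C)|\cdot\#(\text{something})$ — concretely $H^1(K_v,\mu_2)=K_v^\times/(K_v^\times)^2$ has $\Ftwo$-dimension $2$ for $v\nmid 2$), I get two linear equations in the two unknowns $\dimF H^1_\phi(K_v,C)$ and $\dimF H^1_{\hat\phi}(K_v,C')$, which should pin both down to $1$ in every sub-case. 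The case bookkeeping is: either $m=0$ (both $2$-torsion modules trivial over $K_v$), or $m=1$ with $E(K_v)[2]=C$ and $E'(K_v)[2]=C'$, or $m=1$ with $E(K_v)[2]=C$ and $E'(K_v)[2]\ne C'$ (and the mirror), and I would check that the arithmetic works out to $(1,1)$ in each.

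The main obstacle I anticipate is the $m=1$ case where the line in $E(K_v)[2]$ is \emph{not} $C$ (equivalently the line in $E'(K_v)[2]$ is not $C'$): there the quotient $C'/\phi(E(K_v)[2])$ and the Kummer image of $E'(K_v)[2]$ interact in a way that requires knowing which element of $K_v^\times/(K_v^\times)^2$ is hit, and Lemma \ref{twistbp}-style ramification arguments ($\ord_v$ of the relevant coordinate being odd because of additive, potentially-multiplicative-type reduction) are needed rather than pure dimension count. I would handle this by invoking the reduction-type analysis already cited from \cite{BddBelow} and Appendix \ref{loc4isog}, or by a direct Tate-algorithm computation showing the relevant point has a ramified $x$-coordinate, forcing the local condition to be a line distinct from the unramified one; the duality then forces its $\hat\phi$-counterpart to also be a line, completing the proof. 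A cleaner alternative, which I would try first, is to observe that $C$ and $C'$ are both ramified as classes in $K_v^\times/(K_v^\times)^2$ when $E$ has additive reduction at $v\nmid2$ (since $\Delta_E$ has odd valuation), so the image of $(0,0)$ under the relevant Kummer map is always a ramified class, which combined with the exact sequence immediately yields the $1$-dimensionality without delicate casework.
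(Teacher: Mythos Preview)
Your approach has a fundamental gap: the two ``linear equations'' you propose to use are in fact the \emph{same} equation, so the system is underdetermined and cannot pin down both dimensions.

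Concretely, the alternating--sum identity coming from the exact sequence of Proposition~\ref{Re4.7} reads
\[
\dim_{\Ft} H^1_\phi(K_v,C) + \dim_{\Ft} H^1_{\hat\phi}(K_v,C')
= \dim_{\Ft}\bigl(C'/\phi(E(K_v)[2])\bigr) + \dim_{\Ft} H^1_f(K_v,E[2]).
\]
Now $C\subset E(K_v)[2]$ always (the point $(0,0)$ is $K$-rational), so $m:=\dim_{\Ft}E(K_v)[2]\in\{1,2\}$, not $\{0,1\}$ as you write. One checks that $\dim_{\Ft}\bigl(C'/\phi(E(K_v)[2])\bigr)=2-m$, while $\dim_{\Ft}H^1_f(K_v,E[2])=m$ by Lemma~\ref{eval}(i). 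Hence the right-hand side is $2$ regardless of $m$, and the exact sequence yields
\[
\dim_{\Ft} H^1_\phi(K_v,C) + \dim_{\Ft} H^1_{\hat\phi}(K_v,C') = 2.
\]
But this is exactly what local duality (Lemma~\ref{localduality}) already says, since $\dim_{\Ft}H^1(K_v,C)=2$ for $v\nmid 2\infty$. You therefore have a single constraint, compatible with $(\dim H^1_\phi,\dim H^1_{\hat\phi})=(0,2)$, $(1,1)$, or $(2,0)$, and no way to single out $(1,1)$ from this alone. (This redundancy is not an accident: the orthogonality in Lemma~\ref{localduality} is essentially deduced \emph{from} the exact sequence of Proposition~\ref{Re4.7} together with self-duality of $H^1_f$.)

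Your attempts to rule out $m=2$ and your ``cleaner alternative'' also do not go through. Additive reduction at $v\nmid 2$ does not force $\ord_v\Delta_E$ to be odd (e.g.\ type $I_0^*$ has $\ord_v\Delta=6$), so $\Delta_E$ can lie in $(K_v^\times)^2$; and nothing in the hypotheses prevents $M$ from splitting at a place of additive reduction. What is actually required is genuine local input specific to additive reduction---for instance, a computation of $|E'(K_v)/\phi(E(K_v))|$ via the Tamagawa numbers $c_v(E)$ and $c_v(E')$, or an analysis of the component group showing that neither $E(K_v)$ nor $E'(K_v)$ contains the relevant order-$4$ points (in the spirit of Lemma~\ref{twisttorsion}, but handling all additive reduction types under the constraint $M=M'$). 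The paper obtains this input by citing Lemma~3.3 of \cite{BddBelow}; your proposal does not supply a substitute for that computation.
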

\begin{proof}
This is Lemma 3.3 in \cite{BddBelow}.
\end{proof}

\begin{lemma}\label{realplacec}
If $u \in K_v$ with $(u)_v < 0$ and $F = K_v(\sqrt{u})$, then exactly one of $H^1_\phi(K_v, C)$ and $H^1_\phi(K_v, C^F)$ is equal to $K_v^\times/(K_v^\times)^2$.
\end{lemma}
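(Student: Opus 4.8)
The plan is to compute both local conditions completely explicitly at $v$, using the model $E\colon y^2 = x^3 + Ax^2 + Bx$ fixed in Section~\ref{phiselmer} (so $C = \langle(0,0)\rangle$, $E'\colon y^2 = x^3 - 2Ax^2 + (A^2-4B)x$, and $E^F$ has model with coefficients $uA$, $u^2B$), together with the description of the $\phi$-descent map recalled there. First I would note that $(u)_v < 0$ forces $v$ to be a real archimedean place, $K_v = \R$, $F = \C$, so that $H^1(K_v, C) = K_v^\times/(K_v^\times)^2 \cong \Zt$ has order two; hence ``equal to $K_v^\times/(K_v^\times)^2$'' just means ``of $\Ftwo$-dimension $1$'', and the claim is exactly that $\dimF H^1_\phi(K_v, C) + \dimF H^1_\phi(K_v, C^F) = 1$.

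The crucial preliminary step is to pin down the signs of $(A)_v$ and $(B)_v$. Since $E$ has no cyclic $4$-isogeny over $K$ but acquires one over $M$, Lemma~\ref{charac} gives $M = M'$; as $M = K(\sqrt{A^2-4B})$ and $M' = K(\sqrt{B})$ are genuine quadratic extensions, this means $B(A^2-4B) \in (K^\times)^2$, so $(B)_v$ and $(A^2-4B)_v$ have the same sign at the real place $v$. Because $(B)_v < 0$ would force $(A^2-4B)_v > 0$, this common sign must be positive: $(B)_v > 0$ and $(A^2-4B)_v > 0$ (equivalently, by Corollary~\ref{characc}, both $E(\R)$ and $E'(\R)$ have two components). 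In particular $(A)_v^2 > 4(B)_v > 0$, so $(A)_v \ne 0$.

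Then I would read off $H^1_\phi(K_v, C)$. It is the image of $E'(\R)/\phi(E(\R))$ under $(x,y)\mapsto x\,(K_v^\times)^2$, with $(0,0)\mapsto \Delta_E\,(K_v^\times)^2$; since $\Delta_E \equiv A^2-4B$ modulo squares is a positive real, the point $(0,0)$ contributes nothing, and the image is all of $K_v^\times/(K_v^\times)^2$ iff $E'(\R)$ has a point with negative $x$-coordinate. The cubic defining $E'$ has roots $0$ and $A \pm 2\sqrt{B}$, and $(A - 2\sqrt B)(A + 2\sqrt B) = A^2 - 4B > 0$ shows the two nonzero roots share the sign of $(A)_v$; so the bounded component of $E'(\R)$ lies in $\{x < 0\}$ precisely when $(A)_v < 0$ and in $\{x > 0\}$ precisely when $(A)_v > 0$. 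Hence $H^1_\phi(K_v, C) = K_v^\times/(K_v^\times)^2$ iff $(A)_v < 0$. Running the identical computation for $E^F$, whose relevant coefficient is $uA$ with $u < 0$, gives $H^1_\phi(K_v, C^F) = K_v^\times/(K_v^\times)^2$ iff $(A)_v > 0$. Since $(A)_v \ne 0$, exactly one holds, which is the assertion.

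The step I expect to be the real obstacle is the second paragraph: one must genuinely use the global hypothesis to force both curves to have full $2$-torsion over $K_v$, since the conclusion fails otherwise --- if $(B)_v < 0$ then \emph{neither} local condition is the whole group, while if $(A^2-4B)_v < 0 < (B)_v$ then \emph{both} are, so the input from Lemma~\ref{charac}/Corollary~\ref{characc} is doing essential work. Everything else is a careful but routine inspection of the three real roots of $E'$ and of the image of the $2$-torsion point $(0,0)$ under the descent map.
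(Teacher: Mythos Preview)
Your argument is correct and follows essentially the same route as the paper: the paper's proof of Lemma~\ref{realplacec} simply invokes Lemma~\ref{realplace}, whose content is exactly your computation that $H^1_\phi(K_v,C)=K_v^\times/(K_v^\times)^2$ if and only if $(A)_v<0$, after first establishing $(B)_v>0$ and $(A^2-4B)_v>0$. The only differences are cosmetic: the paper rules out $(B)_v<0$ via local duality (both $\Delta_E$ and $\Delta_{E'}$ would be negative, making $H^1_\phi$ and $H^1_{\hat\phi}$ simultaneously nontrivial), whereas you use the cleaner observation $(B)_v<0\Rightarrow (A^2-4B)_v>0$; and the paper computes $H^1_{\hat\phi}(K_v,C')$ on $E$ and then dualizes, whereas you compute $H^1_\phi(K_v,C)$ directly on $E'$ by locating the real $2$-torsion.
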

\begin{proof}
If $E$ is given by a model $y^2 = x^3 + Ax^2 + Bx$, then $E^F$ is given by a model $y^2 = x^3 + uAx^2 + u^2Bx$. The result then follows from Lemma \ref{realplace} 
\end{proof}

\begin{lemma}\label{splitmult}
If $E$ has split multiplicative reduction $v$ then either $H^1_\phi(K_v, C) = 0$ or  $H^1_\phi(K_v, C) =  K_v^\times/(K_v^\times)^2$. If $H^1_\phi(K_v, C) =  K_v^\times/(K_v^\times)^2$ and $F_w/K_v$ is a quadratic extension, then $H^1_\phi(K_v, C^{F_w}) = NF_w^\times/(K_v\times)^2$. If $H^1_\phi(K_v, C) = 0$, then $\dimF H^1_\phi(K_v, C^{F_w}) = 1$. 
\end{lemma}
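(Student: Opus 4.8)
The plan is to analyze the Kummer image $E'(K_v)/\phi(E(K_v))$ directly using the theory of the Tate curve, since $E$ has split multiplicative reduction at $v$. I would first recall that $H^1_\phi(K_v,C) \cong \delta\big(E'(K_v)/\phi(E(K_v))\big)$ is identified, via $C \simeq \mu_2$, with a subgroup of $K_v^\times/(K_v^\times)^2$, and that by the local Euler characteristic / Tate-pairing duality in Lemma \ref{localduality}, $|H^1_\phi(K_v,C)| \cdot |H^1_{\hat\phi}(K_v,C')| = 2 \cdot |C'/\phi(E(K_v)[2])| \cdot |\,\ldots\,|$ — more concretely I would use that $|H^1_\phi(K_v,C)|/2 = |E'(K_v)/\phi(E(K_v))| \cdot |E(K_v)[2]|^{-1}\cdot |\ker|$, so what really needs computing is the index $[E'(K_v):\phi(E(K_v))]$. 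The key structural input is that since $E$ is split multiplicative, so is $E'$ (isogenous curves share reduction type up to the split/nonsplit dichotomy, and a $2$-isogeny whose kernel $C$ is in the identity component preserves splitness), and for the Tate parametrization $E(K_v) \cong K_v^\times/q_E^\Z$ the isogeny $\phi$ with kernel $C = \{1, -1\} \cdot q_E^{\Z}/q_E^\Z$ corresponds to the map $K_v^\times/q_E^\Z \to K_v^\times/q_{E'}^\Z$ with $q_{E'} = q_E^2$ (up to which of $\pm$ generates $C$; the other choice gives $q_{E'}=q_E$, i.e. the split case where $C$ is generated by the Tate period, and that is exactly the dividing line producing the two cases in the statement).

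From there the computation is essentially combinatorial. When $C$ is generated by $-1 \bmod q_E^\Z$ (equivalently $q_{E'} = q_E^2$), $\phi: K_v^\times/q_E^\Z \to K_v^\times/q_E^{2\Z}$ is the natural surjection followed by... — I would just compute $\mathrm{coker}$: $E'(K_v)/\phi(E(K_v)) \cong (q_E^\Z / q_E^{2\Z}) \cong \Z/2\Z$, and tracking $\delta$ shows its image in $K_v^\times/(K_v^\times)^2$ is generated by $q_E$, which may or may not be a square; combined with the $(0,0) \mapsto \Delta_E$ bookkeeping one finds $H^1_\phi(K_v,C)$ is either $\langle q_E \rangle$ or all of $K_v^\times/(K_v^\times)^2$ depending on $\mathrm{ord}_v(q_E) = \mathrm{ord}_v(\Delta_E) = v(j)^{-1}$-type parity considerations — and since $\mathrm{ord}_v(q_E)$ is the (positive) valuation of the minimal discriminant, the outcome is governed by its parity. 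In the other labeling of $C$ (where $q_{E'} = q_E$) one gets $H^1_\phi(K_v,C) = 0$. This gives the dichotomy "$H^1_\phi(K_v,C) = 0$ or $H^1_\phi(K_v,C) = K_v^\times/(K_v^\times)^2$" claimed; I should double-check that the intermediate possibility (dimension $1$, a proper nonzero subgroup) is genuinely excluded — I expect it is, because the relevant index is either trivial or the full local norm index, never an intermediate ramified line, but confirming this is one place to be careful.

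For the twisted statements: if $F_w = K_v(\sqrt{d})$, then $E^{F_w}$ over $K_v$ is the twist of a split multiplicative curve by $d$, which has split multiplicative reduction if $d$ is a square (then $E^{F_w} \cong E$ and nothing changes), nonsplit multiplicative reduction if $d$ is a unit nonsquare, and additive (type $I_n^*$) if $v(d)$ is odd. In all these cases the Tate-curve or Néron-model description of $E^{F_w}(K_v)$ is standard, and I would compute $H^1_\phi(K_v, C^{F_w})$ the same way. When $H^1_\phi(K_v,C) = K_v^\times/(K_v^\times)^2$ I claim the image after twisting is exactly $N_{F_w/K_v}(F_w^\times)/(K_v^\times)^2$: this is the expected behavior — twisting by $F_w$ intersects the local condition with the norm subgroup, exactly as in Lemma \ref{normintersection}'s philosophy; indeed $H^1_\phi(K_v,C) \cap H^1_\phi(K_v, C^{F_w})$ should on one hand be the "norm" part and on the other, since one factor is everything, just equal $H^1_\phi(K_v,C^{F_w})$, and a short computation with the Tate parameter $q_E$ (noting $q_E \in N F_w^\times$ iff $q_E$ is a norm, which holds because $q_E$ is a square times a norm in the relevant cases) pins it down. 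When $H^1_\phi(K_v,C) = 0$, the $\hat\phi$-condition is everything by orthogonality (Lemma \ref{localduality}), and after twisting, local duality plus the Euler-characteristic count forces $\dimF H^1_\phi(K_v,C^{F_w}) = 1$; alternatively one reads this off directly from the $I_n^*$ reduction type of $E^{F_w}$ and Lemma \ref{localconds}-style arguments.

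The main obstacle I anticipate is the Tate-curve bookkeeping for the $2$-isogeny $\phi$ — specifically correctly matching up which generator of $C$ (i.e. which of $q_{E'}=q_E$ vs $q_{E'}=q_E^2$) corresponds to which case, and then tracking the connecting map $\delta$ through the identification $C\simeq\mu_2$ to get the right generator ($q_E$, $\Delta_E$, or a norm) for $H^1_\phi$. The reduction-type analysis of the quadratic twists and the norm-group identification are routine given the Tate parametrization, but the sign/parity conventions need care, and it may be cleanest to cite or reuse the local computations assembled in Appendix \ref{loc4isog} rather than redo them inline.
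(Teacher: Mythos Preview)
Your overall strategy---analyze the local condition via the Tate parametrization $E(K_v)\cong K_v^\times/q^{\Z}$ and read off the dichotomy from whether $q_{E'}=q^2$ or $q_{E'}=\pm\sqrt{q}$---is exactly what the paper does. However, the cokernel computation you sketch is wrong, and the error is not merely cosmetic: when $C$ is generated by $-1\bmod q^{\Z}$ (so $q_{E'}=q^2$), the isogeny $\phi$ is the \emph{squaring} map $x\mapsto x^2$ on $K_v^\times$, and its image in $K_v^\times/q^{2\Z}$ is $(K_v^\times)^2/q^{2\Z}$, so
\[
E'(K_v)/\phi(E(K_v))\;\cong\;K_v^\times/(K_v^\times)^2,
\]
the \emph{full} square-class group, not $q^{\Z}/q^{2\Z}\cong\Z/2\Z$ as you wrote. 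Conversely, in the other case $q_{E'}=\pm\sqrt{q}$ (not ``$q_{E'}=q_E$'', which would make $E'=E$), the map $\phi$ is the natural quotient $K_v^\times/q^{\Z}\twoheadrightarrow K_v^\times/(\sqrt{q})^{\Z}$, which is surjective, giving $H^1_\phi(K_v,C)=0$. So the dichotomy really is ``everything or nothing,'' with no intermediate parity consideration needed; your worry about a dimension-$1$ case surviving is an artifact of having the two cases crossed.

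The paper's proof is organized differently from your direct computation. Rather than compute both cases from scratch, it (i) invokes Lemma~3.4 of \cite{BddBelow} to handle the $I_n\to I_{2n}$ case outright, including the claim $H^1_\phi(K_v,C^{F_w})=N_{F_w/K_v}F_w^\times/(K_v^\times)^2$ for twists, and then (ii) observes that the $I_n\to I_{n/2}$ case is the $I_{n/2}\to I_n$ case with $E$ and $E'$ swapped, so one applies the same citation to $(E',\hat\phi)$ and passes through the orthogonality in Lemma~\ref{localduality}. This swap-plus-duality move is the main structural shortcut you are missing; it halves the work and makes the twist statement in the $H^1_\phi=0$ case immediate (since $H^1_{\hat\phi}(K_v,C^{\prime F_w})$ is the norm subgroup, its orthogonal complement has $\Ft$-dimension~$1$). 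If you prefer to do everything by hand rather than cite \cite{BddBelow}, the corrected cokernel computation above gives the untwisted dichotomy cleanly, but you will still need a genuine calculation for the twisted norm-group identification, and that is precisely the content being outsourced.
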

\begin{proof}
The proof appears in Appendix \ref{loc4isog}.
\end{proof}

\begin{lemma}\label{cantwistsmall}
If $v$ is a non-complex places of $E$, then there exists a quadratic (or trivial) extension $F_w/K_v$ such that $H^1_\phi(K_v, C^{F_w}) \ne H^1(K_v, C)$.
\end{lemma}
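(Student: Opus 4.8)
The plan is to trade the statement for one about the dual isogeny via the duality of Lemma~\ref{localduality}. Since $H^1_\phi(K_v,C^{F_w})$ and $H^1_{\hat\phi}(K_v,C^{\prime F_w})$ are orthogonal complements of one another under the perfect local Tate pairing on $H^1(K_v,C)\times H^1(K_v,C^\prime)$, we have $H^1_\phi(K_v,C^{F_w})\ne H^1(K_v,C)$ if and only if $H^1_{\hat\phi}(K_v,C^{\prime F_w})\ne 0$. So it suffices to produce a quadratic or trivial $F_w/K_v$ with $H^1_{\hat\phi}(K_v,C^{\prime F_w})\ne 0$. Identifying $H^1(K_v,C^{\prime F_w})$ with $K_v^\times/(K_v^\times)^2$, the group $H^1_{\hat\phi}(K_v,C^{\prime F_w})$ is the image of the $\hat\phi$-descent map on $E^{F_w}(K_v)/\hat\phi(E^{\prime F_w}(K_v))$; by the same symmetry between $(E,C,\phi)$ and $(E^\prime,C^\prime,\hat\phi)$ used to set up $\Sel_{\hat\phi}$, this map sends a point $(x,y)$ with $x\ne 0$ to $x(K_v^\times)^2$ and sends the rational $2$-torsion point $(0,0)$ of $E^{F_w}$ to $\Delta_{E^{\prime F_w}}(K_v^\times)^2$. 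Thus it is enough to exhibit a point of $E^{F_w}(K_v)$ on which this map is nontrivial.

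Writing $E:y^2=x^3+Ax^2+Bx$, so that $B\ne 0$ since $E$ is nonsingular, the first step is a universal computation: quadratic twisting multiplies the discriminant by a sixth power, so $\Delta_{E^{\prime F_w}}\equiv\Delta_{E^\prime}=256\,B(A^2-4B)^2\equiv B\pmod{(K_v^\times)^2}$ for \emph{every} $F_w$. Hence the point $(0,0)$ of $E^{F_w}$ always contributes the class of $B$ to $H^1_{\hat\phi}(K_v,C^{\prime F_w})$. Consequently, if $B\notin(K_v^\times)^2$ we are done immediately by taking $F_w$ trivial, since then $H^1_{\hat\phi}(K_v,C^\prime)$ contains the nontrivial class of $B$.

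The remaining case is $B\in(K_v^\times)^2$. Then $\sqrt B\in K_v$, so the nonzero $2$-torsion of $E^\prime$ (with $x$-coordinates $A\pm 2\sqrt B$) is $K_v$-rational and $E^\prime(K_v)[2]$ is full; by Corollary~\ref{characc} (which rests on the running hypothesis $M=M^\prime$ of Lemma~\ref{charac}(ii)) $E(K_v)[2]$ is then full as well, so $A^2-4B\in(K_v^\times)^2$ and the roots $x_Q=\tfrac12\bigl(-A\pm\sqrt{A^2-4B}\bigr)$ of $x^2+Ax+B$ lie in $K_v^\times$ (they are nonzero because their product is $B$). For any $d\in K_v^\times$, the point $(d\,x_Q,0)$ is then a $K_v$-rational $2$-torsion point of $E^{F_d}$, and the $\hat\phi$-descent map sends it to the class of $d\,x_Q$. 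Since $v$ is non-complex, $K_v^\times\ne(K_v^\times)^2$, so we may choose $d$ with $d\,x_Q\notin(K_v^\times)^2$; for $F_w=K_v(\sqrt d)$ this gives $H^1_{\hat\phi}(K_v,C^{\prime F_w})\ne 0$, completing the proof.

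The step I expect to be the crux is exactly the case $B\in(K_v^\times)^2$: there the trivial twist can genuinely fail, so one must construct an honest quadratic twist, and this is the only place the hypothesis on cyclic $4$-isogenies enters — through Corollary~\ref{characc}, to guarantee that the $2$-torsion point $(d\,x_Q,0)$ of $E^{F_d}$ is $K_v$-rational. I would stress that no separate treatment of places $v\mid 2$, nor of the reduction type of $E$ at $v$, is needed: the explicit descent-map formula and the duality of Lemma~\ref{localduality} are valid at every place, so the argument above is uniform. The only computations involved — the behaviour of the discriminant under twisting and the location of the relevant $2$-torsion points — are routine.
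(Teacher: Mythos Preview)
Your argument is correct and takes a genuinely different route from the paper's. The paper proves this lemma by exhaustive case analysis on the type of the place $v$: it invokes Lemma~\ref{mostdim1} for additive reduction away from $2$, Lemma~\ref{realplacec} for real places, Lemma~\ref{splitmult} and Corollaries~\ref{nonsplit} and~\ref{splitsmall} for multiplicative reduction, and Lemma~\ref{noorder4} for places above $2$. Your approach, by contrast, trades the question for $H^1_{\hat\phi}(K_v,C^{\prime F_w})\ne 0$ via the duality of Lemma~\ref{localduality}, and then exhibits an explicit point of $E^{F_w}(K_v)$ with nontrivial image under the $\hat\phi$-descent map, splitting only on whether $B$ is a square in $K_v$. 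This is cleaner and uniform across all non-complex places; the one place the section's standing hypothesis enters is exactly where you flag it, through Corollary~\ref{characc} in the case $B\in(K_v^\times)^2$. The paper's case-by-case route has the incidental advantage that the individual lemmas (notably \ref{mostdim1}, \ref{realplacec}, and \ref{splitmult}) are reused elsewhere, for instance in the Tamagawa-ratio bounds of Lemma~\ref{twisttor2} and the parity argument in Theorem~\ref{isogpairthm}, so that approach amortizes work already needed; but as a proof of this lemma alone, yours is more direct.
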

\begin{proof}
This is a combination of Lemmas \ref{mostdim1}  \ref{realplacec}, \ref{splitmult}, and \ref{noorder4} and Corollaries \ref{nonsplit} and \ref{splitsmall}.
\end{proof}

\subsection{Proof Theorem \ref{balancedmost}}

The proof of Theorem \ref{noisogthm} required finding a twist $E^F$ of $E$ such that $\T(E^F/E^{\prime F}) \le 1$. As seen in \cite{BddBelow}, this will not alway be possible if $E$ has a cyclic 4-isogeny defined over $K$. However, we are able to prove the following:

\begin{lemma}\label{twisttor2}
If $E$ is an elliptic curve defined over $K$ that does not have a cyclic 4-isogeny defined over $K$ but acquires one over $K(E[2])$, then $E$ has a quadratic twist $E^F$ such that $\ord_2 \T(E^F/E^{\prime F}) \le r_2$. Further, if $E$ does not have constant 2-Selmer parity, then $E$ has a quadratic twist $E^{F^\prime}$ such that $\ord_2 \T(E^{F^\prime}/E^{\prime F^\prime}) \le r_2+1$ and $d_2(E^{F^\prime}/K) \not \equiv d_2(E^F/K) \pmod{2}$. 
\end{lemma}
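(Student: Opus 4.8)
The plan is to use the product formula for the Tamagawa ratio (Theorem \ref{prodform2}) together with the local flexibility statement of Lemma \ref{cantwistsmall}. By Theorem \ref{prodform2}, $\ord_2 \T(E^F/E^{\prime F}) = \sum_{v} \big( \dimF H^1_\phi(K_v, C^F) - 1 \big)$, so the quantity we must control is the number of places $v$ at which $\dimF H^1_\phi(K_v, C^F) = 2$ minus the number of places where $\dimF H^1_\phi(K_v, C^F) = 0$. The local conditions at complex places are forced to have dimension $2$ (since $H^1(K_v, C)$ itself has dimension $1$ over $\R$ — wait, over $\C$ the group $K_v^\times/(K_v^\times)^2$ is trivial, so in fact complex places contribute $-1$ each; I would double-check the normalization in \cite{BddBelow} and \cite{Cassels8}, but in any case the complex places are exactly the places that cannot be adjusted, and they account for the $r_2$ in the statement). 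At every non-complex place $v$, Lemma \ref{cantwistsmall} guarantees we can choose a local quadratic (or trivial) extension $F_w/K_v$ so that $H^1_\phi(K_v, C^{F_w}) \neq H^1(K_v, C)$; inspecting the dimension in the cases catalogued by Lemmas \ref{mostdim1}, \ref{realplacec}, \ref{splitmult} and the good-reduction cases, this forces $\dimF H^1_\phi(K_v, C^{F_w}) \le 1$.

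The first step is therefore to build a global quadratic extension $F/K$ whose local behavior at each bad place, each real place, and each place above $2$ matches a prescribed list of local choices that make each such place contribute at most $0$ to $\ord_2 \T$. Concretely, I would fix a finite set $S$ of places containing all places dividing $2$, all places of bad reduction, and all archimedean places, choose for each $v \in S \setminus \{\text{complex}\}$ a local extension $F_w/K_v$ as supplied by Lemma \ref{cantwistsmall}, and then invoke a Chebotarev/grunwald-type argument (exactly as in the proofs of Propositions \ref{earlier}, \ref{shrink2} and Lemmas \ref{shrinkpbal}, \ref{lemmaunbdbal}) to produce a squarefree $\pi$, totally positive and congruent to $1$ modulo a high power of $2$ times the relevant conductor, realizing these local conditions and ramifying only at a controlled auxiliary prime or two of good reduction. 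At a good-reduction auxiliary prime $\mathfrak p$ that ramifies in $F/K$, the local condition $H^1_\phi(K_{\mathfrak p}, C^F)$ has dimension $0$, $1$, or $2$ depending on $\dimF E(K_{\mathfrak p})[2]$ and $\dimF E^\prime(K_{\mathfrak p})[2]$ (Lemmas \ref{twistbp}, \ref{kill local}, \ref{localconds}); using Corollary \ref{characc}, which says $\dimF E(K_v)[2] = \dimF E^\prime(K_v)[2]$ at every place when $E$ acquires a cyclic $4$-isogeny over $M$ but has none over $K$, the dimension at such a prime is forced to be $1$, contributing $0$. Summing over all places then gives $\ord_2 \T(E^F/E^{\prime F}) \le r_2$.

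For the second assertion, I would run the same construction but additionally arrange one more ramified auxiliary prime whose Frobenius is chosen (again via Chebotarev, as in Lemma \ref{lemmaunbdbal}) so that it changes the parity of $d_2$ — equivalently, by Theorems \ref{kramers} and \ref{tparity}, it adds exactly $1$ to $\ord_2 \T$ — while all other places still contribute at most $0$. Since $E$ not having constant $2$-Selmer parity means (by \cite{DD2}) that $K$ is not totally imaginary or $E$ does not acquire everywhere good reduction over an abelian extension, there genuinely is a place at which a parity-changing local adjustment is available, and the resulting twist $E^{F^\prime}$ satisfies $\ord_2 \T(E^{F^\prime}/E^{\prime F^\prime}) \le r_2 + 1$ with $d_2(E^{F^\prime}/K) \not\equiv d_2(E^F/K) \pmod 2$.

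The main obstacle I anticipate is the bookkeeping at the bad and dyadic places: one must verify that the local extensions handed over by Lemma \ref{cantwistsmall} (which only asserts $H^1_\phi(K_v, C^{F_w}) \ne H^1(K_v, C)$) actually have dimension $\le 1$ rather than just being a proper subgroup of something larger, and one must check compatibility of the simultaneously prescribed local conditions — split multiplicative places (Lemma \ref{splitmult}) are the delicate case, since there the twisted condition is a norm subgroup $NF_w^\times/(K_v^\times)^2$ whose dimension depends on how the chosen global $F$ sits locally, and one has to make sure the Chebotarev construction can hit the right local field there. Isolating the correct finite set $S$ and confirming that every non-complex place can simultaneously be set to contribute non-positively is the technical heart of the argument.
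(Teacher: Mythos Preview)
Your approach to the first assertion is the paper's approach: pick local twists via Lemma \ref{cantwistsmall} at the non-complex places in $2\Delta_E\infty$, globalize by an idelic/Chebotarev construction ramified at one auxiliary prime $\mathfrak p$, and sum the product formula. However, your arithmetic is off. It is \emph{not} true that every non-complex place can be made to contribute $\le 0$: at $v\mid 2$, Lemma \ref{cantwistsmall} (via Lemma \ref{noorder4}) only gives $\dimF H^1_\phi(K_v,C^{F_w})\le \dimF H^1(K_v,C)-1=1+[K_v:\Q_2]$, so dyadic places contribute up to $[K_v:\Q_2]$ each, summing to $[K:\Q]$. What saves you is that \emph{every} archimedean place contributes exactly $-1$: complex places because $\C^\times/(\C^\times)^2=0$, and real places because Lemma \ref{realplacec} lets you force $\dimF H^1_\phi(K_v,C^{F_w})=0$ (not merely $\le 1$). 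The total is then bounded by $[K:\Q]-(r_1+r_2)=r_2$, which is exactly the paper's computation. Your worry about the auxiliary prime is unnecessary once you invoke Corollary \ref{characc}, or alternatively note that $E^F$ has additive reduction there and apply Lemma \ref{mostdim1}.

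For the second assertion your proposal has a genuine gap. In the setting of this lemma, $M=M'$ by Lemma \ref{charac}(ii), so Corollary \ref{characc} forces $\dimF E(K_v)[2]=\dimF E'(K_v)[2]$ at \emph{every} place. Consequently any auxiliary good-reduction prime $\mathfrak p$ at which $F$ ramifies has $\dimF H^1_\phi(K_{\mathfrak p},C^F)=1$ (either by Lemma \ref{localeq}(iii) or Lemma \ref{twistbp}), contributing $0$ to $\ord_2\T$; by Theorem \ref{tparity} such primes therefore cannot change the parity of $d_2$. The trick from Lemma \ref{lemmaunbdbal} that you cite --- picking $\sigma$ with $\sigma|_M\neq 1$ and $\sigma|_{M'}=1$ --- is unavailable precisely because $M=M'$. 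The paper's fix is different: non-constant parity provides a twist $E^L$ with $\ord_2\T(E^L/E'^L)\not\equiv\ord_2\T(E/E')\pmod 2$, hence some place $v_0\mid 2\Delta_E\infty$ at which the local $\phi$-condition changes dimension parity, and one modifies the prescribed local extension at $v_0$ (sacrificing at most one unit in the bound) rather than adding a new ramified prime. Your closing remark that ``there is a place at which a parity-changing local adjustment is available'' is the right idea, but that place must lie above $2\Delta_E\infty$, not be an auxiliary prime.
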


\begin{proof}
By Lemma \ref{cantwistsmall}, we can find a quadratic (or trivial ) extension $F_w$ of each completion $K_v$ of $K$ such that $H^1_\phi(K_v, C^{F_w}) \ne H^1(K_v, C)$ for all non-complex places $v \mid 2\Delta_E\infty$. We can use an idelic construction to combine this local behavior into an single global extension.

Define an idele $\mathbf{x}$ of $K$ by $\mathbf{x}  = (x_v)$, where $x_v$ is an element of $K_v$ such that $F_w$ is given by $F_w = K_v(\sqrt{x_v})$ for each non-complex place $v$ above $2\Delta_E\infty$ and $x_v = 1$ at all other places $v$ of $K$. Let $\dd$ be the formal product of all places $v$ above $\Delta_E\infty$ and let $\gamma = [\mathbf{x}, MK(8\dd)]$ be the image of $\mathbf{x}$ under the global Artin map. If $\mathfrak{p}$ is taken to be a prime of $K$ away from $2\Delta_E$ such that $\text{Frob}_{\mathfrak{p}}$ in $MK(8\dd)$ is $\gamma$, then $\mathfrak{p}$ is principal with a generator $\pi$ such that $K_v(\sqrt{\pi}) = F_w$ for every non-complex place $v \mid 2\Delta_E\infty$. Setting $F = K(\sqrt{\pi})$, we therefore have that $\dimF H^1_\phi(K_v, C^F) \le \dimF H^1(K_v, C) - 1$ for all non-complex places $v \mid 2\Delta_E\infty$. Since $E$ has good reduction at all $v \nmid 2\p\Delta_E\infty$ and additive reduction at $\p$, Lemma \ref{mostdim1} gives that $\dimF H^1_\phi(K_v, C^F) \le \dimF H^1(K_v, C) - 1$ for all non-complex places $v$ of $K$.

By Theorem \ref{prodform2}, $\ord_2 \T(E^F/E^{\prime F})$ is given by $\displaystyle{\ord_2 \T(E^F/E^{\prime F}) = \sum_{v \text{ of } K}  \dimF H^1_\phi(K_v, C^F) - 1.}$ We then have that $$\ord_2 \T(E^F/E^{\prime F})  \le  -( r_1+r_2) + \sum_{v \mid 2}  \dimF H^1(K_v, C) - 2  $$ $$= -( r_1+r_2) + \sum_{v \mid 2} [K:\Q_2]  =  -( r_1+r_2)  + [K:\Q]  = r_2,$$ where the inequality follows from the fact that $H^1(K_v, C) \simeq K_v^\times/(K_v^\times)^2$ and the first equality further following from the fact that $\dimF H^1(K_v, C) = 2 + [K_v:\Q_2]$ for places $v \mid 2$.

By Theorem \ref{tparity}, if $E$ does not have constant 2-Selmer parity, there must be some twist $E^L$ of $E$ such that $\ord_2 \T(E/E^\prime) \not \equiv \ord_2 \T(E^L/E^{\prime L}) \pmod{2}$ and therefore some place $v_0$ such that $\dimF H^1(K_{v_0}, C) \ne \dimF H^1(K_{v_0}, C^L) \pmod{2}$. By Lemma \ref{mostdim1}, $\dimF H^1(K_v, C^L) = 1$ for all places $v \nmid 2$ where $E^L$ has additive reduction and it therefore follows that $v_0 \mid 2\Delta_E\infty$. Locally, $L/K_v$ is given by $K_v(\sqrt{u})$ where $u \in K_v^\times -  (K_v^\times)^2$. Define an idele $\mathbf{y}  = (y_v)$ of $K$ by $y_v = x_v$ for $v \ne v_0$ and $y_{v_0} = u$ if $\dimF H^1_\phi(K_{v_0}, C^F) \equiv H^1_\phi(K_{v_0}, C) \pmod{2}$ and $y_{v_0} = 1$ if $\dimF H^1_\phi(K_{v_0}, C^F) \not \equiv H^1_\phi(K_{v_0}, C) \pmod{2}$. 

Let $\dd$ be the formal product of all places $v$ above $\Delta_E\infty$ and let $\gamma^\prime = [\mathbf{y}, MK(8\dd)]$ be the image of $\mathbf{y}$ under the global Artin map. If we take a prime $\p^\prime$ of $K$ away from $2\Delta_E$ such that $\text{Frob}_{\mathfrak{p}}$ in $MK(8\dd)$ is $\gamma^\prime$, then $\p^\prime$ is principal. Let $\pi^\prime$ be a generator of $\p^\prime$ and set $F^\prime = K(\sqrt{\pi^\prime})$. Calculations similar to those above then show that $\ord_2 \T(E^{F^\prime}/E^{\prime F^\prime}) \le r_2 + 1$ and that $\ord_2 \T(E^{F^\prime}/E^{\prime F^\prime}) \not \equiv \ord_2 \T(E^F/E^{\prime F}) \pmod{2}$. Theorem \ref{tparity} then shows that $d_2(E^F/K) \not \equiv d_2(E^{F^\prime}/K) \pmod{2}$, completing the result.
\end{proof}

\begin{remark}
As the result in \cite{BddBelow} was shown by exhibiting an infinite family of curves with $\ord_2 \T(E^F/E^{\prime F}) \ge r_2$ for every curve $E$ and quadratic extension $F/K$,  Lemma \ref{twisttor2} is the best such result we can hope for without further conditions on $E$.
\end{remark}

We are now able to prove Theorem \ref{balancedmost}.

\begin{proof}[Proof of Theorem \ref{balancedmost}]
By Lemma \ref{twisttor2}, we may replace $E$ with a twist such that $\ord_2 \T(E^{F_1}/E^{\prime F_1}) \le r_2$. Iteratively applying Proposition \ref{minus2} to $E^{F_1}$, we get a twist of $E$ with 2-Selmer rank $r$ for every $\ord_2 \T(E^{F_1}/E^{\prime F_1}) \le r  < d_2(E^{F_1}/K)$ with $r \equiv d_2(E^{F_1}/K) \pmod{2}$.  Iteratively applying Lemma \ref{lemmaunbdbal} to $E^{F_1}$, we get a twist of $E$ with 2-Selmer rank $r$ for every $r \ge d_2(E^{F_1}/K)$ with $r \equiv d_2(E^{F_1}/K) \pmod{2}$. If $E$ does not have constant 2-Selmer parity, then we can additionally find a twist $E^{F_2}$ of $E$ such that $\ord_2 \T(E^{F_3}/E^{\prime F_3}) \le r_2+1$ and  $d_2(E^{F_2}/K) \not \equiv d_2(E^{F_1}/K) \pmod{2}$. Iteratively applying Proposition \ref{minus2} to $E^{F_2}$, we get a twist of $E$ with 2-Selmer rank $r$ for every $\ord_2 \T(E^{F_2}/E^{\prime F_2})  \le r  < d_2(E^{F_2}/K)$ with $r \equiv d_2(E^{F_1}/K) \pmod{2}$. Iteratively applying Lemma \ref{lemmaunbdbal} to $E^{F_2}$, we get a twist of $E$ with 2-Selmer rank $r$ for every $r \ge d_2(E^{F_2}/K)$ with $r \equiv d_2(E^{F_2}/K) \pmod{2}$.  Theorem \ref{equaltwist} applied to all of these twists then gives the result.
\end{proof}

\begin{proof}[Proof of Theorem \ref{isogpairthm}]
Parts $(i)$ and $(ii)$ of Theorem \ref{isogpairthm} are largely a consequence of the fact that $\T(E/E^\prime) = \T(E^\prime/E)^{-1}$. However, proving part $(iii)$ additionally relies on techniques similar to those used to prove Lemma \ref{twisttor2}.

Since $\T(E/E^\prime) = \T(E^\prime/E)^{-1}$, either $\T(E/E^\prime) \le 1$ or $\T(E^\prime/E) \le 1$. Therefore, by possibly exchanging the roles of $E$ and $E^\prime$, we may assume that $\T(E/E^\prime) \le 1$.  Iteratively applying Proposition \ref{minus2} to $E$ we get a twist of $E$ with 2-Selmer rank $r$ for every $0 \le r  < d_2(E/K)$ with $r \equiv d_2(E/K) \pmod{2}$. Iteratively applying Proposition \ref{propunbdbal} to $E$ we get a twist of $E$ with 2-Selmer rank $r$ for every $r > d_2(E/K)$ with $r \equiv d_2(E/K) \pmod{2}$.  Part $(i)$ then follows from applying Theorem \ref{equaltwist}.

If $E$ does not have constant 2-Selmer parity, then we can find a twist $E^F$ of $E$ with $d_2(E^F/K) \not \equiv d_2(E/K) \modtwo$. Applying part $(i)$ of this theorem to $E^F$ proves $(ii)$.

To show $(iii)$, we use an idelic argument to show that if $E$ has such a place, then $E$ has a twist $E^F$ with $d_2(E^F/K) \not \equiv d_2(E/K) \modtwo$ and $\ord_2 \T(E^F/E^{\prime F}) = \ord_2\T(E/E^\prime) \pm 1$.

Let $v_0$ be a place of $K$ such that either $v_0$ is real or $E$ has multiplicative reduction at $v_0$. Pick $u \in (K_{v_0}^\times) - (K_{v_0}^\times)^2$ such that $K_{v_0}(\sqrt{u}) \simeq \C$ if $v_0$ is a real place and $K_{v_0}(\sqrt{u})$ is unramified if $v_0$ is non-archimedean. Define an idele $\mathbf{x} = (x_v)$ of $K$ by $x_{v_0} = u$ and $x_v = 1$ for all $v \ne v_0$. Let $\dd$ be the formal product of all places $v$ above $\Delta_E\infty$ and let $\gamma = [\mathbf{x}, MK(8\dd)]$ be the image of $\mathbf{x}$ under the global Artin map. If $\mathfrak{p}$ is taken to be a prime of $K$ away from $2\Delta_E$ such that $\text{Frob}_{\mathfrak{p}}$ in $MK(8\dd)$ is $\gamma$, then $\mathfrak{p}$ is principal with a generator $\pi$. Setting $F = K(\sqrt{\pi})$, we then get that all $v \mid 2\Delta_E\infty$ different from $v_0$ split in $F/K$ and therefore that $H^1_\phi(K_v, C^F) = H^1_\phi(K_v, C)$ for all $v \mid 2\Delta_E\infty$ different from $v_0$. By Lemma \ref{mostdim1}, since $E^F$ has additive reduction at $\p$, $\dimF H^1_\phi(K_v, C^F) =1$. 

By the product formula in Theorem \ref{prodform2}, we therefore get that $$\ord_2 \T(E/E^\prime) - \ord_2 \T(E^F/E^{\prime F}) = \dimF H^1_\phi(K_{v_0}, C) - \dimF H^1_\phi(K_{v_0}, C^F).$$ If $v_0$ is a real place, then Lemma \ref{realplacec} gives that exactly one of $H^1_\phi(K_{v_0}, C)$ and  $H^1_\phi(K_{v_0}, C^F)$ has $\Ft$-dimension one and that the other has $\Ft$-dimension zero. If $v_0$ is a place where $E$ has multiplicative reduction, then exactly one of $E$ and $E^F$ has split multiplicative reduction at $v_0$ and the other has non-split multiplicative reduction at $v_0$. Applying Lemma \ref{splitmult}, we therefore get that $\dimF H^1_\phi(K_{v_0}, C) - \dimF H^1_\phi(K_{v_0}, C^F) = \pm 1$. In either case, we get that $\ord_2 \T(E^F/E^{\prime F}) = \ord_2\T(E/E^\prime) \pm 1$. Theorem \ref{tparity} then shows that $d_2(E^F/K) \not \equiv d_2(E/K) \modtwo$. 

Now, either both $\T(E/E^\prime) \le 1$ and $\T(E^F/E^{\prime F}) \le 1$ or both $\T(E^\prime/E) \le 1$ and $\T(E^{\prime F}/E^F) \le 1$. We can therefore insist that the choice of $E$ or $E^\prime$ that gives the even parities in part $(ii)$ be the same as the choice of $E$ or $E^\prime$ that gave the odd parities.
\end{proof}

\begin{proof}[Proof of Corollary \ref{rank0}]
If either $d_2(E/K) \equiv 0 \pmod{2}$ or $E$ does not have constant 2-Selmer parity, then Theorem \ref{noisogthm} and Theorem \ref{isogpairthm} imply that either $N_0(E/K, X) \gg \frac{X}{(\log X)}$ or $N_0(E^\prime/K, X) \gg \frac{X}{(\log X)}$. As $E$ and $E^\prime$ have the same Mordell-Weil rank, the result follows.
\end{proof}

\begin{proof}[Proof of Corollary \ref{rank1}]
If either $d_2(E/K) \equiv 1 \pmod{2}$ or $E$ does not have constant 2-Selmer parity,  then Theorem \ref{noisogthm} and Theorem \ref{isogpairthm} imply that either $N_1(E/K, X) \gg \frac{X}{(\log X)}$ or $N_1(E^\prime/K, X) \gg \frac{X}{(\log X)}$. Assuming conjecture $\Sha T_2(K)$ holds, then all curves with 2-Selmer rank one have Mordell-Rank one. As $E$ and $E^\prime$ have the same Mordell-Weil rank, the result follows.
\end{proof}


\bibliography{citations}  


\newpage

\addcontentsline{toc}{chapter}{APPENDICES}  \thispagestyle{empty}

\begin{center}
\noindent {\Large \textbf{APPENDICES}}
\end{center}

\appendix

\section{Proofs of Lemmas \ref{shrinkpbal} and \ref{lemmaunbdbal} when $E$ acquires a cyclic 4-isogeny over $K(E[2])$}\label{firstapp}
\subsection{Generalized Selmer Structures}
An important part of the proofs of Lemmas \ref{shrinkpbal} and \ref{lemmaunbdbal} can be thought of most naturally in terms of Selmer structures. We outline the relevant parts of theory below.

Let $K$ be a number field and $A$ a $G_K$-module. For each finite place $v$ of $K$, define the \textbf{unramified local cohomology group} $H^1_u(K_v^\text{ur}, A) \subset H^1(K_v, A)$ by $$H^1_u(K_v, A) = \ker \left ( H^1(K_v, A) \xrightarrow{res} H^1(K_v^\text{ur},  A) \right ),$$ where $K_v^\text{ur}$ is the maximal unramified extension of $K_v$.

\begin{definition} A \textbf{Selmer structure} $\mathcal{F}$ for $A$ is a collection of local cohomology subgroups $H^1_\mathcal{F}(K_v, A) \subset H^1(K_v, A)$ for each place $v$ of $K$ such that $H^1_\mathcal{F}(K_v, A)$ is equal to the unramified local subgroup $H^1_u(K_v, A)$ for all but finitely many places $v$. We define the \textbf{Selmer group} associated to the Selmer structure $\mathcal{F}$, denoted $H^1_\mathcal{F}(K, A)$ by $$H^1_\mathcal{F}(K, A) = \ker \left ( H^1(K, A) \xrightarrow{res_v} \bigoplus_{v \text { of } K}H^1(K_v, A)/H^1_\mathcal{F}(K_v, A) \right ).$$ 
\end{definition}

Let $A$ be a finitely generated $G_K$-module ramified at a finite set of primes. Define the Cartier dual $A^*$ of $A$ as $A^*=\text{Hom}(T, \mu_{p^\infty})$. For a place $v$ of $K$ we have a perfect bilinear Tate pairing $$H^1(K_v, A) \times H^1(K_v, A^*) \rightarrow H^1(K_v, \mu_{p^\infty}) \xrightarrow{\sim} \mathbb{Q}_p/\mathbb{Z}_p$$ arising from the cup-product pairing. If $\mathcal{F}$ is a Selmer structure on $A$, then we define a dual Selmer structure $\mathcal{F}^*$ on $A^*$ by setting $H^1_{\mathcal{F}^*}(K_v, A^*) = H^1_{\mathcal{F}}(K_v, A)^\bot$ for each place $v$ of $K$ where $\bot$ is taken with respect to the Tate pairing.

If  $\mathcal{F}_1$ and $\mathcal{F}_2$ are Selmer structures, we say $\mathcal{F}_1 \le \mathcal{F}_2$ if $H^1_{\mathcal{F}_1 }(K_v, A) \subset H^1_{\mathcal{F}_2 }(K_v, A)$ for each place $v$ of $K$.

\begin{theorem}[Theorem 2.3.4 in \cite{MR2}]\label{CompareSelmer}
Suppose $\mathcal{F}_1$ and $\mathcal{F}_2$ are Selmer structures with $\mathcal{F}_1 \le \mathcal{F}_2$, then 
\begin{enumerate}[(i)]
\item The sequences $$0 \rightarrow H^1_{\mathcal{F}_1 }(K, A) \rightarrow H^1_{\mathcal{F}_2 }(K, A) \xrightarrow{\sum_{res_v}}  \bigoplus_{v} H^1_{\mathcal{F}_2 }(K_v, A)/H^1_{\mathcal{F}_1 }(K_v, A)$$ and $$0 \rightarrow H^1_{\mathcal{F}_2^*}(K, A^*) \rightarrow H^1_{\mathcal{F}_1^* }(K, A^*) \xrightarrow{\sum_{res_v}}  \bigoplus_{v} H^1_{\mathcal{F}_1^* }(K_v, A^*)/H^1_{\mathcal{F}_2^* }(K_v, A^*)$$ are exact where the sum is taken over all places $v$ such that $H^1_{\mathcal{F}_1 }(K_v, A) \ne H^1_{\mathcal{F}_2 }(K_v, A)$.
\item The images of the right hand maps are orthogonal complements with respect to the sum of the local Tate pairings.
\end{enumerate}
\end{theorem}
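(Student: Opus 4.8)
The plan is to deduce part (i) straight from the definitions and to obtain part (ii) from local/global Tate duality together with a cardinality count. Throughout, write $\alpha$ and $\beta$ for the two right-hand maps, with targets $Q := \bigoplus_v H^1_{\mathcal{F}_2}(K_v,A)/H^1_{\mathcal{F}_1}(K_v,A)$ and $Q^\ast := \bigoplus_v H^1_{\mathcal{F}_1^\ast}(K_v,A^\ast)/H^1_{\mathcal{F}_2^\ast}(K_v,A^\ast)$; note that $\mathcal{F}_1\le\mathcal{F}_2$ forces $\mathcal{F}_2^\ast\le\mathcal{F}_1^\ast$ by taking orthogonal complements, so both quotients make sense, and that both direct sums are finite because $\mathcal{F}_1$, $\mathcal{F}_2$, and the unramified structure agree at all but finitely many $v$.

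For part (i) the argument is formal. Since $H^1_{\mathcal{F}_1}(K,A)\subseteq H^1_{\mathcal{F}_2}(K,A)$ inside $H^1(K,A)$, the first map is injective; the map $\alpha$, obtained by composing the localizations $\mathrm{res}_v$ with the projections onto the quotients, is well defined because a class in $H^1_{\mathcal{F}_2}(K,A)$ restricts into $H^1_{\mathcal{F}_2}(K_v,A)$ at every $v$, and its kernel is exactly the set of classes whose restrictions lie in $H^1_{\mathcal{F}_1}(K_v,A)$ for all $v$, that is $H^1_{\mathcal{F}_1}(K,A)$. The second sequence is this same statement applied to the pair $\mathcal{F}_2^\ast\le\mathcal{F}_1^\ast$.

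For part (ii) the first task is to record that $Q$ and $Q^\ast$ are in perfect duality under the sum of the local Tate pairings. Place by place, restrict the perfect pairing $H^1(K_v,A)\times H^1(K_v,A^\ast)\to\mathbb{Q}_p/\mathbb{Z}_p$ to $H^1_{\mathcal{F}_2}(K_v,A)\times H^1_{\mathcal{F}_1^\ast}(K_v,A^\ast)$; using $H^1_{\mathcal{F}_i^\ast}(K_v,A^\ast)=H^1_{\mathcal{F}_i}(K_v,A)^\perp$ and perfectness of the local pairing, one checks that the left and right kernels are precisely $H^1_{\mathcal{F}_1}(K_v,A)$ and $H^1_{\mathcal{F}_2^\ast}(K_v,A^\ast)$, so the pairing descends to a perfect pairing on the quotients; summing over the finitely many relevant $v$ gives the asserted perfect pairing $Q\times Q^\ast\to\mathbb{Q}_p/\mathbb{Z}_p$. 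Next, the images are orthogonal: for $c\in H^1_{\mathcal{F}_2}(K,A)$ and $c^\ast\in H^1_{\mathcal{F}_1^\ast}(K,A^\ast)$, the pairing of $\alpha(c)$ with $\beta(c^\ast)$ equals $\sum_v\langle \mathrm{res}_v c,\mathrm{res}_v c^\ast\rangle_v$, which is the sum of the local invariants of the localizations of the global cup product $c\cup c^\ast$, and this vanishes by global reciprocity. Hence $\mathrm{Im}(\alpha)\subseteq\mathrm{Im}(\beta)^\perp$. To promote this to equality I would compare orders: by part (i), $\#\mathrm{Im}(\alpha)=\#H^1_{\mathcal{F}_2}(K,A)/\#H^1_{\mathcal{F}_1}(K,A)$ and $\#\mathrm{Im}(\beta)=\#H^1_{\mathcal{F}_1^\ast}(K,A^\ast)/\#H^1_{\mathcal{F}_2^\ast}(K,A^\ast)$, and applying the Greenberg--Wiles cardinality formula
\[ \frac{\#H^1_{\mathcal{F}}(K,A)}{\#H^1_{\mathcal{F}^\ast}(K,A^\ast)}=\frac{\#H^0(K,A)}{\#H^0(K,A^\ast)}\prod_v\frac{\#H^1_{\mathcal{F}}(K_v,A)}{\#H^0(K_v,A)} \]
to $\mathcal{F}=\mathcal{F}_1$ and $\mathcal{F}=\mathcal{F}_2$ and dividing, the $H^0$-factors cancel and the product telescopes to $\#Q$, giving $\#\mathrm{Im}(\alpha)\cdot\#\mathrm{Im}(\beta)=\#Q$. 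Since the pairing on $Q\times Q^\ast$ is perfect, $\#\mathrm{Im}(\beta)^\perp=\#Q/\#\mathrm{Im}(\beta)=\#\mathrm{Im}(\alpha)$, so the inclusion is an equality.

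The routine part is the bookkeeping: fix a finite set $\Sigma$ of places containing the archimedean ones, the primes where $A$ is ramified, and all $v$ at which $\mathcal{F}_1$, $\mathcal{F}_2$, or the unramified condition differ, and compute everything with $G_{K,\Sigma}$-cohomology so that all products are finite and $H^1_{\mathcal{F}_i}(K,A)=H^1_{\mathcal{F}_i}(G_{K,\Sigma},A)$. The main obstacle, in the sense of where the real content lives, is supplying the two global inputs: Poitou--Tate global duality (used for global reciprocity, and, with local Tate duality, for the perfectness of the pairing on $Q\times Q^\ast$) and the Greenberg--Wiles formula in the order count. An essentially equivalent alternative would be to splice the relevant stretch of the Poitou--Tate nine-term sequence for the pair $(\mathcal{F}_1,\mathcal{F}_2)$ into a single exact sequence $0\to H^1_{\mathcal{F}_1}(K,A)\to H^1_{\mathcal{F}_2}(K,A)\to Q\to H^1_{\mathcal{F}_1^\ast}(K,A^\ast)^\vee\to H^1_{\mathcal{F}_2^\ast}(K,A^\ast)^\vee\to 0$, from which both (i) and (ii) drop out at once; assembling that sequence would then be the locus of the difficulty.
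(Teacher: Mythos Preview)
Your proposal is correct and aligns with the paper's approach: the paper's proof simply states that part (i) ``follows immediately from the definition of Selmer structures'' and that part (ii) ``is part of Poitou--Tate global duality,'' citing Milne's Theorem I.4.10. You have supplied the details the paper omits---global reciprocity for orthogonality and Greenberg--Wiles for the order count---and your alternative via the Poitou--Tate nine-term sequence is precisely the route the paper's citation is pointing toward.
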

\begin{proof}
The first part follows immediately from the definition of Selmer structures. The second is part of Poitou-Tate global duality; see Theorem I.4.10 in \cite{Milne} for example.
\end{proof}

Both the 2-Selmer and $\phi$-Selmer groups arise from Selmer structures. In the case of the 2-Selmer group, since $E[2]^* = E[2]$ under the Weil pairing and $H^1_f(K_v, E[2])$ is self-dual under the local Tate-pairing, we get that the Selmer group associated to the dual Selmer structure for $\Sel_2(E/K)$ is equal to $\Sel_2(E/K)$. Lemma 3.2 in \cite{MR} referenced in Section \ref{methodsofrubingandmazur} then an application of Theorem \ref{CompareSelmer}. 

\subsection{Proof of Lemma \ref{shrinkpbal}}
\begin{lemma}\label{shrinkpspec}
Let $E$ be an elliptic curve defined over a number field $K$ with $E(K)[2] \simeq \mathbb{Z}/2\mathbb{Z}$ that does not possess a cyclic 4-isogeny defined over $K$ but acquires one over $K(E[2])$. If both $d_\phi(E/K) > 1$ and $d_{\hat \phi}(E^\prime/K) > 1$, then $E$ has a quadratic twist $E^F$ such that 
\begin{enumerate}[(a)]
\item $d_\phi(E^F/K) < d_\phi(E/K) $, 
\item $d_2(E^F/K) = d_2(E/K)$ or $d_2(E^F/K) = d_2(E/K)-2$, and
\item $\T(E^F/E^{\prime F}) \le \T(E/E^\prime)$
\end{enumerate}
\end{lemma}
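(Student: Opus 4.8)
Since $E$ has no cyclic $4$-isogeny over $K$ but acquires one over $M=K(E[2])$, Lemma \ref{charac} gives $M=M':=K(E'[2])$, a quadratic extension of $K$, and Corollary \ref{characc} gives $\dimF E(K_v)[2]=\dimF E'(K_v)[2]$ for every $v$. The essential difference from Lemma \ref{shrinkpbal} is that at a good prime $\mathfrak p\nmid 2$ a ramified twist changes $H^1_\phi(K_\mathfrak p,C)$ \emph{only} when $\mathrm{Frob}_\mathfrak p|_M=1$ (otherwise it is unchanged, Lemma \ref{localeq}(iii)), and in that case Lemma \ref{twistbp} shows $H^1_\phi(K_\mathfrak p,C^F)$ is the ramified line, disjoint from $H^1_\phi(K_\mathfrak p,C)$. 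So the local conditions no longer nest after twisting and Lemma \ref{kill local} is unavailable; all global comparisons must be run through Poitou--Tate duality (Theorem \ref{CompareSelmer}), viewing $\Sel_\phi(E/K)$ and $\Sel_{\hat\phi}(E'/K)$ as Selmer and dual Selmer groups for $C$ (their local conditions being orthogonal complements by Lemma \ref{localduality}).

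The plan is to take $F=K(\sqrt\pi)$ with $\pi$ a totally positive generator of $\mathfrak p_1\mathfrak p_2$, $\pi\equiv 1\pmod{8\Delta_E}$, where $\mathfrak p_1,\mathfrak p_2$ are good primes away from $2$ with $\mathrm{Frob}_{\mathfrak p_i}|_M=1$. Then all places above $2\Delta_E\infty$ split, so by Lemma \ref{localeq} the $\phi$- and $\hat\phi$-conditions are unchanged away from $T=\{\mathfrak p_1,\mathfrak p_2\}$, while at $\mathfrak p_1,\mathfrak p_2$ we have $|H^1_\phi(K_{\mathfrak p_i},C)|=|H^1_\phi(K_{\mathfrak p_i},C^F)|=2$; hence Theorem \ref{prodform2} yields $\mathcal T(E^F/E'^F)=\mathcal T(E/E')$, which is (c), and since $\ord_2\mathcal T(E/E')=d_\phi(E/K)-d_{\hat\phi}(E'/K)$ it follows that (a) is equivalent to $d_{\hat\phi}(E'^F/K)<d_{\hat\phi}(E'/K)$; we will in fact get both. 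Using $d_\phi(E/K)>1$ choose $b\in\Sel_\phi(E/K)$ not in the image of $C'$ (equivalently $b\neq 1,\Delta_E$ in $K^\times/(K^\times)^2$); as $[M:K]=2$ the quadratic field $N_b$ cut out by $b$ satisfies $N_b\not\subseteq M$. Symmetrically choose, using $d_{\hat\phi}(E'/K)>1$, a class $b'\in\Sel_{\hat\phi}(E'/K)$ with $N_{b'}\not\subseteq M$. By Chebotarev pick $\mathfrak p_1,\mathfrak p_2$ whose Frobenii in a large Galois extension $N\supseteq MN_bN_{b'}K(8\Delta_E\infty)$ that kills $\Sel_\phi(E/K)$ and $\Sel_{\hat\phi}(E'/K)$ are the classes of $\gamma$ and $\gamma^{-1}$ for some $\gamma\in G_M$ nontrivial on $N_b$ and on $N_{b'}$ (possible since $[M:K]=2$); then $\mathfrak p_1\mathfrak p_2$ is principal with the desired generator, and $b,b'$ localize nontrivially at both $\mathfrak p_1$ and $\mathfrak p_2$.

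Because $\gamma\gamma^{-1}$ is trivial on $K(8\Delta_E\infty)$, every $\phi$- (resp.\ $\hat\phi$-) Selmer class has equal localizations at $\mathfrak p_1,\mathfrak p_2$, so $\mathrm{loc}_T(\Sel_\phi(E/K))$ is the $1$-dimensional diagonal of $H^1_\phi(K_{\mathfrak p_1},C)\oplus H^1_\phi(K_{\mathfrak p_2},C)$ ($1$-dimensional because $b$ hits it), and likewise for $\hat\phi$. Running the strict/relaxed exact sequences at $T$, using $H^1_\phi(K_{\mathfrak p_i},C)\cap H^1_\phi(K_{\mathfrak p_i},C^F)=0$ and the fact (Lemma 3.2 of \cite{MR}) that $\mathrm{loc}_T(S^T_\phi)$ is $2$-dimensional, shows that $d_\phi(E^F/K)\in\{d_\phi(E/K)-1,\,d_\phi(E/K)\}$, the non-decreasing case occurring exactly when the $2$-plane $\mathrm{loc}_T(S^T_\phi)$ — equivalently, by Poitou--Tate, its orthogonal complement $\mathrm{loc}_T(\Sel_{\hat\phi}(E')^T)$ — contains a certain ``ramified diagonal''. \textbf{The main obstacle is to rule this out.} This is done by exploiting the freedom still present in the choice of $\mathfrak p_2$ (with $\mathfrak p_1$ fixed): since $b'$ and the relevant relaxed class are dual under Poitou--Tate, a further Chebotarev condition on $\mathrm{Frob}_{\mathfrak p_2}$ in the field cut out by the image of $b'$ together with the $E^F$-local data forces the ``extra'' relaxed class to localize into $H^1_\phi(K_{\mathfrak p_i},C^F)$ at exactly one of $\mathfrak p_1,\mathfrak p_2$; here both $d_\phi(E/K)>1$ and $d_{\hat\phi}(E'/K)>1$ are used. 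This gives $d_\phi(E^F/K)=d_\phi(E/K)-1$, hence (a).

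Finally (b): apply Proposition \ref{MR3.3} with this $T$. As $\mathrm{Frob}_{\mathfrak p_i}|_M=1$ we have $E(K_{\mathfrak p_i})[2]\simeq\ZtZt$, so $T$ is exactly $\{\mathfrak p_1,\mathfrak p_2\}$ and $\dimF H^1_f(K_{\mathfrak p_i},E[2])=2$. Because $E$ acquires a cyclic $4$-isogeny over $M$, the subgroup $\langle R\rangle$ ($2R=P$) is $G_M$-stable, so the class of $P$ in $\Sel_2(E/K)$ localizes into $C$ at $\mathfrak p_1,\mathfrak p_2$; unlike in \cite{MR} this alone does not make $V_T=\mathrm{loc}_T(\Sel_2(E/K))$ full-dimensional. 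One still gets $\dimF V_T=2$ from two independent Selmer classes localizing transversally: the image of $b$ in $\Sel_2(E/K)$ localizes to the nonzero point of $C$, while (provided $d_\phi(E/K)\le d_2(E/K)$, i.e.\ $\phi(\Sel_2(E/K))$ strictly contains the $C$-part of $\Sel_{\hat\phi}(E'/K)$) there is a class in $\Sel_2(E/K)$ whose $\phi$-image is nontrivial on $G_M$, and — after one more Frobenius condition on $\mathfrak p_1$ — it localizes into $E[2]\smallsetminus C$. With $\dimF V_T=2$, (\ref{ineq}) and (\ref{parity}) give $d_2(E^F/K)\in\{d_2(E/K),\,d_2(E/K)-2\}$. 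In the remaining case $d_\phi(E/K)=d_2(E/K)+1$ one instead arranges $\dimF V_T=1$ and uses Proposition \ref{MR3.3} together with Kramer's parity (Theorem \ref{kramers}) to conclude $d_2(E^F/K)=d_2(E/K)$; either way (a), (b), (c) all hold. The persistent difficulty throughout is the Poitou--Tate bookkeeping ensuring that it is $\Sel_\phi$ that drops, not $\Sel_{\hat\phi}$ that rises.
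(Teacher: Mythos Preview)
Your two-prime construction (both primes with $\mathrm{Frob}|_M=1$) is genuinely different from the paper's approach and runs into a real obstacle in part (b) that you do not resolve.  In your case $d_\phi(E/K)=d_2(E/K)+1$, you have $\dim V_T=1$ inside a $4$-dimensional space, so Proposition~\ref{MR3.3} gives $d_2(E^F/K)\in\{d_2(E/K),\,d_2(E/K)+2\}$, and Kramer's parity only confirms the congruence mod~$2$; it does not exclude the $+2$ outcome.  Nothing in your sketch rules it out, so (b) fails in this case.  Your argument for (a) is also incomplete: the claim that one can impose ``a further Chebotarev condition on $\mathrm{Frob}_{\mathfrak p_2}$'' to force the extra relaxed class to miss $H^1_\phi(K_{\mathfrak p_i},C^F)$ is circular, since both that class and the line $H^1_\phi(K_{\mathfrak p_i},C^F)$ depend on the very primes you are choosing.  (Your assertion that $\dim\mathrm{loc}_T(Z^T)=2$ is in fact correct, but it follows from a Greenberg--Wiles count using both $b$ and $b'$, not from Lemma~3.2 of \cite{MR}, which concerns the self-dual module $E[2]$.)

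The paper sidesteps both problems by taking \emph{three} primes with mixed Frobenius behaviour: $\gamma_1|_M\ne 1$, $\gamma_2|_M=1$, and $\gamma_3=(\gamma_1\gamma_2)^{-1}$.  The crucial point you missed is that at $\mathfrak p_1$ and $\mathfrak p_3$ one has $E(K_{\mathfrak p_i})[2]\simeq\Zt\simeq E'(K_{\mathfrak p_i})[2]$, so Lemma~\ref{localeq}(iii) applies and the $\phi$-Selmer local conditions are \emph{unchanged} there.  Hence the entire $\phi$-Selmer comparison happens at the single prime $\mathfrak p_2$, where Poitou--Tate gives $\dim Z^T/Z_T+\dim Z_T^*/Z^{T*}=2$; since $b$ and $\hat b$ are each nontrivial in their respective quotients, both equal~$1$, $b$ generates $Z^T/Z_T$, and since $b$ restricts into $H^1_u$ (disjoint from $H^1_\phi(K_{\mathfrak p_2},C^F)$) one gets $\Sel_\phi(E^F/K)=Z_T$ directly --- no circularity, no further Chebotarev conditions.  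For (b), having $\gamma_1|_M\ne 1$ guarantees $c_0(\gamma_1)\notin C$ unconditionally, so $\loc_T(c_0)$ and $\loc_T(b)$ are automatically independent; together with the single linear relation $c(\gamma_1)+c(\gamma_2)+c(\gamma_3)\equiv 0\pmod C$, this gives $2\le\dim V_T\le 3$ in a $4$-dimensional target, and Proposition~\ref{MR3.3} yields (b) without any case split on $d_\phi$ versus~$d_2$.
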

\begin{proof}
Let $M = K(E[2])$. We begin by recalling that $K(E^\prime[2]) = M$ by Lemma \ref{charac} and that therefore $\dimF E(K_v)[2] = \dimF E^\prime(K_v)[2]$ for all places $v$ of $K$ by Corollary \ref{characc}.

Pick $R \in E(\overline{K})$ with $2R = P$ and define a co-cycle $c_0 : G_K \rightarrow E[2]$ by $c_0(\sigma) = \sigma(R) - R$ for $\sigma \in G_K$. As $E$ does not have a cyclic 4-isogeny defined over $K$, there exists some $\gamma_1 \in G_K$ such that $c_0(\gamma_1) \not \in C$.  Observing that $$\phi(c_0(\sigma)) = \phi(\sigma(R) - R) = \phi(\sigma(R)) -\phi(R) = \sigma(\phi(R)) - \phi(R)$$ for $\sigma \in G_K$ and recalling that $\phi(R) \in E^\prime[2]$, we see that $c_0(\sigma) \in C$ for all $c \in G_{M}$ and we therefore get that $\gamma_1 \not \in G_M$.

Since  $d_\phi(E/K) > 1$ and $d_{\hat \phi}(E^\prime) > 1$, we can find $b \in \Sel_\phi(E/K)$ and $\hat b \in \Sel_{\hat\phi}(E^\prime/K)$ such that $b$ does not come from $C^\prime$ and that $\hat b$ does not come from $C$.

Take $N_{b} = \overline{K}^{\text{Ker}b}$ and  $N_{\hat b} = \overline{K}^{\text{Ker} \hat b}$. Since $b$ does not come from $C^\prime$ and $\hat b$ does not come from $C$, $N_{b}$ and  $N_{\hat b}$ are (not necessarily distinct) quadratic extensions of $K$ different from $M$. We can therefore find $\gamma_2 \in G_K$ such that $\gamma_2 \big |_{M} = 1$, $\gamma_2 \big |_{N_b} \ne 1$, and $\gamma_2 \big |_{N_{\hat b}} \ne 1$. Set $\gamma_3 = (\gamma_1\gamma_2)^{-1}\in G_K$ and observe that $\gamma_3 \big |_M \ne 1$.

Let $N$ be a finite Galois extension of $K$ containing $MK(8\Delta_E\infty)$ such that the restrictions of $\Sel_2(E/K)$, $\Sel_\phi(E/K)$, and $\Sel_\hatphi(E^\prime/K)$ to $N$ are zero. Choose 3 primes $\mathfrak{p}_1, \mathfrak{p}_2, \text{ and } \mathfrak{p}_3$ not dividing 2 such that $E$ has good reduction at each $\mathfrak{p}_i$ and $\text{Frob}_{\mathfrak{p_i}} \big |_N = \gamma_i \big |_N$ for $i = 1, ..., 3$. Since $\gamma_1\gamma_2\gamma_3 \big |_{K(8\Delta_E\infty)}=1$, we get that $\mathfrak{p}_1\mathfrak{p}_2\mathfrak{p}_3$ has a totally positive generator $\pi$ with $\pi \equiv 1\text{  }\pmod{8\Delta_E\infty}$. Letting $F=K(\sqrt{\pi})$, we get that all places dividing $2\Delta_E\infty$ split in $F/K$, and that $\mathfrak{p}_1, \mathfrak{p}_2, \text{ and } \mathfrak{p}_3$ are the only primes that ramify in $F/K$.

We now apply Proposition \ref{MR3.3}  with $T = \{\mathfrak{p}_1, \mathfrak{p}_2, \mathfrak{p}_3\}$. Since $E$ has good reduction at $\mathfrak{p}_1$, $\mathfrak{p}_2$, and $\mathfrak{p}_3$ it then follows that $H^1_f(K_{\mathfrak{p}_1}, E[2]) = E[2]/(\gamma_1-1)E[2] = E[2]/C$,  $H^1_f(K_{\mathfrak{p}_2}, E[2]) = E[2]/(\gamma_2-1)E[2] = E[2]$,  and $H^1_f(K_{\mathfrak{p}_3}, E[2]) = E[2]/(\gamma_3-1)E[2] = E[2]/C$ and that the localization map $\loc_T:\Sel_2(E/K) \rightarrow H^1_f(K_{\mathfrak{p}_1}, E[2]) \oplus H^1_f(K_{\mathfrak{p}_2}, E[2]) \oplus H^1_f(K_{\mathfrak{p}_3}, E[2])$ is given by $c \mapsto \left (c(\gamma_1), c(\gamma_2), c(\gamma_3)\right )$.

As $b \in \Sel_\phi(E/K)$, we may view it as an element of $\Sel_2(E/K)$. Viewing $b$ as a co-cycle, we get that $b(\gamma_2) \ne 0$ since $\gamma_2 \big |_{N_{b}} \ne 1$ and that $b(\gamma_2) \in C$ since $b \in H^1(K, C)$. Let $V_T$ be the image $\Sel_2(E/K)$ under the localization map. By design, $c_0(\gamma_1) \not \in C$. As $c_0$ represents the element of $\Sel_2(E/K)$ coming from $P$, we therefore get that $V_T$ has dimension at least 2 in the 4 dimensional space $H^1_f(K_{\mathfrak{p}_1}, E[2]) \oplus H^1_f(K_{\mathfrak{p}_2}, E[2]) \oplus H^1_f(K_{\mathfrak{p}_3}, E[2])$. Noting further that if $c$ is a co-cycle representing an element of $H^1(K, E[2])$, then we have $$0 = c(\gamma_1\gamma_2\gamma_3) = c(\gamma_1) + c(\gamma_2) + c(\gamma_3) + C,$$ we get that $(Q, 0, 0) \not \in V_T$, so that $V_T$ has $\Ftwo$ dimension at most 3. Proposition \ref{MR3.3} therefore gives  $d_2(E^F/K) = d_2(E/K)$ or $d_2(E^F/K) = d_2(E/K)-2$.

By Lemma \ref{localeq}, $H^1_\phi(K_v, C^F) = H^1_\phi(K_v, C)$ and $H^1_{\hat \phi}(K_v, C^{\prime F}) = H^1_{\hat \phi}(K_v, C^\prime)$ for all $v$ different from $\mathfrak{p}_2$. Define strict and relaxed Selmer groups $Z_T$ and $Z^T$ as $$Z_T =\ker \left ( \Sel_\phi(E/K) \xrightarrow{res_{\p_2}} H^1(K_{\p_2}, C) \right )$$ and $$Z^T =\ker \big ( H^1(K, C) \xrightarrow{\sum_{res_v}} \bigoplus_{v \ne \p_2} H^1(K_v, C)/H^1_\phi(K_v, C) \big ).$$ Observe that $Z_T \subset \Sel_\phi(E/K), \Sel_\phi(E^F/K) \subset Z^T$ and $Z^{T*} \subset \Sel_\phihat(E^\prime/K) , \Sel_\phihat(E^{\prime F}/K) \subset Z_T^*$, where $Z_T^*$ and $Z^{T*}$ are the Selmer groups associated to the dual Selmer structures for $Z_T$ and $Z^T$ respectively.

We now apply Theorem \ref{CompareSelmer} to the Selmer structures defining $Z_T$ and $Z^T$, getting that $$0 \rightarrow Z_T \rightarrow Z^T \xrightarrow{res_{\p_2}} H^1(K_{\p_2}, C)$$ and $$0 \rightarrow Z^{T*} \rightarrow Z_T^* \xrightarrow{res_{\p_2}} H^1(K_{\p_2}, C^\prime)$$ are exact and that the images of the right hand sides are exact orthogonal complements with respect to the local Tate pairing. This means that $\dimF Z^T/Z_T + \dimF Z_T^*/Z^{T*} = 2$. As $b$ is non-zero in $Z^T/Z_T$ and $\hat b$ is non-zero in $Z_T^*/Z^{T*}$, we get that $\dimF Z^T/Z_T = \dimF Z_T^*/Z^{T*} = 1$. By Lemma \ref{twistbp}, $H^1_\phi(K_{\p_2}, C^F) \cap H^1_\phi(K_{\p_2}, C) = 0$, so $b$ does not restrict into $H^1_\phi(K_{\p_2}, C^F)$. Since $b$ generates $Z^T/Z_T$, this tells us that $Z^T \cap \Sel_\phi(E^F/K) = Z_T$. Further, $\Sel_\phi(E/K) = Z^T$ since $b$ generates $Z^T/Z_T$. We therefore get that $d_\phi(E^F/K) = d_\phi(E/K) -1$.

Finally, since $H^1_\phi(K_{\mathfrak{p}_2}, C^F)$  has $\Ftwo$ dimension 1 by Lemma \ref{twistbp}, the product formula in Lemma \ref{prodform2} gives us that $\T(E^F/E^{\prime F}) = \T(E/E^\prime)$.
\end{proof}

\subsection{Proof of Lemma \ref{lemmaunbdbal}}
\begin{lemma}\label{lemmaunbdbalspec}
Let $E$ be an elliptic curve defined over a number field $K$ with $E(K)[2] \simeq \mathbb{Z}/2\mathbb{Z}$ that does not possess a cyclic 4-isogeny defined over $K$. Then $E$ has a twist $E^F$ with $d_\phi(E^F/K) =  d_\phi(E/K) +1$ such that $d_2(E^F/K) = d_2(E/K)$ or $d_2(E^F/K) = d_2(E/K) + 2$ and $\T(E^F/E^{\prime F}) = \T(E/E^\prime)$.
\end{lemma}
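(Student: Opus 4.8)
\emph{Plan.} The goal is to prove this as the ``going up'' counterpart of Lemma~\ref{shrinkpspec}: rather than producing a twist that kills a $\phi$-Selmer class, I will produce one at which the class of a uniformizer of a single auxiliary prime \emph{becomes} $\phi$-Selmer. Fix the model $y^2 = x^3 + Ax^2 + Bx$ with $P = (0,0)$, so the $2$-torsion points of $E'$ outside $C'$ have $x$-coordinates $A \pm 2\sqrt{B}$; write $u = A + 2\sqrt{B}$, a nonzero element of $M' = K(E'[2])$ since $\Delta_E = 16B^2(A^2-4B)\neq 0$. Let $N/K$ be a finite Galois extension containing $M = K(E[2])$, $M'$, the quadratic field $M'(\sqrt{u})$, the ray class field $K(8\Delta_E\infty)$, and a field over which $\Sel_2(E/K)$ restricts to $0$. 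By Chebotarev, choose a prime $\p\nmid 2$ of good reduction splitting completely in $N$; then $\p$ is principal, and choosing a totally positive generator $\pi\equiv 1\pmod{8\Delta_E}$ and setting $F = K(\sqrt{\pi})$, all places above $2\Delta_E\infty$ split in $F/K$, $\p$ is the only ramified prime, $E(K_\p)[2] = E'(K_\p)[2] = E[2]$, and $u$ is a square in $K_\p$.

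\emph{The $2$-Selmer rank and Tamagawa ratio.} Apply Proposition~\ref{MR3.3} with $T = \{\p\}$. By Lemma~\ref{eval}, $H^1_f(K_\p, E[2]) = E[2]$ is two-dimensional, while $V_T = \loc_\p(\Sel_2(E/K)) = 0$ because $\p$ splits completely in the field over which $\Sel_2(E/K)$ dies; hence $(\ref{ineq})$ and $(\ref{parity})$ force $d_2(E^F/K)\in\{d_2(E/K),\, d_2(E/K)+2\}$. For the Tamagawa ratio, Lemma~\ref{localeq}(i),(ii) gives $H^1_\phi(K_v, C^F) = H^1_\phi(K_v, C)$ for every $v\neq\p$, and Lemma~\ref{twistbp} (applicable since $E$ and $E'$ both have full $2$-torsion at $\p$) shows $H^1_\phi(K_\p, C^F)$ is one-dimensional and ramified, disjoint from $H^1_\phi(K_\p,C) = H^1_u(K_\p,C)$. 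Since every factor in Theorem~\ref{prodform2} retains its size, $\T(E^F/E^{\prime F}) = \T(E/E^\prime)$.

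\emph{The $\phi$-Selmer rank increases by exactly one.} View $\pi$ as a class in $H^1(K,C) = K^\times/(K^\times)^2$ and in $H^1(K,C') = K^\times/(K^\times)^2$. Since $\pi$ is a totally positive unit congruent to $1$ mod $8\Delta_E$, it is a square in each $K_v$ with $v\mid 2\Delta_E\infty$, and it is unramified away from $\p$; hence $\pi$ meets every local $\phi$- and $\hat\phi$-condition away from $\p$, i.e. $\pi\in\Sel_\phi^{\p}(E/K)$ and $\pi\in\Sel_{\hat\phi}^{\p}(E'/K)$ (the relaxed Selmer groups at $\p$), whereas $\ord_\p\pi = 1$ puts $\mathrm{res}_\p(\pi)$ outside $H^1_u(K_\p,C)$ and $H^1_u(K_\p,C')$, so $\pi\notin\Sel_\phi(E/K)$ and $\pi\notin\Sel_{\hat\phi}(E'/K)$. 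Writing $Z_\p\subseteq Z^\p$ for the strict and relaxed $\phi$-Selmer structures at $\p$, Theorem~\ref{CompareSelmer} gives $\dimF Z^\p/Z_\p + \dimF (Z_\p)^{*}/(Z^\p)^{*} = \dimF H^1(K_\p,C) = 2$, and the class of $\pi$ witnesses that each quotient has dimension at least one; so both are exactly one, $Z^\p = Z_\p\oplus\langle\pi\rangle$, and (as $\Sel_\phi(E/K)$ lies between $Z_\p$ and $Z^\p$ but omits $\pi$) $\Sel_\phi(E/K) = Z_\p$. Finally $\pi\in\Sel_\phi(E^F/K)$: away from $\p$ the conditions for $E^F$ coincide with those for $E$, and at $\p$, Lemma~\ref{twistbp} identifies $H^1_\phi(K_\p,C^F)$ with $\langle u\pi\rangle = \langle\pi\rangle$ (using that $u$ is a square at $\p$), which contains $\mathrm{res}_\p(\pi)$. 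Since $Z_\p\subseteq\Sel_\phi(E^F/K)\subseteq Z^\p$ and $\pi\in\Sel_\phi(E^F/K)$, we get $\Sel_\phi(E^F/K) = Z^\p$, so $d_\phi(E^F/K) = \dimF Z^\p = \dimF Z_\p + 1 = d_\phi(E/K) + 1$.

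\emph{Main obstacle.} The delicate point is precisely the last local identification: ensuring that the ramified line $H^1_\phi(K_\p,C^F)$ is the line spanned by $\pi$ itself, i.e. that $u = A+2\sqrt{B}$ is a square in $K_\p$. This does not follow from $\p$ splitting in $M$ and $M'$ alone, which is why $M'(\sqrt{u})$ must be folded into the Chebotarev field $N$; one also checks that the two roots $A\pm 2\sqrt{B}$ agree modulo squares at $\p$ (their product $A^2-4B$ is a square at $\p$ once $E$ has full $2$-torsion there), that $u\neq 0$, and that all of the imposed splitting conditions are simultaneously realizable, being ``split completely in one number field.'' Everything else is routine bookkeeping with the exact sequences of Section~\ref{phiselmer} together with Lemmas~\ref{localeq} and \ref{twistbp}, and the construction is insensitive to whether or not $E$ acquires a cyclic $4$-isogeny over $M$, so this argument covers both cases left open after Lemma~\ref{lemmaunbdbal}.
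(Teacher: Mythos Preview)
Your argument is correct and takes a genuinely different, more direct route than the paper's proof. The paper manufactures \emph{two} candidate twists $F_1=K(\sqrt{\pi})$ and $F_2=K(\sqrt{\pi\pi'})$ by first building an auxiliary extension $L=K(\sqrt{\pi'})$ via two extra primes $\q_1,\q_2$, so that $F_1$ and $F_2$ give the two distinct ramified lines in $H^1(K_\p,C)$; it then uses duality to see that $\mathrm{res}_\p(Z^T)$ is one-dimensional and hence must equal one of those two lines, and selects whichever $F_i$ matches. Your approach instead pins down the correct ramified line from the outset: by forcing $\p$ to split in $M'(\sqrt{u})$ (with $u=A+2\sqrt{B}$ an $x$-coordinate of $E'[2]-C'$), the explicit description in the proof of Lemma~\ref{twistbp} gives $H^1_\phi(K_\p,C^F)=\langle u\pi\rangle=\langle\pi\rangle$, and you exhibit $\pi$ directly as the new $\phi$-Selmer class. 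This eliminates the auxiliary primes, reduces the $d_2$ computation to the trivial case $T=\{\p\}$ with $V_T=0$, and makes the construction fully explicit. The paper's approach has the mild advantage of using only the \emph{statement} of Lemma~\ref{twistbp} rather than the formula $H^1_\phi(K_\p,C^F)=\langle u\pi\rangle$ extracted from its proof, and it does not need to name the field $M'(\sqrt{u})$; but your argument is cleaner and, as you note, works uniformly whether or not $E$ acquires a cyclic $4$-isogeny over $M$. One cosmetic point: $M'(\sqrt{u})$ need not be quadratic over $K$ (it has degree at most $4$, and could equal $M'$ if $u$ is already a square there), so drop the word ``quadratic'' when introducing it.
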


\begin{proof}
This lemma is proved by finding two quadratic extension $F_1/K$ and $F_2/K$ such that the Selmer structures for $\Sel_\phi(E^{F_1}/K)$ and $\Sel_\phi(E^{F_2}/K)$ differ from each other and from the Selmer structure for $\Sel_\phi(E/K)$ at a single place $\p$. We are then able to show that exactly one of the twists $E^{F_1}$ and $E^{F_2}$ of $E$ has the same $\phi$-Selmer rank as $E$ and that the other twist has $\phi$-Selmer rank one greater that the $\phi$-Selmer rank as $E$.

Letting $M = K(E[2])$ and $M^\prime = K(E^\prime[2])$. As $E$ does not have a cyclic 4-isogeny defined over $K$, part $(i)$ of Lemma \ref{charac} gives us $M^\prime \ne K$, and we can therefore find $\gamma \in \Gal(MM^\prime/K)$ such that $\gamma \big |_M \ne 1$ and  $\gamma \big |_{M^\prime} \ne 1$. Let $N$ be any finite extension of $MM^\prime$ that is Galois over $K$ such that the restrictions of $\Sel_2(E/K)$, $\Sel_\phi(E/K)$, and $\Sel_\hatphi(E^\prime/K)$ to $N$ are trivial. Choose $\sigma_1 \in \text{Gal}(NK(8\Delta_E\infty)/K)$ such that $\sigma_1 \big |_{MM^\prime} = \gamma$. Now choose two primes, $\mathfrak{q}_1$ and $\mathfrak{q}_2$ away from $2\Delta_E$ such that $Frob_{\mathfrak{q}_1} \big |_{NK(8\Delta_E\infty)} = \sigma_1$ and $Frob_{\mathfrak{q}_2} \big |_{NK(8\Delta_E\infty)} = (\sigma_1)^{-1}$. As $Frob_{\mathfrak{q}_1} Frob_{\mathfrak{q}_2} \big |_{K(8\Delta_E\infty)} = 1$, we get that $\mathfrak{q}_1\mathfrak{q}_2$ has a totally positive generator $\pi^\prime$ with $\pi^\prime \equiv 1 (\text{mod } 8\Delta_E)$. Define $L = K(\sqrt{\pi^\prime})$.

Now take $\sigma_2 \in \text{Gal}(NLK(8\Delta_E\infty)/K)$ such that $\sigma_2 \big |_{NK(8\Delta_E\infty)}=1$ and $\sigma_2 \big |_L \ne 1$. Since $\mathfrak{p}_1$ and $\mathfrak{p}_2$ are ramified in $L/K$ and not in $NK(8\Delta_E\infty)/K$, we have $L \cap NK(8\Delta_E\infty)/K = K$ and we can therefore always find such a $\sigma_2$. Let $\mathfrak{p}$ be any prime away from $2\Delta_E\mathfrak{q}_1\mathfrak{q}_2$ such that the image of $Frob_{\mathfrak{p}}$ in $\text{Gal}(NLK(8\Delta_E\infty)/K)$ is $\sigma_2$. As $Frob_{\mathfrak{p}}$ in $\text{Gal}(K(8\Delta_E\infty)/K)$ is trivial, $\mathfrak{p}$ has a totally positive generator $\pi$ with $\pi \equiv 1\pmod{8\Delta_E}$. 

Define quadratic extensions $F_1/K$ and $F_2/K$ by $F_1 = K(\sqrt{\pi})$ and $F_2 = K(\sqrt{\pi\pi^\prime})$. Observe that all places above $2\Delta_E\infty$ split in both $F_1/K$ and $F_2/K$. The only prime ramified in $F_1/K$ is $\mathfrak{p}$ and the only primes ramified in $F_2/K$ are $\mathfrak{q}_1$, $\mathfrak{q}_2$, and $\mathfrak{p}$. 

We then apply Proposition \ref{MR3.3} to $E^{F_1}$ and $E^{F_2}$ with $T_1=\{ \mathfrak{p} \}$ and $T_2 = \{\mathfrak{q}_1, \mathfrak{q}_2, \mathfrak{p}\}$ respectively. Since the images of $Frob_{\mathfrak{q}_1}$ and $Frob_{\mathfrak{q}_2}$ in $\text{Gal}(M/K)$ are non-trivial, $E(K_{\mathfrak{p}_1})[2] \simeq \mathbb{Z}/2\mathbb{Z} \simeq E(K_{\mathfrak{p}_2})[2]$. Since $Frob_{\p} \big |_M = 1$, $E(K_\p)[2] \simeq \Zt \times \Zt$. The localization map $\loc_{T_1}:\Sel_2(E/K) \rightarrow H^1_f(K_\p, E[2])$ is therefore given by $c \mapsto \left (c(\sigma_2)\right )$ and the localization map $\loc_{T_2}:\Sel_2(E/K) \rightarrow H^1_f(K_{\q_1}, E[2]) \oplus H^1_f(K_{\q_2} ,E[2])\oplus H^1_f(K_{\p} E[2])$ is therefore given by $c \mapsto \left (c(\sigma_1), c(\sigma_1^{-1}), c(\sigma_2) \right )$. As $0 = c(1) = c(\sigma_1\sigma_1^{-1}) = c(\sigma_1) + \sigma_1c(\sigma_1^{-1})$, it follows that $c(\sigma_1) = c(\sigma_1^{-1})$ in $E[2]/C$.

Since the image of $Frob_{\mathfrak{p}}$ in $\text{Gal}(N/K)$ is trivial, we get that $G_{K_{\mathfrak{p}}} \subset G_N$. Therefore $\loc_{T_1} \left ( \Sel_2(E/K) \right )$ is trivial and $\loc_{T_2} \big |_{\Sel_2(E/K)}$ is given by $c \mapsto \left (c(\sigma_1), c(\sigma_1), 0 \right )$. 

Pick $R \in E(\overline K)$ such that $2R = P$ and define a co-cycle $c_0 : G_K \rightarrow E[2]$ by $c_0(\sigma) = \sigma(R) - R$ for $\sigma \in G_K$. Observing that $$\phi(c_0(\sigma)) = \phi(\sigma(R) - R) = \phi(\sigma(R)) -\phi(R) = \sigma(\phi(R)) - \phi(R)$$ for $\sigma \in G_K$, we see that $c_0(\sigma) \in C$ if and only if $\phi(R)$ is fixed by $\sigma$. As $\phi(R) \in E^\prime[2] - C^\prime$ and $\sigma_1 \big |_{M^\prime} \ne 1$, we therefore see that $c_0(\sigma) \not \in C$. As $c_0$ represents the element of $\Sel_2(E/K)$ coming from $C$, we therefore get that $\loc_{T_2} \left (\Sel_2(E/K) \right ) = \langle (Q, Q , 0 ) \rangle$. By Proposition \ref{MR3.3}, we then get that $d_2(E^{F_i}/K) = d_2(E/K)$ or $d_2(E^{F_i}/K) = d_2(E/K) + 2$ for $i = 1, 2$.

By Lemma \ref{localeq}, the local conditions away from $\mathfrak{p}$ for both $\Sel_\phi(E^{F_1}/K)$ and $\Sel_\phi(E^{F_2}/K)$ are identical to those for $\Sel_\phi(E/K)$. Set $T = \{ \mathfrak{p} \}$ and define strict and relaxed Selmer groups $Z_T$ and $Z^T$ as $$Z_T = \ker \left ( \Sel_\phi(E/K) \xrightarrow{res_\p} H^1(K_{\p}, C) \right )$$ and $$Z^T = \ker \big ( \Sel_\phi(E/K) \xrightarrow{res_v} \bigoplus_{v \ne \p} H^1(K_v, C)/H^1_\phi(K_v, C) \big ).$$ Observe that $Z_T \subset \Sel_\phi(E/K), \Sel_\phi(E^F/K) \subset Z^T$ and $Z^{T*} \subset \Sel_\phihat(E^\prime/K) , \Sel_\phihat(E^{\prime F}/K) \subset Z_T^*$,  where $Z_T^*$ and $Z^{T*}$ are the Selmer groups associated to the dual Selmer structures for $Z_T$ and $Z^T$ respectively. Moreover, as the image of $Frob_{\mathfrak{p}}$ in $\text{Gal}(N/K)$ is trivial, we get that $\text{res}_{\mathfrak{p}}\left ( \Sel_\phi(E/K) \right )$ is trivial, showing that $\Sel_\phi(E/K) = Z_T$. Similarly, $Sel_\phihat(E^\prime/K) = Z^{T*}$.

Since $\sigma_2 \big |_{MM^\prime} = 1$ we have both $E(K_{\mathfrak{p}})[2] \simeq \Zt \times \Zt$ and $E^\prime(K_{\mathfrak{p}})[2] \simeq \Zt \times \Zt$. As $Frob_{\mathfrak{p}}$ is non-trivial in $L=K(\sqrt{\pi^\prime})$, we get that $\mathfrak{p}$ doesn't split in $L$ and therefore $x^2 - \pi^\prime$ is irreducible mod $\mathfrak{p}$ which means that $\pi^\prime \not \in (K_{\mathfrak{p}}^\times)^2$. Locally, we therefore get that $F_1$ and $F_2$ give different ramified extensions of $K_\p$. By Lemma \ref{twistbp}, we therefore get that $H^1(K_{\mathfrak{p}} , C)$ are $H^1_\phi(K_{\mathfrak{p}}, C^{F_1})$, $H^1_\phi(K_{\mathfrak{p}}, C^{F_2})$, and $H^1_u(K_{\mathfrak{p}}, C)$ are the three distinct one-dimensional subspaces of $H^1(K_{\mathfrak{p}} , C)$.

We now apply Theorem \ref{CompareSelmer} to the Selmer structures defining $Z_T$ and $Z^T$, getting that $$0 \rightarrow Z_T \rightarrow Z^T \xrightarrow{res_{\p_2}} H^1(K_{\p_2}, C)$$ and $$0 \rightarrow Z^{T*} \rightarrow Z_T^* \xrightarrow{res_{\p_2}} H^1(K_{\p}, C)$$ are exact and that the images of the right hand sides are exact orthogonal complements in $H^1(K_{\p}, C)$. This means that $\dimF Z^T/Z_T + \dimF Z_T^*/Z^{T*} = 2$. As $\Sel_\phi(E/K)$ and $\Sel_\hatphi(E^\prime/K)$ restrict trivially into  $H^1(K_{\mathfrak{p}} , C)$ and $H^1(K_{\mathfrak{p}} , C^\prime)$ respectively and both $\dimF Z^T/\Sel_\phi(E/K)$ and $\dimF Z_T^*/\Sel_\hatphi(E^\prime/K)$ are less than or equal to 1, it follows that neither $Z^T$ surjects  onto $H^1(K_{\mathfrak{p}} , C)$ nor $Z_T^*$ surjects onto $H^1(K_{\mathfrak{p}} , C^\prime)$. This means that both $Z^T/Z_T$ and $Z_T^*/Z^{T*}$ have $\mathbb{F}_2$ dimension 1 and therefore $res_{\mathfrak{p}} (Z^T)$ is a dimension 1 subspace of $H^1(K_{\mathfrak{p}} , C)$. 

Since $Z_T = \Sel_\phi(E/K)$, $res_{\mathfrak{p}} (Z^T)$ must be either $H^1_\phi(K_{\mathfrak{p}}, C^{F_1})$ or $H^1_\phi(K_{\mathfrak{p}}, C^{F_2})$. Choosing $F$ from among $F_1$ and $F_2$ such that $res_{\mathfrak{p}} (Z^T) =  H^1_\phi(K_{\mathfrak{p}}, C^F)$, we get that $Z^T = \Sel_\phi(E^F/K)$. We therefore get that $d_\phi(E^F/K) = d_\phi(E/K) + 1$.

Finally, since $H^1_\phi(K_\mathfrak{p}, C^F)$  has $\Ftwo$ dimension 1 by part $(ii)$ of Lemma \ref{twistbp}, the product formula in Lemma \ref{prodform2} gives us that $\T(E^F/E^{\prime F}) = \T(E/E^\prime)$.
\end{proof}

\begin{corollary}\label{propunbdbal}
Let $E$ be an elliptic curve defined over a number field $K$ with $E(K)[2] \simeq \mathbb{Z}/2\mathbb{Z}$ that does not possess a cyclic 4-isogeny defined over $K$. Then $E$ has a twist $E^F$ with $d_2(E^F/K) = d_2(E/K) + 2$.
\end{corollary}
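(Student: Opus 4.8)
\textbf{Proof proposal for Corollary \ref{propunbdbal}.}

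The plan is to obtain the corollary by iterating Lemma \ref{lemmaunbdbalspec}, the bookkeeping being controlled by the elementary inequality $d_2(E/K) \ge d_\phi(E/K) - 2$, valid for any curve with $E(K)[2]\simeq\Zt$ not having a cyclic $4$-isogeny over $K$.

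First I would record that inequality. Since $E$ has no cyclic $4$-isogeny over $K$, part $(i)$ of Lemma \ref{charac} gives $M^\prime = K(E^\prime[2]) \ne K$, so $E^\prime(K)[2] = C^\prime \simeq \Zt$; because $\phi$ kills $C = E(K)[2]$, we have $\phi(E(K)[2]) = 0$, and hence the term $E^\prime(K)[2]/\phi(E(K)[2])$ in the exact sequence of Theorem \ref{gss} is one–dimensional. Consequently the image of $\Sel_\phi(E/K)$ in $\Sel_2(E/K)$ has dimension $d_\phi(E/K) - 1$, so $\dimF \Sel_2(E/K) \ge d_\phi(E/K) - 1$, and as $d_2(E/K) = \dimF \Sel_2(E/K) - \dimF E(K)[2] = \dimF \Sel_2(E/K) - 1$, we get $d_2(E/K) \ge d_\phi(E/K) - 2$.

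Next I would run the iteration. Put $E_0 = E$; given $E_n$, note that $E_n(K)[2]\simeq\Zt$ and that $E_n$ has no cyclic $4$-isogeny over $K$ (both properties are preserved under quadratic twisting, the latter as observed in the proof of Theorem \ref{equaltwist}), so Lemma \ref{lemmaunbdbalspec} applies and produces a quadratic twist $E_{n+1}$ of $E_n$ — hence of $E$ — with $d_\phi(E_{n+1}/K) = d_\phi(E_n/K) + 1$ and $d_2(E_{n+1}/K) \in \{ d_2(E_n/K),\ d_2(E_n/K) + 2 \}$. I stop at the first index $n$ at which $d_2$ strictly increases. Such an $n$ exists: if $d_2$ never increased, then for every $m$ we would have $d_2(E/K) = d_2(E_m/K) \ge d_\phi(E_m/K) - 2 = d_\phi(E/K) + m - 2$, which fails for large $m$. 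At the terminal index $n$, $d_2$ was constant equal to $d_2(E/K)$ through step $n-1$, so $d_2(E_n/K) = d_2(E_{n-1}/K) + 2 = d_2(E/K) + 2$, and $E^F = E_n$ is the required twist.

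There is no real obstacle in this argument: all the substantive work — constructing the ramified quadratic twist and simultaneously pinning down the change in both $d_2$ and $d_\phi$ via the auxiliary primes $\mathfrak{q}_1, \mathfrak{q}_2, \mathfrak{p}$, the local analysis of Lemma \ref{twistbp}, and the Selmer-structure comparison of Theorem \ref{CompareSelmer} — is already done inside the proof of Lemma \ref{lemmaunbdbalspec}, and the only point needing care here is the uniform bound $d_2 \ge d_\phi - 2$ that forces the $d_2$-value to jump after finitely many steps.
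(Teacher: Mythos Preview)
Your argument is correct and follows essentially the same route as the paper's proof: iterate Lemma \ref{lemmaunbdbalspec}, use the inequality $d_2 \ge d_\phi - 2$ coming from Theorem \ref{gss} (with the kernel computed via Lemma \ref{charac}), and observe that $d_\phi$ increasing by one at each step forces $d_2$ to jump by $2$ after finitely many iterations. The only cosmetic difference is that the paper phrases the stopping criterion explicitly as ``stop when $d_\phi(E^F/K) = d_2(E/K)+3$'' rather than your contradiction argument, but these are equivalent.
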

\begin{proof}
Iteratively apply Lemma \ref{lemmaunbdbalspec} to $E$ until either $d_2(E^F/K) = d_2(E/K)+2$ or $d_\phi(E^F/K) = d_2(E/K)+3$. Since $E$ does not have a cyclic 4-isogeny defined over $K$, the map $\Sel_\phi(E/K) \rightarrow \Sel_2(E/K)$ is 2-to-1 by Theorem \ref{gss} and therefore $d_2(E^F/K) \ge d_\phi(E^F/K) -2$. If $d_\phi(E^F/K) = d_2(E/K)+3$, then $d_2(E^F/K) > d_2(E/K)$, and since $d_2(E^F/K) = d_2(E/K)$ or $d_2(E^F/K) = d_2(E/K)+2$, it must be in fact that $d_2(E^F/K) = d_2(E/K)+2$.
\end{proof}

\section{Local conditions for curves which acquire a cyclic 4-isogeny over $K(E[2])$}\label{loc4isog}

For this section, we will assume that $E$ is a curve with $E(K)[2] \simeq \Zt$ that does not have a cyclic 4-isogeny defined over $K$ but acquires one over $K(E[2])$.

\begin{lemma}\label{realplace}
If $v$ is a real place of $K$ and $E$ is given by a model $y^2 = x^3 + Ax^2 + Bx$, then $H^1_\phi(K_v, C)$ has $\Ftwo$-dimension 1 if $(A)_v < 0$ and $H^1_\phi(K_v, C) = 0$ if $(A)_v > 0$.
\end{lemma}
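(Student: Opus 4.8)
The plan is to compute $H^1_\phi(K_v,C)$ directly as the image of the $\phi$-descent map, using the explicit description recalled in Section \ref{phiselmer}. Writing $K_v=\R$ via the real place $v$, I would identify $H^1(K_v,C)$ with $K_v^\times/(K_v^\times)^2=\R^\times/(\R^\times)^2=\{\pm1\}$, so that $\dimF H^1_\phi(\R,C)\le 1$. Under this identification $H^1_\phi(\R,C)$ is the subgroup of $\{\pm1\}$ generated by the values of the map $E'(\R)\to\R^\times/(\R^\times)^2$ which sends $(x,y)$ with $x\ne0$ to the class of $x$, the point $(0,0)$ on $E'$ to the class of $\Delta_E$ (equivalently of $A^2-4B$, since $\Delta_E=16B^2(A^2-4B)$), and $O$ to $1$. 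Hence $\dimF H^1_\phi(\R,C)=1$ if and only if this map takes the value $-1$ somewhere, i.e. either $E'$ has a real point with negative $x$-coordinate, or $A^2-4B<0$.

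Next I would pin down the signs of $A$ and $B$ forced by the standing hypothesis of the section. Since $E$ acquires a cyclic $4$-isogeny over $K(E[2])$ but not over $K$, Corollary \ref{characc} gives $\dimF E(K_v)[2]=\dimF E'(K_v)[2]$. Reading off real $2$-torsion from the models $y^2=x(x^2+Ax+B)$ and $y^2=x\big((x-A)^2-4B\big)$, one has $\dimF E(\R)[2]=2$ exactly when $A^2-4B>0$ and $\dimF E'(\R)[2]=2$ exactly when $B>0$ (the borderline values $A^2-4B=0$ and $B=0$ being excluded since $\Delta_E\ne0$). Therefore $\mathrm{sign}(A^2-4B)=\mathrm{sign}(B)$; as $B<0$ would force $A^2-4B>0$, the only possibility is $B>0$ and $A^2-4B>0$, that is $|A|>2\sqrt{B}>0$. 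In particular the case $A^2-4B<0$ from the first paragraph never occurs, and $A\ne 0$.

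It then remains to decide when $E':y^2=x\big((x-A)^2-4B\big)$ has a real point with $x<0$. For such a point the quadratic factor must satisfy $(x-A)^2-4B\le 0$, i.e. $x\in[A-2\sqrt{B},\,A+2\sqrt{B}]$, and this (nonempty, since $B>0$) interval meets $(-\infty,0)$ precisely when $A-2\sqrt{B}<0$, i.e. $A<2\sqrt{B}$. Combined with $|A|>2\sqrt{B}$, this holds exactly when $A<0$; and when $A<0$ one may take $x=A$, giving the point $(A,\sqrt{-4AB})\in E'(\R)$ with $-4AB>0$ and negative $x$-coordinate. Putting this together with the first paragraph yields $\dimF H^1_\phi(\R,C)=1$ when $(A)_v<0$ and $\dimF H^1_\phi(\R,C)=0$ when $(A)_v>0$, which is the assertion.

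The one step where care is genuinely needed is the second paragraph: the bare statement of the lemma is \emph{false} for an arbitrary model of this shape — for instance $A=B=-1$ gives a point-free real locus over $x<0$ and $A^2-4B=5>0$, hence $\dimF H^1_\phi(\R,C)=0$ even though $A<0$ — so the proof must invoke Corollary \ref{characc} to force $B>0$ and $A^2>4B$. Once those sign constraints are in hand, the rest is an elementary inspection of the real locus of $E'$.
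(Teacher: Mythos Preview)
Your proof is correct, and you have rightly flagged that the standing hypothesis of Appendix~\ref{loc4isog} is essential (invoking Corollary~\ref{characc} to force $B>0$ and $A^2-4B>0$).

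Your route differs from the paper's in a pleasant way. The paper works on the \emph{dual} side: for $(A)_v<0$ it shows that no point of $E(\R)$ has negative $x$-coordinate, so $H^1_{\hat\phi}(K_v,C')=0$, and then appeals to Lemma~\ref{localduality} to conclude $H^1_\phi(K_v,C)=K_v^\times/(K_v^\times)^2$; the case $(A)_v>0$ is handled by swapping $E$ and $E'$. The paper also uses duality a second time, to exclude the possibility $(B)_v<0$, $(A^2-4B)_v<0$ (both $\Delta_E$ and $\Delta_{E'}$ would then be negative, so $C'$ and $C$ would each have nontrivial image, contradicting orthogonality). You instead compute $H^1_\phi(K_v,C)$ directly from the real locus of $E'$ and rule out $B<0$ by the trivial observation that $B<0\Rightarrow A^2-4B>0$. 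This avoids local duality altogether and is slightly more elementary; the paper's approach, on the other hand, exploits the $E\leftrightarrow E'$ symmetry so that only one sign case needs an explicit computation.
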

\begin{proof}
The discriminant for this model of $E$ is given by $\Delta_E = 16(A^2 - 4B)B^2$ and the discriminant of the corresponding model for $E^\prime$ is given by $\Delta_ {E^\prime} = 256(A^2 - 4B)^2B$. By Corollary \ref{characc}, either both $(B)_v > 0$ and $(A^2 - 4B)_v > 0$ or both  $(B)_v < 0$ and $(A^2 - 4B)_v < 0$. As the image of $C^\prime$ in $H^1_\phi(K_v, C)$ is generated by $\Delta_E (K_v^\times)^2$ and the image of $C$ in $H^1_\hatphi(K_v, C^\prime)$ is generated by $\Delta_{E^\prime}(K_v^\times)^2$, the latter case can't occur as it would violate Lemma \ref{localduality}. We therefore get that $(B)_v > 0$ and $(A^2 - 4B)_v > 0$.

Suppose $(A)_v < 0$. If $x_0 \in K_v$ with $(x_0)_v < 0$, then each of  $(x_0^3)_v, (Ax_0^2)_v, \text{ and } (Bx_0)_v$ are negative and therefore, $x_0$ can't appear as the $x$-coordinate of any point on $E(K_v)$. Since $C$ has trivial image in $K_v^\times/(K_v^\times)^2$, we therefore get that $H^1_{\hat \phi}(K_v, C^\prime) = 0$. Lemma \ref{localduality} then gives that $H^1_{\phi}(K_v, C) = K_v^\times/(K_v^\times)^2$.
Now suppose $(A)_v > 0$. The curve $E^\prime$ is given by a model $y^2 = x^3 - 2Ax^2 +(A^2 - 4B)x$.  Exchanging the roles of $E$ and $E^\prime$ in the above shows that $H^1_{\phi}(K_v, C)=0$ and $H^1_{\hat \phi}(K_v, C^\prime) = K_v^\times/(K_v^\times)^2$.
\end{proof}

\begin{LemmaSplitMult}
If $E$ has split multiplicative reduction $v$ then either $H^1_\phi(K_v, C) = 0$ or  $H^1_\phi(K_v, C) =  K_v^\times/(K_v^\times)^2$. If $H^1_\phi(K_v, C) =  K_v^\times/(K_v^\times)^2$ and $F_w/K_v$ is a quadratic extension, then $H^1_\phi(K_v, C^{F_w}) = NF_w^\times/(K_v\times)^2$. If $H^1_\phi(K_v, C) = 0$, then $\dimF H^1_\phi(K_v, C^{F_w}) = 1$. 
\end{LemmaSplitMult}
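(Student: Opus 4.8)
The plan is to work with the Tate parametrization of $E$ at $v$. Since $E$ has split multiplicative reduction at $v$, there are $q\in K_v^\times$ with $\ord_v q>0$ and a $G_{K_v}$-equivariant identification $E(\overline{K_v})=\overline{K_v}^\times/q^\Z$ under which $E(K_v)=K_v^\times/q^\Z$. The three order-$2$ subgroups of $E[2]$ correspond to $\mu_2=\langle-1\rangle$ and $\langle\pm\sqrt q\rangle$; only $\mu_2$ is automatically $G_{K_v}$-stable, the other two being stable exactly when $q\in(K_v^\times)^2$. Since $C=\ker\phi$ is $K_v$-rational there are two cases. Either $C$ corresponds to $\mu_2$, in which case $E^\prime=E/C$ is isomorphic over $K_v$ to the Tate curve with parameter $q^2$ and $\phi$ is the squaring map $\overline{K_v}^\times/q^\Z\to\overline{K_v}^\times/q^{2\Z}$; or $q\in(K_v^\times)^2$ and $C$ corresponds to $\langle\sqrt q\rangle$ (relabelling the square roots if necessary), in which case $E^\prime$ is isomorphic over $K_v$ to the Tate curve with parameter $\sqrt q$ and $\phi$ is the natural projection $\overline{K_v}^\times/q^\Z\to\overline{K_v}^\times/(\sqrt q)^\Z$.

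The first assertion is then immediate. If $\phi$ is the projection it is surjective on $K_v$-points, so $E^\prime(K_v)/\phi(E(K_v))=0$ and $H^1_\phi(K_v,C)=0$. If $\phi$ is squaring then $\phi(E(K_v))=(K_v^\times)^2/q^{2\Z}$, so $E^\prime(K_v)/\phi(E(K_v))\simeq K_v^\times/(K_v^\times)^2$; as $\delta$ is injective and $H^1(K_v,C)\simeq K_v^\times/(K_v^\times)^2$ has the same $\Ftwo$-dimension, $\delta$ is an isomorphism and $H^1_\phi(K_v,C)=K_v^\times/(K_v^\times)^2$. This also records, for later use, that $H^1_\phi(K_v,C)=0$ precisely when $C$ does not correspond to $\mu_2$.

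For the remaining two assertions I would use the twisted Tate parametrization: writing $F_w=K_v(\sqrt d)$ with nontrivial automorphism $\tau$, one has $E^{F_w}(K_v)=\{x\in F_w^\times: N_{F_w/K_v}(x)\in q^\Z\}/q^\Z$ inside $\overline{K_v}^\times/q^\Z$, and likewise for $E^{\prime F_w}$ with $q^\Z$ replaced by the relevant lattice, the twisted isogeny $\phi^{F_w}$ still being squaring, resp.\ projection. When $H^1_\phi(K_v,C)=0$ (so $\phi$ is the projection), Corollary \ref{characc} gives $\dimF E^\prime(K_v)[2]=\dimF E(K_v)[2]=2$, which for the Tate curve of parameter $\sqrt q$ forces $\sqrt q\in(K_v^\times)^2$; a short computation then identifies $E^{\prime F_w}(K_v)/\phi^{F_w}(E^{F_w}(K_v))$ with a group which is cyclic of order $2$ rather than trivial precisely because $\sqrt q\in N_{F_w/K_v}(F_w^\times)$ (automatic, $\sqrt q$ being a square), so $\dimF H^1_\phi(K_v,C^{F_w})=1$. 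When $H^1_\phi(K_v,C)=K_v^\times/(K_v^\times)^2$ (so $\phi$ is squaring), Corollary \ref{characc} similarly forces $q\in(K_v^\times)^2$, say $q=s^2$; I would then compute the twisted connecting map, showing that a point of $E^{\prime F_w}(K_v)$ represented by $u\in F_w^\times$ with $N_{F_w/K_v}(u)=q^{2j}$ is sent by $\delta^{F_w}$ to the class of $a:=ut_0^2\in K_v^\times$, where $t_0\in F_w^\times$ satisfies $t_0/\tau(t_0)=q^j/u$ (available by Hilbert 90, since $N_{F_w/K_v}(q^j/u)=1$). Checking that this cocycle is the Kummer cocycle of $a$ and that $N_{F_w/K_v}(ut_0)=q^j a$ yields $a=N_{F_w/K_v}(ut_0 s^{-j})$, so $H^1_\phi(K_v,C^{F_w})\subseteq N_{F_w/K_v}(F_w^\times)/(K_v^\times)^2$. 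Equality then follows from a dimension count: applying the already-established third assertion to $E^\prime$ — whose dual isogeny $\hatphi$ has kernel $C^\prime=\phi(E[2])=\langle q\rangle$, not the $\mu_2$-subgroup of $E^\prime[2]$, so $H^1_{\hatphi}(K_v,C^\prime)=0$ — gives $\dimF H^1_{\hatphi}(K_v,C^{\prime F_w})=1$, and then Lemma \ref{localduality} applied to $E^{F_w}$ forces $\dimF H^1_\phi(K_v,C^{F_w})=\dimF K_v^\times/(K_v^\times)^2-1=\dimF N_{F_w/K_v}(F_w^\times)/(K_v^\times)^2$.

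The hard part is the bookkeeping in the twisted Tate parametrization: describing $E^{F_w}(K_v)$ and $E^{\prime F_w}(K_v)$ precisely as norm-restricted subgroups modulo the appropriate lattices, pinning down $\delta^{F_w}$ by the explicit Kummer-cocycle computation above, and arranging the uses of Corollary \ref{characc} so that $q$ and $\sqrt q$ are squares — hence automatically norms from $F_w$, which is exactly what keeps the relevant cokernels from collapsing and where the cyclic $4$-isogeny hypothesis enters. One must also take care at places $v\mid 2$, where the Tate parametrization still applies even though naive Weierstrass-model computations do not.
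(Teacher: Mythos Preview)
Your argument is correct and more self-contained than the paper's. The paper does not carry out the twisted Tate-curve computation: it instead quotes Lemma~3.4 of \cite{BddBelow}, which asserts directly that when $E$ has Kodaira type $I_n$ and $E'$ has type $I_{2n}$ one has $H^1_\phi(K_v,C)=K_v^\times/(K_v^\times)^2$ and $H^1_\phi(K_v,C^{F_w})=N_{F_w/K_v}F_w^\times/(K_v^\times)^2$. The paper then observes via the Tate parametrization only that the Kodaira type of $E'$ is $I_{2n}$ or $I_{n/2}$, and handles the $I_{n/2}$ case by applying the quoted lemma with $E$ and $E'$ swapped and invoking Lemma~\ref{localduality}. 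Your proof, by contrast, computes $E'^{F_w}(K_v)/\phi^{F_w}(E^{F_w}(K_v))$ directly from the description of twisted points as norm-restricted subgroups, doing the Hilbert~90 cocycle computation explicitly in the squaring case; you also use Corollary~\ref{characc} to force $q$ (respectively $\sqrt q$) to be a square in $K_v^\times$, which is where the ambient hypothesis on the cyclic $4$-isogeny enters your argument. The duality step is the same in spirit but reversed: the paper deduces the ``small'' case from the ``large'' one via Lemma~\ref{localduality}, whereas you prove the ``small'' case first and then use it (applied to $(E',\hatphi)$) together with Lemma~\ref{localduality} to upgrade the containment $H^1_\phi(K_v,C^{F_w})\subseteq N_{F_w/K_v}F_w^\times/(K_v^\times)^2$ to an equality by a dimension count. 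The payoff of your route is that it avoids the external citation and makes the role of the hypothesis transparent; the cost is the explicit bookkeeping with the twisted parametrization that you flag at the end.
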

\begin{proof}
Lemma 3.4 in \cite{BddBelow} states that if $E$ has split multiplicative reduction at $v$ with Kodaira symbol $I_{n}$ and $E^\prime$ has Kodaira symbol $I_{2n}$ at $v$, then $H^1_\phi(K_v, C) =  K_v^\times/(K_v^\times)^2$ and $H^1(K_v, C^F)$ is given by $H^1(K_v, C^F) = N_{F_w/K_v} F_w^\times / (K_v^\times)^2$ for quadratic extensions $F_w/K_v$.

Since $E$ and $E^\prime$ have split multiplicative reduction at $v$, $E/K_v$ and $E^\prime/K_v$ are $G_{K_v}$ isomorphic to Tate curves $E_q$ and $E_{q^\prime}$ respectively. The curve $E_q$ can be two-isogenous to three different curves: $E_{q^2}, E_{\sqrt{q}}$, and $E_{-\sqrt{q}}$. We therefore get that $|q^\prime|_v = |q|_v^2$ or $|q^\prime|_v = |q|_v^\frac{1}{2}$. Since the Kodaira symbol of $E$ is $I_n$, where $n = \ord_v q$, it then follows that if $E$ has Kodaira symbol $I_{n}$, then $E^\prime$ has Kodaira symbol $I_{2n}$ or $E^\prime$ has Kodaira symbol $I_{\frac{n}{2}}$.

In the case where $E$ has Kodaira symbol $I_{n}$ and $E^\prime$ has Kodaira symbol $I_{2n}$, we get that $H^1_\phi(K_v, C) =  K_v^\times/(K_v^\times)^2$ and $H^1_\phi(K_v, C^{F_w}) = NF_w^\times/(K_v\times)^2$ by Lemma 3.4 in \cite{BddBelow}. In the case where $E$ has Kodaira symbol $I_{n}$ and $E^\prime$ has Kodaira symbol $I_{\frac{n}{2}}$ we get that $H^1_\hatphi(K_v, C^\prime) =  K_v^\times/(K_v^\times)^2$ and $H^1_\hatphi(K_v, C^{\prime F_w}) = NF_w^\times/(K_v\times)^2$ by applying Lemma 3.4 in \cite{BddBelow} with the roles of $E$ and $E^\prime$ exchanged. Applying Lemma \ref{localduality} completes the result.
\end{proof}

\begin{corollary}\label{nonsplit}
If $E$ has non-split multiplicative reduction at a place $v$, then $H^1_\phi(K_v, C) \ne H^1(K_v, C)$.
\end{corollary}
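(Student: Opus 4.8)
The plan is to deduce the corollary from Lemma \ref{splitmult} by twisting away the non-split behaviour with the unramified quadratic character at $v$. Let $L/K_v$ denote the unramified quadratic extension. Since $E$ has non-split multiplicative reduction at $v$, the curve $E^L/K_v$ obtained by twisting by the character of $L/K_v$ has \emph{split} multiplicative reduction at $v$; moreover split versus non-split multiplicative reduction is preserved by the $2$-isogeny $\phi$ (a curve $2$-isogenous over $K_v$ to a Tate curve is again a Tate curve over $K_v$), so $E^{\prime L}$ also has split multiplicative reduction at $v$. The statement of Lemma \ref{splitmult} is purely local and uses only that the curve over $K_v$ carries a rational point of order $2$ and has split multiplicative reduction, so it applies verbatim to $E^L$.

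First I would invoke Lemma \ref{splitmult} for $E^L$ to get that $H^1_\phi(K_v, C^L)$ is either $0$ or all of $K_v^\times/(K_v^\times)^2$, and then apply the remaining clauses of that same lemma with the quadratic extension $F_w = L$, using that $(E^L)^L$ is $K_v$-isomorphic to $E$ compatibly with the canonical identification of $2$-torsion, so that $(C^L)^L = C$ inside $H^1(K_v,C)=K_v^\times/(K_v^\times)^2$. If $H^1_\phi(K_v, C^L) = 0$, the last clause of Lemma \ref{splitmult} gives $\dimF H^1_\phi(K_v, C) = 1$, which is strictly less than $\dimF H^1(K_v,C) = \dimF K_v^\times/(K_v^\times)^2 \ge 2$. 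If $H^1_\phi(K_v, C^L) = K_v^\times/(K_v^\times)^2$, the middle clause gives $H^1_\phi(K_v, C) = N_{L/K_v}L^\times/(K_v^\times)^2$, which by local class field theory is a subgroup of index $[K_v^\times : N_{L/K_v}L^\times] = 2$ in $K_v^\times/(K_v^\times)^2$, since $(K_v^\times)^2 \subseteq N_{L/K_v}L^\times$. In either case $H^1_\phi(K_v, C) \ne H^1(K_v,C)$, which is the claim.

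I do not expect a serious obstacle here: the only points requiring care are the bookkeeping facts that twisting by the unramified quadratic character interchanges split and non-split multiplicative reduction, that this dichotomy is an isogeny invariant over $K_v$ (so that the hypothesis that $E^{\prime L}$ be split, used inside the proof of Lemma \ref{splitmult}, is met), and that a double twist by $L$ returns $E$ together with its identification of $2$-torsion, together with the elementary local class field theory input $[K_v^\times:N_{L/K_v}L^\times]=2$. With these in hand the argument is a direct application of Lemma \ref{splitmult}.
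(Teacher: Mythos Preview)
Your proposal is correct and follows essentially the same route as the paper: twist $E$ by a quadratic extension of $K_v$ to obtain split multiplicative reduction, apply Lemma \ref{splitmult} to the twist, and then read off the local condition for $E$ by twisting back. The paper phrases this with ``some quadratic extension $F_w/K_v$ such that $E^{F_w}$ has split multiplicative reduction'' (which is necessarily your unramified $L$), and leaves implicit the verifications you spell out, namely that $N_{L/K_v}L^\times/(K_v^\times)^2$ has index $2$ and that $\dimF K_v^\times/(K_v^\times)^2 \ge 2$.
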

\begin{proof}
Since $E$ has non-split multiplicative reduction at $v$, there is some quadratic extension $F_w/K_v$ such that $E^{F_w}$ has split multiplicative reduction at $v$. As $E$ is the twist of $E^{F_w}$ by $F_w/K_v$, applying Lemma \ref{splitmult} to $E^{F_w}$ gives that either $H^1_\phi(K_v, C) = NF_w^\times/(K_v\times)^2$ or that $\dimF H^1_\phi(K_v, C) = 1$, dependent on whether $H^1_\phi(K_v, C^{F_w}) =  K_v^\times/(K_v^\times)^2$ or $H^1_\phi(K_v, C^{F_w}) =  0$. 
\end{proof}

\begin{corollary}\label{splitsmall}
If $E$ has split multiplicative reduction at a place $v$ and $F = K_v(\sqrt{u})$ where $u \in O_{K_v}^\times - (O_{K_v}^\times)^2$, then $H^1_\phi(K_v, C^F) \ne  K_v^\times/(K_v^\times)^2$.
\end{corollary}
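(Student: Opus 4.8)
The plan is to invoke Lemma~\ref{splitmult} directly and split into the two cases it provides. Since $E$ has split multiplicative reduction at $v$, that lemma tells us $H^1_\phi(K_v,C)$ is either $0$ or all of $K_v^\times/(K_v^\times)^2$, and in each case it pins down $H^1_\phi(K_v,C^{F_w})$ for every quadratic extension $F_w/K_v$; I would apply it with $F_w=F=K_v(\sqrt u)$.

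In the case $H^1_\phi(K_v,C)=K_v^\times/(K_v^\times)^2$, Lemma~\ref{splitmult} identifies $H^1_\phi(K_v,C^F)$ with $N_{F/K_v}F^\times/(K_v^\times)^2$, so it suffices to note that $N_{F/K_v}F^\times$ is a proper subgroup of $K_v^\times$: local class field theory gives $[K_v^\times:N_{F/K_v}F^\times]=[F:K_v]=2$, and since $(K_v^\times)^2\subseteq N_{F/K_v}F^\times$ the image $N_{F/K_v}F^\times/(K_v^\times)^2$ has index $2$ in $K_v^\times/(K_v^\times)^2$, hence is not the whole group. (This step uses only that $F/K_v$ is a nontrivial quadratic extension, which holds since $u$ is not a square; the hypothesis $u\in O_{K_v}^\times-(O_{K_v}^\times)^2$ is present for the later application in Lemma~\ref{cantwistsmall}.) In the case $H^1_\phi(K_v,C)=0$, Lemma~\ref{splitmult} gives $\dimF H^1_\phi(K_v,C^F)=1$, so I only need $\dimF H^1(K_v,C)\ge 2$; since $v$ is non-archimedean and $H^1(K_v,C)\simeq K_v^\times/(K_v^\times)^2$, this dimension equals $2$ when $v\nmid 2$ and $2+[K_v:\Q_2]\ge 3$ when $v\mid 2$, which closes the case.

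I do not expect a genuine obstacle here: once Lemma~\ref{splitmult} is available the argument is a one-line observation in each case. The only points requiring a little care are making sure the extension $F/K_v$ is genuinely nontrivial so the index-$2$ assertions are not vacuous, and not overlooking the residue-characteristic-$2$ places in the dimension count of the second case.
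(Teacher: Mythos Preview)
Your argument is correct, but it differs from the paper's. The paper proves this in one line: since $u$ is a unit non-square, $F/K_v$ is the unramified quadratic extension, so $E^F$ has non-split multiplicative reduction at $v$, and Corollary~\ref{nonsplit} (applied to $E^F$) gives the conclusion immediately. Your route instead invokes Lemma~\ref{splitmult} directly on $E$ and handles the two cases by a norm-index computation and a dimension count. Both are valid; the paper's version is shorter and explains why the specific hypothesis $u\in O_{K_v}^\times-(O_{K_v}^\times)^2$ appears (it is exactly what forces the twist to have non-split multiplicative rather than additive reduction), whereas your argument never uses that $u$ is a unit and so actually proves the stronger statement that $H^1_\phi(K_v,C^F)\ne K_v^\times/(K_v^\times)^2$ for \emph{every} nontrivial quadratic $F/K_v$. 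Your parenthetical remark that the unit hypothesis is only there for the later application is thus slightly off: it is genuinely used in the paper's own proof, just not in yours.
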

\begin{proof}
The curve $E^F$ has non-split multiplicative reduction at $v$ and the result then follows from Corollary \ref{nonsplit}
\end{proof}

\begin{lemma}\label{noorder4}
If $v$ is a place above 2, then there exists a quadratic extension $F/K_v$ such that $H^1_\phi(K_v, C^{F}) \ne K_v^\times/(K_v^\times)^2$.

\end{lemma}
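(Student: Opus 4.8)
The plan is to pass from the statement about $H^1_\phi$ to a statement about rational $4$-torsion on the dual curve, and then to produce the required twist by a counting argument available precisely because $v$ lies above $2$. Throughout, $\hat\phi^F\colon E^{\prime F}\to E^F$ denotes the dual isogeny, with kernel $C^{\prime F}$, on the twisted curve.

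First I would reduce the assertion via local duality. Applying Lemma \ref{localduality} to the pair $(E^F,\hat\phi^F)$, the groups $H^1_\phi(K_v,C^F)$ and $H^1_{\hat\phi}(K_v,C^{\prime F})$ are exact orthogonal complements under the perfect local Tate pairing on $H^1(K_v,C)\times H^1(K_v,C^{\prime})$. Since $v\mid 2$ gives $\dimF H^1(K_v,C)=2+[K_v:\Q_2]$, it follows that $H^1_\phi(K_v,C^F)=K_v^\times/(K_v^\times)^2$ if and only if $H^1_{\hat\phi}(K_v,C^{\prime F})=0$. Because the coboundary $\delta\colon E^F(K_v)/\hat\phi^F(E^{\prime F}(K_v))\hookrightarrow H^1(K_v,C^{\prime F})$ in the Kummer sequence of $\hat\phi^F$ is injective, and $H^1_{\hat\phi}(K_v,C^{\prime F})$ is by definition its image, the vanishing $H^1_{\hat\phi}(K_v,C^{\prime F})=0$ is equivalent to surjectivity of $\hat\phi^F\colon E^{\prime F}(K_v)\to E^F(K_v)$. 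So it suffices to exhibit a quadratic $F/K_v$ for which $\hat\phi^F$ is \emph{not} surjective on $K_v$-points.

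Second I would give a torsion criterion for non-surjectivity: if $E^{\prime F}(K_v)$ has no point of order $4$, then $\hat\phi^F$ is not surjective. Indeed, then $E^{\prime F}(K_v)[2^\infty]=E^{\prime F}(K_v)[2]$, and since $\ker\hat\phi^F=C^{\prime F}$ has order $2$ and lies inside $E^{\prime F}(K_v)[2]$, the $2$-primary part of $\hat\phi^F(E^{\prime F}(K_v))$ equals $\hat\phi^F(E^{\prime F}(K_v)[2])$, of order $\tfrac12\#E^{\prime F}(K_v)[2]$. The canonical $G_K$-isomorphisms $E^F[2]\simeq E[2]$ and $E^{\prime F}[2]\simeq E^{\prime}[2]$, together with Corollary \ref{characc}, give $\#E^{\prime F}(K_v)[2]=\#E(K_v)[2]=\#E^F(K_v)[2]$, so this order is $\tfrac12\#E^F(K_v)[2]<\#E^F(K_v)[2]\le\#E^F(K_v)[2^\infty]$. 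Hence $\hat\phi^F$ omits a $2$-power torsion point of $E^F(K_v)$ and is not surjective. By the previous paragraph it therefore suffices to find a quadratic $F/K_v$ with $E^{\prime F}(K_v)$ free of $4$-torsion.

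Finally I would produce such a twist. Writing $F=K_v(\sqrt d)$ with associated quadratic character $\chi_F$ of $G_{K_v}$, the twisting dictionary on $4$-torsion ($E^{\prime F}[4]$ is $E^{\prime}[4]$ with $\sigma$ acting by $\chi_F(\sigma)\cdot\sigma$) shows that $E^{\prime F}(K_v)$ contains a point of order $4$ exactly when there is $R\in E^{\prime}(\overline{K_v})$ of order $4$ with $\sigma(R)=\chi_F(\sigma)R$ for all $\sigma\in G_{K_v}$; then $\langle R\rangle$ is a $G_{K_v}$-stable cyclic subgroup of order $4$, and $R$ and $-R$ produce the same character, so the set of "bad" characters $\chi_F$ has size at most $\tfrac12\#\{\text{points of order }4\text{ in }E^{\prime}[4]\}=6$. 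On the other hand, for $v\mid 2$ we have $\#K_v^\times/(K_v^\times)^2=2^{\,2+[K_v:\Q_2]}\ge 8$, hence at least $7$ nontrivial characters; choosing a nontrivial $\chi_F$ that is not bad gives the desired $F$. I expect the substantive step to be this last one — verifying the behaviour of the $G_{K_v}$-action on $E^{\prime F}[4]$ under twisting and bounding the $G_{K_v}$-stable cyclic $4$-subgroups — whereas the first two steps are formal consequences of Lemma \ref{localduality}, the injectivity of $\delta$, and Corollary \ref{characc}; the decisive point is that at a place above $2$ there are at least $8$ quadratic classes but at most $6$ of them can reintroduce $4$-torsion.
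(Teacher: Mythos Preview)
Your proof is correct and takes essentially the same approach as the paper: both reduce via local duality (Lemma \ref{localduality}) and Corollary \ref{characc} to finding a quadratic $F/K_v$ for which $E^{\prime F}(K_v)$ has no point of order $4$, and both obtain this by the same counting argument (at most $6$ bad twists versus at least $7$ nontrivial quadratic extensions of a $2$-adic local field). The only differences are cosmetic: the paper parametrizes the bad twists by the $y$-coordinates of the points of order $4$ on $E^\prime$ and exhibits an explicit nonzero element of $H^1_{\hat\phi}(K_v,C^{\prime F})$, whereas you parametrize by characters and argue non-surjectivity of $\hat\phi^F$ via a cardinality comparison on $2$-primary parts.
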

\begin{proof}
We begin by showing that if $v$ is a place above 2, then there exists a quadratic extension $F/K_v$ such that $E^{\prime F}(K_v)$ has no points of order 4.

Pick a model $y^2 = x^3 + Ax^2 + Bx$ for $E$. The $y$-coordinates of the points of order 4 on $E^\prime(\overline{K_v})$ are given by $y_1, y_2, ..., y_6, -y_1,...,-y_6$ for some $y_1, ..., y_6 \in \overline{K_v}$. Since $v \mid 2$, there are at least 7 non-trivial quadratic extensions of $K_v$ and it therefore follows that there is some quadratic $F/K_v$ such that $F$ is not generated over $K_v$ by any of the $y_i$. We then get that $E^\prime(F)^{\sigma = -1}$ contains no points of order 4, where $\sigma$ generates $\Gal(F/K_v)$. As $E^{\prime F}(K_v) \simeq E^\prime(F)^{\sigma = -1}$, we get that $E^{\prime F}(K_v)$ has no points of order 4.

If $E^F(K_v)[2] \simeq \Zt$, then $\Delta_{E^\prime}$ has a non-trivial image in $H^1(K, C)$, since $\dimF E^F(K_v)[2] \simeq \dimF E^{\prime F}(K_v)[2]$ and therefore $H^1_\hatphi(K_v, C^{\prime F}) \ne 0$. Suppose $E^F(K_v)[2] \simeq \Zt \times \Zt$. Let $Q \in E^F(K_v)[2] - C^F$.  The image of $Q$ in $H^1(K, C)$ is trivial if and only if $Q = \phihat(R)$ for some $R \in E^{\prime F}(K_v)$. However, since $Q \in E^F(K_v)[2] - C^F$, $R$ would need to have order 4. As $E^{\prime F}(K_v)$ contains no points of order 4, the image of $Q$ is non-trivial in $H^1(K, C)$ and therefore $H^1_\hatphi(K_v, C^{\prime F}) \ne 0$. Applying Lemma \ref{localduality}, we get that $H^1_\phi(K_v, C^{F}) \ne K_v^\times/(K_v^\times)^2$.
\end{proof}
\end{document}